\documentclass[11pt]{article}
\usepackage{authblk}
\usepackage[toc,page]{appendix}
\usepackage[top=2cm, bottom=2cm, left=2cm, right=2cm]{geometry}

\usepackage{color}
\usepackage{helvet}         
\usepackage{courier}        
\usepackage{type1cm}        

\usepackage{framed} 
\usepackage{tikz}
\usepackage{makeidx}         
\usepackage{graphicx}        
\usepackage{multicol}        
\usepackage[bottom]{footmisc}

\usepackage{amsmath}
\usepackage{amssymb}
\usepackage{bbold}
\usepackage{amsthm}
\usepackage{subcaption}
\usepackage{sidecap}
\usepackage{floatrow}
\usepackage{pdflscape}
\usepackage{comment}
\usepackage[font=small]{caption}
\usepackage{enumitem}
\usepackage{esint}

\usepackage[hidelinks]{hyperref}

\usepackage{scalerel}
\usepackage{shuffle}
\usepackage{mathrsfs}
\newtheorem{theorem}{Theorem}

\newtheorem{lemma}[theorem]{Lemma}

\newtheorem{proposition}[theorem]{Proposition}

\theoremstyle{definition}
\newtheorem{definition}[theorem]{Definition}

\theoremstyle{remark}

\newtheorem{remark}[theorem]{\bf Remark}

\numberwithin{theorem}{section}
\numberwithin{figure}{section}
\numberwithin{equation}{section}

\begin{document}
\title{Gaussian free field in annulus: \\BPZ equations and crossing probabilities for level lines}
\bigskip{}
\author[1]{Chongzhi Huang\thanks{huangchzh2001prob@gmail.com}}
\author[2]{Mingchang Liu\thanks{liumc\_prob@163.com}}
\author[1]{Hao Wu\thanks{hao.wu.proba@gmail.com.}}
\affil[1]{Tsinghua University, China}
\affil[2]{Capital Normal University, China}
\date{}

%
%


\global\long\def\qnum#1{\left[#1\right]_q }
\global\long\def\qfact#1{\left[#1\right]_q! }
\global\long\def\qbin#1#2{\left[\begin{array}{c}
	#1\\
	#2 
	\end{array}\right]_q}

\global\long\def\defpatt{\shuffle}
\global\long\def\rainbow{\includegraphics[scale=0.15]{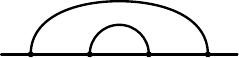}}
\global\long\def\rainbowBig{\includegraphics[scale=0.3]{figures/link-2}}

\newcommand{\nradpartfn}[2]{\mathcal{Z}^{#2}_{#1\mathrm{\textnormal{-}rad}}}
\global\long\def\covmap{h}

\global\long\def\mslitdriv{\omega}
\global\long\def\nnofloops{\mathscr{L}}

\global\long\def\Selberg{S}
\global\long\def\Diff{\Theta}
\global\long\def\SinDiff{\Xi}

\global\long\def\contour{\mathscr{C}}


\global\long\def\U{\mathbb{U}}
\global\long\def\T{\mathbb{T}}
\global\long\def\HH{\mathbb{H}}
\global\long\def\R{\mathbb{R}}
\global\long\def\C{\mathbb{C}}
\global\long\def\N{\mathbb{N}}
\global\long\def\Z{\mathbb{Z}}
\global\long\def\E{\mathbb{E}}
\global\long\def\PP{\mathbb{P}}
\global\long\def\rate{\mathcal{J}}
\global\long\def\QQ{\mathbb{Q}}
\global\long\def\A{\mathbb{A}}
\global\long\def\S{\mathbb{S}}
\global\long\def\one{\mathbb{1}}

\newcommand{\PPspiral}[1]{\mathbb{P}^{\mu}_{#1\mathrm{\textnormal{-}rad}}}
\newcommand{\PPnospiral}[1]{\mathbb{P}^{0}_{#1\mathrm{\textnormal{-}rad}}}
\newcommand{\PPnospiralrho}{\mathbb{P}^{0; \bs{\rho}}}
\newcommand{\PPspiralrho}{\mathbb{P}^{\mu; \bs{\rho}}}

\global\long\def\CR{\mathrm{CR}}
\global\long\def\ST{\mathrm{ST}}
\global\long\def\SF{\mathrm{SF}}
\global\long\def\cov{\mathrm{cov}}
\global\long\def\dist{\mathrm{dist}}
\global\long\def\SLE{\mathrm{SLE}}
\global\long\def\hSLE{\mathrm{hSLE}}
\global\long\def\CLE{\mathrm{CLE}}
\global\long\def\GFF{\mathrm{GFF}}
\global\long\def\inte{\mathrm{int}}
\global\long\def\ext{\mathrm{ext}}
\global\long\def\inrad{\mathrm{inrad}}
\global\long\def\outrad{\mathrm{outrad}}
\global\long\def\dimH{\mathrm{dim}}
\global\long\def\capa{\mathrm{cap}}
\global\long\def\diam{\mathrm{diam}}
\global\long\def\sign{\mathrm{sgn}}
\global\long\def\cat{\mathrm{Cat}}
\global\long\def\cst{\mathrm{C}}
\global\long\def\ck{\mathrm{C}_{\kappa}}
\global\long\def\free{\mathrm{free}}
\global\long\def\hF{{}_2\mathrm{F}_1}
\global\long\def\simple{\mathrm{simple}}
\global\long\def\even{\mathrm{even}}
\global\long\def\odd{\mathrm{odd}}
\global\long\def\st{\mathrm{ST}}
\global\long\def\usf{\mathrm{USF}}
\global\long\def\Leb{\mathrm{Leb}}
\global\long\def\LP{\mathrm{LP}}
\global\long\def\I{\mathrm{I}}
\global\long\def\II{\mathrm{II}}
\global\long\def\hcap{\mathrm{hcap}}
\global\long\def\Poisson{\mathrm{P}}
\global\long\def\cross{\mathrm{cross}}
\global\long\def\wind{\mathrm{wind}}
\global\long\def\Green{\mathrm{G}}
\global\long\def\Mod{\mathrm{mod}}

\global\long\def\LA{\mathcal{A}}
\global\long\def\LB{\mathcal{B}}
\global\long\def\LC{\mathcal{C}}
\global\long\def\LD{\mathcal{D}}
\global\long\def\LF{\mathcal{F}}
\global\long\def\LK{\mathcal{K}}
\global\long\def\LE{\mathcal{E}}
\global\long\def\LG{\mathcal{G}}
\global\long\def\LGmu{\mathcal{G}_{\mu}}
\global\long\def\LI{\mathcal{I}}
\global\long\def\LJ{\mathcal{J}}
\global\long\def\LL{\mathcal{L}}
\global\long\def\LM{\mathcal{M}}
\global\long\def\LN{\mathcal{N}}
\global\long\def\OO{\mathcal{O}}
\global\long\def\LQ{\mathcal{Q}}
\global\long\def\LR{\mathcal{R}}
\global\long\def\LT{\mathcal{T}}
\global\long\def\LS{\mathcal{S}}
\global\long\def\LU{\mathcal{U}}
\global\long\def\LV{\mathcal{V}}
\global\long\def\LW{\mathcal{W}}
\global\long\def\LX{\mathcal{X}}
\global\long\def\LY{\mathcal{Y}}
\global\long\def\PartF{\mathcal{Z}}
\global\long\def\LH{\mathcal{H}}
\global\long\def\LJ{\mathcal{J}}

\global\long\def\blm{m}

\global\long\def\LZ{\mathcal{Z}}
\newcommand{\LZann}[1]{\mathcal{Z}_{#1\mathrm{\textnormal{-}ann}}}
\newcommand{\varphiann}[1]{\varphi_{#1\mathrm{\textnormal{-}ann}}}
\newcommand{\LZcro}[1]{\mathcal{Z}_{#1\mathrm{\textnormal{-}cro}}}
\newcommand{\LFcro}[1]{\mathcal{F}_{#1\mathrm{\textnormal{-}cro}}}
\newcommand{\varphicro}[1]{\varphi_{#1\mathrm{\textnormal{-}cro}}}
\newcommand{\Pcro}[1]{\mathsf{P}_{#1\mathrm{\textnormal{-}cro}}}
\newcommand{\Ecro}[1]{\mathsf{E}_{#1\mathrm{\textnormal{-}cro}}}
\newcommand{\Pind}[1]{\mathsf{P}_{#1\mathrm{\textnormal{-}ind}}}
\newcommand{\Eind}[1]{\mathsf{E}_{#1\mathrm{\textnormal{-}ind}}}

\newcommand{\LZtwo}{\mathcal{Z}_{\includegraphics[scale=0.15]{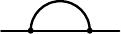}}^{(\kappa)}}
\newcommand{\Etwo}{\mathbb{E}_{\includegraphics[scale=0.15]{figures/link-0}}}
\newcommand{\Ptwo}{\PP_{\includegraphics[scale=0.15]{figures/link-0}}}
\newcommand{\PPtworho}{\PP_{\includegraphics[scale=0.15]{figures/link-0}}^{(\kappa; \boldsymbol{\rho})}}
\newcommand{\LEtwo}{\mathcal{E}_{\includegraphics[scale=0.15]{figures/link-0}}}
\newcommand{\Etworho}{\mathbb{E}_{\includegraphics[scale=0.15]{figures/link-0}}^{(\kappa; \boldsymbol{\rho})}}
\newcommand{\LItwo}{\mathcal{I}_{\includegraphics[scale=0.15]{figures/link-0}}}
\newcommand{\LUtwo}{\mathcal{U}_{\includegraphics[scale=0.15]{figures/link-0}}}
\newcommand{\LZtwor}{\LZtwo^{(\mathfrak{r})}}
\newcommand{\LHtwo}{\mathcal{H}_{\includegraphics[scale=0.15]{figures/link-0}}}
\newcommand{\LFtwo}{\mathcal{F}_{\includegraphics[scale=0.15]{figures/link-0}}}
\newcommand{\chambertwo}{\chamber_{\includegraphics[scale=0.15]{figures/link-0}}}

\newcommand{\PPkapparho}{\PP_{\includegraphics[scale=0.15]{figures/link-0}}^{(\kappa; \bs{\rho})}}

\newcommand{\LZfourb}{\mathcal{Z}_{\includegraphics[scale=0.15]{figures/link-2}}}
\newcommand{\LHfourb}{\mathcal{H}_{\includegraphics[scale=0.15]{figures/link-2}}}
\newcommand{\LFfourb}{\mathcal{F}_{\includegraphics[scale=0.15]{figures/link-2}}}
\newcommand{\LFfourbRenorm}{\widehat{\mathcal{F}}_{\includegraphics[scale=0.15]{figures/link-2}}}

\newcommand{\QQrainbow}[1]{\mathbb{Q}_{\rainbow_{#1}}}
\newcommand{\LZrainbow}[1]{\mathcal{Z}_{\rainbow_{#1}}}
\newcommand{\Frainbow}[1]{\mathscr{F}_{\rainbow_{#1}}}
\newcommand{\chamberrainbow}[1]{\chamber_{\rainbow_{#1}}}

\newcommand{\chamberradial}[1]{\chamber_{#1\mathrm{\textnormal{-}rad}}}
\newcommand{\PPradial}[1]{\mathbb{P}_{#1\mathrm{\textnormal{-}rad}}}
\newcommand{\Eradial}[1]{\mathbb{E}_{#1\mathrm{\textnormal{-}rad}}}
\newcommand{\rateradial}[1]{\mathcal{J}_{#1\mathrm{\textnormal{-}rad}}}
\newcommand{\LIradial}[1]{\mathcal{I}_{#1\mathrm{\textnormal{-}rad}}}
\newcommand{\LVradial}[1]{\mathcal{V}_{#1\mathrm{\textnormal{-}rad}}}
\newcommand{\tilderateradial}[1]{\tilde{\mathcal{J}}_{#1\mathrm{\textnormal{-}rad}}}
\newcommand{\hatrateradial}[1]{\hat{\mathcal{J}}_{#1\mathrm{\textnormal{-}rad}}}
\newcommand{\LZradial}[1]{\mathcal{Z}_{#1\mathrm{\textnormal{-}rad}}}
\newcommand{\LFradial}[1]{\mathcal{F}_{#1\mathrm{\textnormal{-}rad}}}
\newcommand{\varphiradial}[1]{\varphi_{#1\mathrm{\textnormal{-}rad}}}

\newcommand{\coulombGasHRenorm}{\widehat{\coulombGasH}}

\global\long\def\coulomb{\LH}
\global\long\def\auxcoulomb{\hat{\coulomb}}
\global\long\def\coulombGas{\LF}
\global\long\def\coulombnew{\LK}
\global\long\def\coulombLine{\LG}
\global\long\def\kfunc{p}

\global\long\def\eps{\epsilon}
\global\long\def\ov{\overline}
\global\long\def\QQrp{\QQ_{\alpha; \bs{s}}^{(p)}}

\global\long\def\bn{\mathbf{n}}
\global\long\def\MR{MR}
\global\long\def\cond{\,|\,}
\global\long\def\bigcond{\,\big|\,}
\global\long\def\Bigcond{\;\Big|\;}
\global\long\def\la{\langle}
\global\long\def\ra{\rangle}
\global\long\def\tree{\Upsilon}
\global\long\def\prob{\mathbb{P}}
\global\long\def\hm{\mathrm{Hm}}
%

\global\long\def\Im{\operatorname{Im}}
\global\long\def\Re{\operatorname{Re}}

\global\long\def\ud{\mathrm{d}}
\global\long\def\pder#1{\frac{\partial}{\partial#1}}
\global\long\def\pdder#1{\frac{\partial^{2}}{\partial#1^{2}}}
\global\long\def\pddder#1{\frac{\partial^{3}}{\partial#1^{3}}}
\global\long\def\der#1{\frac{\ud}{\ud#1}}

\global\long\def\bZnn{\mathbb{Z}_{\geq 0}}
\global\long\def\bZpos{\mathbb{Z}_{> 0}}
\global\long\def\bZneg{\mathbb{Z}_{< 0}}

\global\long\def\Vfunc{\LG}
\global\long\def\gfunc{g^{(\rr)}}
\global\long\def\hfunc{h^{(\rr)}}

\global\long\def\SimplexInt{\rho}
\global\long\def\CubeInt{\widetilde{\rho}}

\global\long\def\ii{\mathrm{i}}
\global\long\def\ee{\mathrm{e}}
\global\long\def\rr{\mathfrak{r}}
\global\long\def\chamber{\mathfrak{X}}
\global\long\def\Wchamber{\mathfrak{W}}

\global\long\def\SimplexIntKappa8{\SimplexInt}

\global\long\def\nested{\boldsymbol{\underline{\Cap}}}
\global\long\def\unnested{\boldsymbol{\underline{\cap\cap}}}
\global\long\def\unnested{\boldsymbol{\underline{\cap\cap}}}

\global\long\def\acycle{\vartheta}
\global\long\def\bcycle{\tilde{\acycle}}

\global\long\def\metric{\mathrm{dist}}

\global\long\def\adj#1{\mathrm{adj}(#1)}

\global\long\def\bs{\boldsymbol}

\global\long\def\edge#1#2{\langle #1,#2 \rangle}
\global\long\def\graph{G}

\newcommand{\conn}{\varsigma}
\newcommand{\realacycle}{\smash{\mathring{\acycle}}}
\newcommand{\realpt}{\smash{\mathring{x}}}
\newcommand{\corrind}{\LC}
\newcommand{\bssymb}{\pi}
\newcommand{\coeff}{p}
\newcommand{\MainConst}{C}

\global\long\def\removeLink{/}

\global\long\def\domainofdef{\mathfrak{U}}
\global\long\def\Test_space{C_c^\infty}
\global\long\def\Distr_space{(\Test_space)^*}

\global\long\def\bs{\boldsymbol}
\global\long\def\cst{\mathrm{C}}

\newcommand{\red}{\textcolor{red}}
\newcommand{\blue}{\textcolor{blue}}
\newcommand{\green}{\textcolor{green}}
\newcommand{\magenta}{\textcolor{magenta}}
\newcommand{\cyan}{\textcolor{cyan}}

\newcommand{\coulombGasH}{\mathcal{H}}
\newcommand{\secondbeta}{\intloop}

\newcommand{\cev}[1]{\reflectbox{\ensuremath{\vec{\reflectbox{\ensuremath{#1}}}}}}

\global\long\def\anticonf{\zeta}
\global\long\def\intloop{\varrho}
\global\long\def\Gloop{\smash{\mathring{\intloop}}}

\global\long\def\SLEmeasure{\mathrm{P}}
\global\long\def\SLEmeasureEx{\mathrm{E}}

\global\long\def\fugacity{\nu}
\global\long\def\meanderMat{\mathcal{M}}
\global\long\def\LM{\mathcal{M}}
\global\long\def\meanderMatrix{\meanderMat_{\fugacity}}
\global\long\def\meanderMatrixPrime{\meanderMat_{\fugacity(\kappa')}}
\global\long\def\meanderRenorm{\widehat{\mathcal{M}}}

\global\long\def\PartFRenorm{\widehat{\PartF}}
\global\long\def\coulombGasRenorm{\widehat{\coulombGas}}

\global\long\def\hexa{\scalebox{1.3}{\hexagon}}

\global\long\def\np{p}

\global\long\def\FKdual{\mathcal{L}}

\global\long\def\fixedindex{\flat}

\global\long\def\Dirichletradial{\mathcal{I}_{\mathrm{rad}}}
\global\long\def\LVone{\mathcal{V}_{1\mathrm{\textnormal{-}rad}}}
\maketitle
\vspace{-1cm}
\begin{center}
\begin{minipage}{0.95\textwidth}
\abstract{
We consider level lines of Gaussian free field (GFF) in annulus with alternating boundary conditions. 
We calculate the probability that all level lines cross the annulus. Such probability is given by the ratio between two partition functions. 
These two partition functions are constructed via Dub\'edat's regularized Dirichlet energy. 
We show that these partition functions are solutions to annulus Belavin-Polyakov-Zamolodchikov (BPZ) equations. 
In the annulus setup, the number of variables exceeds the number of BPZ equations, so the BPZ system alone does not determine the partition functions uniquely. By establishing sufficiently good control of the two partition functions constructed above, we are nevertheless able to derive the crossing probability.
}
\medbreak
\noindent\textbf{Keywords:} Gaussian free field, Belavin-Polyakov-Zamolodchikov equations, crossing probability \\ 
\noindent\textbf{MSC:} 60J67
\end{minipage}
\end{center}

\tableofcontents

\section{Introduction}
In the context of the multiple SLE, the Belavin-Polyakov-Zamolodchikov (BPZ) equations provide the analytical framework to ensure that the partition function $\LZ$ yields a local martingale under the Loewner flow. This requirement is the manifestation of the commutation relations for multi-path SLE, which assert that the joint law of a family of curves $(\gamma^1, \dots, \gamma^n)$ is consistent regardless of their sampling order. As established by Dub\'{e}dat~\cite{DubedatCommutationSLE}, these relations are fundamentally rooted in the combination of conformal invariance and the domain Markov property. 
The BPZ equations in the chordal setting are widely investigated~\cite{BauerBernardKytolaMultipleSLE, GrahamSLE, FloresKlebanPDE1, KytolaPeltolaPurePartitionFunctions, 
KytolaPeltolaConformalCovBoundaryCorrelation,zhang2025multiplechordalslekappaquantum} and their radial analogue also gets attention recently~\cite{KrusellWangWuCommutationRelation, zhang2025multipleradialslekappaquantum, HuangPeltolaWuMultiradialSLEResamplingBP}. 
For the annulus setup, the BPZ equations must incorporate a derivative with respect to the modulus, reflecting the geometric deformation of the doubly connected domain as the curves grow. 
In this article, we discuss BPZ equations in the annulus setup and relate them to the level lines of Gaussian free field (GFF).

\paragraph*{Annulus and its universal cover.} 
Fix $r>0$ and we define
\begin{align}\label{eqn::annulus_strip_def}
\A_r=\{z\in\C: \ee^{-r}<|z|<1\}, \qquad \S_r=\{z\in\C: 0<\Im{z}<r\}, \qquad q: z\mapsto \ee^{\ii z}. 
\end{align}
The annulus $\A_r$ is doubly connected. The infinite strip $\S_r=q^{-1}(\A_r)$ is the universal cover of $\A_r$ and it is simply connected.  
Fix $n\ge 1$, we define 
\[\LX_n=\{(\theta_1, \ldots, \theta_n): \theta_1<\cdots<\theta_n<\theta_1+2\pi\}.\]
For $\bs{\alpha}=(\alpha_1, \ldots, \alpha_n)\in\LX_n$, we write $\bs{x}=\ee^{\ii\bs{\alpha}}$ meaning that $\bs{x}=(x_1, \ldots, x_n)$ with $x_j=\ee^{\ii\alpha_j}$ for $1\le j\le n$. 
For $\bs{\beta}=(\beta_1, \ldots, \beta_n)\in\LX_n$, we write $\bs{y}=\ee^{\ii\bs{\beta}-r}$ meaning that $\bs{y}=(y_1, \ldots, y_n)$ with $y_j=\ee^{\ii\beta_j-r}$ for $1\le j\le n$. 
Note that $\bs{x}$ are marked points on the outer-boundary of $\A_r$ and $\bs{y}$ are marked points on the inner-boundary of $\A_r$. 
The preimages of $\bs{x}$ and of $\bs{y}$ are given by 
\[q^{-1}(\bs{x})=\cup_{\ell\in\Z}(\alpha_1+2\pi\ell, \ldots, \alpha_n+2\pi\ell), \qquad q^{-1}(\bs{y})=\cup_{\ell\in\Z}(\beta_1+2\pi\ell+\ii r, \ldots, \beta_n+2\pi\ell+\ii r).\]

\paragraph*{GFF in annulus.}
Fix an even number $n=2N$. For $\bs{\alpha}, \bs{\beta}\in\LX_n$, we write $\bs{x}=\ee^{\ii\bs{\alpha}}$ and $\bs{y}=\ee^{\ii\bs{\beta}-r}$.
We consider GFF in annulus $(\A_r; \bs{x}; \bs{y})$ with alternating boundary data:\footnote{with the convention that $x_{2N+j}=x_j$ and $y_{2N+j}=y_j$.} 
\begin{align}\label{eqn::boundarydata}
\pi \text{ on } \bigcup_{j=1}^N (x_{2j-1}x_{2j})\cup(y_{2j-1}y_{2j}), \qquad 0\text{ on }\bigcup_{j=1}^N (x_{2j}x_{2j+1})\cup(y_{2j}y_{2j+1}).
\end{align}
Let $\gamma^j$ be the level line of the field starting from $x_{j}$ for $1\le j\le n$ (see Theorem~\ref{thm::GFF_levellines_annulus} and details in Section~\ref{sec::GFF_levellines_annulus}) and denote $\bs{\gamma}=(\gamma^1, \ldots, \gamma^n)$. 
For each $\gamma^j$, it almost surely terminates in $\{x_{j+1}, x_{j+3}, \ldots, x_{j+2N-1}\}\cup \{y_j, y_{j+2}, \ldots, y_{j+2N-2}\}$. 
When $\gamma^j$ terminates in $\{y_j, y_{j+2}, \ldots, y_{j+2N-2}\}$, we say that it crosses the annulus. 
In this article, we focus on the case when all $\gamma^j$ cross the annulus and denote 
\begin{equation}\label{eqn::crossingevent_def}
\cross(\bs{\gamma})=\{\text{all }\gamma^1, \ldots, \gamma^n\text{ cross the annulus }\A_r\text{ and terminate in }\bs{y}\}.
\end{equation} 
On the event $\cross(\bs{\gamma})$, let us further check the location of the terminal points of $\bs{\gamma}$. As the level lines do not cross each other, there exists $m\in\{1, \ldots, N\}$ such that $\gamma^j$ terminates in $y_{2m+j}$ for all $1\le j\le n$, see Figure~\ref{fig::levellines_annulus_cover}. For each fixed $m$, the winding of the level lines can be different by a multiple of $2\pi$. 
On the event $\cross(\bs{\gamma})$, for $m\in\{1, \ldots, N\}$ and $\ell\in\Z$, we define\footnote{with the convention that $\beta_{2N+j}=\beta_j+2\pi$.} 
\begin{equation}\label{eqn::windingevent_def}
\{\wind(\bs{\gamma})=(m;\ell)\}=\{q^{-1}(\gamma^j)\text{ connects }\alpha_j \text{ to }\beta_{2m+j}+2\pi\ell+\ii r \text{ for all }1\le j\le n\}. 
\end{equation}

\begin{figure}[ht!]
\begin{subfigure}[t]{0.45\textwidth}
\begin{center}
\includegraphics[width=0.7\textwidth]{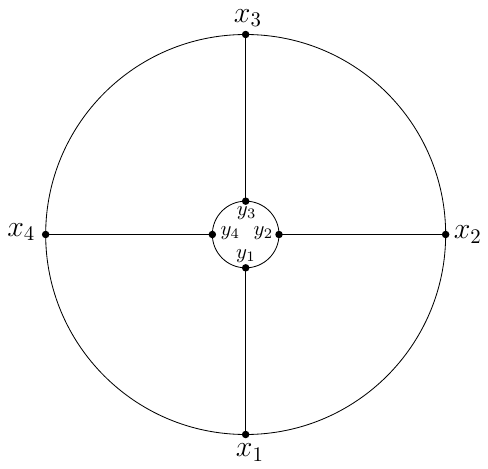}
\end{center}
\caption{$\gamma^j$ connects $x_j$ to $y_j$.}
\end{subfigure}
\begin{subfigure}[t]{0.45\textwidth}
\begin{center}
\includegraphics[width=0.7\textwidth]{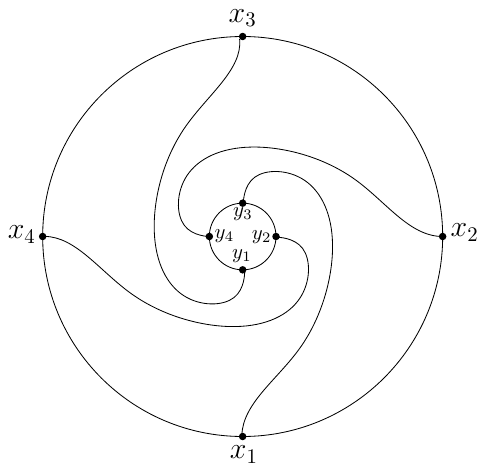}
\end{center}
\caption{$\gamma^j$ connects $x_j$ to $y_{j+2}$.} 
\end{subfigure}
\medbreak
\begin{subfigure}[t]{0.45\textwidth}
\begin{center}
\includegraphics[width=0.8\textwidth]{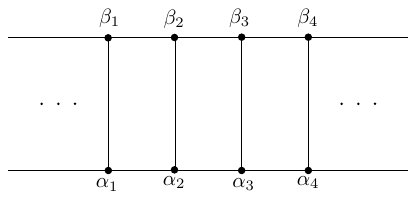}
\end{center}
\caption{$\wind(\bs{\gamma})=(0,0)$.}
\end{subfigure}
\begin{subfigure}[t]{0.45\textwidth}
\begin{center}
\includegraphics[width=0.8\textwidth]{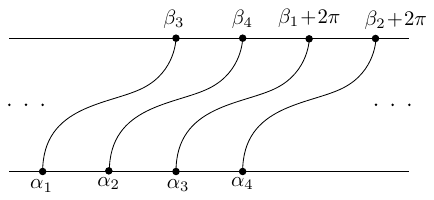}
\end{center}
\caption{$\wind(\bs{\gamma})=(1;0)$.}
\end{subfigure}

\caption{\label{fig::levellines_annulus_cover} 
Level lines crossing the annulus may have different winding.}
\end{figure}

We will derive the law of the level lines $\bs{\gamma}$ and calculate $\PP[\cross(\bs{\gamma}), \wind(\bs{\gamma})=(m;\ell)]$. 
We summarize our conclusion below.
\begin{itemize}
\item The law of the level lines and the probability of the crossing event are encoded by partition functions. 
We define two partition functions $\LZann{n}$ and $\LZcro{n}^{(\ell)}$ in Section~\ref{subsec::intro_pf_BPZ} using Jacobi theta functions. 
They are derived from the regularized Dirichlet energy, introduced by Dub\'edat~\cite{DubedatSLEFreeField}. 
We show that they satisfy annulus BPZ equations in Proposition~\ref{prop::annulusBPZ_kappa4}. Unlike the chordal case where the solution space is finite-dimensional, the annulus BPZ equations admit an infinite-dimensional space of solutions. Our explicit construction via regularized Dirichlet energy identifies the particular solutions relevant to the level lines.
\item We derive the law of the level lines in Theorem~\ref{thm::GFF_levellines_annulus}. 
Their law is encoded by the partition function $\LZann{n}$. 
The key idea in deriving the law of the level lines is to introduce an analytic lift of the harmonic observable. 
We calculate the probability $\PP[\cross(\bs{\gamma}), \wind(\bs{\gamma})=(m;\ell)]$ in Theorem~\ref{thm::crossingproba}. This probability is the ratio between $\LZcro{n}^{(\ell)}$ and $\LZann{n}$. We also derive the asymptotics of the partition functions, beyond the usual exponential decay in the modulus $r$ with the rate given by $\SLE$ arm exponents, our explicit formula further reveals a nontrivial polynomial correction $\sqrt r$ in the prefactor.
\end{itemize}

The analysis for level lines of GFF dates back to~\cite{DubedatSLEFreeField, SchrammSheffieldDiscreteGFF, SchrammSheffieldContinuumGFF}.
The properties of multiple level lines of GFF in simply connected domains are widely investigated, see e.g.~\cite{WangWuLevellinesGFFI,AruSepulvedaWernerBTLStwoDGFF}. In particular, explicit formulae for crossing probabilities of multiple level lines in polygons are derived in~\cite{PeltolaWuGlobalMultipleSLEs}. 
In the case of polygons, it is possible to derive all the crossing probabilities, because 
the number of variables is the same as the number of the corresponding chordal BPZ equations and 
we have complete understanding of the solution space to the chordal BPZ equations. 

GFF in multiply connected domains are studied in~\cite{HagendorfBauerBernardGFFDoubly,IzyurovKytolaHadamardSLEFreeField} and its level lines are also analyzed in~\cite{AruSepulvedaWernerBTLStwoDGFF,AruLupuSepulvedaFPSGFF}. 
Certain hitting probabilities of level lines in doubly connected domains are calculated in~\cite{HagendorfBauerBernardGFFDoubly}. 
In general multiply connected domains, it is usually hard to get explicit formula for crossing probabilities, because the number of variables is more than the number of the corresponding BPZ equations. Nevertheless, we find it possible to derive an explicit formula for the crossing probability $\PP[\cross(\bs{\gamma}), \wind(\bs{\gamma})=(m;\ell)]$ in Theorem~\ref{thm::crossingproba}.

\subsection{Partition functions and BPZ equations in annulus}
\label{subsec::intro_pf_BPZ}

\paragraph*{Jacobi theta functions.}
Jacobi theta functions are a family of special functions of two variables $(t, z)$ with $t\in \HH$ and $z\in\C$. This family has four functions, typically denoted by $\vartheta_1, \vartheta_2, \vartheta_3, \vartheta_4$. 
They are fundamental solutions to the heat equation with quasi-periodicity and polynomial growth.  
The Jacobi theta function appears as the one-loop partition function of a free boson compactified on a circle, which is one of the simplest conformal field theories. For the annulus modulus parameter $r > 0$ and a complex variable $z \in \mathbb{C}$, we define the rescaled Jacobi theta functions $\Theta_1$ and $\Theta_3$ by
\begin{align}\label{eqn::JacobiTheta_productexpansion}
\begin{split}
\Theta_1(r;z)=& 2 \ee^{-r/4} \sin\left(\frac{z}{2}\right) \prod_{m=1}^\infty \left(1 - \ee^{-2mr}\right) \left(1 - \ee^{-2mr}\ee^{\ii z}\right) \left(1 - \ee^{-2mr}\ee^{-\ii z}\right);\\
\Theta_3(r;z)=& \prod_{m=1}^\infty \left(1 - \ee^{-2mr}\right) \left(1 - \ee^{-(2m-1)r}\ee^{\ii z}\right) \left(1 - \ee^{-(2m-1)r}\ee^{-\ii z}\right). 
\end{split}
\end{align}
They are the building blocks of the partition functions for GFF in annulus. 

\paragraph*{Partition functions in annulus.}
Fix $n\ge 1$ and $\ell\in\Z$. For $r>0$ and $\bs{\alpha}=(\alpha_1, \ldots, \alpha_n)\in\LX_n$ and $\bs{\beta}=(\beta_1, \ldots, \beta_n)\in\LX_n$, define
\begin{align}\label{eqn::LZann_def}
	\LZann{n}(r; \bs{\alpha}, \bs{\beta})=&\exp\left(\frac{nr}{8}-\frac{1}{8r}\left(\sum_{j=1}^n(-1)^j(\alpha_j-\beta_j)\right)^2\right)\prod_{m=1}^{\infty}(1-\ee^{-2mr})^{\frac{3n}{2}}\\
	&\times \prod_{j=1}^n|\Theta_3(r; \beta_j-\alpha_j)|^{-\frac{1}{2}}\times\prod_{1\le i<k\le n}\frac{|\Theta_1(r; \alpha_k-\alpha_i)|^{\frac{1}{2}(-1)^{k-i}}|\Theta_1(r; \beta_k-\beta_i)|^{\frac{1}{2}(-1)^{k-i}}}{|\Theta_3(r; \beta_k-\alpha_i)|^{\frac{1}{2}(-1)^{k-i}}|\Theta_3(r; \beta_i-\alpha_k)|^{\frac{1}{2}(-1)^{k-i}}};\notag\\
	\label{eqn::LZcroell_def}
		\LZcro{n}^{(\ell)}(r; \bs{\alpha}, \bs{\beta})=&\exp\left(\frac{nr}{8}-\frac{1}{8r}\left(\sum_{j=1}^n(\alpha_j-\beta_j)-2n\pi\ell\right)^2\right)\times \prod_{m=1}^{\infty}(1-\ee^{-2mr})^{\frac{3n}{2}}\\
		&\times\prod_{j=1}^n|\Theta_3(r; \beta_j-\alpha_j)|^{-\frac{1}{2}}\times\prod_{1\le i<k\le n}\frac{|\Theta_1(r; \alpha_k-\alpha_i)|^{\frac{1}{2}} |\Theta_1(r; \beta_k-\beta_i)|^{\frac{1}{2}}}{|\Theta_3(r; \beta_k-\alpha_i)|^{\frac{1}{2}}|\Theta_3(r; \beta_i-\alpha_k)|^{\frac{1}{2}}}.\notag
\end{align}
where $\Theta_1, \Theta_3$ are rescaled Jacobi theta functions defined in~\eqref{eqn::JacobiTheta_productexpansion}. 
We also define
\begin{align}\label{eqn::LZcro_def}
\LZcro{n}(r; \bs{\alpha}, \bs{\beta})=\sum_{\ell\in\Z}\LZcro{n}^{(\ell)}(r; \bs{\alpha}, \bs{\beta}). 
\end{align}

\begin{proposition}[Annulus BPZ]
\label{prop::annulusBPZ_kappa4}
Recall that $\Theta_1, \Theta_3$ are rescaled Jacobi theta functions in~\eqref{eqn::JacobiTheta_productexpansion}. Define
\begin{align} \label{eqn::H1H3_def}
	H_1(r;z) := 2 \partial_z \log \Theta_1(r;z), \qquad
	H_3(r;z) := 2 \partial_z \log \Theta_3(r;z).
\end{align}
The partition functions $\LZann{n}$ and $\LZcro{n}^{(\ell)}$ in~\eqref{eqn::LZann_def} and~\eqref{eqn::LZcroell_def} satisfy the following system of annulus BPZ equations: for all $j\in\{1, \ldots, n\}$, 
\begin{align} \label{eqn::annulus_BPZ_kappa4_alpha}
	\begin{split}
		\frac{\partial_r \LZ}{\LZ}=2\frac{\partial_{\alpha_j}^2 \LZ}{\LZ} & +   \sum_{\ell\neq j} \left( H_1(r; \alpha_\ell-\alpha_j) \frac{\partial_{\alpha_\ell} \LZ}{\LZ} + \frac{1}{4} \partial_z H_1(r; \alpha_\ell-\alpha_j) \right) \\
		& +  \sum_{\ell=1}^n \left( H_3(r; \beta_\ell-\alpha_j) \frac{\partial_{\beta_\ell} \LZ}{\LZ} + \frac{1}{4} \partial_z H_3(r; \beta_\ell-\alpha_j) \right) + \frac{1}{2r} + \frac{3}{4} \LE(r) + \frac{n}{4}; 
	\end{split} \\
 \label{eqn::annulus_BPZ_kappa4_beta}
	\begin{split}
		\frac{\partial_r \LZ}{\LZ}= 2\frac{\partial_{\beta_j}^2 \LZ}{\LZ} &+  \sum_{\ell\neq j} \left( H_1(r; \beta_\ell-\beta_j) \frac{\partial_{\beta_\ell} \LZ}{\LZ} + \frac{1}{4} \partial_z H_1(r; \beta_\ell-\beta_j) \right) \\
		& +  \sum_{\ell=1}^n \left( H_3(r; \alpha_\ell-\beta_j) \frac{\partial_{\alpha_\ell} \LZ}{\LZ} + \frac{1}{4} \partial_z H_3(r; \alpha_\ell-\beta_j) \right) + \frac{1}{2r} + \frac{3}{4} \LE(r) + \frac{n}{4}; 
	\end{split}
\end{align}
where \begin{equation} \label{eqn::Def_LE}
	\LE(r) := -\frac{1}{6} + \sum_{k=1}^{\infty} \frac{1}{\sinh^2(kr)}.
\end{equation}
\end{proposition}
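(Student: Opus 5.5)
The plan is to verify the equations directly, treating $\LZann{n}$ and $\LZcro{n}^{(\ell)}$ as Coulomb-gas-type products. Assign to the points $\alpha_j$ and $\beta_j$ the charges $\sigma_j,\tau_j\in\{\pm\tfrac12\}$. For $\LZann{n}$ take $\sigma_j=\tau_j=\tfrac12(-1)^j$. For $\LZcro{n}^{(\ell)}$ take $\sigma_j=\tfrac12$ and $\tau_j=-\tfrac12$.

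With these charges both functions take the common form
\[
\LZ=\ee^{nr/8}\,G(r)\,\exp\!\Big(-\tfrac{1}{2r}\Lambda^2\Big)\prod_{i<k}|\Theta_1(\alpha_k-\alpha_i)|^{2\sigma_i\sigma_k}|\Theta_1(\beta_k-\beta_i)|^{2\tau_i\tau_k}\prod_{i,k}|\Theta_3(\beta_k-\alpha_i)|^{2\sigma_i\tau_k},
\]
where $G(r)=\prod_m(1-\ee^{-2mr})^{3n/2}$. The linear form is $\Lambda=\sum_j(\sigma_j\alpha_j+\tau_j\beta_j)$, shifted by the constant $-n\pi\ell$ in the crossing case. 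One should double-check that $\Lambda^2/(2r)$ matches the stated $\frac1{8r}(\cdots)^2$, and that the charge assignment reproduces the sign exponents $\tfrac12(-1)^{k-i}$ together with the $-\tfrac12$ powers of $\Theta_3(\beta_j-\alpha_j)$.

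Since every term is a product, I would compute $\partial_{\alpha_j}\log\LZ$, $\partial_{\alpha_j}^2\log\LZ$ and $\partial_r\log\LZ$ explicitly. Then I would use
\[
\frac{\partial^2_{\alpha_j}\LZ}{\LZ}=\partial^2_{\alpha_j}\log\LZ+(\partial_{\alpha_j}\log\LZ)^2 .
\]
The equation for $\beta_j$ is the same computation with the roles of the two boundaries exchanged; $\Theta_3$ is even in $z$, so it is symmetric.

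Three analytic inputs are needed.

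First, the heat equations. Up to normalization, $\Theta_1$ and $\Theta_3$ are $\vartheta_1(z/2\,|\,\tau)$ and $\vartheta_4(z/2\,|\,\tau)$ with $\tau=\ii r/\pi$. From this, $\partial_r\log\Theta_{1,3}=2\partial_z^2\Theta_{1,3}/\Theta_{1,3}+c_{1,3}(r)$. The explicit constants come from the $\ee^{-r/4}$ prefactor and the $\prod(1-\ee^{-2mr})$ factor, and I expect them to be expressible through $\LE(r)$. Indeed $\LE$ is essentially $-\tfrac16E_2$, i.e. a multiple of $\partial_r\log\eta$.

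Second, the relation to the Weierstrass $\zeta$ function for the lattice $2\pi\Z+2\ii r\Z$. One has $H_1(z)=\zeta(z)-\eta_1 z/\pi$ and $H_3(z)=H_1(z+\ii r)+\mathrm{const}$, and $\partial_z H_{1,3}$ equals $-\wp$ up to constants.

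Third, the addition identity: for $u+v+w=0$,
\[
(\zeta(u)+\zeta(v)+\zeta(w))^2=\wp(u)+\wp(v)+\wp(w).
\]
This is to be applied to triples of points, with the half-period shift $\ii r$ whenever a $\beta$ is involved.

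The computation then proceeds in three steps. Expanding $(\partial_{\alpha_j}\log\LZ)^2$ gives diagonal terms $\sigma_j^2\sigma_k^2H^2$. Using $\sigma_j^2=\tfrac14$ together with the heat equation, these combine with $\partial_{\alpha_j}^2\log\LZ$ and $\partial_r$ of the $\Theta$-factors. The cross terms $H(\cdot-\alpha_j)H(\cdot-\alpha_j)$, with $j$ fixed, are what the drift terms $H\,\partial\LZ/\LZ$ produce in addition to the pair terms not involving $j$. These match through the three-term identity, which turns products of $H$'s into $\partial_zH$'s plus constants. This is the standard mechanism by which Coulomb-gas products with $\sigma^2=\tfrac14$ at $\kappa=4$ solve the level-two null-vector equations.

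The linear pieces $\eta_1 z/\pi$ separating $H$ from $\zeta$ generate extra terms. I expect these to be absorbed exactly by the Gaussian factor. Its $\alpha_j$-derivative is $-\sigma_j\Lambda/r$, and its second derivative contributes $-\sigma_j^2/r$. Its $r$-derivative contributes $\Lambda^2/(2r^2)$ and should produce the $\tfrac1{2r}$ constant. The key check is that the charge-neutrality condition $\sum_j\sigma_j+\sum_j\tau_j=0$ holds for both choices; it does, and it kills the leftover terms that are linear in $\Lambda$. The constant $\ell$-shift is harmless because only $\Lambda$ enters.

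The main obstacle will be the bookkeeping of constants. This means matching $\frac34\LE(r)+\frac n4+\frac1{2r}$ against three sources: the heat-equation constants $c_{1,3}$, the $\partial_r$ of $\ee^{nr/8}G(r)$, and the quasi-periodicity constants $\eta_1$ (related to $\LE$ via $\eta_1\propto E_2$). A further point is making sure the half-period shift in $H_3$ introduces no stray $\ii$-terms. Since $|\Theta_3|$ appears with real arguments, $\Theta_3$ is real and positive there, so the absolute values can be dropped. To settle the constants I would first verify them in the simplest case $n=1$ by expanding in $q=\ee^{-r}$ to low order, and then carry the same constants over to general $n$.
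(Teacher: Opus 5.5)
\textbf{Verdict.} Your route is correct in substance and genuinely different from the paper's, but four details need fixing before it goes through (listed at the end).

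\textbf{How the two proofs differ.} The paper never splits the computation into local identities. It first peels off the Gaussian (which gives the $\frac1{2r}$) and the $r$-prefactor (which gives $\frac{3n}{4}\LE(r)+\frac n4$). It then treats the whole residual $G_\eps$ as a meromorphic function of all $2n$ complexified variables. It checks case by case (Appendix B) that $G_\eps$ is $2\pi$- and $2\ii r$-periodic in every variable and that all poles cancel. Liouville then makes $G_\eps$ constant, and the value $-\frac34(n-1)\LE(r)$ is read off from a degeneration.

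You instead invoke Liouville only once, through the Frobenius--Stickelberger identity, and the rest is algebra. With $H_1=2\zeta-\frac{2\eta_1}{\pi}z$ and $u+v+w=0$ it reads
\[
\bigl(H_1(u)+H_1(v)+H_1(w)\bigr)^2=-2\bigl(H_1'(u)+H_1'(v)+H_1'(w)\bigr)+6\LE(r).
\]
Place the $\beta$'s at $\beta_k+\ii r$. Then the same identity holds with $H_3$ for every $\alpha$--$\beta$ difference, because the $\mp\ii$ in $H_3(z\pm\ii r)=H_1(z)\mp\ii$ cancel around any triangle. In particular the linear parts of $H$ never need to be absorbed by the Gaussian.

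Now take charges $s_a\in\{\pm\frac12\}$ on all $2n$ points.
\begin{itemize}
\item For each pair $\{b,c\}\not\ni j$, the cross terms from $2(\partial_j\log)^2$ and from the drift collapse via the identity.
\item The pure $\{b,c\}$ pieces then match $\partial_r\log$ through the heat equation.
\item Neutrality $\sum_{c\ne j}s_c=-s_j$, together with $s^2=\frac14$, makes the $(j,b)$ pieces match as well.
\item What remains is $3\LE(r)\sum_{\{b,c\}\not\ni j}s_bs_c=3\LE(r)\cdot\frac{1-n}{4}$, which is exactly the paper's $G_\eps$.
\end{itemize}
This argument works uniformly for every neutral charge assignment and gives the $\beta_j$-equations at the same time. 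It also explains where the constant comes from. The paper's route, by contrast, needs only the shift identities and the Laurent expansion, at the cost of long case checks.

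\textbf{Corrections you need.}
\begin{enumerate}
\item Your charges for $\LZann{n}$ are wrong. You need $\tau_j=-\sigma_j$ in both cases, i.e. $\sigma_j=\eps_j/2$ and $\tau_j=-\eps_j/2$. With your choice $\tau_j=\sigma_j$ you get a different function:
\begin{itemize}
\item $|\Theta_3(\beta_k-\alpha_i)|$ gets exponent $+\frac12(-1)^{k-i}$ instead of $-\frac12(-1)^{k-i}$;
\item $\Theta_3(\beta_j-\alpha_j)$ gets power $+\frac12$ instead of $-\frac12$;
\item $\Lambda$ involves $\alpha_j+\beta_j$ instead of $\alpha_j-\beta_j$.
\end{itemize}
\item Your normalizations are off. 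The heat equations are exact, $\partial_r\Theta_a=\partial_z^2\Theta_a$, so $\partial_r\log\Theta_a=\frac12H_a'+\frac14H_a^2$, with no factor $2$ and no $c_a(r)$. Also $H_1=2\zeta-\frac{2\eta_1}{\pi}z$, not $\zeta-\frac{\eta_1}{\pi}z$.
\item You attribute the Gaussian cancellations to the wrong mechanisms.
\begin{itemize}
\item The terms linear in $\Lambda$ cancel by oddness of $H$ together with $4\sigma_j^2=1$; neutrality plays no role there.
\item The term $\Lambda^2/(2r^2)$ coming from $\partial_r$ cancels against $2(\partial_{\alpha_j}\log)^2$.
\item The $\frac1{2r}$ comes from $2\partial_{\alpha_j}^2$ of the Gaussian, i.e. from $-2\sigma_j^2/r$.
\end{itemize}
Neutrality is needed only in the diagonal collapse described above.
\item The $n=1$ check cannot fix the constant. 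With one $\alpha$ and one $\beta$ there is no triple, so the three-term identity is never used. You must pin down $\eta_1$ from $H_1(z)=\frac2z+\LE(r)z+O(z^3)$, which gives $\frac{2\eta_1}{\pi}=-\LE(r)$.
\end{enumerate}
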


The function $\LE(r)$ is intrinsically connected to the theory of modular forms. Specifically, it is a scaling of the Eisenstein series $E_2$ of weight $2$: 
\begin{equation*}
	\LE(r) = -\frac{1}{6} E_2\left( \frac{\ii r}{\pi} \right).
\end{equation*}
The significance of $E_2$ in the context of annulus SLE and GFF lies in its status as a \textit{quasi-modular} form. Unlike modular forms of higher weight, $E_2$ does not transform purely under the modular inversion $\tau \mapsto -1/\tau$, but picks up an additive term: $E_2(-1/\tau) = \tau^2 E_2(\tau) + \frac{6\tau}{\pi \ii}$. 
In the language of Conformal Field Theory (CFT), this non-modular behavior is a manifestation of the conformal anomaly (or central charge). In our setting, $\LE(r)$ governs the ``drift" or the interaction between the two boundaries of the annulus $\A_r$. As the modulus $r$ tends to infinity, $\LE(r)$ approaches $-1/6$, which corresponds to the limit where the inner boundary of the annulus shrinks to a point, recovering the behavior of radial SLE in the unit disc. The appearance of $\LE(r)$ in the differential equation for the Brownian loop measure (Lemma~\ref{lem::mt_blm_annulus_diff}) is thus consistent with the fact that the partition function of the free boson on a Riemann surface is expressed in terms of these modular-type objects.

Annulus BPZ equations are also considered in the setup of conformal field theory~\cite{ByunKangTakCFTannulusSLE,AlbertsByunKangCFTannulusGFF}. 
The general version of~\eqref{eqn::annulus_BPZ_kappa4_alpha}-\eqref{eqn::annulus_BPZ_kappa4_beta} with $\kappa>0$ will be given in~\eqref{eqn::annulus_BPZ_alpha}-\eqref{eqn::annulus_BPZ_beta} in Section~\ref{sec::multiannulus_SLE}.  
These equations with $\kappa>0$ and $n=1$ appeared in~\cite{lawlerDefiningSLEwithBrownianLoop,ZhanReversibilityWholeplaneSLE}. 
Moreover, we introduce multi-time martingale and show in Proposition~\ref{prop::multitime_mart_annulus} that any solution to the annulus BPZ equations gives a mutli-time martingale in the annulus setup. 
Multi-time martingales in the chordal setting and in the radial setting appeared in earlier literature~\cite{HealeyLawlerNSidedRadialSLE, HuangWuYangMultipleSLEsDysonBM, FengWuYangIsing, HuangPeltolaWuMultiradialSLEResamplingBP}. We extend this idea in the annulus setting. 
In all three settings, one starts from independent single-curve SLEs, inserts the Brownian loop correction and the boundary conformal covariance factors, and obtains a multi-time local martingale exactly when the partition function satisfies the corresponding BPZ system. In the annulus, the difference is that the calculation has to keep track of both boundary components and the moving modulus. 

We will prove Proposition~\ref{prop::annulusBPZ_kappa4} in Section~\ref{subsec::annulusBPZ}. Let us point out the main difficulty comparing to earlier works. 
When $\kappa=4$,  one often has explicit formulae for the partition functions; in the chordal case these formulae are algebraic in the marked points~\cite{PeltolaWuGlobalMultipleSLEs}, while in the radial case they are expressed through trigonometric and hyperbolic functions~\cite{KrusellWangWuCommutationRelation}. The BPZ verification in these settings is therefore mostly reduced to elementary algebraic manipulations. Our annulus verification for $\kappa=4$ is different. In the annulus case, the formulae involve rescaled Jacobi theta functions, making a direct verification much less transparent. The annulus and its covering space carry two periods, and this suggests a different way to handle the complicated identities. After applying the BPZ operator, the remaining identity is reduced to a residual term $G_{\eps}$ (see~\eqref{eqn::LZannLZcro_BPZ_Geps_goal}); we view $G_{\eps}$ as a meromorphic function of the complexified variables $(\bs{\alpha},\bs{\beta})$. The shift identities give double periodicity, and Laurent expansions show that all possible poles cancel. Hence $G_{\eps}$ is constant by Liouville's theorem on the compact complex torus, and the constant is obtained from degeneration $\alpha_j=x_j\delta$, $\beta_j=y_j\delta$ as $\delta\to0$.
Note that the dimension of the solution space of the chordal BPZ equations (with mild extra requirements) is Catalan number. However, the dimension of the solution space of the annulus BPZ is infinite, we have at least solutions $\LZann{n}$ and $\LZcro{n}^{(\ell)}$ with $\ell\in\Z$, see other solutions in Remark~\ref{rem::annulus_otherpatterns}.

\subsection{GFF in annulus and crossing probability}

\begin{theorem}[Crossing probability]
\label{thm::crossingproba}
Fix $r>0$ and an even number $n=2N$. 
For $\bs{\alpha}, \bs{\beta}\in\LX_n$, we write $\bs{x}=\ee^{\ii\bs{\alpha}}$ and $\bs{y}=\ee^{\ii\bs{\beta}-r}$. 
Let $\varphiann{n}$ be the bounded harmonic function in annulus $\A_r$ with alternating boundary data~\eqref{eqn::boundarydata}.
Suppose $\Gamma$ is Dirichlet GFF in annulus $\A_r$. 
Let $\gamma^j$ be the level line of $\Gamma+\varphiann{n}$ starting from $x_{j}$ for $1\le j\le n$ and denote $\bs{\gamma}=(\gamma^1, \ldots, \gamma^n)$.
We define $\cross(\bs{\gamma})$ and $\wind(\bs{\gamma})$ as in~\eqref{eqn::crossingevent_def} and~\eqref{eqn::windingevent_def}. 
We have 
\begin{align}\label{eqn::cross_wind_proba}
&\PP[\cross(\bs{\gamma}), \wind(\bs{\gamma})=(m;\ell)]=\frac{\LZcro{n}^{(\ell)}(r; \bs{\alpha}, \beta_{2m+1}, \ldots, \beta_{2m+n})}{\LZann{n}(r; \bs{\alpha}, \bs{\beta})}, \quad \text{for $m\in\{1, \ldots, N\}$ and $\ell\in\Z$};\\
&\PP[\cross(\bs{\gamma})]=\sum_{m=1}^N\frac{\LZcro{n}(r; \bs{\alpha}, \beta_{2m+1}, \ldots, \beta_{2m+n})}{\LZann{n}(r; \bs{\alpha}, \bs{\beta})}.\label{eqn::cross_proba}
\end{align}
where $\LZann{n}, \LZcro{n}^{(\ell)}, \LZcro{n}$ are partition functions defined in~\eqref{eqn::LZann_def}-\eqref{eqn::LZcro_def}. 
\end{theorem}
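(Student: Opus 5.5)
We prove the formula with a martingale argument driven by the BPZ system of Proposition~\ref{prop::annulusBPZ_kappa4}.

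\textbf{Setup.} Grow the level lines $\bs{\gamma}$ from the outer points $\bs{x}$ using Theorem~\ref{thm::GFF_levellines_annulus}. Their joint law is that of multiple annulus $\SLE_4$ with partition function $\LZann{n}$: in a common multi-time parametrization, the driving functions evolve with drift $\partial_{\alpha_j}\log\LZann{n}$, with the annulus modulus $r_t$ decreasing.

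\textbf{Candidate martingale.} For fixed $(m;\ell)$ consider
\[
M_t=\frac{\LZcro{n}^{(\ell)}(r_t;\bs{\alpha}_t,\beta_{2m+1,t},\ldots,\beta_{2m+n,t})}{\LZann{n}(r_t;\bs{\alpha}_t,\bs{\beta}_t)}.
\]
The variables are read in the covering strip, so the winding label $\ell$ is tracked continuously. Both numerator and denominator solve the same BPZ system. Proposition~\ref{prop::multitime_mart_annulus} says each solution times the Brownian loop and covariance factors is a multi-time local martingale for independent $\SLE_4$'s. The ratio is therefore a local martingale under the law weighted by $\LZann{n}$, which is the law of $\bs{\gamma}$. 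This is the usual change-of-measure argument: the common factors cancel in the ratio.

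\textbf{Boundedness.} $M_t\le 1$ up to constants, because $\LZcro{n}^{(\ell)}\le \LZann{n}$ after matching the shifted $\bs\beta$. Comparing the two definitions, the $\Theta_1$ factors carry exponent $\tfrac12$ against $\tfrac12(-1)^{k-i}$, and the Gaussian prefactor differs. To get uniform integrability without proving a pointwise inequality, I would instead use the upper bound $\sum_{m,\ell}M_t\le 1$. This comes from the Gaussian series identity for theta functions: the sum over $\ell$ of the Gaussian factors is a theta series. If that identity fails, I would fall back on the explicit bounds and asymptotics of the partition functions developed in the paper.

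\textbf{Terminal behavior.} This is the main obstacle. As the curves are grown to completion, $M_t$ must converge to $\one\{\cross(\bs{\gamma}),\wind(\bs{\gamma})=(m;\ell)\}$. Several limits have to be analyzed.
\begin{itemize}
\item When two adjacent outer curves merge, i.e.\ $\gamma^j$ terminates at $x_{j\pm1}$, the factor $|\Theta_1(\alpha_{j+1}-\alpha_j)|^{1/2}$ in the numerator vanishes. In the denominator the corresponding exponent is $-\tfrac12$, so $M_t\to0$.
\item When $\gamma^j$ approaches $y_{2m+j}$ with the correct winding, the $\Theta_3(\beta-\alpha)^{-1/2}$ singularities match between numerator and denominator, giving ratio $\to1$ after the boundary-behavior computation.
\item When $\gamma^j$ hits a $\bs{y}$-point with a mismatched index or winding, the Gaussian factor $\exp(-(\cdots-2n\pi\ell)^2/8r)$ compared with the denominator gives decay. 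As the modulus degenerates, the ratio then tends to $0$.
\end{itemize}
Making these limits rigorous requires controlling the curves near the inner boundary as $r_t\to0$. Here I would use the modular transformation of $\Theta_1,\Theta_3$ to rewrite both partition functions in the dual nome $\ee^{-\pi^2/r}$, where the leading terms become explicit and the $\sqrt r$ correction from the Gaussian prefactor appears.

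\textbf{Conclusion.} Optional stopping then gives~\eqref{eqn::cross_wind_proba} as $M_0$. Summing over $\ell$ and $m$ gives~\eqref{eqn::cross_proba}.
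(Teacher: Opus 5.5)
Your choice of martingale is right: the ratio $\LZcro{n}^{(\ell)}/\LZann{n}$ is a local martingale for the level lines because both partition functions solve the BPZ system. The gap is in the terminal-value step, which carries the whole proof, and the bullet supporting it is false.

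When a single line, say $\gamma^1$, completes the prescribed crossing, the annulus degenerates ($r-t\to0$) and the ratio does \emph{not} tend to $1$. After the modular transformation of Lemma~\ref{lem::JacobiTheta_technical}, it factors as in~\eqref{eqn::crossingproba_aux2}. Its limit is $\LZ_{\rainbow}^{(4)}/\LZ_{2n-2}$ on the slit polygon $(\A_r\setminus\gamma^1;x_2,\ldots,x_n,y_n,\ldots,y_2)$, which for $n\ge4$ is in general strictly less than $1$.

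The missing idea is to stop at this point. By the domain Markov property of the GFF, the remaining lines are level lines in this simply connected polygon. Lemma~\ref{lem::PW19_rainbow_connection} (Peltola--Wu) then identifies the limit as the conditional probability of the rainbow connection, and optional stopping for the bounded martingale gives the formula.

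Growing all curves ``to completion'' does not work inside your framework. Once the first line crosses, the modulus is $0$, and neither Theorem~\ref{thm::GFF_levellines_annulus} nor the annulus BPZ system describes the evolution any longer. You would have to build a new polygon martingale and essentially reprove the Peltola--Wu connection probability. A path along which all tips degenerate simultaneously would instead need a joint limit analysis, which you do not provide.

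The mechanisms you propose for the individual limits are also not the right ones.
\begin{itemize}
\item \emph{Mismatched index or winding.} The Gaussian prefactors cannot produce the decay. After the modular transformation they cancel exactly against the $\ee^{-z^2/4r}$ factors of the theta functions, so~\eqref{eqn::LZcro_LZann_mono_aux3} contains no Gaussian term at all. The decay comes from the Poisson-kernel cross-ratios, e.g. $S_{2,2j-1,t}\le\exp\bigl(\pi(\Re\mathfrak{\covmap}_t(\beta_{2j-1}+\ii r)-\mathfrak{\covmap}_t(\alpha_2))/(r-t)\bigr)$.
\item \emph{Geometric input.} These bounds must be combined with the estimates of Lemma~\ref{lem::RNcontrol_aux}. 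That lemma uses harmonic-measure and Beurling bounds to show that the lifted marked points on the far side separate from the tip on scales $\gg r-t$, while staying within $O(r-t)$ of each other. The same estimates make the $F$-, $G$- and $L$-factors tend to $1$ in the correct-crossing case. Nothing in your proposal supplies them.
\item \emph{Boundedness.} You need exactly the pointwise bound $\LZcro{n}^{(\ell)}\le\LZann{n}$, which is Proposition~\ref{prop::LZcro_LZann_mono}. You also need the invariance of $\LZann{n}$ under the even cyclic relabelling $\bs\beta\mapsto(\beta_{2m+1},\ldots,\beta_{2m+n})$. Your fallback $\sum_{m,\ell}M_t\le1$ is not a theta-function identity; it is a consequence of the theorem itself, so using it would be circular.
\end{itemize}
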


\begin{proposition}[Asymptotic]
\label{prop::asymptotics}
For $\bs{\theta}=(\theta_1, \ldots, \theta_n)\in\LX_n$, define 
\begin{align}\label{eqn::LZunitdisc_def}
\LZ_n(\bs{\theta})=\prod_{1\le i<j\le n}\left|\ee^{\ii\theta_i}-\ee^{\ii\theta_j}\right|^{\frac{1}{2}(-1)^{j-i}}, \qquad
\LZradial{n}(\bs{\theta})=\prod_{1\le i<j\le n}\left|\ee^{\ii\theta_i}-\ee^{\ii\theta_j}\right|^{\frac{1}{2}}. 
\end{align}
Assume the same setup as in Theorem~\ref{thm::crossingproba}. The crossing probabilities have the following asymptotic as $r\to\infty$: 
\begin{align}\label{eqn::crossproba_ell_asymp}
&\PP[\cross(\bs{\gamma}), \wind(\bs{\gamma})=(m;\ell)]=\sqrt{2}^{n^2}\frac{\LZradial{n}(\bs{\alpha})\LZradial{n}(\bs{\beta})}{\LZ_n(\bs{\alpha})\LZ_n(\bs{\beta})}\exp\left(-\frac{n^2r}{8}-\frac{1}{8r}\LA(\bs{\alpha}, \bs{\beta}; m; \ell)+O(1)\ee^{-r} \right);\\
&\PP[\cross(\bs{\gamma})]=\sqrt{2}^{n^2}\frac{\LZradial{n}(\bs{\alpha})\LZradial{n}(\bs{\beta})}{\sqrt{2\pi}\LZ_n(\bs{\alpha})\LZ_n(\bs{\beta})}\exp\left(-\frac{n^2r}{8}+\frac{\log r}{2}+\frac{1}{8r}\left(\sum_{j=1}^n(-1)^j(\alpha_j-\beta_j)\right)^2+O(1)\ee^{-r/n^2}\right);\label{eqn::crossproba_asymp}
\end{align}
where 
\begin{align*}
\LA(\bs{\alpha}, \bs{\beta}; m; \ell)=\left(\sum_{j=1}^{n}(\alpha_j-\beta_{j})-2m\pi-2n\pi\ell\right)^2-\left(\sum_{j=1}^{n}(-1)^j(\alpha_j-\beta_j)\right)^2. 
\end{align*}
\end{proposition}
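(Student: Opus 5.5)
The plan is to start from the exact formulas in Theorem~\ref{thm::crossingproba} and expand both partition functions as $r\to\infty$ using the product expansions~\eqref{eqn::JacobiTheta_productexpansion}. For fixed $z$ with $\Im z=0$ and $z\notin 2\pi\Z$ we have
\[
\Theta_1(r;z)=2\ee^{-r/4}\sin(z/2)\,\bigl(1+O(\ee^{-2r})\bigr),\qquad \Theta_3(r;z)=1+O(\ee^{-r}),\qquad \prod_{m\ge1}(1-\ee^{-2mr})=1+O(\ee^{-2r}),
\]
uniformly for the fixed configurations $\bs{\alpha},\bs{\beta}$. Note also that $|2\sin((\theta_k-\theta_i)/2)|=|\ee^{\ii\theta_k}-\ee^{\ii\theta_i}|$. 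Hence in the ratio $\LZcro{n}^{(\ell)}/\LZann{n}$ the following simplifications occur. The common factors $\ee^{nr/8}$ and $\prod_m(1-\ee^{-2mr})^{3n/2}$ cancel exactly. Every $\Theta_3$-factor is $1+O(\ee^{-r})$, which only contributes to the error term. The $\Theta_1$-factors produce the chord lengths together with powers of $\ee^{-r/4}$ and of $2$.

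Step 1 is to count these exponents. In $\LZcro{n}^{(\ell)}$, each of the $n(n-1)/2$ pairs in $\bs{\alpha}$ and in $\bs{\beta}$ carries exponent $\tfrac12$. This gives $(2\ee^{-r/4})^{n(n-1)/2}\,\LZradial{n}(\bs{\alpha})\LZradial{n}(\bs{\beta}')$, where $\bs{\beta}'=(\beta_{2m+1},\ldots,\beta_{2m+n})$. In $\LZann{n}$ the exponents are $\tfrac12(-1)^{k-i}$. With $n=2N$ there are $N^2$ pairs with $k-i$ odd and $N(N-1)$ pairs with $k-i$ even, so $\sum_{i<k}(-1)^{k-i}=-N$. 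This gives $(2\ee^{-r/4})^{-N}\LZ_n(\bs{\alpha})\LZ_n(\bs{\beta})$. The quotient is therefore
\[
(2\ee^{-r/4})^{n(n-1)/2+n/2}=\sqrt2^{\,n^2}\ee^{-n^2r/8}.
\]
Moreover $\LZradial{n}(\bs{\beta}')=\LZradial{n}(\bs{\beta})$, because $\LZradial{n}$ is a symmetric product over unordered pairs of the points $\ee^{\ii\beta_j}$, and the $2\pi$-shifts are invisible there.

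Step 2 handles the Gaussian factors. The numerator contributes $\exp\bigl(-\tfrac1{8r}(\sum_j(\alpha_j-\beta_{2m+j})-2n\pi\ell)^2\bigr)$ and the denominator contributes $\exp\bigl(-\tfrac1{8r}(\sum_j(-1)^j(\alpha_j-\beta_j))^2\bigr)$. Rewrite $\sum_j\beta_{2m+j}$ using the convention $\beta_{2N+j}=\beta_j+2\pi$, so that it becomes $\sum_j\beta_j$ plus a multiple of $2\pi$ determined by $m$. The result is exactly $\exp(-\LA(\bs{\alpha},\bs{\beta};m;\ell)/(8r))$, and this gives~\eqref{eqn::crossproba_ell_asymp}. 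The only care needed here is the bookkeeping of the $2\pi$ shifts in $\LA$; I would match it against the indexing convention of~\eqref{eqn::windingevent_def}.

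Step 3 sums over $\ell$ and $m$ for~\eqref{eqn::crossproba_asymp}, and this is the only nontrivial part; it is where the $\sqrt r$ correction appears. Set $c_m=\sum_j(\alpha_j-\beta_{2m+j})$ and apply Poisson summation:
\[
\sum_{\ell\in\Z}\exp\Bigl(-\frac{(c_m-2n\pi\ell)^2}{8r}\Bigr)=\frac{\sqrt{8\pi r}}{2n\pi}\sum_{k\in\Z}\ee^{-4k^2r/n^2}\,\ee^{\ii kc_m/n}=\frac{1}{n}\sqrt{\frac{2r}{\pi}}\bigl(1+O(\ee^{-4r/n^2})\bigr).
\]
The leading term is independent of $m$. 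Summing over the $N=n/2$ values of $m$ therefore gives $\sqrt{r}/\sqrt{2\pi}=\ee^{\frac12\log r}/\sqrt{2\pi}$. The denominator Gaussian contributes $+\tfrac1{8r}(\sum_j(-1)^j(\alpha_j-\beta_j))^2$ in the exponent. The $\Theta$-corrections $O(\ee^{-r})$ are dominated by the Poisson error $O(\ee^{-4r/n^2})$. Both are absorbed into $O(1)\ee^{-r/n^2}$ after taking logarithms, which yields~\eqref{eqn::crossproba_asymp}.

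The main obstacle is keeping the errors uniform. This is automatic here because $\bs{\alpha},\bs{\beta}$ are fixed, so all $\sin$-factors are bounded away from zero. The remaining points of care are the precise index shift in $\LA$ and the fact that the Poisson dual terms decay only like $\ee^{-4r/n^2}$, which is why the error in~\eqref{eqn::crossproba_asymp} is weaker than in~\eqref{eqn::crossproba_ell_asymp}.
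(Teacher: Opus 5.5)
Your route is the paper's route. The paper's proof just inserts \eqref{eqn::LZann_asymp}, \eqref{eqn::LZcroell_asymp} and \eqref{eqn::LZcro_sum_asymp} into Theorem~\ref{thm::crossingproba}; these come from $\Theta_1\approx 2\ee^{-r/4}\sin(z/2)$, $\Theta_3\approx 1$ and the same Poisson summation. However, your Step~1 contains a genuine arithmetic error: you count the factor $2$ twice. You note yourself that $2\sin((\theta_k-\theta_i)/2)=|\ee^{\ii\theta_k}-\ee^{\ii\theta_i}|$, hence
\[
|\Theta_1(r;\theta_k-\theta_i)|=\ee^{-r/4}\,|\ee^{\ii\theta_k}-\ee^{\ii\theta_i}|\,\bigl(1+O(\ee^{-2r})\bigr).
\]
So the $\Theta_1$-factors produce chord lengths and powers of $\ee^{-r/4}$ only, with no leftover powers of $2$. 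The correct counts, with $\bs{\beta}'=(\beta_{2m+1},\ldots,\beta_{2m+n})$, are
\[
\prod_{1\le i<k\le n}\bigl|\Theta_1(r;\alpha_k-\alpha_i)\,\Theta_1(r;\beta'_k-\beta'_i)\bigr|^{\frac12}=\ee^{-\frac{n(n-1)r}{8}}\LZradial{n}(\bs{\alpha})\LZradial{n}(\bs{\beta})\bigl(1+O(\ee^{-2r})\bigr),
\]
\[
\prod_{1\le i<k\le n}\bigl|\Theta_1(r;\alpha_k-\alpha_i)\,\Theta_1(r;\beta_k-\beta_i)\bigr|^{\frac12(-1)^{k-i}}=\ee^{\frac{nr}{8}}\LZ_n(\bs{\alpha})\LZ_n(\bs{\beta})\bigl(1+O(\ee^{-2r})\bigr).
\]
The quotient is therefore $\ee^{-n^2r/8}\,\LZradial{n}(\bs{\alpha})\LZradial{n}(\bs{\beta})/(\LZ_n(\bs{\alpha})\LZ_n(\bs{\beta}))$ with prefactor $1$, not $\sqrt{2}^{\,n^2}$. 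Your derivation reaches the displayed constant only through the slip.

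The same slip appears in the paper. In passing from \eqref{eqn::varphiann_asymptotic} and \eqref{eqn::varphicro_asymptotic} to \eqref{eqn::LZann_asymp} and \eqref{eqn::LZcroell_asymp}, the products of $\sin((\theta_j-\theta_i)/2)$ are identified with $\LZ_n$, $\LZradial{n}$ without the factor $2$ per chord, while the separate $\log 2$ terms are retained. This produces the spurious $\sqrt{2}^{-n}$ and $\sqrt{2}^{\,n(n-1)}$. For instance, expanding \eqref{eqn::LZann_def} directly for $n=2$ gives $\LZann{2}\sim\ee^{r/2}\LZ_2(\bs{\alpha})\LZ_2(\bs{\beta})$, not half of it. A correct proof should therefore flag the discrepancy rather than reproduce $\sqrt{2}^{\,n^2}$.

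Step~2 also needs the bookkeeping you postponed, and it does not give $\LA$ as written. Exactly $2m$ of the indices $2m+1,\ldots,2m+n$ exceed $n$, so $\sum_j\beta_{2m+j}=\sum_j\beta_j+4m\pi$. The numerator Gaussian is then $\exp\bigl(-\frac{1}{8r}(\sum_j(\alpha_j-\beta_j)-4m\pi-2n\pi\ell)^2\bigr)$. Hence the $2m\pi$ in $\LA$ has to be read as $4m\pi$; for $m=N$ this is the shift $\ell\mapsto\ell+1$, as the definition of $\wind$ requires. This discrepancy is invisible in \eqref{eqn::crossproba_asymp}, where only the $k=0$ Poisson mode survives.

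Finally, the dual Gaussian weights are $\ee^{-2k^2r/n^2}$ (compare \eqref{eqn::LZcro_asymp_aux2}), not $\ee^{-4k^2r/n^2}$. This is harmless, since both fit into $O(1)\ee^{-r/n^2}$.
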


We will complete the proof of Theorem~\ref{thm::crossingproba} and Proposition~\ref{prop::asymptotics} in Section~\ref{sec::GFF_levellines_annulus}. 
Let us explain the key ingredients in the proof.
\begin{itemize}
\item First, we derive the marginal law of the level line $\gamma^1$. This is an annulus $\SLE_4$ process with partition function $\LZann{n}$ in~\eqref{eqn::LZann_def} (see details in Section~\ref{sec::GFF_levellines_annulus}).  The coupling between $\SLE_4$ and the GFF is usually verified through harmonic martingale observables whose quadratic variations match the variation of the Green's function; this point of view goes back to the chordal theory~\cite{SchrammSheffieldContinuumGFF,DubedatSLEFreeField}. In the annulus, the harmonic observable contains theta-function interactions, the modulus derivative, and the interaction between the two boundary components. The new idea of our proof is to introduce an analytic lift whose imaginary part is the harmonic observable, see details in Lemma~\ref{lem::multiannulus_levelline_aux10}. Instead of checking the drift of the harmonic observable by a direct term-by-term cancellation, we apply It\^o's formula to its analytic lift and show that the imaginary part of the complex drift vanishes. The drift of the analytic lift is real on both boundary lines of the universal cover $\S_r$; by periodicity and harmonicity, its imaginary part then vanishes in the whole strip. This turns the main martingale verification into a boundary-value argument for an analytic function and separates it cleanly from the subsequent Green's function calculation.
\item Second, we construct a martingale observable for $\gamma^1$ which is given by the ratio between the two partition functions  $\LZcro{n}^{(\ell)}$ and $\LZann{n}$, see~\eqref{eqn::mtg_partition_ratio}. 
The fact that this process is a local martingale relies on Proposition~\ref{prop::annulusBPZ_kappa4}. 
The boundedness of the process relies the control $\LZcro{n}^{(\ell)}\le \LZann{n}$ proved in Proposition~\ref{prop::LZcro_LZann_mono} in Section~\ref{sec::Regularized Dirichlet energy}. 
We then derive the terminal value of the martingale observable when $\gamma^1$ makes the prescribed crossing. 
When $\gamma^1$ completes the prescribed crossing, this ratio degenerates into a ratio of partition functions in the simply connected domain $\A_r\setminus\gamma^1$, reducing the remaining problem to the probability that the other level lines form the rainbow pattern derived in~\cite{PeltolaWuGlobalMultipleSLEs}. 
\end{itemize}

\medbreak
Let us compare the asymptotic~\eqref{eqn::crossproba_asymp} with earlier result. 
We denote by $\PP_r$ the law of GFF in the annulus $\A_r$ as in Proposition~\ref{prop::asymptotics}.  
Fix $\bs{\alpha}\in\LX_n$ and write $\bs{x}=\ee^{\ii\bs{\alpha}}$. 
We denote by $\PP_*$ the law of GFF in the unit disc $\U$ with alternating boundary data: $\pi$ on $\cup_{j=1}^N(x_{2j-1}x_{2j})$ and $0$ on $\cup_{j=1}^N(x_{2j}x_{2j+1})$. Let $\gamma^j$ be the level line of the field starting from $x_j$ for $1\le j\le n$ and denote $\bs{\gamma}=(\gamma^1, \ldots, \gamma^n)$. We define $\cross(\bs{\gamma})$ to be the event that all $\gamma^1, \ldots, \gamma^n$ hit the centered disc of radius $\ee^{-r}$. It follows from~\cite[Theorem~1.4]{PeltolaWuGlobalMultipleSLEs} and~\cite[Theorem~1.2]{FengWuYangIsing} that the probability for $\cross(\bs{\gamma})$ has the following asymptotic as $r\to\infty$: 
\begin{align}\label{eqn::FWY}
\PP_*[\cross(\bs{\gamma})]=C\frac{\LZradial{n}(\bs{\alpha})}{\LZ_n(\bs{\alpha})}\exp\left(-\frac{n^2}{8}r\right)(1+O(\ee^{-ur})), 
\end{align}
where $C\in (0,\infty)$ and $u>0$ are constants depending on $n$. Comparing with the RHS of~\eqref{eqn::crossproba_asymp}, the leading term $\ee^{\frac{n^2r}{8}}$ are the same and the exponent $\frac{n^2}{8}$ is the arm exponent for $\SLE_4$. However, the subleading terms in the RHS of~\eqref{eqn::crossproba_asymp} is different from the one in the RHS of~\eqref{eqn::FWY}: 
\begin{align*}
\frac{\PP_r[\cross(\bs{\gamma})]}{\PP_*[\cross(\bs{\gamma})]}\sim \sqrt{r}. 
\end{align*} 
The extra $\sqrt{r}$ is due to the influence of the boundary of the inner hole $\ee^{-r}\U$: in the setup of Theorem~\ref{thm::crossingproba}, the boundary value of the GFF is bounded on $\ee^{-r}\partial\U$; while in the setup of GFF in the unit disc, the variance of the average of the field on $\ee^{-r}\partial\U$ is $r$. 

\subsection{Consistence with the previous construction}

We recall the construction of partition function from~\cite{ZhanRestrictionAnnulusSLE} and~\cite{JahangoshahiLawlerMultiplepathsSLE} and point out a byproduct of our analysis on partition functions. 
Fix $\kappa\in (0,4]$ and $n\ge 1$ and $\ell\in\Z$. 
Fix $r>0$ and $\bs{\alpha}, \bs{\beta}\in\LX_n$. For $1\le j\le n$, let $\gamma^j$ be a single annulus $\SLE_{\kappa}(\LFcro{1}^{(\kappa;\ell)})$ in $(\A_r;\ee^{\ii \alpha_j};\ee^{\ii \beta_j-r})$ (see Section~\ref{subsec::multiannulus_pre}). 
Let $\Pind{n}^{(\kappa;\ell)}=\Pind{n}^{(\kappa;\ell)}(\A_r; \ee^{\ii\bs{\alpha}}, \ee^{\ii\bs{\beta}})$ be the probability measure on $\bs{\gamma}=(\gamma^{1}, \ldots, \gamma^{n})$ under which the curves are independent.
Define
\begin{align} \label{eqn::multiannulus_pf_def}
\LFcro{n}^{(\kappa; \ell)}(r; \bs{\alpha}, \bs{\beta})=\prod_{j=1}^n\LFcro{1}^{(\kappa;\ell)}(r; \alpha_j, \beta_j)\times \Eind{n}^{(\kappa;\ell)}\left[\one_{\LE_{\emptyset}(\bs{\gamma})}\exp\left(\frac{\mathfrak{c}}{2}\blm(\A_r; \gamma^1, \ldots, \gamma^n)\right)\right],
\end{align}
where $\mathfrak{c}=\frac{(6-\kappa)(3\kappa-8)}{2\kappa}$ is central charge and 
$\LE_{\emptyset}(\bs{\gamma}) = \{ \gamma^{j} \cap \gamma^{i}=\emptyset, \, \forall i\neq j\}$ is the event that different curves are disjoint, and $\blm$ is the Brownian loop measure defined in~\eqref{eqn::blm_def}. Our partition function $\LZcro{n}^{(\ell)}$ coincides with this partition function when $\kappa=4$. 

\begin{proposition}
\label{prop::two_pf_equal}
Fix $n\ge 1$ and $\ell\in\Z$. 
The partition function $\LZcro{n}^{(\ell)}$ in~\eqref{eqn::LZcroell_def} 
coincides with $\LFcro{n}^{(\kappa; \ell)}$ in~\eqref{eqn::multiannulus_pf_def} when $\kappa=4$: for $r>0$ and $\bs{\alpha}, \bs{\beta}\in\LX_n$, we have
\begin{equation*}
\LFcro{n}^{(4; \ell)}(r; \bs{\alpha}, \bs{\beta})=\LZcro{n}^{(\ell)}(r; \bs{\alpha}, \bs{\beta}).
\end{equation*}
\end{proposition}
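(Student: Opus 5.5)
We prove the identity by showing that both sides, after normalizing by the product of one-curve partition functions, are the same expectation. Write
\[
R(r;\bs{\alpha},\bs{\beta}):=\frac{\LZcro{n}^{(\ell)}(r;\bs{\alpha},\bs{\beta})}{\prod_{j=1}^n\LZcro{1}^{(\ell)}(r;\alpha_j,\beta_j)}.
\]

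\textbf{Step 1: the one-curve case.} From the definition~\eqref{eqn::multiannulus_pf_def}, $\LFcro{1}^{(4;\ell)}$ is the single-curve partition function. We first check that it equals $\LZcro{1}^{(\ell)}$, by comparing with the explicit one-curve annulus $\SLE_4$ partition function from Section~\ref{subsec::multiannulus_pre}. With $n=1$, formula~\eqref{eqn::LZcroell_def} reduces to
\[
\ee^{r/8-(\alpha-\beta-2\pi\ell)^2/(8r)}\prod_{m}(1-\ee^{-2mr})^{3/2}\,|\Theta_3(r;\beta-\alpha)|^{-1/2},
\]
which should be exactly the $\kappa=4$ specialization, possibly after fixing a normalization. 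It therefore suffices to show
\[
R=\Eind{n}^{(4;\ell)}\left[\one_{\LE_\emptyset(\bs\gamma)}\exp\left(\tfrac{\mathfrak c}{2}\blm(\A_r;\gamma^1,\ldots,\gamma^n)\right)\right],\qquad \mathfrak c=1.
\]

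\textbf{Step 2: a multi-time martingale.} Let each $\gamma^j$ grow under its own independent one-curve law, and stop all curves at a common stopping time before any two of them intersect. By Proposition~\ref{prop::multitime_mart_annulus}, the process
\[
M_{\bs t}=\prod_j\big(\text{conformal covariance factors}\big)\times\exp\left(\tfrac{\mathfrak c}{2}\blm(\A_r;\gamma^1_{[0,t_1]},\ldots,\gamma^n_{[0,t_n]})\right)\times\LZcro{n}^{(\ell)}(\text{evolved annulus and points})\big/\textstyle\prod_j\LZcro{1}^{(\ell)}(\cdots)
\]
is a multi-time local martingale. This uses that $\LZcro{n}^{(\ell)}$ solves the annulus BPZ system of Proposition~\ref{prop::annulusBPZ_kappa4}, and that the single-curve factors solve their own one-curve BPZ equations. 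At $\bs t=0$ the value of $M$ is $R$. The goal is then to run all curves to completion and identify the terminal value as $\one_{\LE_\emptyset}\exp(\tfrac12\blm(\A_r;\gamma^1,\ldots,\gamma^n))$.

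\textbf{Step 3: boundedness and terminal values (main obstacle).} For boundedness we need a bound on the ratio $\LZcro{n}^{(\ell)}/\prod_j\LZcro{1}^{(\ell)}$ uniformly over all evolved configurations. We would obtain it from the regularized Dirichlet energy representation of Section~\ref{sec::Regularized Dirichlet energy}, since monotonicity of Dubédat's energy gives such ratios a bound by $1$, in the spirit of Proposition~\ref{prop::LZcro_LZann_mono}. The Brownian loop term is bounded as long as the curves stay disjoint.

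Terminal values:
\begin{itemize}
\item \textbf{Curves approach each other.} The factors $|\Theta_1|^{1/2}$ and $|\Theta_3|^{-1/2}$ force the ratio to $0$, since $\Theta_1(r;z)\sim z$ as $z\to0$. This matches $\one_{\LE_\emptyset}=0$.
\item \textbf{All curves complete disjointly.} The remaining domain degenerates, and the partition-function ratio should tend to $1$. We would check this by sequential conditioning: complete $\gamma^1$ first, so that the remaining domain becomes simply connected. The reduced ratio then falls into the chordal $\kappa=4$ setting, where the analogous identity (the pure partition function equals the product of one-curve partition functions times the loop-measure expectation) is known from~\cite{PeltolaWuGlobalMultipleSLEs}.
\end{itemize}

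The delicate points are the uniform bound and the justification of the limits when curves hit the inner boundary simultaneously. We would handle both by a localization argument: stop the curves when their distance to each other falls below $\epsilon$, then let $\epsilon\to0$ using dominated convergence.
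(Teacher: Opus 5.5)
Step~1 is fine; it is \eqref{eqn::single_annulusSLE4_pf}. Step~3, however, has a genuine gap, and it is the step that carries all the content.

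\textbf{Why the proposed bound fails.}
\begin{itemize}
\item The bound you propose cannot hold. The terminal value you want, $\one_{\LE_{\emptyset}(\bs{\gamma})}\exp(\frac12\blm(\A_r;\gamma^1,\ldots,\gamma^n))$, is itself an unbounded random variable under $\Pind{n}^{(4;\ell)}$. With positive probability two curves are disjoint but come within distance $\delta$ of each other, and then $\blm$ grows like $\log(1/\delta)$. So the multi-time martingale $M_{\bs{t}}(\LZcro{1}^{(\ell)};\LZcro{n}^{(\ell)})$ is not uniformly bounded, and ``the Brownian loop term is bounded as long as the curves stay disjoint'' is false uniformly.
\item The inequality $\LZcro{n}^{(\ell)}\le\prod_j\LZcro{1}^{(\ell)}$ at a common modulus does not control $M_{\bs{t}}$ either. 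That process also contains the factor $\exp(\mathfrak{b}\sum_j(r-t_j)-n\mathfrak{b}R_{\bs{t}})\ge 1$ and the derivative factors. Its numerator is evaluated at modulus $R_{\bs{t}}$, while the denominators are evaluated at $r-t_j$.
\item What you would actually need is uniform integrability. Your $\epsilon$-localization with dominated convergence is circular, because it presupposes the missing dominating function.
\end{itemize}

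\textbf{Why the terminal-value analysis is incomplete.}
\begin{itemize}
\item On $\LE_{\emptyset}^c$ there is a $0\cdot\infty$ competition against $\exp(\frac12\blm)\to\infty$. The decay need not come from $\Theta_1(r;z)\sim z$: a tip hitting the body of another curve does not force the driving points to merge.
\item $M_{\bs{t}}$ is only defined while $\A_r\setminus\bs{\gamma}_{[\bs{0},\bs{t}]}$ is doubly connected. The degeneration $R_{\bs{t}}\to 0$ when the first curve crosses therefore has to be analysed, and you do not do this.
\end{itemize}

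\textbf{How the paper avoids this.} It integrates out curves $2,\ldots,n$ first, so that the only martingale it needs is a bounded one-curve martingale.
\begin{itemize}
\item For $\LFcro{n}^{(4;\ell)}$, three ingredients give the cascade relation \eqref{eqn::multiannulus_pf_2nd}: the Brownian-loop definition, the Peltola--Wu loop representation of rainbow partition functions, and the boundary perturbation property of a single annulus SLE. The relation says $\LFcro{n}^{(4;\ell)}$ equals $\LFcro{1}^{(4;\ell)}(r;\alpha_1,\beta_1)$ times the expectation, under a single annulus $\SLE_4$ curve $\gamma^1$, of the $\kappa=4$ rainbow partition function of $\A_r\setminus\gamma^1$.
\item Lemma~\ref{lem::multiannulusSLE4_pf_2nd} proves the same relation for $\LZcro{n}^{(\ell)}$ by growing only $\gamma^1$. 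The resulting one-time martingale $N_t$ has no loop factor. Via the Jacobi imaginary transformation it equals a constant, times the rainbow partition function of the universal-cover domain $\Omega_t$, times explicit factors $\le 1$. It is therefore bounded by Poisson-kernel monotonicity.
\item Lemma~\ref{lem::RNcontrol_aux} then identifies the terminal value of $N_t$.
\end{itemize}
Your ``sequential conditioning'' remark points in this direction. To close the gap, make that the main argument and drop the multi-time martingale altogether.
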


Proposition~\ref{prop::two_pf_equal} with $n=1$ is proved in~\cite{ZhanReversibilityWholeplaneSLE}, we will prove the general case $n\ge 2$ in Section~\ref{subsec::LZcron_ac}. 
The proof relies on a cascade relation for the annulus partition function. 
Such cascade relation is immediate for the construction in~\eqref{eqn::multiannulus_pf_def}, see Lemma~\ref{lem::earlierconstruction_marginal_conditional}. 
However, the cascade relation is not clear from the definition~\eqref{eqn::LZcroell_def}. To show that $\LZcro{n}^{(\ell)}$ enjoys the same cascade relation, we use an explicit formula for partition functions derived in~\cite[Theorem~1.5]{PeltolaWuGlobalMultipleSLEs} when $\kappa=4$ (see~\eqref{eqn::rainbow_pf4}), an alternative expression for Jacobi theta functions in Lemma~\ref{lem::JacobiTheta_technical}, and refined analysis of the Radon-Nikodym derivative in Section~\ref{subsec::LZcron_ac}.
\medbreak
Let us point out a byproduct of our construction of partition functions. 
Define
\begin{align}\label{eqn::multiannulus_withoutspiral_def}
\LFcro{n}^{(\kappa)}(r; \bs{\alpha}, \bs{\beta})=\sum_{\ell\in\Z}\LFcro{n}^{(\kappa;\ell)}(r; \bs{\alpha}, \bs{\beta}).
\end{align}
We believe the partition function $\LFcro{n}^{(\kappa)}$ for multi-annulus SLE has the following asymptotic: for $\kappa\in (0,4]$ and $n\ge 2$, as $r\to\infty$, 
\begin{align}\label{eqn::multiannulus_pf_general_asymp}
\LFcro{n}^{(\kappa)}(r; \bs{\alpha}, \bs{\beta})\asymp \LFradial{n}^{(\kappa)}(\bs{\alpha})\LFradial{n}^{(\kappa)}(\bs{\beta})\exp\left(\mathfrak{a}_n r+\frac{\mathfrak{c}}{2}\log r\right), 
\end{align}
where 
\[\mathfrak{b}=\frac{(6-\kappa)}{2\kappa}, \qquad \mathfrak{c}=\frac{(6-\kappa)(3\kappa-8)}{2\kappa}, \qquad \mathfrak{a}_n=n\mathfrak{b}-\frac{4n^2-(4-\kappa)^2}{8\kappa},\]
and 
\[\LFradial{n}^{(\kappa)}(\bs{\theta})=\prod_{1\le i<j\le n}\left|\ee^{\ii\theta_i}-\ee^{\ii\theta_j}\right|^{\frac{2}{\kappa}}, \qquad \text{for }\bs{\theta}\in\LX_n.\]
The authors in~\cite{JahangoshahiLawlerMultiplepathsSLE} derive~\eqref{eqn::multiannulus_pf_general_asymp} for $\kappa\in (0,4]$ and  $n\in \{1,2\}$ in~\cite[Eq.~(13) and Theorem~4]{JahangoshahiLawlerMultiplepathsSLE}. 
Our construction improves this control for the case when $\kappa=4$ (see~\eqref{eqn::LZcro_asymp}): for $\kappa=4$ and $n\ge 1$, as $r\to\infty$,
\begin{align*}
\LFcro{n}^{(4)}(r; \bs{\alpha}, \bs{\beta})=&\frac{\sqrt{2}^{n(n-1)}}{n\sqrt{\pi/2}}\LFradial{n}^{(4)}(\bs{\alpha})\LFradial{n}^{(4)}(\bs{\beta})\exp\left(\frac{n(2-n)}{8}r+\frac{1}{2}\log r+O(1)\ee^{-r/n^2}\right).
\end{align*}
We will prove a weaker bound for $\LFcro{n}^{(\kappa)}$ in~\eqref{eqn::multiannulus_pf_upperbound}, but we cannot prove~\eqref{eqn::multiannulus_pf_general_asymp} with $n\ge 3$ other than for $\kappa=4$. Note that the exponent $\mathfrak{a}_n$ in the exponential part in RHS of~\eqref{eqn::multiannulus_pf_general_asymp} can be predicted from boundary conformal weights $h_{1,2}$ and SLE arm exponents~\cite{WuAlternatingArmIsing, FengWuYangIsing}: 
\[\mathfrak{a}_n= n \underbrace{\frac{(6-\kappa)}{2\kappa}}_{h_{1,2}:=}-\underbrace{\frac{4n^2-(4-\kappa)^2}{8\kappa}}_{\text{SLE arm exponent}}.\]
Whereas, the central charge $\mathfrak{c}$ in the
polynomial part $\sqrt{r}^{\mathfrak{c}}$ is quite mysterious for us. 

\paragraph*{Outline.}
In Section~\ref{sec::Regularized Dirichlet energy}, we introduce regularized Dirichlet energy and calculate four examples of the energy. 
These four examples correspond to the four partition functions $\LZann{n}$ in~\eqref{eqn::LZann_def}, $\LZcro{n}^{(\ell)}$ in~\eqref{eqn::LZcroell_def} and $\LZ_n, \LZradial{n}$ in~\eqref{eqn::LZunitdisc_def}. 
Moreover, we prove $\LZcro{n}^{(\ell)}\le \LZann{n}$ in Section~\ref{sec::Regularized Dirichlet energy}. 
In Section~\ref{sec::multiannulus_SLE}, we introduce annulus BPZ equations for general $\kappa>0$ and provide a general framework for defining and analyzing multi-annulus SLE process. 
In Section~\ref{sec::multiannulu_SLE4}, we prove Propositions~\ref{prop::annulusBPZ_kappa4} and~\ref{prop::two_pf_equal}. 
In Section~\ref{sec::GFF_levellines_annulus}, we describe the law of the level lines of the GFF in annulus and complete the proof of Theorem~\ref{thm::crossingproba} and Proposition~\ref{prop::asymptotics}. 

\paragraph*{Acknowledgment.}
We thank Titus Lupu for helpful discussion about GFF level lines. 
We thank Eveliina Peltola for helpful discussion about BPZ equations. 
H.W. is supported by New Cornerstone Investigator Program 100001127. 
H.W. is partly affiliated at Yanqi Lake Beijing Institute of Mathematical Sciences and Applications, Beijing, China.
This project was initiated when the first and the third authors participated in a program hosted by the Hausdorff Research Institute for Mathematics (HIM), which is supported by the Deutsche Forschungsgemeinschaft (DFG, German Research Foundation) under Germany’s Excellence Strategy EXC-2047/1-390685813.

\section{Regularized Dirichlet energy}
\label{sec::Regularized Dirichlet energy}
\paragraph*{Regularized Dirichlet energy.}
We define the regularized Dirichlet energy of a harmonic function with boundary jumps and interior monodromies following~\cite[Section~5.2]{DubedatSLEFreeField}. Consider a domain $D$ with smooth boundary. 
Suppose $x_1, \ldots, x_n\in\partial D$ are boundary points and $y_1, \ldots, y_m\in D$ are interior points.
Let $u$ be a locally bounded harmonic function in $D \setminus \{y_1, \ldots, y_m\}$ that extends continuously to $\partial D \setminus \{x_1, \ldots, x_n\}$. Assume that $u$ has a jump of $\delta_j$ at $x_j\in\partial D$ for $1\le j\le n$, and has a monodromy of $M_k$ around $y_k \in D$ for $1 \le k \le m$ (i.e., $u$ increases by $M_k$ along a counterclockwise loop around $y_k$). 
We define the regularized Dirichlet energy for $u$ to be
\begin{equation*}
	\| u \|_{\nabla(D),\mathrm{reg}}^2 = \lim_{\substack{\eps_j\to 0, \, 1\le j\le n \\ \rho_k\to 0, \, 1\le k\le m}} \left( \int_{x\in D, \, |x-x_j|\ge \eps_j, \, |x-y_k|\ge \rho_k} |\nabla u |^2 \ud x + \sum_{j=1}^{n} \frac{\delta_j^2}{\pi} \log(\eps_j) + \sum_{k=1}^{m} \frac{M_k^2}{2\pi} \log(\rho_k) \right).
\end{equation*}

For a general domain $\Omega$ such that each component of $\partial\Omega$ is locally connected and that the marked boundary points $x_1, \ldots, x_n$ lie on $C^{1+\eps}$-boundary segments, and $y_1, \ldots, y_m$ are marked points in $\Omega$.
Let $\varphi:\Omega\to D$ be a conformal homeomorphism between $\Omega$ and $D$. Then $u\circ \varphi$ is the corresponding harmonic function in $\Omega$ and we define the regularized Dirichlet energy of $u\circ\varphi$ in $\Omega$ by  
\begin{equation} \label{eqn::reg_Dirichlet_COV}
	\| u\circ \varphi \|_{\nabla(\Omega),\mathrm{reg}}^2=\| u \|_{\nabla(D),\mathrm{reg}}^2 -  \sum_{j=1}^{n} \frac{\delta_j^2}{\pi} \log |\varphi'(x_j)| - \sum_{k=1}^{m} \frac{M_k^2}{2\pi} \log |\varphi'(y_k)|.
\end{equation}
This is well-defined because of the conformal covariance of the regularized Dirichlet energy~\cite[Section~5.2]{DubedatSLEFreeField}.

The goal of this section to calculate four examples for the regularized Dirichlet energy. They are the key players in the statement of our main conclusion. 
In Section~\ref{subsec::energy_unitdisc}, we calculate two examples of the energy in the unit disc. 
These two examples correspond to the two partition functions $\LZ_n$ and $\LZradial{n}$ in~\eqref{eqn::LZunitdisc_def}. 
In Section~\ref{subsec::JacobiTheta}, we derive properties for Jacobi theta functions that will be useful later. 
In Section~\ref{subsec::energy_annulus}, we calculate two examples of the energy in annulus.
These two examples correspond to the two partition functions $\LZann{n}$ and $\LZcro{n}^{(\ell)}$ in~\eqref{eqn::LZann_def} and~\eqref{eqn::LZcroell_def}. 
In Section~\ref{subsec::asymp}, we derive asymptotics of the partition functions. 
In Section~\ref{subsec::LZcro_LZann_mono}, we show that $\LZcro{n}^{(\ell)}$ is smaller than $\LZann{n}$(see Proposition~\ref{prop::LZcro_LZann_mono}). Such control will be important in the proof of Theorem~\ref{thm::crossingproba}.
\begin{proposition}\label{prop::LZcro_LZann_mono}
Fix $r>0$ and an even number $n=2N$ and $\ell\in\Z$. For $\bs{\alpha}, \bs{\beta}\in\LX_n$, we have
\begin{align}\label{eqn::LZcro_LZann_mono}
\LZcro{n}^{(\ell)}(r; \bs{\alpha}, \bs{\beta})\le \LZann{n}(r; \bs{\alpha}, \bs{\beta}). 
\end{align}
\end{proposition}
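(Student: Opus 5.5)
The plan is to avoid comparing the theta-function formulas~\eqref{eqn::LZann_def} and~\eqref{eqn::LZcroell_def} directly. Instead I will use their representation through the regularized Dirichlet energy established in Section~\ref{subsec::energy_annulus}, and reduce~\eqref{eqn::LZcro_LZann_mono} to a Dirichlet principle.

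\textbf{Step 1: energy representation.} I assume that Section~\ref{subsec::energy_annulus} gives both partition functions in the form $\exp(-\frac{1}{2\pi}\|u\|^2_{\nabla,\mathrm{reg}})$ times the same explicit factor; I have not seen that computation, so this common normalization is an assumption. The relevant harmonic functions are as follows.
\begin{itemize}
\item $\LZann{n}$ corresponds to $u_{\mathrm{ann}}=\varphiann{n}$, the bounded harmonic function with alternating data $0,\pi$ from~\eqref{eqn::boundarydata}.
\item $\LZcro{n}^{(\ell)}$ corresponds to a function $u_{\mathrm{cro}}$. I describe it on the universal cover $\S_r$. It is harmonic, with boundary values in $\pi\Z$ that increase by $\pi$ at each preimage of $\alpha_j$ and of $\beta_j$. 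It is quasi-periodic: $u_{\mathrm{cro}}(z+2\pi)=u_{\mathrm{cro}}(z)+n\pi$, so it descends to a multivalued function on $\A_r$.
\item The choice $\ell\in\Z$ fixes the relative additive offset between the top and bottom boundary values. This offset produces the Gaussian factor $\exp(-\frac{1}{8r}(\sum_j(\alpha_j-\beta_j)-2n\pi\ell)^2)$.
\end{itemize}
At every marked point both functions jump by exactly $\pi$ in absolute value. Hence the logarithmic counterterms $\frac{\delta_j^2}{\pi}\log\eps_j$ coincide, and the prefactors $\exp(nr/8)\prod(1-\ee^{-2mr})^{3n/2}$ agree. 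So it suffices to prove
\[\|u_{\mathrm{ann}}\|^2_{\nabla(\A_r),\mathrm{reg}}\le \|u_{\mathrm{cro}}\|^2_{\nabla(\A_r),\mathrm{reg}},\]
where the right-hand energy is computed on a fundamental domain of $\S_r$. This is legitimate because $|\nabla u_{\mathrm{cro}}|$ is $2\pi$-periodic.

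\textbf{Step 2: folding.} Let $T:\R\to[0,\pi]$ be the triangle wave $T(t)=\dist(t,2\pi\Z)$. It is $2\pi$-periodic, $1$-Lipschitz, and satisfies $|T'|=1$ away from $\pi\Z$. Set $v=T\circ u_{\mathrm{cro}}$.
\begin{itemize}
\item Single-valuedness: since $n=2N$ is even, $n\pi\in 2\pi\Z$, so $v$ is $2\pi$-periodic on $\S_r$ and descends to a genuine function on $\A_r$.
\item Boundary data: the boundary values of $u_{\mathrm{cro}}$ run through consecutive multiples of $\pi$, so $v$ takes the values $0$ and $\pi$ alternately on the boundary arcs.
\item Correct phase: the phase of this alternation must match~\eqref{eqn::boundarydata}, namely $\pi$ on $(x_{2j-1}x_{2j})$ and on $(y_{2j-1}y_{2j})$. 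On the outer circle this is arranged by the normalization of $u_{\mathrm{cro}}$. On the inner circle it holds because the offset between the two boundaries is an even multiple of $\pi$; the $\ell$-dependence only shifts by $2n\pi\ell$.
\item Energy: $|\nabla v|=|\nabla u_{\mathrm{cro}}|$ almost everywhere, because the level set $\{u_{\mathrm{cro}}\in\pi\Z\}$ has measure zero. Hence $v$ has the same truncated energies as $u_{\mathrm{cro}}$, and in particular the same regularized energy.
\end{itemize}

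\textbf{Step 3: Dirichlet principle.} Both $v$ and $u_{\mathrm{ann}}$ have the same boundary data with the same jumps. Near each marked point both look like $\pm\arg(z-x_j)$ up to an additive constant and a smooth term, so $w=v-u_{\mathrm{ann}}$ has finite Dirichlet energy and vanishing boundary trace. Expanding over the truncated domain gives
\[\int|\nabla v|^2=\int|\nabla u_{\mathrm{ann}}|^2+\int|\nabla w|^2+2\int\nabla u_{\mathrm{ann}}\cdot\nabla w.\]
The cross term is a boundary integral $\oint w\,\partial_n u_{\mathrm{ann}}$. It vanishes on $\partial\A_r$ because $w=0$ there. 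On the small arcs $|z-x_j|=\eps$ it tends to $0$, because $w$ is bounded there and $|\partial_n u_{\mathrm{ann}}|=O(1)$ on these arcs (the normal to the arc is radial and $u_{\mathrm{ann}}\approx$ const$\,\pm\arg(z-x_j)$), while the arc length is $O(\eps)$. Letting $\eps\to0$ yields $\|v\|^2_{\mathrm{reg}}=\|u_{\mathrm{ann}}\|^2_{\mathrm{reg}}+\|\nabla w\|^2\ge\|u_{\mathrm{ann}}\|^2_{\mathrm{reg}}$, which is the claim.

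\textbf{Main obstacle.} The delicate points are Step 1 and the local regularity in Step 3.
\begin{itemize}
\item In Step 1, I need to confirm that the formula~\eqref{eqn::LZcroell_def} for each $\ell$ is exactly the energy of a single quasi-periodic $u_{\mathrm{cro}}$, and that its two boundary offsets make $T\circ u_{\mathrm{cro}}$ match~\eqref{eqn::boundarydata} rather than the complementary data. If there is a phase mismatch, I would instead fold with $T(\cdot+\pi)$, or use the $0\leftrightarrow\pi$ symmetry $u\mapsto\pi-u$, which preserves energy.
\item In Step 3, $v$ is only Lipschitz, not harmonic. I will check directly that $w$ lies in $H^1_0$ near the marked points, using the local form $u_{\mathrm{cro}}=\pi k\pm\arg(z-x_j)+O(|z-x_j|)$.
\end{itemize}
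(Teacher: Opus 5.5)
Your argument is correct, and it takes a genuinely different route from the paper. Step 1 is exactly what Lemmas~\ref{lem::reg_Dirichlet_annular} and~\ref{lem::reg_Dirichlet_annular_cro} supply. The normalization is $\exp(-\frac{1}{4\pi}\|\cdot\|^2_{\nabla(\A_r),\mathrm{reg}})$ for both functions, with no extra factor, and $\LZcro{n}^{(\ell)}(r;\bs{\alpha},\bs{\beta})=\LZcro{n}^{(0)}(r;\bs{\alpha},\bs{\beta}+2\pi\ell)$.

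The phase issue you flag does not arise. The lift of $\varphicro{n}$ equals $k\pi$ on both $(\alpha_k,\alpha_{k+1})$ and $(\beta_k+\ii r,\beta_{k+1}+\ii r)$. So the offset between the two boundary labelings is $0$ for $\ell=0$ and $-n\ell\pi$ in general. This is not $2n\pi\ell$ as you wrote, but it is still in $2\pi\Z$, so $T\circ u_{\mathrm{cro}}$ has exactly the data~\eqref{eqn::boundarydata}. Your fallback $T(\cdot+\pi)$ flips the phase on both circles at once, so it could not have repaired a relative mismatch between them; it is simply not needed.

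The paper proves the bound computationally. Using the Jacobi imaginary transformation of Lemma~\ref{lem::JacobiTheta_technical}, it rewrites $\LZcro{n}^{(0)}/\LZann{n}$ as the explicit product~\eqref{eqn::LZcro_LZann_mono_aux3}. Every factor there is visibly at most one: infinite products of the form $(1-\ee^{\cdots})/(1+\ee^{\cdots})$, and the strip Poisson-kernel cross-ratios~\eqref{eqn::LZcro_LZann_mono_aux4}. General $\ell$ is then handled through the invariance of $\LZann{n}$ under $\bs{\beta}\mapsto\bs{\beta}+2\pi\ell$.

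Your folding plus Dirichlet principle has several advantages:
\begin{itemize}
\item it avoids all theta-function identities;
\item it treats every $\ell$ at once;
\item it makes the role of the parity of $n$ transparent;
\item it actually yields an identity: since the cross term vanishes in the limit, $\LZcro{n}^{(\ell)}/\LZann{n}=\exp\bigl(-\frac{1}{4\pi}\int_{\A_r}|\nabla w|^2\bigr)$ with $w=T\circ u_{\mathrm{cro}}-\varphiann{n}$.
\end{itemize}
It is also robust, since it only uses that $u_{\mathrm{cro}}$ has boundary values in $\pi\Z$ with jumps of size $\pi$ at the marked points.

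What the paper's route buys in exchange is the factorization itself, which is reused in~\eqref{eqn::crossingproba_aux2}. Along the Loewner flow of $\gamma^1$, each factor $S_{i,k,t}$, $R_{1,t}$, $R_{2,t}$, $L_{j,t}$ is separately bounded by one and has an identifiable terminal value. This is what drives the computation of $\lim_{t\to T}M_t$ in the proof of Theorem~\ref{thm::crossingproba}. Your variational argument supplies the uniform bound on $M_t$, but not that structure.
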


\subsection{Two examples in the unit disc}
\label{subsec::energy_unitdisc}

\begin{lemma} \label{lem::reg_Dirichlet_polygon}
Fix an even number $n=2N$ and $\bs{\theta}=(\theta_1, \ldots, \theta_{n})\in\LX_{n}$. We write $x_j=\ee^{\ii\theta_j}$ for $1\le j\le n$.  
Let $\varphi_n$ be the bounded harmonic function in $\U$ with alternating boundary data:
\begin{equation} \label{eqn::alternating_boundary_data}
\pi\text{ on }\cup_{j=1}^N (x_{2j-1}x_{2j}), \qquad 0\text{ on }\cup_{j=1}^N (x_{2j}x_{2j+1}). 
\end{equation}
Then
\begin{equation} \label{eqn::varphin_energy}
\| \varphi_n\|_{\nabla(\U),\mathrm{reg}}^2= -2\pi \sum_{1\le i<j\le n} (-1)^{j-i} \log |x_i-x_j|. 
\end{equation}
Moreover, its regularized Dirichlet energy is related to the partition function $\LZ_n$ in~\eqref{eqn::LZunitdisc_def}:
\begin{align*}
\LZ_n(\bs{\theta})=\exp\left(-\frac{1}{4\pi}\| \varphi_n\|_{\nabla(\Omega),\mathrm{reg}}^2\right)=\prod_{1\le i<j\le n}\left|\ee^{\ii\theta_i}-\ee^{\ii\theta_j}\right|^{\frac{1}{2}(-1)^{j-i}}. 
\end{align*}
\end{lemma}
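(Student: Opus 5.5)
We compute the regularized Dirichlet energy in the upper half-plane and then transfer it to $\U$ using the covariance rule~\eqref{eqn::reg_Dirichlet_COV}. The harmonic function $\varphi_n$ has jumps $\delta_j=\pm\pi$ at each $x_j$, so $\delta_j^2=\pi^2$ and each boundary correction is $\pi\log\eps_j$.

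\emph{Computation in $\HH$.} Let $\phi:\U\to\HH$ be a Möbius map sending the $x_j$ to finite points $a_1<\dots<a_n$ on $\R$. The pulled-back function is
\[
u(z)=\sum_{j}\pm\arg(z-a_j)+\text{const},
\]
the imaginary part of $F(z)=\sum_j s_j\log(z-a_j)$. The signs $s_j=(-1)^{j}$ alternate, up to a global sign, so that $u$ takes the values $0$ and $\pi$. Since $\sum_j s_j=0$, $F$ is analytic at $\infty$, and $u$ is bounded with no energy at infinity. By Green's formula, $\int|\nabla u|^2=\int_{\partial}u\,\partial_n u$ over the boundary of $\HH$ with half-discs of radius $\eps_j$ removed. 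On $\R$ away from the $a_j$, $u$ is piecewise constant and $\partial_n u=-\partial_y u=-\Re F'$ by Cauchy--Riemann. It is cleaner to use the conjugate: $|\nabla u|^2=|\nabla v|^2$ with $v=\Re F=\sum_j s_j\log|z-a_j|$.

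Green's formula for $v$ is the better route. On $\R$, $\partial_y v=0$, so only the small semicircles contribute. On the semicircle around $a_j$, $v\approx s_j\log\eps_j+\sum_{k\ne j}s_k\log|a_j-a_k|$ and the outward flux of $\nabla v$ is $-\pi s_j$. This gives
\[
\int|\nabla u|^2=-\pi\sum_j\log\eps_j-\pi\sum_j\sum_{k\neq j}s_js_k\log|a_j-a_k|+o(1).
\]
Adding $\sum_j\pi\log\eps_j$ and using $s_js_k=(-1)^{j-k}$, we obtain
\[
\|u\|^2_{\nabla(\HH),\mathrm{reg}}=-2\pi\sum_{i<j}(-1)^{j-i}\log|a_i-a_j|.
\]

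\emph{Transfer to $\U$.} By~\eqref{eqn::reg_Dirichlet_COV}, the energy in $\U$ equals the $\HH$ energy minus $\pi\sum_j\log|\phi'(x_j)|$. For a Möbius map, $|\phi(x_i)-\phi(x_j)|^2=|\phi'(x_i)||\phi'(x_j)||x_i-x_j|^2$. Substituting, the $\log|\phi'|$ terms collect with coefficient
\[
-\pi\sum_j\log|\phi'(x_j)|\sum_{k\ne j}(-1)^{j-k},
\]
and this must cancel against the covariance term. Since $n$ is even, $\sum_{k\neq j}(-1)^{j-k}=-1$ for every $j$, so the coefficient is $+\pi\sum_j\log|\phi'(x_j)|$, which exactly cancels the correction $-\pi\sum_j\log|\phi'(x_j)|$. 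This yields~\eqref{eqn::varphin_energy}. Exponentiating with the factor $-1/(4\pi)$ gives $\prod_{i<j}|x_i-x_j|^{\frac12(-1)^{j-i}}=\LZ_n(\bs\theta)$.

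\emph{Main obstacle.} The main obstacle is justifying the exchange between $u$ and $v$ near the singular points, since $v$ is unbounded there while $u$ is bounded. Their gradients agree pointwise, so the truncated integrals agree exactly. It then remains to check that the $o(1)$ errors from the semicircle approximations vanish as $\eps_j\to0$. This holds because $v-s_j\log|z-a_j|$ is smooth near $a_j$ and has zero normal derivative on $\R$.
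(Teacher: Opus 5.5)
Your proof is correct, but it takes a different route from the paper. The paper stays in $\U$. It writes $\varphi_n=\sum_j(-1)^j\arg(z-x_j)+\mathrm{const}$ and expands the regularized energy bilinearly. It then quotes Dub\'edat's cross-term identity $(\arg(z-x_i),\arg(z-x_j))_{\nabla(\U),\mathrm{reg}}=-\pi\log|x_i-x_j|$, and checks by an explicit polar integral that each self-energy $\|\arg(z-x_j)\|^2_{\nabla(\U),\mathrm{reg}}$ vanishes.

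You instead do a single Green's-formula computation for the conjugate $v=\sum_j s_j\log|z-a_j|$ in $\HH$. There $\partial_y v=0$ on $\R$, and the decay at infinity coming from $\sum_j s_j=0$ leaves only the small semicircles. You then pull back to $\U$ using~\eqref{eqn::reg_Dirichlet_COV} and the M\"obius identity $|\phi(x_i)-\phi(x_j)|^2=|\phi'(x_i)||\phi'(x_j)||x_i-x_j|^2$. The semicircle asymptotics are right, and so is the cancellation via $\sum_{k\neq j}(-1)^{j-k}=-1$ for even $n$.

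What each route buys:
\begin{itemize}
\item Your version is self-contained: no imported cross-term formula and no separate self-energy integral. It also makes the weight-$\frac14$ M\"obius covariance of $\LZ_n$ visible. The price is invoking conformal covariance and handling an unbounded domain.
\item The paper's version needs no change of domain, but it leans on Dub\'edat's computation.
\end{itemize}

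You could even drop the transfer. On $\partial\U$ one has $\partial_r\log|z-x_j|=\frac12$ for $z\neq x_j$, so $\partial_\nu v=\frac12\sum_j s_j=0$ there too. Your semicircle computation then goes through verbatim in $\U$ with $a_j$ replaced by $x_j$.
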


\begin{proof}
The bounded harmonic function $\varphi_n$ in $\U$ with alternating boundary data~\eqref{eqn::alternating_boundary_data} is given by
\begin{equation*}
	\varphi_n(z)=\sum_{j=1}^{2N} (-1)^j (\arg(z-x_j) - \arg(y-x_j)),
\end{equation*}
where $y\in (x_{2N}x_1)$ is fixed. 
Thus,
\begin{align}\label{eqn::reg_Dirichlet_polygon_aux1}
	\| \varphi_n \|_{\nabla(\U),\mathrm{reg}}^2= & \sum_{j=1}^{2N} \| \arg(z-x_j) \|_{\nabla(\U),\mathrm{reg}}^2 + 2 \sum_{1\le i<j\le 2N} (-1)^{j-i} \left( \arg(z-x_i),\arg(z-x_j) \right)_{\nabla(\U), \mathrm{reg}}.
\end{align}
Let us check the two terms in~\eqref{eqn::reg_Dirichlet_polygon_aux1}.
\begin{itemize}
\item It is calculated in the first equation in~\cite[Page~1026]{DubedatSLEFreeField} that 
\begin{align}\label{eqn::Dubedat1026}
\left( \arg(z-x_i),\arg(z-x_j) \right)_{\nabla(\U), \mathrm{reg}}=-\pi\log|x_i-x_j|. 
\end{align}
\item From the rotation symmetry, we have $\| \arg(z-x_j) \|_{\nabla(\U),\mathrm{reg}}^2=\| \arg(z-1) \|_{\nabla(\U),\mathrm{reg}}^2$. Let us calculate $\| \arg(z-1) \|_{\nabla(\U),\mathrm{reg}}^2$. Note that $\arg(z-1)=\Im\log(z-1)$ and $|\nabla\arg(z-1)|^2=\frac{1}{|z-1|^2}$. We have
\begin{align*}
\int_{\U\setminus B(1,\eps)}\frac{\ud z}{|z-1|^2}=&\int_{-\pi/2}^{\pi/2}\one\{2\cos\phi\ge \eps\}\ud \phi\int_{\eps}^{2\cos\phi}\frac{1}{r^2}r\ud r \tag{set $z=1+\ee^{\ii\phi}$}\\
=&\int_{-\pi/2}^{\pi/2}\one\{2\cos\phi\ge \eps\}\ud \phi\left(\log(2\cos\phi)-\log\eps\right)\\
=&o(1)-\pi\log\eps, \quad\text{as }\eps\to 0. 
\end{align*} 
This shows that $\| \arg(z-1) \|_{\nabla(\U),\mathrm{reg}}^2=0$ and 
\begin{align}\label{eqn::reg_Dirichlet_constant}
\| \arg(z-x_j) \|_{\nabla(\U),\mathrm{reg}}^2=\| \arg(z-1) \|_{\nabla(\U),\mathrm{reg}}^2=0.
\end{align}
\end{itemize}
Plugging~\eqref{eqn::Dubedat1026} and~\eqref{eqn::reg_Dirichlet_constant} into~\eqref{eqn::reg_Dirichlet_polygon_aux1} gives~\eqref{eqn::varphin_energy} as desired. 
\end{proof}

\begin{lemma} \label{lem::varphiradial_spiral_energy}
Fix $n\ge 1$ and $\bs{\theta}=(\theta_1, \ldots, \theta_{n})\in\LX_{n}$. Define 
\begin{align}\label{eqn::varphiradial_def}
\varphiradial{n}(z)=-\frac{1}{2}\sum_{j=1}^n\arg\left(\frac{\ee^{\ii\theta_j}-z}{1-\overline{z}\ee^{\ii\theta_j}}\right)+\frac{n}{2}\arg(z), \quad z\in\U\setminus \{0\}. 
\end{align}
Then
\begin{align}\label{eqn::varphiradial_spiral_energy}
\| \varphiradial{n}\|_{\nabla(\U),\mathrm{reg}}^2=-2\pi \sum_{1\le i<j\le n} \log \left| \ee^{\ii\theta_i}-\ee^{\ii\theta_j} \right|.
\end{align}
Moreover, its regularized Dirichlet energy is related to the partition function $\LZradial{n}$ in~\eqref{eqn::LZunitdisc_def}:
\begin{align*}
\LZradial{n}(\bs{\theta})=\exp\left(-\frac{1}{4\pi}\| \varphiradial{n}\|_{\nabla(\U),\mathrm{reg}}^2\right)=\prod_{1\le i<j\le n}\left|\ee^{\ii\theta_i}-\ee^{\ii\theta_j}\right|^{\frac{1}{2}}. 
\end{align*} 
\end{lemma}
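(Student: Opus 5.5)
We follow the proof of Lemma~\ref{lem::reg_Dirichlet_polygon}: write $\varphiradial{n}$ as a sum of elementary harmonic functions, expand the regularized energy bilinearly, and evaluate each piece. For each $j$, the function
\[u_j(z):=\arg\left(\frac{\ee^{\ii\theta_j}-z}{1-\overline{z}\ee^{\ii\theta_j}}\right)\]
is harmonic in $\U$, since $\arg(\ee^{\ii\theta_j}-z)$ and $-\arg(1-\overline{z}\ee^{\ii\theta_j})$ are both harmonic. On $\partial\U$ we have $1-\overline{z}\ee^{\ii\theta_j}=-\overline{z}(\ee^{\ii\theta_j}-z)\cdot\ee^{\ii\theta_j}/\ldots$, more precisely $\frac{\ee^{\ii\theta_j}-z}{1-\overline z\ee^{\ii\theta_j}}=-z\,\ee^{-\ii\theta_j}\cdot\frac{\ee^{\ii\theta_j}-z}{\ee^{\ii\theta_j}-z}\cdot\ldots$; in any case the boundary value of $u_j$ equals $\arg(z)$ up to an additive constant that jumps by $\pm\pi$ at $\ee^{\ii\theta_j}$. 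Hence $\varphiradial{n}$ has jumps of size $\pi/2$ at each $\ee^{\ii\theta_j}$, and its boundary value is locally constant between the marked points, because the $\frac n2\arg z$ term cancels the $\arg z$ part of the $u_j$'s. It also has monodromy $n\pi$ around $0$, coming from $\frac n2\arg z$.

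The key step is to split into three kinds of contributions, each computed via the conformal covariance~\eqref{eqn::reg_Dirichlet_COV} and rotation invariance.

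\textbf{Diagonal boundary terms.} After rotation, $u_j$ becomes $u(z)=\arg\frac{1-z}{1-\overline z}=2\arg(1-z)$, whose energy is $4\|\arg(z-1)\|^2_{\nabla(\U),\mathrm{reg}}=0$ by~\eqref{eqn::reg_Dirichlet_constant}. One must check that the regularization constant $\delta_j^2/\pi$ matches the jump of $2\arg(1-z)$; the computation behind~\eqref{eqn::reg_Dirichlet_constant} scales consistently, so this term vanishes.

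\textbf{Cross terms $(u_i,u_j)$.} Since $u_i=2\arg(\ee^{\ii\theta_i}-z)+\text{const}$ in the same manner, \eqref{eqn::Dubedat1026} gives $(u_i,u_j)_{\mathrm{reg}}=-4\pi\log|x_i-x_j|$. With the prefactor $\tfrac14$ and the factor $2$ from the bilinear expansion, this yields $-2\pi\sum_{i<j}\log|x_i-x_j|$, which is exactly~\eqref{eqn::varphiradial_spiral_energy}.

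\textbf{Interior term and its pairings.} This is the term $\frac{n^2}{4}\|\arg z\|^2_{\mathrm{reg}}$ together with the cross terms $-\frac n2(u_j,\arg z)_{\mathrm{reg}}$. The first is computed directly:
\[\int_{\rho<|z|<1}|z|^{-2}\,\ud z=-2\pi\log\rho,\]
which is cancelled by $\frac{(2\pi)^2}{2\pi}\log\rho$, so $\|\arg z\|^2_{\mathrm{reg}}=0$.

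The cross terms $(\arg z,u_j)$ are the main obstacle, because $\arg z$ is multivalued. I would compute them by Green's formula on $\U\setminus(B(0,\rho)\cup B(x_j,\eps)\cup\text{cut})$, using $\nabla\arg z\cdot\nabla u=\nabla\log|z|\cdot\nabla\tilde u$ for the harmonic conjugates. Since $\partial_n\log|z|=1$ on $\partial\U$, the boundary integral is $\int_{\partial\U}\tilde u_j$, where $\tilde u_j=\log\left|\frac{\ee^{\ii\theta_j}-z}{1-\overline z\ee^{\ii\theta_j}}\right|$ vanishes identically on $\partial\U$. The integral around the small circle $\partial B(0,\rho)$ gives $2\pi\tilde u_j(0)=0$. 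Hence the cross terms vanish, provided we verify that the cut and the $\eps$-circle contribute $o(1)$.

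Summing the three contributions gives~\eqref{eqn::varphiradial_spiral_energy}, and exponentiating with the factor $-\frac1{4\pi}$ gives $\LZradial{n}$.
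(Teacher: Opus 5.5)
Your decomposition is a valid route to the formula and differs from the paper's. However, two of your intermediate claims are wrong, and one of them is your justification of the cross term, which you yourself flag as the main obstacle.

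\emph{Jump sizes.} On $\partial\U$ we have $\overline z=1/z$, so $\frac{\ee^{\ii\theta_j}-z}{1-\overline z\ee^{\ii\theta_j}}=-z$ exactly. The single-valued branch $u_j=2\arg(\ee^{\ii\theta_j}-z)-\theta_j$ increases by $2\pi$ along $\partial\U\setminus\{\ee^{\ii\theta_j}\}$. Hence $u_j$ jumps by $2\pi$, and $\varphiradial{n}$ jumps by $\pi$ (not $\pi/2$) at each $\ee^{\ii\theta_j}$. This is forced by the monodromy $n\pi$ that you identify correctly, since the $n$ equal jumps must sum to it.

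It matters because the counterterm $\frac{\delta_j^2}{\pi}\log\eps_j$ is fixed by the true jump; with $\delta_j=\pi/2$ the defining limit would diverge. Your diagonal step is nevertheless right, precisely because $u_j$ has twice the jump $\pi$ of $\arg(z-1)$. You should also say why the bilinear expansion is legitimate. The jump at $\ee^{\ii\theta_j}$ comes only from $-\frac12u_j$, and the monodromy only from $\frac n2\arg z$. So $\delta_j^2=\frac14(2\pi)^2$ and $M^2=\frac{n^2}{4}(2\pi)^2$ are exactly the diagonal counterterms, and every cross integral converges absolutely.

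\emph{The cross term $(\arg z,u_j)$.} Your $\tilde u_j=\log\bigl|\frac{\ee^{\ii\theta_j}-z}{1-\overline z\ee^{\ii\theta_j}}\bigr|$ vanishes identically in all of $\U$, not just on $\partial\U$. The quotient equals $\ee^{-\ii\theta_j}(\ee^{\ii\theta_j}-z)/\overline{(\ee^{\ii\theta_j}-z)}$, which is unimodular and not holomorphic. So $\tilde u_j$ is not a harmonic conjugate of the non-constant $u_j$, and the vanishing you derive from it is vacuous.

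The correct input is $u_j=\Im\bigl(2\log(\ee^{\ii\theta_j}-z)\bigr)-\theta_j$, which gives $\nabla\arg z\cdot\nabla u_j=\nabla\log|z|\cdot\nabla\bigl(2\log|\ee^{\ii\theta_j}-z|\bigr)$. Both functions are single-valued, so no cut is needed. Green's formula on $\U\setminus(B(0,\rho)\cup B(\ee^{\ii\theta_j},\eps))$ then gives, up to $O(\eps\log\eps)$,
\[(\arg z,u_j)_{\nabla(\U),\mathrm{reg}}=2\int_{\partial\U}\log|\ee^{\ii\theta_j}-z|\,\ud s-4\pi\log|\ee^{\ii\theta_j}|=0.\]
The first term vanishes because $\int_0^{2\pi}\log|1-\ee^{\ii\phi}|\,\ud\phi=0$, not because the integrand vanishes pointwise. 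The second vanishes because $|\ee^{\ii\theta_j}|=1$. So your conclusion stands, but for this reason.

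\emph{Comparison with the paper.} The paper does not decompose $\varphiradial{n}$. It applies Green's formula directly on $\U$ slit along $[0,1]$, with conjugate $\psi=-\sum_j\log|\ee^{\ii\theta_j}-z|+\frac n2\log|z|$.
\begin{itemize}
\item The outer arcs contribute $\pi\sum_j\psi(\ee^{\ii(\theta_j+\eps)})$, which carries the $-n\pi\log\eps$ divergence and the pairwise logarithms.
\item The two sides of the slit, across which $\varphiradial{n}$ jumps by $n\pi$, contribute $n\pi(\psi(1)-\psi(\eps))$, which carries the $-\frac{n^2\pi}{2}\log\eps$ divergence.
\item The $\psi(1)$ terms from the slit endpoint cancel between the two.
\end{itemize}
This is self-contained and needs no splitting of counterterms, but it hides the structure. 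Your route reuses \eqref{eqn::Dubedat1026} and \eqref{eqn::reg_Dirichlet_constant} from Lemma~\ref{lem::reg_Dirichlet_polygon}. It isolates the only new facts: $\|\arg z\|^2_{\nabla(\U),\mathrm{reg}}=0$ and $(\arg z,u_j)_{\nabla(\U),\mathrm{reg}}=0$. This makes transparent that the interior monodromy contributes nothing. It also shows that the all-positive exponents of $\LZradial{n}$, versus the alternating ones of $\LZ_n$, come solely from every $\arg(\ee^{\ii\theta_j}-z)$ entering with the same sign.
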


\begin{proof}
From~\eqref{eqn::varphiradial_def}, we have
\begin{align*} 
	\varphiradial{n}(z)=\sum_{j=1}^{n}\frac{ \theta_j}{2}-\sum_{j=1}^n\arg\left(\ee^{\ii\theta_j}-z\right)+\frac{n}{2}\arg(z), \quad z\in\U\setminus\{0\}. 
\end{align*}
By Green's formula, we have
\begin{equation} \label{eqn::varphiradial_spiral_energy_aux2}
	\| \varphiradial{n}\|_{\nabla(\U),\mathrm{reg}}^2= \lim_{\eps\to 0} \left( \int_{C_{\mathrm{horizon}}\cup C_{\mathrm{in}}\cup C_{\mathrm{out}} \cup C_{\mathrm{circle}}} \varphiradial{n} \partial_{\nu} \varphiradial{n} + \left( \frac{\pi n^2}{2}+n\pi \right) \log \eps  \right),
\end{equation}
where $\nu$ is the normal vector and
\begin{align*}
\begin{split}
	&C_{\mathrm{horizon}}=(\eps \ee^{0\ii},\ee^{0\ii}) \cup (\eps \ee^{2\pi\ii},\ee^{2\pi \ii}), \quad C_{\mathrm{in}}=\partial B(0,\eps), \\
	&C_{\mathrm{out}}=\bigcup_{j=1}^{n} \left(\ee^{\ii(\theta_j+\eps)},\ee^{\ii(\theta_{j+1}-\eps)}\right), \quad C_{\mathrm{circle}}=\bigcup_{j=1}^{n} \partial B(\ee^{\ii\theta_j},\eps) \cap \U.
\end{split}
\end{align*}
Let us evaluate the integral in the RHS of~\eqref{eqn::varphiradial_spiral_energy_aux2}.
\begin{itemize}
\item First, let us investigate the integral over $C_{\mathrm{circle}}$ as $\eps\to 0$. For $z\in \partial B(\ee^{\ii\theta_j},\eps)\cap \U$, since $\varphiradial{n}(z)$ and $\partial_{\nu} \varphiradial{n}(z)$ is bounded, we have
\begin{align}  \label{eqn::varphiradial_spiral_energy_aux3}
	\int_{C_{\mathrm{circle}}} \varphiradial{n} \partial_{\nu} \varphiradial{n} \ud s= \sum_{j=1}^{n} \int_{\partial B( \ee^{\ii\theta_j},\eps )\cap \U}  \varphiradial{n} \partial_{\nu} \varphiradial{n} \ud s \to 0,\quad \text{as } \eps\to 0.
\end{align}
\item Second, let us investigate the integral over $C_{\mathrm{in}}$. We have
\begin{align} \label{eqn::varphiradial_spiral_energy_aux4}
	& \int_{C_{\mathrm{in}}} \varphiradial{n} \partial_{\nu} \varphiradial{n} \ud s \\
	= & \int_{\partial B(0,\eps)} \left( \sum_{j=1}^{n}\frac{ \theta_j}{2}-\sum_{j=1}^n\arg\left(\ee^{\ii\theta_j}-z\right)+\frac{n}{2}\arg(z) \right) \left( -\sum_{j=1}^n \partial_{\nu} \arg\left(\ee^{\ii\theta_j}-z\right)\right) \ud s \to 0, \quad \text{as }\eps\to 0. 	\notag 
\end{align}
\item Then, let us investigate the integral over $C_{\mathrm{out}}$. Let $\psi(z)$ be the harmonic conjugate of $\varphiradial{n}(z)$: 
\begin{align*}
	\psi(z)=-\sum_{j=1}^n\log\left|\ee^{\ii\theta_j}-z\right|+\frac{n}{2}\log|z|, \quad z\in\U\setminus\{0\}.
\end{align*}
Then $\psi(z)+\ii \varphiradial{n}(z)$ is analytic and we have the Cauchy-Riemann equations
\begin{align*}
	\partial_x \psi(z)=\partial_y \varphiradial{n}(z), \quad \partial_y \psi(z)=-\partial_x \varphiradial{n}(z).
\end{align*}
On the boundary $C_{\mathrm{out}}$, we have $\partial_{\nu} \varphiradial{n} = -\partial_l \psi$ where $l$ is the tangent vector. Since $\varphiradial{n}(z)=-n \pi/2+j\pi$ on the arcs $(\ee^{\ii\theta_j}, \ee^{\ii\theta_{j+1}})$, we have
\begin{align}  \label{eqn::varphiradial_spiral_energy_aux5}
		\int_{C_{\mathrm{out}}} \varphiradial{n} \partial_{\nu} \varphiradial{n} \ud s =&  -\int_{C_{\mathrm{out}}} \varphiradial{n} \partial_l \psi \ud s \notag\\
		= & \sum_{j=1}^{n} \left( (-n\pi/2+j\pi) \psi(\ee^{\ii(\theta_j+\eps)}) - (-n\pi/2+(j-1)\pi) \psi(\ee^{\ii(\theta_j-\eps)}) \right) \notag\\
		& - n\pi/2 \left( \psi(\ee^{0\ii}) + \psi(\ee^{2\pi\ii}) \right) \notag\\
		= & o(1)+\pi \sum_{j=1}^{n} \psi(\ee^{\ii(\theta_j+\eps)}) + n\pi \sum_{j=1}^{n} \log |\ee^{\ii\theta_j}-1|. 
\end{align}
\item Finally, let us investigate the integral over $C_{\mathrm{horizon}}$. On the horizontal segment $(\eps \ee^{0\ii},\ee^{0\ii})$, we have $\partial_{\nu} \varphiradial{n} = -\partial_y \varphiradial{n}=-\partial_x \psi$. On the horizontal segment $(\eps \ee^{2\pi\ii},\ee^{2\pi \ii})$, we have $\partial_{\nu} \varphiradial{n} = \partial_y \varphiradial{n}=\partial_x \psi$. Note that for $x\in (\eps,1)$,
\begin{equation*} 
	\varphiradial{n}(\ee^{2\pi\ii} x) - \varphiradial{n}(\ee^{0\ii} x) =n\pi.
\end{equation*}
Thus we have
\begin{align} \label{eqn::varphiradial_spiral_energy_aux6}
\begin{split}
	\int_{C_{\mathrm{out}}} \varphiradial{n} \partial_{\nu} \varphiradial{n} \ud s = & \int_{\eps}^{1} \left( \varphiradial{n}(\ee^{2\pi\ii} x) - \varphiradial{n}(\ee^{0\ii} x) \right) \partial_x \psi \ud x \\
	= & n\pi \left( \psi(1)-\psi(\eps) \right) \\
	= & -\frac{n^2 \pi \log \eps}{2} - n\pi \sum_{j=1}^{n} \log |\ee^{\ii\theta_j}-1| + o(1).	
\end{split}
\end{align}
\end{itemize}
Plugging~(\ref{eqn::varphiradial_spiral_energy_aux3},\ref{eqn::varphiradial_spiral_energy_aux4},\ref{eqn::varphiradial_spiral_energy_aux5},\ref{eqn::varphiradial_spiral_energy_aux6}) into~\eqref{eqn::varphiradial_spiral_energy_aux2}, we obtain~\eqref{eqn::varphiradial_spiral_energy} as desired.
\end{proof}

\subsection{Properties of Jacobi theta functions}
\label{subsec::JacobiTheta}
\begin{lemma}\label{lem::Theta_basic}
\begin{itemize}
\item The functions $\Theta_1$ and $\Theta_3$ satisfy the following heat equations:
\begin{equation}\label{eqn::Theta_heat}
	\partial_r \Theta_1(r; z) = \partial_z^2 \Theta_1(r; z), \quad 
	\partial_r \Theta_3(r; z) = \partial_z^2 \Theta_3(r; z).
\end{equation}
\item The function $\Theta_1(r; \cdot)$ has period $2\pi$ while the function $\Theta_3(r; \cdot)$ has anti-period $2\pi$:
\begin{equation} \label{eqn::Theta_period}
	\Theta_1(r; z)=-\Theta_1(r; z+2\pi), \quad \Theta_3(r; z)=\Theta_3(r; z+2\pi).
\end{equation}
\item The functions $\Theta_1$ and $\Theta_3$ satisfy the following imaginary shift identities:
\begin{equation}\label{eqn::Theta_shift}
	\Theta_1(r; z + \ii r) = \ii \ee^{\frac{r}{4} - \frac{\ii z}{2}} \Theta_3(r; z), \quad 
	\Theta_3(r; z + \ii r) = \ii \ee^{\frac{r}{4} - \frac{\ii z}{2}} \Theta_1(r; z).
\end{equation}
\item The functions $\Theta_1$ and $\Theta_3$ have the following asymptotics as $r\to \infty$: for $x\in \R$, we have
\begin{align} \label{eqn::Theta_asymptotic}
	\Theta_1(r; x)= 2 \ee^{-r/4} \sin\left(\frac{x}{2}\right) (1+O(1)\ee^{-2r}),\quad
	\Theta_3(r; x)= 1 + O(1)\ee^{-r},
\end{align}
where the $O(1)$ terms have upper bounds independent of $x$.
\end{itemize}
\end{lemma}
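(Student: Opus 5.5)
We prove all four items at once by passing from the product expansions~\eqref{eqn::JacobiTheta_productexpansion} to Fourier series via the Jacobi triple product identity. Write $q=\ee^{-r}$ and $w=\ee^{\ii z}$. The triple product identity $\prod_{m\ge1}(1-q^{2m})(1+q^{2m-1}w)(1+q^{2m-1}w^{-1})=\sum_{k\in\Z}q^{k^2}w^k$, with $w$ replaced by $-w$, gives
\begin{align*}
\Theta_3(r;z)=\sum_{k\in\Z}(-1)^k\ee^{-k^2 r}\ee^{\ii kz}.
\end{align*}
For $\Theta_1$, we write $2\sin(z/2)=-\ii(w^{1/2}-w^{-1/2})$. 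Applying the triple product identity with $q^{2m-1}w$ replaced by $-q^{2m}w$, equivalently substituting $w\mapsto -qw$ and multiplying by the prefactor, gives
\begin{align*}
\Theta_1(r;z)=-\ii\sum_{k\in\Z}(-1)^k\ee^{-(k+\frac12)^2 r}\ee^{\ii(k+\frac12)z}=2\sum_{k\ge0}(-1)^k\ee^{-(k+\frac12)^2r}\sin\big((k+\tfrac12)z\big).
\end{align*}
This is the standard $\vartheta_1$ with nome $q$. The one point to check is the sign and normalisation: expand the lowest-order terms in $q$ on both sides to confirm the factor $2\ee^{-r/4}\sin(z/2)$.

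\textbf{Heat equations.} Both series converge absolutely and locally uniformly in $(r,z)\in(0,\infty)\times\C$, together with all derivatives, because of the Gaussian factors $\ee^{-k^2r}$. So we may differentiate term by term. For $\Theta_3$, the $k$-th term satisfies $\partial_r=-k^2=\partial_z^2$. For $\Theta_1$, the $k$-th term satisfies $\partial_r=-(k+\frac12)^2=\partial_z^2$. This gives~\eqref{eqn::Theta_heat}.

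\textbf{Periodicity.} This is immediate from the product form. The factors involving $\ee^{\pm\ii z}$ are $2\pi$-periodic, so $\Theta_3(r;z+2\pi)=\Theta_3(r;z)$. In $\Theta_1$ only $\sin(z/2)$ changes, and it changes sign under $z\mapsto z+2\pi$. Note that $\Theta_1$ is anti-periodic and $\Theta_3$ is periodic, which is what the formulas in~\eqref{eqn::Theta_period} state.

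\textbf{Imaginary shifts.} These follow from the series. Replacing $z$ by $z+\ii r$ multiplies $w^k$ by $q^k$. For $\Theta_3$ this gives
\begin{align*}
\Theta_3(r;z+\ii r)=\sum_k(-1)^kq^{k^2+k}w^k=\ee^{r/4}w^{-1/2}\sum_k(-1)^kq^{(k+\frac12)^2}w^{k+\frac12}=\ii\,\ee^{\frac r4-\frac{\ii z}{2}}\Theta_1(r;z).
\end{align*}
For $\Theta_1$, the same substitution gives $q^{(k+\frac12)^2+(k+\frac12)}=q^{(k+1)^2-\frac14}$. After the reindexing $k+1\mapsto k$, this produces $\ii\,\ee^{\frac r4-\frac{\ii z}{2}}\Theta_3(r;z)$, where the sign is fixed by $-\ii\cdot(-1)=\ii$. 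The main obstacle in the whole lemma is this sign and branch bookkeeping (choices of $w^{1/2}$, reindexing). I would double-check it by comparing the leading terms as $q\to0$ at a convenient value such as $z=0$.

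\textbf{Asymptotics.} For real $x$ we have $|\ee^{\pm\ii x}|=1$, so each factor satisfies $|\log(1-q^{2m}\ee^{\pm\ii x})|\le Cq^{2m}$ and $|\log(1-q^{2m})|\le Cq^{2m}$ uniformly in $x$, once $r$ is large enough. Summing over $m$ shows the infinite product in $\Theta_1$ equals $1+O(\ee^{-2r})$ uniformly in $x$. The same estimate with $q^{2m-1}$ shows the product defining $\Theta_3$ equals $1+O(\ee^{-r})$, the dominant contribution being the $m=1$ factor of size $\ee^{-r}$. This gives~\eqref{eqn::Theta_asymptotic}.
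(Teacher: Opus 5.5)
Your proof is correct, but for the heat equations and the imaginary shifts it takes a different route from the paper. The paper simply asserts that \eqref{eqn::Theta_heat}, \eqref{eqn::Theta_period} and \eqref{eqn::Theta_shift} follow directly from the product expansion \eqref{eqn::JacobiTheta_productexpansion}. It obtains \eqref{eqn::Theta_asymptotic} by sandwiching the real, positive products between $\prod_m(1-\ee^{-2mr})^3$ and $\prod_m(1+\ee^{-2mr})^3$, with analogous bounds in $\ee^{-(2m-1)r}$ for $\Theta_3$. Your periodicity argument and your logarithm bounds are the same in substance.

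The real gain of your Fourier-series detour is the heat equation. Each mode is a separable heat solution, whereas from the product alone the identity $\partial_r\log\Theta=\partial_z^2\log\Theta+(\partial_z\log\Theta)^2$ is not visible term by term and would need an elliptic-function argument. So your route supplies the justification the paper leaves implicit.

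For the shifts, the product is the shorter path and avoids the sign bookkeeping you flag. The map $z\mapsto z+\ii r$ turns $1-\ee^{-(2m-1)r}\ee^{-\ii z}$ into $1-\ee^{-(2m-2)r}\ee^{-\ii z}$ and $1-\ee^{-(2m-1)r}\ee^{\ii z}$ into $1-\ee^{-2mr}\ee^{\ii z}$. The leftover factor $1-\ee^{-\ii z}=2\ii\,\ee^{-\ii z/2}\sin(z/2)$ then gives $\Theta_3(r;z+\ii r)=\ii\,\ee^{\frac r4-\frac{\ii z}{2}}\Theta_1(r;z)$ at once. The same works for $\Theta_1$, using $2\ii\sin(\frac{z+\ii r}{2})=-\ee^{\frac r2-\frac{\ii z}{2}}(1-\ee^{-r}\ee^{\ii z})$.

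Likewise, you need no leading-order normalisation check for your $\Theta_1$ series. The substitution $w\mapsto -\ee^{-r}w$ gives exactly $(1-w^{-1})\prod_m(1-\ee^{-2mr})(1-\ee^{-2mr}w)(1-\ee^{-2mr}w^{-1})=\sum_k(-1)^k\ee^{-(k^2+k)r}w^k$. Multiplying by $-\ii\,\ee^{-r/4}w^{1/2}$, with $w^{1/2}:=\ee^{\ii z/2}$, yields your formula with the correct sign.
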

\begin{proof}
Equations~(\ref{eqn::Theta_heat},\ref{eqn::Theta_period},\ref{eqn::Theta_shift}) directly follow from~\eqref{eqn::JacobiTheta_productexpansion}. By~\eqref{eqn::JacobiTheta_productexpansion}, we have
\begin{align*}
	\frac{\Theta_1(r; x)}{2 \ee^{-r/4} \sin\left(\frac{x}{2}\right)} = & \prod_{m=1}^{\infty} \left(1-\ee^{-2mr} \right) \left(1 - 2\ee^{-2mr}\cos x +\ee^{-4mr} \right)
	\in  \left(\prod_{m=1}^{\infty} \left(1-\ee^{-2mr} \right)^3,\prod_{m=1}^{\infty}\left(1+\ee^{-2mr} \right)^3\right), \\
	\Theta_3(r; x)= & \prod_{m=1}^\infty \left(1 - \ee^{-2mr}\right) \left(1 - 2 \ee^{-(2m-1)r} \cos(x) + \ee^{-(4m-2)r} \right)\\
	\in & \left(\prod_{m=1}^{\infty} \left(1-\ee^{-2mr} \right) \left(1 - \ee^{-(2m-1)r}\right)^2,\prod_{m=1}^{\infty} \left(1-\ee^{-2mr} \right) \left(1 + \ee^{-(2m-1)r}\right)^2\right),
\end{align*}	
which gives~\eqref{eqn::Theta_asymptotic} as desired.
\end{proof}

\begin{lemma}\label{lem::Theta_bc}
The functions $\Theta_1(r; \cdot)$ and $\Theta_3(r; \cdot)$ are analytic and have no zeros in the open strip $\mathbb{S}_r$. Consequently, one can define single-valued, continuous branch of the argument functions $\arg\Theta_1$ and $\arg\Theta_3$ in $\mathbb{S}_r$. These functions are harmonic in $\mathbb{S}_r$ and can be continuously extended to the boundary with the following boundary data: 
\begin{align} \label{eqn::Theta_bc}
\begin{split}
	&\begin{cases}
		\arg \Theta_1(r; x)= -\ell \pi, \quad x\in (2\ell\pi,2(\ell+1)\pi), \quad \ell\in \Z;\\
		\arg \Theta_1(r; x+\ii r)=\frac{\pi}{2}-\frac{x}{2}, \quad x\in\R;
	\end{cases}\\
	&\begin{cases}
		\arg \Theta_3(r; x)=0, \quad x\in\R;\\
		\arg \Theta_3(r; x+\ii r)=\frac{\pi}{2}-\frac{x}{2}+\ell \pi, \quad x\in (2\ell\pi,2(\ell+1)\pi), \quad \ell\in \Z.
	\end{cases}	
\end{split}
\end{align}
\end{lemma}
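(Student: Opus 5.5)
The plan is to read everything off the product expansion~\eqref{eqn::JacobiTheta_productexpansion}. Both products converge locally uniformly on $\C$, because $\sum_m \ee^{-(2m-1)r}|\ee^{\pm\ii z}|<\infty$ uniformly on compacts. Hence $\Theta_1(r;\cdot)$ and $\Theta_3(r;\cdot)$ are entire, and their zeros are exactly the zeros of the individual factors.

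\textbf{Zeros.} For $\Theta_1$ the zeros come from $\sin(z/2)=0$, i.e.\ $z\in 2\pi\Z$, and from $\ee^{\pm\ii z}=\ee^{2mr}$, i.e.\ $\Im z=\pm 2mr$ with $m\ge1$. For $\Theta_3$ they come from $\ee^{\pm\ii z}=\ee^{(2m-1)r}$, i.e.\ $\Im z=\pm(2m-1)r$. The values $0$, $\pm 2mr$ and $\pm(2m-1)r$ are never in $(0,r)$, so neither function vanishes on $\S_r$. Since $\S_r$ is simply connected, there are single-valued holomorphic branches of $\log\Theta_1$ and $\log\Theta_3$ on $\S_r$. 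Their imaginary parts are continuous, harmonic branches of $\arg\Theta_1$ and $\arg\Theta_3$, determined up to an additive constant in $2\pi\Z$, which we fix by normalization.

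\textbf{Boundary values on $\R$.} For real $x$, each factor $1-\ee^{-2mr}$ is positive. The factor $(1-q\ee^{\ii x})(1-q\ee^{-\ii x})=|1-q\ee^{\ii x}|^2$ is positive for $0<q<1$. Thus $\Theta_3(r;x)>0$, and we normalize $\arg\Theta_3=0$ on $\R$. Similarly $\Theta_1(r;x)=2\ee^{-r/4}\sin(x/2)\cdot(\text{positive})$. On the bottom boundary the function $\arg\Theta_1$ is locally constant away from the zeros $2\pi\Z$, and the sign of $\sin(x/2)$ on $(2\ell\pi,2(\ell+1)\pi)$ is $(-1)^\ell$.

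To fix the jumps, note that near a simple zero $z_0=2\pi\ell$ on the boundary we have $\Theta_1\approx c(z-z_0)$ with $c$ real. Going from the right of $z_0$ to its left through the upper half-disc, $\arg(z-z_0)$ increases by $\pi$. So $\arg\Theta_1$ on the interval to the left of $z_0$ equals its value on the right plus $\pi$; equivalently it decreases by $\pi$ when passing each $2\pi\ell$ to the right. Normalizing $\arg\Theta_1=0$ on $(0,2\pi)$ then gives $\arg\Theta_1=-\ell\pi$ on $(2\ell\pi,2(\ell+1)\pi)$. This is consistent with the sign $(-1)^\ell$. Continuity of the extension away from the zeros follows from holomorphy across $\R$.

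\textbf{Top boundary.} Use the shift identities~\eqref{eqn::Theta_shift}. For real $x$, $\Theta_1(r;x+\ii r)=\ii\ee^{r/4-\ii x/2}\Theta_3(r;x)$ with $\Theta_3(r;x)>0$. Its argument is therefore $\pi/2-x/2$ modulo $2\pi$, and this expression is continuous in $x$. It thus agrees with the continuous extension of the branch up to one global constant in $2\pi\Z$.

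To pin the constant, I compare the two boundaries along a vertical segment, say $\Re z=\pi$, where $\Theta_1\ne0$ on $[0,r]$. The tricky point is showing this constant is $0$ rather than some other multiple of $2\pi$. I would use the asymptotic factorization: the factor $\sin(z/2)$ contributes argument change $\arg\sin((\pi+\ii t)/2)=\arg\cosh(t/2)=0$ along the segment. The $m$-products contribute, for $0<t<r$, factors $1-\ee^{-2mr}\ee^{\pm\ii(\pi+\ii t)}=1+\ee^{-2mr\mp t}$, which are positive reals. Hence $\arg\Theta_1(r;\pi+\ii t)=0$ for all $t\in[0,r]$. At $x=\pi$ the formula gives $\pi/2-\pi/2=0$, matching.

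For $\Theta_3$ on the top boundary, $\Theta_3(r;x+\ii r)=\ii\ee^{r/4-\ii x/2}\Theta_1(r;x)$. Combining the argument $\pi/2-x/2$ of the prefactor with the bottom values $\arg\Theta_1(r;x)=-\ell\pi$ gives $\pi/2-x/2-\ell\pi$ up to a global multiple of $2\pi$, with jumps of $\pi$ at $2\pi\Z$. The sign of $\ell$ in the claimed formula $+\ell\pi$ must be settled by the same local half-disc analysis, now at the top boundary zeros $2\pi\ell+\ii r$ approached from below. There, passing left to right along the segment below $z_0$, $\arg(z-z_0)$ runs over the lower half-disc. So the jump has the opposite sign to the bottom case, $+\pi$, giving $+\ell\pi$.

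The normalization is checked on $\Re z=\pi$. For real $t$ we have $\Theta_3(r;\pi+\ii t)>0$, since each factor becomes $1+\ee^{-(2m-1)r\mp t}>0$. At $x=\pi$ with $\ell=0$ the formula gives $0$, which matches. This sign and constant bookkeeping is the only delicate step.
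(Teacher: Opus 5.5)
Your proof is correct. It uses the same ingredients as the paper: the product expansion, the shift identities~\eqref{eqn::Theta_shift}, and the reality of $\Theta_1,\Theta_3$ on vertical lines through odd multiples of $\pi$. The difference is in how you resolve the integer ambiguities between consecutive boundary intervals.

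The paper never analyzes the boundary zeros. It uses the segment $\{(2\ell+1)\pi+\ii y: 0\le y\le r\}$ for \emph{every} $\ell\in\Z$. On that segment $\Theta_1=2(-1)^\ell\ee^{-r/4}\cosh(y/2)\prod(\cdots)$ is real and nonzero, and $\Theta_3>0$. For $\Theta_1$, this transports the top value $\frac{\pi}{2}-\frac{(2\ell+1)\pi}{2}=-\ell\pi$ straight down to the $\ell$-th bottom interval. For $\Theta_3$, it transports the value $0$ from $\R$ up to $(2\ell+1)\pi+\ii r$, which pins the $2\pi\Z$-ambiguity on each top interval separately.

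You instead normalize once on $\Re z=\pi$ and propagate from interval to interval using the local winding at the simple boundary zeros $2\pi\ell$ and $2\pi\ell+\ii r$. The sign of each $\pi$-jump is set by whether the strip lies above or below the zero, and your signs are right. The paper's route is shorter and needs no orientation bookkeeping or simplicity check. Yours makes explicit the mechanism producing the jumps of $\pi$ in the boundary data. You also derive the absence of zeros in $\mathbb{S}_r$ from the product formula, which the paper only asserts. For completeness, state that the zero of $\Theta_3$ at $2\pi\ell+\ii r$ is simple: it comes only from the $m=1$ factor $1-\ee^{-r}\ee^{-\ii z}$.

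One sentence needs fixing. In the $\Theta_3$ paragraph, the shift identity gives $\frac{\pi}{2}-\frac{x}{2}-\ell\pi$ only modulo $2\pi$ pointwise, and the integer may change at every zero $2\pi\ell+\ii r$. It is not correct up to a \emph{global} multiple of $2\pi$: the true value $\frac{\pi}{2}-\frac{x}{2}+\ell\pi$ differs from it by $2\ell\pi$. Your subsequent jump computation is exactly what settles these interval-dependent integers, so the argument stands once that phrase is corrected.
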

\begin{proof}
Let us check~\eqref{eqn::Theta_bc} by tracking the continuous variation of the argument within the strip $\mathbb{S}_r$. 
First, let us consider $\Theta_3(r; \cdot)$ on the boundary at bottom ($y=0$) and $\Theta_1(r; \cdot)$ on the boundary at top ($y=r$). Since $\Theta_3(r; x) > 0$ for all $x  \in \mathbb{R}$, we have $\arg \Theta_3(r; x) = 0$. By the shift identity~\eqref{eqn::Theta_shift}, we have $\arg \Theta_1(r; x+\ii r) = \frac{\pi}{2} - \frac{x}{2}$.
\medbreak
Next, let us consider $\arg \Theta_1(r; \cdot)$ on the boundary at bottom ($y=0$). Tracking the argument along the vertical segment $z = (2\ell+1)\pi + \ii y$ for $y \in (0, r)$:
\begin{equation*}
	\Theta_1(r; (2\ell+1)\pi + \ii y) = 2 (-1)^\ell \ee^{-r/4} \cosh(y/2) \prod_{m=1}^\infty \left(1 - \ee^{-2mr}\right) \left(1 + \ee^{-2mr-y}\right) \left(1 + \ee^{-2mr+y}\right) \in \R\setminus \{0\},
\end{equation*}
we see that the argument $\Theta_1(r; \cdot)$ must be constant along this path. Thus
\begin{equation*}
	\arg \Theta_1(r; (2\ell+1)\pi)=\arg \Theta_1(r; (2\ell+1)\pi+\ii r)=-\ell\pi.
\end{equation*}
Since $\Theta_1(r; x)$ has no zeros in the open interval $(2\ell\pi, 2(\ell+1)\pi)$, its argument remains constantly $-\ell\pi$ throughout this interval.
\medbreak
Finally, let us consider $\arg \Theta_3(r; \cdot)$ on the boundary at top ($y=r$). From the shift identity~\eqref{eqn::Theta_shift}, we have
\begin{equation} \label{eqn::Theta_bc_aux1}
	\arg \Theta_3(r; x+\ii r)\equiv \frac{\pi}{2} - \frac{x}{2} +\arg \Theta_1(r; x)=\frac{\pi}{2} - \frac{x}{2}-\ell\pi \quad (\mathrm{mod} \; 2\pi), \quad x\in (2\ell\pi, 2(\ell+1)\pi).
\end{equation}
To find the exact continuous branch, we evaluate $\Theta_3$ along the vertical line $z = (2\ell+1)\pi + \ii y$:
\begin{equation*}
	\Theta_3(r; (2\ell+1)\pi + \ii y) = \prod_{m=1}^\infty \left(1 - \ee^{-2mr}\right) \left(1 + \ee^{-(2m-1)r-y}\right) \left(1 + \ee^{-(2m-1)r+y}\right) >0. 
\end{equation*}
Hence $\Theta_3$ is strictly positive along this line and we conclude that 
\[
\arg \Theta_3(r; (2\ell+1)\pi + \ii r) = \arg \Theta_3(r; (2\ell+1)\pi) = 0.
\]
Plugging into~\eqref{eqn::Theta_bc_aux1}, we obtain $\arg \Theta_3(r; x+\ii r) = \frac{\pi}{2} - \frac{x}{2} + \ell\pi$ for $x \in (2\ell\pi, 2(\ell+1)\pi)$ as desired.
\end{proof}

In the following lemma, we give equivalent expressions for $\Theta_1$ and $\Theta_3$. 
This will be used in the proof of Proposition~\ref{prop::LZcro_LZann_mono}.
\begin{lemma}\label{lem::JacobiTheta_technical}
For $r>0$ and $z\in\C$, we have
\begin{align}\label{eqn::Jac_theta1}
\Theta_1(r;z)=&-\ii\sqrt{\frac{\pi}{r}}\ee^{-\frac{z^2}{4r}}\Theta_1\left(\frac{\pi^2}{r};\frac{\ii\pi z}{r}\right)\notag\\
=&2\sqrt{\frac{\pi}{r}}\ee^{-\frac{z^2}{4r}} \ee^{-\frac{\pi^2}{4r}} \sinh\left(\frac{\pi z}{2r}\right)\prod_{m=1}^\infty\left(1-\ee^{\frac{-2m\pi^2}{r}}\right)\left(1-\ee^{\frac{-2m\pi^2}{r}-\frac{\pi z}{r}}\right)\left(1-\ee^{\frac{-2m\pi^2}{r}+\frac{\pi z}{r}}\right);\\
\label{eqn::Jac_theta2}
\Theta_3(r;z)=&-\ii\ee^{\frac{\ii z}{2}-\frac{r}{4}}\Theta_1(r;z+\ii r)\notag\\
=&2\sqrt{\frac{\pi}{r}}\ee^{-\frac{z^2}{4r}} \ee^{-\frac{\pi^2}{4r}} \cosh\left(\frac{\pi z}{2r}\right)\prod_{m=1}^\infty\left(1-\ee^{\frac{-2m\pi^2}{r}}\right)\left(1+\ee^{\frac{-2m\pi^2}{r}-\frac{\pi z}{r}}\right)\left(1+\ee^{\frac{-2m\pi^2}{r}+\frac{\pi z}{r}}\right). 
\end{align}
\end{lemma}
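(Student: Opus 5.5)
The plan is to derive \eqref{eqn::Jac_theta1} from the classical Jacobi imaginary transformation, proved self-containedly by Poisson summation, and then obtain \eqref{eqn::Jac_theta2} from \eqref{eqn::Jac_theta1} together with the shift identity \eqref{eqn::Theta_shift}.

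\textbf{Step 1: series form of $\Theta_1$.} Set the nome $q=\ee^{-r}$. By the Jacobi triple product identity, the product \eqref{eqn::JacobiTheta_productexpansion} equals the series
\[
\Theta_1(r;z)=-\ii\sum_{k\in\Z}(-1)^k\,\ee^{-r(k+\frac12)^2}\,\ee^{\ii(k+\frac12)z}.
\]
I will check the prefactor by matching the terms $k=0,-1$, which give $2\ee^{-r/4}\sin(z/2)$ at leading order. This identity holds for all $z\in\C$ by analyticity.

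\textbf{Step 2: Poisson summation.} Apply Poisson summation to $f(t)=\ee^{-r(t+\frac12)^2+\ii(t+\frac12)(z+\pi)}$, using $(-1)^k\ee^{\ii(k+\frac12)z}=\ee^{-\ii\pi/2}\ee^{\ii(k+\frac12)(z+\pi)}$. The Fourier transform of a Gaussian is again a Gaussian:
\[
\hat f(2\pi m)=\sqrt{\pi/r}\;\ee^{\ii\pi m}\,\ee^{-(z+\pi-2\pi m)^2/(4r)}.
\]
Expanding the square produces the factor $\ee^{-z^2/(4r)}$ and leaves a theta series in the new nome $\ee^{-\pi^2/r}$ with argument $\ii\pi z/r$. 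Re-indexing $m\mapsto k$ and comparing with the series of Step 1 should give
\[
\Theta_1(r;z)=-\ii\sqrt{\pi/r}\,\ee^{-\frac{z^2}{4r}}\,\Theta_1\!\left(\pi^2/r;\ \ii\pi z/r\right).
\]
This is the first line of \eqref{eqn::Jac_theta1}.

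\textbf{Step 3: second line of \eqref{eqn::Jac_theta1}.} Insert the product \eqref{eqn::JacobiTheta_productexpansion} with $r$ replaced by $\pi^2/r$ and $z$ by $\ii\pi z/r$. Then $\sin(\ii\pi z/(2r))=\ii\sinh(\pi z/(2r))$, so $-\ii\cdot 2\ii=2$. Also $\ee^{\pm\ii(\ii\pi z/r)}=\ee^{\mp\pi z/r}$, which gives exactly the stated product.

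\textbf{Step 4: \eqref{eqn::Jac_theta2}.} The first line is the shift identity \eqref{eqn::Theta_shift} solved for $\Theta_3$. For the second line, apply the first line of \eqref{eqn::Jac_theta1} at the point $z+\ii r$.
\begin{itemize}
\item The Gaussian factor becomes $\ee^{-(z+\ii r)^2/(4r)}=\ee^{-\frac{z^2}{4r}-\frac{\ii z}{2}+\frac r4}$, which cancels the prefactor $\ee^{\frac{\ii z}{2}-\frac r4}$.
\item The argument becomes $\ii\pi z/r-\pi$. This gives $\sinh(\frac{\pi z}{2r}-\frac{\ii\pi}{2})=-\ii\cosh(\frac{\pi z}{2r})$ and $\ee^{\pm\ii(\ii\pi z/r-\pi)}=-\ee^{\mp\pi z/r}$, which turns every product factor $(1-\cdot)$ into $(1+\cdot)$.
\item Collecting constants, $-\ii\cdot(-\ii)\cdot(-\ii)\cdot 2=2\ii$ comes out wrong by a factor of $\ii$, so I will re-derive the sign conventions and argument shifts here carefully. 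A clean cross-check is that both sides must be real and positive for real $z$, since $\Theta_3(r;x)>0$ by the proof of Lemma~\ref{lem::Theta_bc}.
\end{itemize}

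\textbf{Main obstacle.} The only real obstacle is bookkeeping of phases and constants, namely the triple-product normalization in Step 1 and the branch of $\sqrt{\pi/r}$. I will fix all of these by testing the final formulas at a special point: the derivative at $z=0$ for \eqref{eqn::Jac_theta1}, and $z=0$ itself for \eqref{eqn::Jac_theta2}. Positivity of both sides for real $z$ confirms the signs.
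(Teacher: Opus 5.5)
Your route works, and the $2\ii$ in Step 4 is a bookkeeping slip, not a problem with the statement.

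At $z+\ii r$ the new argument is $w=\frac{\ii\pi z}{r}-\pi$, so $\sin(w/2)=\sin\bigl(\frac{\ii\pi z}{2r}-\frac{\pi}{2}\bigr)=-\cosh\bigl(\frac{\pi z}{2r}\bigr)$. If you go through $\sin(\ii x)=\ii\sinh x$, two corrections are needed:
\begin{itemize}
\item The argument is $x=\frac{\pi z}{2r}+\frac{\ii\pi}{2}$, with a plus sign, because $-\frac{\pi}{2}=\ii\cdot\frac{\ii\pi}{2}$. Hence $\sinh x=\ii\cosh\bigl(\frac{\pi z}{2r}\bigr)$, not $-\ii\cosh$.
\item The factor $\ii$ from $\sin(\ii x)=\ii\sinh x$ must be kept alongside the $-\ii$ prefactor of the first line of \eqref{eqn::Jac_theta1}. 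These are the two factors that cancel in your Step 3.
\end{itemize}
Your product $(-\ii)\cdot(-\ii)\cdot(-\ii)\cdot 2$ should therefore read $(-\ii)\cdot(-\ii)\cdot\ii\cdot\ii\cdot 2=2$. Equivalently, starting from the second line of \eqref{eqn::Jac_theta1}, you get $(-\ii)\cdot 2\cdot\ii=2$. The Gaussian prefactors cancel and the factors $(1-\cdot)$ become $(1+\cdot)$, as you say.

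Steps 1--3 check out. In Step 2 the Poisson sum naturally produces $\Theta_1(\pi^2/r;-\ii\pi z/r)$, and oddness in the second variable (the substitution $k\mapsto -1-k$) gives exactly the stated prefactor $-\ii$. There is also no branch issue: for $r>0$, $\int_{\R}\ee^{-rs^2+\ii sw}\,\ud s=\sqrt{\pi/r}\,\ee^{-w^2/(4r)}$ with the positive root for every $w\in\C$.

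The paper argues differently. It forms the ratio $H(z)$ of the two sides of the first line of \eqref{eqn::Jac_theta1}, which is entire because both theta factors have simple zeros exactly on $2\pi\Z+2\ii r\Z$. It checks $2\pi$- and $2\ii r$-periodicity from \eqref{eqn::Theta_period} and \eqref{eqn::Theta_shift}, concludes by Liouville that $H$ is constant, and fixes $H(0)=1$ by comparing $\partial_z$ at $0$ using the quoted Dedekind eta transformation \eqref{eqn::eta}. Then \eqref{eqn::Jac_theta2} follows from the shift identity, as in your Step 4.

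Your Poisson-summation argument proves the transformation outright with no undetermined constant, and it even yields \eqref{eqn::eta} as a corollary by differentiating at $z=0$. Its cost is the Jacobi triple product, needed to pass from the paper's product definition to the series. The paper's argument stays entirely within the product form and reuses the same ``doubly periodic without poles implies constant'' mechanism behind Proposition~\ref{prop::annulusBPZ_kappa4} and Lemma~\ref{lem::identity_H1}. Its price is importing the eta transformation, whose standard proofs are of the same depth as the theta transformation you derive.
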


\begin{proof}
First, we claim that 
\begin{equation}\label{eqn::eta}
\ee^{-\frac{\pi^2}{12 r}}\prod_{m=1}^\infty\left(1-\ee^{\frac{-2m\pi^2}{r}}\right)=\sqrt{\frac{r}{\pi}}\ee^{-\frac{r}{12}}\prod_{m=1}^\infty\left(1-\ee^{-2mr}\right).
\end{equation}
The identity~\eqref{eqn::eta} is precisely the modular transformation formula for the Dedekind eta function. Indeed, recalling that the Dedekind eta function is defined as
\[\eta(\tau)=\ee^{\frac{\pi\ii\tau}{12}}\prod_{m=1}^{\infty}(1-\ee^{2\pi\ii m\tau}),\quad \tau\in\HH,\]
and satisfies the standard transformation law
$\eta(-1/\tau)=\sqrt{-\ii\tau}\eta(\tau)$.
We obtain~\eqref{eqn::eta} by setting $\tau=\frac{\ii r}{\pi}$ and taking the absolute value. 
\medbreak
The identities~\eqref{eqn::Jac_theta1} and~\eqref{eqn::Jac_theta2} are precisely the Jacobi imaginary transformation applied to our scaled theta functions. For completeness, we give a short proof of~\eqref{eqn::Jac_theta1}; then~\eqref{eqn::Jac_theta2} follows immediately from the shift identity~\eqref{eqn::Theta_shift}. For~\eqref{eqn::Jac_theta1}, define 
\[H(z)=-\ii\sqrt{\frac{\pi}{r}}\ee^{-\frac{z^2}{4r}}\frac{\Theta_1\left(\frac{\pi^2}{r};\frac{\ii\pi z}{r}\right)}{ \Theta_1(r;z)},\quad z\in\C.\]
Note that $H$ has no poles. Moreover, using the quasi-periodicity~\eqref{eqn::Theta_period} and the imaginary shift identity~\eqref{eqn::Theta_shift}, one checks directly that $H$ is doubly periodic:
\[H(z+2\pi)=H(z),\quad H(z+2\ii r)=H(z),\quad \text{ for }z\in\C.\]
This implies that $H$ is a constant due to Liouville's theorem. By~\eqref{eqn::eta} and 
\begin{equation} \label{eqn::varphiann_energy_aux6}
	\lim_{\eps\to 0} \Theta_1(r; \eps)/\eps = \partial_z \Theta_1(r; 0) = \ee^{-\frac{r}{4}} \prod_{m=1}^{\infty} (1-\ee^{-2mr})^3, 
\end{equation}we obtain $H(0)=1$. This gives~\eqref{eqn::Jac_theta1} and completes the proof.
\end{proof}

Recall that $H_1(r;z) = 2 \partial_z \log \Theta_1(r;z), $ is defined in~\eqref{eqn::H1H3_def}. We derive a property for $H_1$ in the following lemma. It will be used in Section~\ref{sec::GFF_levellines_annulus}.
	\begin{lemma} \label{lem::identity_H1}
		For $w_1,w_2\in \S_r$, we have
		\begin{align} \label{eqn::identity_H1}
			\begin{split}	
				& \Im H_1(r;w_1)\Im H_1(r;w_2) \\
				=& \Re\Big(
				-\frac12\left(\partial_zH_1(r;w_1-w_2)-\partial_zH_1(r;w_1-\overline{w_2})\right)
				-\frac14\left(H_1(r;w_1-w_2)^2-H_1(r;w_1-\overline{w_2})^2\right)\\
				& \qquad
				+\frac12H_1(r;w_1-w_2)\left(H_1(r;w_1)-H_1(r;w_2)\right)
				-\frac12H_1(r;w_1-\overline{w_2})\left(H_1(r;w_1)-\overline{H_1(r;w_2)}\right)
				\Big).	
			\end{split}
		\end{align}
	\end{lemma}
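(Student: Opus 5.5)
We prove \eqref{eqn::identity_H1} in the same spirit as Lemma~\ref{lem::JacobiTheta_technical}: fix $w_2\in\S_r$, regard both sides as functions of $w_1$, and show that their difference is harmonic with the right periodicity and no singularities. Then a Liouville-type argument forces it to vanish.

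The first step is to rewrite the left-hand side without imaginary parts. We use $\Im a\,\Im b=-\frac12\Re(ab-a\overline{b})$. We also use that $\Theta_1(r;\cdot)$ is real on $\R$, so $\overline{H_1(r;\bar w)}=H_1(r;w)$. The left-hand side then becomes $-\frac12\Re\big(H_1(w_1)H_1(w_2)-H_1(w_1)\overline{H_1(w_2)}\big)$. The right-hand side is already the real part of the function
\[
F(w_1)=A(w_1,w_2)-A(w_1,\overline{w_2}),
\]
where
\[
A(w,u)=-\tfrac12\partial_zH_1(w-u)-\tfrac14H_1(w-u)^2+\tfrac12H_1(w-u)\big(H_1(w)-H_1(u)\big).
\]
Here we note that $H_1(\overline{w_2})=\overline{H_1(w_2)}$. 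So it suffices to show that
\[
A(w_1,u)+\tfrac12H_1(w_1)H_1(u)=:B(w_1,u)
\]
satisfies $\Re\big(B(w_1,w_2)-B(w_1,\overline{w_2})\big)=0$. In fact we aim for the stronger statement that $B(w,u)=c(r)$ does not depend on $(w,u)$. Since the $c(r)$ terms cancel in the difference, this gives the identity.

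To show $B$ is constant, we fix $u$ and view $B(\cdot,u)$ as a meromorphic function of $w$.

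\emph{Periodicity.} By \eqref{eqn::Theta_period}, $H_1$ is $2\pi$-periodic. By \eqref{eqn::Theta_shift}, applied twice, $\Theta_1(z+2\ii r)=-\ee^{r-\ii z}\Theta_1(z)$, so $H_1(z+2\ii r)=H_1(z)-2\ii$. Substituting $H_1(w-u)\to H_1(w-u)-2\ii$ and $H_1(w)\to H_1(w)-2\ii$, the change in $B$ is
\[
-\tfrac14\big(-4\ii H_1(w-u)-4\big)+\tfrac12\big(-2\ii(H_1(w)-H_1(u))-2\ii H_1(w-u)-4\big)-\ii H_1(u).
\]
This equals $-\ii H_1(w)+1-2$. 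There is a leftover $-\ii H_1(w)$, so my grouping is not yet right. I would correct it by adjusting the term $\tfrac12H_1(w)H_1(u)$, presumably by symmetrizing to the classical three-term form
\[
H_1(w-u)\big(H_1(w)-H_1(u)\big)-H_1(w)H_1(u)\quad\text{plus }\wp\text{-type terms,}
\]
which is genuinely elliptic. Concretely, the underlying fact is the Weierstrass identity
\[
\big(\zeta(a)+\zeta(b)-\zeta(a+b)\big)^2=\wp(a)+\wp(b)+\wp(a+b),
\]
with $a=w-u$, $b=u$, and $H_1=2\zeta$ minus a linear correction coming from the quasi-periodicity. Rechecking this bookkeeping is the main obstacle. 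Once the correct elliptic combination is identified, double periodicity holds with periods $2\pi$ and $2\ii r$.

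\emph{Poles.} The only possible poles of $B(\cdot,u)$ lie at $w\equiv0$ and $w\equiv u$, modulo the lattice. Near $w=u$ we expand $H_1(\epsilon)=2/\epsilon+O(\epsilon)$ and $\partial_zH_1(\epsilon)=-2/\epsilon^2+O(1)$. The $\epsilon^{-2}$ terms cancel: $1-1=0$. The $\epsilon^{-1}$ terms are $\tfrac12\cdot\tfrac2\epsilon\cdot\epsilon H_1'(u)$, which contributes to order $O(1)$, and no other $1/\epsilon$ term survives. Near $w=0$, the simple pole of $H_1(w)$ is multiplied by $\tfrac12\big(H_1(-u)+H_1(u)\big)=0$, by oddness of $H_1$. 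So $B(\cdot,u)$ is entire and doubly periodic, hence constant by Liouville's theorem.

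Alternatively, avoiding complex constants entirely: define $D(w_1)$ as the left-hand side minus the right-hand side of \eqref{eqn::identity_H1}. This $D$ is real-analytic and harmonic in $w_1$ on $\S_r$ away from $w_2$, and the pole analysis shows it is smooth at $w_2$. It is $2\pi$-periodic. On the bottom boundary $\Im w_1=0$, $D$ vanishes because $\Im H_1=0$ there and $w_1-w_2$ and $w_1-\overline{w_2}$ are conjugate. On the top boundary $\Im w_1=r$, $\Im H_1(w_1)=-1$, which follows from \eqref{eqn::Theta_shift} since $\Theta_1(x+\ii r)=\ii\ee^{r/4-\ii x/2}\Theta_3(x)$. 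We check $D=0$ there using the shift identity to move to $\Theta_3$. The maximum principle on the cylinder $\S_r/2\pi\Z$ then gives $D\equiv0$. I would take this boundary-value route as the primary argument, mirroring the paper's strategy. The top-boundary verification is the computational crux.
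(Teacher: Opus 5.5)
Your boundary-value route is correct and differs from the paper's. Your first route, however, aims at a false statement: the leftover you found is not a bookkeeping slip.

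\textbf{The first route.} Your shift computation is right: $B(w+2\ii r,u)-B(w,u)=-\ii H_1(r;w)-1$, so $B(\cdot,u)$ is not elliptic. It is not entire either. At $w=2\ii rm$ the pole of $H_1(w)$ is multiplied by $\tfrac{1}{2}(H_1(w-u)+H_1(u))=-\ii m\neq0$ for $m\neq0$, so your check at $w=0$ does not transfer ``modulo the lattice''. Carrying the Weierstrass computation through, with $\tfrac{1}{2}H_1=\zeta-\lambda z$ and $\lambda=-\tfrac{1}{2}\LE(r)$, gives
\[
B(w,u)=g(w)+g(u)-\tfrac{3}{2}\LE(r),\qquad g:=\tfrac{1}{4}H_1(r;\cdot)^2+\tfrac{1}{2}\partial_zH_1(r;\cdot)=\partial_r\log\Theta_1(r;\cdot),
\]
where the last equality is \eqref{eqn::Theta_heat}. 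The function $g$ is not constant.

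You also should not alter the term $\tfrac{1}{2}H_1(w)H_1(u)$, since it is forced by the left-hand side. The right move is to subtract $g(w)+g(u)$:
\begin{itemize}
\item $B(\cdot,u)-g$ has matching $2\ii r$-defects, so it is doubly periodic;
\item modulo the lattice its only candidate poles are at $0$ and $u$, and these are removable by your expansions and \eqref{eqn::Laurrent_expand_LE};
\item hence it is constant, equal to its value $g(u)-\tfrac{3}{2}\LE(r)$ at $w=0$, using $B(0,u)=g(u)$ and $g(0)=\tfrac{3}{2}\LE(r)$.
\end{itemize}
The identity then reduces to $\Re\big(g(w_2)-\overline{g(w_2)}\big)=0$, which is trivial.

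The paper in effect uses instead that the defect $-\ii H_1(w)-1$ does not depend on $u$. So $B(\cdot,w_2)-B(\cdot,\overline{w_2})$, whose real part is the paper's $D$, is doubly periodic with removable singularities at $0,w_2,\overline{w_2}$. It is therefore constant, and it is purely imaginary at real $w_1$.

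\textbf{The second route.} This works, with one addition. You must note that $D$ is continuous at the boundary points $w_1\in2\pi\Z$, where $\Im H_1(w_1)$ blows up. This follows from your cancellation at $w=0$, applied with $u=w_2$ and with $u=\overline{w_2}$.

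The top-boundary check is short. For $w_1=x+\ii r$ put $P=H_1(w_1-w_2)$.
\begin{itemize}
\item Since $\overline{w_1-\overline{w_2}}=w_1-w_2-2\ii r$, reality of $H_1$ on $\R$ and \eqref{eqn::H_shift_identities} give $H_1(w_1-\overline{w_2})=\overline{P}-2\ii$ and $\partial_zH_1(w_1-\overline{w_2})=\overline{\partial_zH_1(w_1-w_2)}$.
\item By \eqref{eqn::Theta_shift}, $H_1(w_1)=H_3(x)-\ii$, so $\overline{H_1(w_1)}=H_1(w_1)+2\ii$.
\item Substituting, the right-hand side becomes $(-\Im P-1)+(\Im P+1-\Im H_1(w_2))=-\Im H_1(w_2)$, which equals $\Im H_1(w_1)\Im H_1(w_2)$.
\end{itemize}
The maximum principle on $\S_r/2\pi\Z$ then gives $D\equiv0$.

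Compared with the paper, you trade the $2\ii r$-shift bookkeeping and the removability at $\overline{w_2}$ for a half-shift evaluation on $\R+\ii r$. This is the same mechanism the paper uses in Lemma~\ref{lem::multiannulus_levelline_aux10}. The corrected first route is the most conceptual of the three, because it exhibits the difference of the two sides directly as $\Re\big(g(w_2)-\overline{g(w_2)}\big)$.
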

	\begin{proof}
		Letting
		\begin{align*}
			A(w_1,w_2)=&
			-\frac12\left(\partial_zH_1(r;w_1-w_2)-\partial_zH_1(r;w_1-\overline{w}_2)\right)-\frac14\left(H_1(r;w_1-w_2)^2-H_1(r;w_1-\overline{w}_2)^2\right)\\
			&+\frac12H_1(r;w_1-w_2)\left(H_1(r;w_1)-H_1(r;w_2)\right)-\frac12H_1(r;w_1-\overline{w}_2)\left(H_1(r;w_1)-\overline{H_1(r;w_2)}\right),
		\end{align*}
		and
		\begin{equation*}
			D(w_1,w_2)=\Re A(w_1,w_2)-\Im H_1(r;w_1)\Im H_1(r;w_2),
		\end{equation*}
		we will show that $D(w_1,w_2)=0$.
		\medbreak
		First, let us check the double-periodicity of $D(w_1,w_2)$ in $w_1$. The $2\pi$-periodicity follows immediately from~\eqref{eqn::H_shift_identities}. We check the imaginary period. When $w_1$ is replaced by $w_1+2\ii r$, by~\eqref{eqn::H_shift_identities}, we have
		\begin{align*}
			&\Re A(w_1+2\ii r,w_2)-\Re A(w_1,w_2)
			=\Re\left(\ii\left(H_1(r;w_2)-\overline{H_1(r;w_2)}\right)\right)=-2\Im H_1(r;w_2), \\
			&\Im H_1(r;w_1+2\ii r)\Im H_1(r;w_2)-\Im H_1(r;w_1)\Im H_1(r;w_2)=-2\Im H_1(r;w_2).
		\end{align*}
		Therefore $D(w_1+2\ii r,w_2)=D(w_1,w_2)$, and $D(\cdot,w_2)$ is doubly periodic.
		
		\medbreak
		Next, let us inspect the possible singularities of $D(\cdot,w_2)$. By the double-periodicity, it suffices to inspect the singularities when $w_1\to 0$, $w_1\to w_2$ and $w_1\to \overline{w}_2$. All unlisted terms are analytic in the corresponding local coordinate. Applying~\eqref{eqn::Laurrent_expand_LE}, we have the following observations.
		
		\begin{itemize}
			\item Let $z=w_1\to 0$.
			The possible singular term in $A(w_1,w_2)$ is
			\begin{align*}
				&\frac12H_1(r;w_1)\left(H_1(r;w_1-w_2)-H_1(r;w_1-\overline{w}_2)\right)=-\frac{2\ii\Im H_1(r;w_2)}{z}+O(1).
			\end{align*}
			Thus the possible singular term in $D(w_1,w_2)$ is
			\[
			\Re\left(-\frac{2\ii\Im H_1(r;w_2)}{z}\right)-\Im\left(\frac{2}{z}\right)\Im H_1(r;w_2)=0.
			\]
			Thus $D(\cdot,w_2)$ has no pole when $w_1\to0$.
			\item Let $z=w_1-w_2\to 0$.
			The possible singular terms in $A(w_1,w_2)$ are
			\begin{align*}
				-\frac12\partial_zH_1(r;z)-\frac14H_1(r;z)^2+\frac12H_1(r;z)\left(H_1(r;w_1)-H_1(r;w_2)\right)=O(1).
			\end{align*}
			Thus $D(\cdot,w_2)$ has no pole when $w_1\to w_2$.
			\item Let $z=w_1-\overline{w}_2\to 0$. The possible singular terms in $A(w_1,w_2)$ are
			\begin{align*}
				\frac12\partial_zH_1(r;z)+\frac14H_1(r;z)^2-\frac12H_1(r;z)\left(H_1(r;w_1)-H_1(r;\overline{w}_2)\right)=O(1),
			\end{align*}
			where we used $H_1(r;w_1)-H_1(r;\overline{w}_2)=O(z)$. Thus $D(\cdot,w_2)$ has no pole when $w_1\to \overline{w}_2$.
		\end{itemize}
		Therefore $D(\cdot,w_2)$ is a doubly periodic harmonic function with no singularities, which implies that it is a constant in $w_1$.
		\medbreak
		Finally, let us take the limit $w_1\to x\in \R$. Note that
		\[
		H_1(r;x-\overline{w}_2)=\overline{H_1(r;x-w_2)},\qquad \partial_zH_1(r;x-\overline{w}_2)=\overline{\partial_zH_1(r;x-w_2)}.
		\]
		We have
		\begin{align*}
			A(x,w_2)=&
			-\frac12\left(\partial_zH_1(r;x-w_2)-\overline{\partial_zH_1(r;x-w_2)}\right)-\frac14\left(H_1(r;x-w_2)^2-\overline{H_1(r;x-w_2)^2}\right)\\
			&+\frac12\left(H_1(r;x-w_2)\left(H_1(r;x)-H_1(r;w_2)\right)-\overline{H_1(r;x-w_2)\left(H_1(r;x)-H_1(r;w_2)\right)}\right),
		\end{align*}
		which is purely imaginary. Therefore $\Re A(x,w_2)=0$, and consequently the constant $D(\cdot,w_2)$ is zero. This gives~\eqref{eqn::identity_H1} as desired.
\end{proof}

\subsection{Two examples in the annulus}
\label{subsec::energy_annulus}

\begin{lemma} \label{lem::reg_Dirichlet_annular}
Fix $r>0$ and an even number $n=2N$. For $\bs{\alpha}, \bs{\beta}\in\LX_n$, we write $\bs{x}=\ee^{\ii\bs{\alpha}}$ and $\bs{y}=\ee^{\ii\bs{\beta}-r}$.
Let $\varphiann{n}$ be the bounded harmonic function in annulus $\A_r$ with alternating boundary data~\eqref{eqn::boundarydata}. Then
\begin{align} \label{eqn::varphiann_energy}
	\begin{split}
		\| \varphiann{n} \|_{\nabla(\A_r),\mathrm{reg}}^2= & \frac{\pi}{2r}\left(\sum_{j=1}^{n}(-1)^j (\alpha_j-\beta_j) \right)^2-6n\pi \sum_{m=1}^{\infty} \log \left(  1-\ee^{-2mr} \right) - \frac{n}{2}\pi r \\
		& - 2\pi \sum_{1\le i<j \le n} (-1)^{j-i} \left( \log \left| \Theta_1(r; \beta_{j}-\beta_i) \right| +\log \left| \Theta_1(r; \alpha_{j}-\alpha_i) \right| \right) \\
		& +2 \pi \sum_{i=1}^{n} \sum_{j=1}^{n} (-1)^{j-i} \log \left| \Theta_3(r; \beta_{j}-\alpha_i) \right| .
	\end{split}
\end{align}
Moreover, its regularized Dirichlet energy is related to the partition function $\LZann{n}$ in~\eqref{eqn::LZann_def}: 
\begin{align}\label{eqn::LZann_varphiann}
\LZann{n}(r; \bs{\alpha}, \bs{\beta})=\exp\left(-\frac{1}{4\pi}\| \varphiann{n} \|_{\nabla(\A_r),\mathrm{reg}}^2\right). 
\end{align}
\end{lemma}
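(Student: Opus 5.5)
We plan to mirror the proof of Lemma~\ref{lem::reg_Dirichlet_polygon}, working in the universal cover $\S_r$ via $q$. Since $q'(z)=\ii\ee^{\ii z}$ has modulus $1$ on the bottom line $\R$ and $\ee^{-r}$ on the top line $\R+\ii r$, the covariance rule~\eqref{eqn::reg_Dirichlet_COV} converts the energy on $\A_r$ into the energy of the lifted function over one fundamental domain $[0,2\pi)\times(0,r)$. The conversion adds a correction $-\sum_j\frac{\pi^2}{\pi}\log|q'|$, contributing $n\pi r$ from the $n$ inner jumps. (I will track this constant carefully; it should combine with the other $r$-linear terms to give $-\frac{n}{2}\pi r$.)

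\textbf{Step 1: an explicit formula for the lift.} Using Lemma~\ref{lem::Theta_bc}, the functions $\arg\Theta_1(r;z-\alpha_j)$ and $\arg\Theta_3(r;z-\beta_j-\ii r+\ii r)$ are harmonic in $\S_r$ with piecewise constant or linear boundary data. I would write
\[
\varphiann{n}\circ q(z)=\sum_j(-1)^j\Big(\arg\Theta_1(r;z-\alpha_j)-\arg\Theta_3(r;z-\beta_j)\Big)+a\,\Im z+b .
\]
Each $\Theta_1$-term jumps by $\pi$ at $\alpha_j+2\pi\Z$ on the bottom line. Each $\Theta_3$-term jumps by $\pi$ at $\beta_j+\ii r+2\pi\Z$ on the top line. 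The linear-in-$x$ pieces $-x/2$ appear on the opposite boundary; they cancel in the alternating sum because $\sum_j(-1)^j=0$. The constant $a$ is fixed so that the top values match~\eqref{eqn::boundarydata}. Matching the constant offsets between the top and bottom boundary values forces $a=\frac{1}{2r}\sum_j(-1)^j(\alpha_j-\beta_j)$ (up to sign). The $2\pi$-periodicity of the sum follows from~\eqref{eqn::Theta_period}.

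\textbf{Step 2: expand the energy bilinearly.} As in~\eqref{eqn::reg_Dirichlet_polygon_aux1}, I would expand the energy into diagonal and cross terms. For each pair, I would compute the pairing by Green's formula on the fundamental domain. The integral $\int u\,\partial_\nu v$ is taken over the bottom and top lines minus $\eps$-half-discs, plus the two vertical sides; the vertical sides cancel by periodicity. On the boundary lines $u$ is piecewise constant, or linear in $x$. Writing $\partial_\nu v=-\partial_l\tilde v$, where $\tilde v$ is the harmonic conjugate ($\log|\Theta_\cdot|$), the integral becomes a sum of jumps of $u$ times boundary values of $\log|\Theta|$. This produces $\log|\Theta_1(r;\alpha_k-\alpha_i)|$ for bottom--bottom pairs, $\log|\Theta_1(r;\beta_k-\beta_i)|$ for top--top pairs, and $\log|\Theta_3(r;\beta_k-\alpha_i)|$ for mixed pairs, exactly as in~\eqref{eqn::Dubedat1026}.

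The linear pieces $-x/2$ and $a\Im z$ contribute integrals of $\log|\Theta|$ over a period. These are evaluated from the product expansion~\eqref{eqn::JacobiTheta_productexpansion}: the mean of $\log|1-c\ee^{\ii x}|$ vanishes for $|c|<1$. The term $a^2\cdot 2\pi r$ yields $\frac{\pi}{2r}(\sum(-1)^j(\alpha_j-\beta_j))^2$.

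\textbf{Step 3: the diagonal self-energies.} These are where the constants $-6n\pi\sum_m\log(1-\ee^{-2mr})$ and the $r$-linear terms arise. Near $\alpha_j$, the $\log\eps$ divergence comes from $\log|\Theta_1(r;\eps)|$. By~\eqref{eqn::varphiann_energy_aux6}, this equals $\log\eps-\frac r4+3\sum\log(1-\ee^{-2mr})+o(1)$, and the regularization removes the $\log\eps$. The top jumps are handled by the shift identity~\eqref{eqn::Theta_shift} in the same way. Each of the $2n$ jumps contributes $-\pi\cdot 3\sum_m\log(1-\ee^{-2mr})$ and a multiple of $\pi r$.

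I expect the main obstacle to be this bookkeeping of constants. The delicate quantities are the $r$-linear terms coming from the $-\frac r4$ in $\Theta_1'(0)$, the $\ee^{r/4}$ in the shift identity, the $\log|q'|=-r$ correction, and the cross terms between $a\Im z$ and the $-x/2$ pieces. I would first check them on the case $n=2$ numerically or via the $r\to\infty$ limit against Lemma~\ref{lem::reg_Dirichlet_polygon}.

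Finally, \eqref{eqn::LZann_varphiann} follows by comparing with~\eqref{eqn::LZann_def}. The double sum over $i,j$ of $\Theta_3$ splits into the diagonal product $|\Theta_3(r;\beta_j-\alpha_j)|^{-1/2}$ and the pairs $(\beta_k-\alpha_i)$ and $(\beta_i-\alpha_k)$, using the evenness of $|\Theta_3|$.
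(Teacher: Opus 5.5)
Your route works, but it is organized differently from the paper's. One step in it is false as stated.

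\textbf{What matches the paper.} You use the paper's ingredients: the lift by $q$ with covariance correction $\|\varphiann{n}\|^2_{\nabla(\A_r),\mathrm{reg}}=\|v\|^2_{\mathrm{reg}}-n\pi r$, the explicit lift $v$ (your $a$ is $-\frac{1}{2r}\sum_j(-1)^j(\alpha_j-\beta_j)$), Green's formula with conjugates $\log|\Theta_1|$ and $\log|\Theta_3|$, and \eqref{eqn::varphiann_energy_aux6} to absorb $\log\eps$. Your constant heuristics are also right. Each of the $2n$ jump points gives $-\pi\lim_{\eps\to0}\log|\Theta_1(r;\eps)/\eps|=\frac{\pi r}{4}-3\pi\sum_m\log(1-\ee^{-2mr})$. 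The term $a^2\cdot 2\pi r$ gives the quadratic term, and the cross terms with $\Im z$ do vanish.

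\textbf{Where it differs.} The difference is your bilinear expansion. There, the claim that the vertical sides of the period cell cancel by periodicity is false for individual summands. Since $\Theta_1(r;z+2\pi)=-\Theta_1(r;z)$, each $\arg\Theta_1(r;z-\alpha_j)$ drops by $\pi$ under $z\mapsto z+2\pi$, i.e.\ it has monodromy around the hole. Its values on $\R+\ii r$ are also linear in $x$. So each pairing picks up vertical-side terms, endpoint terms at the cut, and period means of $\log|\Theta_1|,\log|\Theta_3|$. The pairings are therefore not simply of the form \eqref{eqn::Dubedat1026}.

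For instance, the pairing of $\arg\Theta_1(r;z-\alpha_i)$ with $\arg\Theta_1(r;z-\alpha_k)$ over a period cell equals $-\pi\log|\Theta_1(r;\alpha_i-\alpha_k)|+\pi\bigl(\frac{r}{4}+\sum_m\log(1-\ee^{-2mr})\bigr)$. You reach this value only after including the vertical contribution $\pi\bigl(w_k(\ii r)-w_k(0)\bigr)$, where $w_k=\log|\Theta_1(r;z-\alpha_k)|$; without it you are left with terms depending on where the cell is cut. The other blocks carry different constants: top--top pairs carry $\pi\bigl(\sum_m\log(1-\ee^{-2mr})-\frac{r}{4}\bigr)$ and mixed pairs carry $\pi\sum_m\log(1-\ee^{-2mr})$. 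All these extras are killed by the alternating weights, because $\sum_j(-1)^j=0$ and the total $v$ is periodic. So your route closes, but only with this extra bookkeeping.

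\textbf{The paper's route.} The paper avoids this by applying Green's formula to $v$ itself rather than to its pieces. Since $v$ is $2\pi$-periodic and takes only the values $0$ and $\pi$ on $\partial\S_r$, the vertical sides genuinely cancel and the semicircles contribute $o(1)$. Then $\int v\,\partial_\nu v$ collapses to $-\pi\sum_j\bigl(w(\alpha_{2j}-\eps)-w(\alpha_{2j-1}+\eps)\bigr)+\pi\sum_j\bigl(w(\beta_{2j}+\ii r-\eps)-w(\beta_{2j-1}+\ii r+\eps)\bigr)$, with $w$ the conjugate of $v$. The quadratic term comes from the linear part of $w$. The $\ee^{r/4}$ factors produced by \eqref{eqn::Theta_shift} on the top line cancel because $\sum_j(-1)^j=0$. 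The terms $\log|\Theta_1(r;\pm\eps)|$ combine with the regularization into $-4N\pi\lim_{\eps\to0}\log|\Theta_1(r;\eps)/\eps|$.

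\textbf{Trade-off.} Your expansion gives a pairwise (Coulomb-gas) reading of each term in \eqref{eqn::varphiann_energy}. The paper's computation never has to handle non-periodic or linearly varying boundary data.
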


\begin{proof}
Recall the covering map $q$ defined in~\eqref{eqn::annulus_strip_def}. Then $v:=\varphiann{n}\circ q$ is the bounded harmonic function in the infinite strip $\S_r=q^{-1}(\A_r)$ with alternating boundary data:\footnote{with the convention that $\alpha_{2N+j}=\alpha_j+2\pi$ and $\beta_{2N+j}=\beta_j+2\pi$.} 
\begin{align*}
\begin{cases}
	\pi \text{ on } \bigcup_{\ell\in\Z} \bigcup_{j=1}^N \left( (\alpha_{2j-1}+2\pi \ell,\alpha_{2j}+2\pi \ell)\cup(\beta_{2j-1}+2\pi \ell+\ii r,\beta_{2j}+2\pi \ell+\ii r) \right), \\
	0 \text{ on } \bigcup_{\ell\in\Z} \bigcup_{j=1}^N \left( (\alpha_{2j}+2\pi \ell,\alpha_{2j+1}+2\pi \ell)\cup(\beta_{2j}+2\pi \ell+\ii r,\beta_{2j+1}+2\pi \ell+\ii r)\right).
\end{cases}
\end{align*}
By the conformal covariance of regularized Dirichlet energy~\eqref{eqn::reg_Dirichlet_COV}, we have
\begin{align}\label{eqn::varphiann_energy_aux0}
\begin{split}
	\| \varphiann{n} \|_{\nabla(\A_r),\mathrm{reg}}^2= & \| v \|_{\nabla((0,2\pi)\times (0,\ii r)),\mathrm{reg}}^2 + \pi \sum_{j=1}^{2N} \left( \log |q'( \alpha_j)| + \log |q'( \beta_j+\ii r)| \right) \\
	= & \| v \|_{\nabla((0,2\pi)\times (0,\ii r)),\mathrm{reg}}^2 - 2N\pi r.	
\end{split}
\end{align}
\medbreak
Let us calculate the regularized Dirichlet energy of $v$ in the square $(0,2\pi)\times(0,\ii r)$.
By~\eqref{eqn::Theta_bc}, the explicit expression of $v$ is given by:
\begin{equation} \label{eqn::varphiann_energy_harmonic_function}
	v(z)=\sum_{j=1}^{2N} (-1)^j \arg \Theta_1(r; z-\alpha_j) - \sum_{j=1}^{2N} (-1)^j \arg \Theta_3(r; z-\beta_j) - \sum_{j=1}^{2N} (-1)^j (\alpha_j-\beta_j) \frac{\Im(z)}{2r}.
\end{equation}
 By Green's formula, we have
\begin{equation} \label{eqn::varphiann_energy_aux1}
	\| v \|_{\nabla((0,2\pi)\times (0,\ii r)),\mathrm{reg}}^2 = \lim_{\eps\to 0} \left( \int_{C_{\mathrm{top}}\cup C_{\mathrm{bottom}} \cup C_{\mathrm{circle}}}  v \partial_{\nu} v \ud s + 4N \pi \log \eps \right),
\end{equation}
where\footnote{For $z_1,z_2\in \C$, we denote $(z_1,z_2)$ the straight segment from $z_1$ to $z_2$.}
\begin{equation*}
	C_{\mathrm{top}}= \bigcup_{j=1}^{2N} (\beta_{j}+\eps+\ii r,\beta_{j+1}-\eps+\ii r), \quad C_{\mathrm{bottom}}= \bigcup_{j=1}^{2N} (\alpha_j+\eps, \alpha_{j+1}-\eps),
\end{equation*}
and $C_{\mathrm{circle}}$ denotes the union of small semicircles of radius $\eps$ pointing into the domain around each marked point on the boundaries; and we use the fact that the line integrals over the vertical segments at $x=0$ and $x=2\pi$ perfectly cancel each other due to the periodicity.
\medbreak

Let us evaluate the integral in the RHS of~\eqref{eqn::varphiann_energy_aux1}.
\begin{itemize}
\item First, let us investigate the integral over $C_{\mathrm{circle}}$ as $\eps\to 0$. For $z\in \partial B(\alpha_1,\eps)\cap \S_r$, since $v(z)$ and $\partial_{\nu} v(z)=\partial_{\nu} (v(z)-\arg(z-\alpha_1))$ is bounded, we have
\begin{align*} 
	\int_{\partial B(\alpha_1,\eps)\cap \S_r}  v \partial_{\nu} v \ud s \to 0,\quad \text{as } \eps\to 0.
\end{align*}
Similarly, the integrals over other small semicircles vanish as $\eps\to 0$. Thus
\begin{equation} \label{eqn::varphiann_energy_aux2}
	\int_{C_{\mathrm{circle}}}  v \partial_{\nu} v \ud s \to 0,\quad \text{as } \eps\to 0.
\end{equation}
\item Second, let us investigate the integral over $C_{\mathrm{top}}$ and $C_{\mathrm{bottom}}$.
Let $w(z)$ be the harmonic conjugate of $v(z)$: 
\begin{align*}
	w(z)=\sum_{j=1}^{2N} (-1)^j \log |\Theta_1(r; z-\alpha_j)| - \sum_{j=1}^{2N} (-1)^j \log |\Theta_3(r; z-\beta_j)| - \sum_{j=1}^{2N} (-1)^j (\alpha_j-\beta_j) \frac{\Re (z)}{2r}.
\end{align*}
Then $w(z)+\ii v(z)$ is analytic and we have the Cauchy-Riemann equations
\begin{align*}
	\partial_x w(z)=\partial_y v(z), \quad \partial_y w(z)=-\partial_x v(z).
\end{align*}
On the boundary at bottom $C_{\mathrm{bottom}}$ ($y=0$), we have $\partial_{\nu} v = -\partial_y v = -\partial_x w$. Since $v=\pi$ on the intervals $(\alpha_{2j-1}, \alpha_{2j})$ and $0$ elsewhere, we have
\begin{equation} \label{eqn::varphiann_energy_aux3}
	\int_{C_{\mathrm{bottom}}} v \partial_{\nu} v \ud x = -\pi \sum_{j=1}^N \int_{\alpha_{2j-1}+\eps}^{\alpha_{2j}-\eps} \partial_x w \ud x = -\pi \sum_{j=1}^N \Big( w(\alpha_{2j}-\eps) - w(\alpha_{2j-1}+\eps) \Big).
\end{equation}
Similarly, for the boundary at top $C_{\mathrm{top}}$ ($y=r$), we have
\begin{equation} \label{eqn::varphiann_energy_aux4}
	\int_{C_{\mathrm{top}}} v \partial_{\nu} v \ud x = \pi \sum_{j=1}^N \int_{\beta_{2j-1}+\eps}^{\beta_{2j}-\eps} \partial_x w(x+\ii r) \ud x = \pi \sum_{j=1}^N \Big( w(\beta_{2j}+\ii r-\eps) - w(\beta_{2j-1}+\ii r+\eps) \Big).
\end{equation}
\end{itemize}
Plugging~(\ref{eqn::varphiann_energy_aux2},\ref{eqn::varphiann_energy_aux3},\ref{eqn::varphiann_energy_aux4}) into~\eqref{eqn::varphiann_energy_aux1}, we have
\begin{align} \label{eqn::varphiann_energy_aux5}
	\begin{split}
		\| v \|_{\nabla((0,2\pi)\times (0,\ii r)),\mathrm{reg}}^2 
		= &\frac{\pi}{2r}\left(\sum_{j=1}^{2N}(-1)^j (\alpha_j-\beta_j) \right)^2-4N \pi \lim_{\eps\to 0} \log \left| \Theta_1(r; \eps)/\eps \right|\\
		& - 2\pi \sum_{1\le i<j\le 2N} (-1)^{j-i} \left( \log \left| \Theta_1(r; \beta_{j}-\beta_i) \right| +\log \left| \Theta_1(r; \alpha_{j}-\alpha_i) \right| \right) \\
&+2 \pi \sum_{i=1}^{2N} \sum_{j=1}^{2N} (-1)^{j-i} \log \left| \Theta_3(r; \beta_{j}-\alpha_i) \right| 
	\end{split}
\end{align}
Plugging~(\ref{eqn::varphiann_energy_aux5},\ref{eqn::varphiann_energy_aux6}) into~\eqref{eqn::varphiann_energy_aux0}, we obtain~\eqref{eqn::varphiann_energy} as desired.
\end{proof}


\begin{lemma} \label{lem::reg_Dirichlet_annular_cro}
Fix $r>0$ and $n\ge 1$. For $\bs{\alpha}, \bs{\beta}\in\LX_n$, we write $\bs{x}=\ee^{\ii\bs{\alpha}}$ and $\bs{y}=\ee^{\ii\bs{\beta}-r}$.
Define
\begin{equation}\label{eqn::varphicro_def}
	\varphicro{n}(z)=n\pi- \sum_{j=1}^{n} \arg \Theta_1(r; -\ii \log z-\alpha_j) + \sum_{j=1}^{n} \arg \Theta_3(r; -\ii \log z-\beta_j) - \sum_{j=1}^{n} (\alpha_j-\beta_j) \frac{\log |z|}{2r}.
\end{equation}
Then we have
\begin{align} \label{eqn::varphicro_def}
	\begin{split}
	\| \varphicro{n} \|_{\nabla(\A_r),\mathrm{reg}}^2= & \frac{\pi}{2r}\left(\sum_{j=1}^{n} (\alpha_j-\beta_j) \right)^2-6n\pi \sum_{m=1}^{\infty} \log \left(  1-\ee^{-2mr} \right)  - \frac{n}{2}\pi r\\
	& - 2\pi \sum_{1\le i<j\le n} \left( \log \left| \Theta_1(r; \beta_{j}-\beta_i) \right| +\log \left| \Theta_1(r; \alpha_{j}-\alpha_i) \right| \right) \\
	& +2 \pi \sum_{i=1}^{n} \sum_{j=1}^{n} \log \left| \Theta_3(r; \beta_{j}-\alpha_i) \right| .
\end{split}
\end{align}
Moreover, its regularized Dirichlet energy is related to the partition function $\LZcro{n}^{(\ell)}$ in~\eqref{eqn::LZcroell_def}: 
\begin{align}\label{eqn::LZcro_varphicro}
\begin{split}
\LZcro{n}^{(0)}(r; \bs{\alpha}, \bs{\beta})=&\exp\left(-\frac{1}{4\pi}\| \varphicro{n} \|_{\nabla(\A_r),\mathrm{reg}}^2\right), \\
 \LZcro{n}^{(\ell)}(r; \bs{\alpha}, \bs{\beta})=&\LZcro{n}^{(0)}(r; \bs{\alpha}, \beta_{1}+2\pi\ell, \ldots, \beta_{n}+2\pi\ell). 
 \end{split}
\end{align}
\end{lemma}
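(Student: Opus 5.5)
I will follow the proof of Lemma~\ref{lem::reg_Dirichlet_annular} step by step, since the function in~\eqref{eqn::varphicro_def} has the same structure with all signs $(-1)^j$ replaced by $+1$. The plan is to lift to the strip, split the Green's-formula boundary integral into pieces, and evaluate each piece with a harmonic conjugate.

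\textbf{Lift to the strip.} Let $v:=\varphicro{n}\circ q$. Then
\[
v(z)=n\pi-\sum_j\arg\Theta_1(r;z-\alpha_j)+\sum_j\arg\Theta_3(r;z-\beta_j)+\sum_j(\alpha_j-\beta_j)\frac{\Im z}{2r}.
\]
By Lemma~\ref{lem::Theta_bc}, $v$ is piecewise constant on the bottom line, with jumps of $\pi$ at each $\alpha_j$. On the top line, the linear terms $-x/2$ coming from $\arg\Theta_3(r;x+\ii r-\beta_j)$ combine with $\sum_j(\alpha_j-\beta_j)/2$ and the integer jumps, so that $v$ is again piecewise constant there, with jumps at each $\beta_j$.

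Unlike $v$ in Lemma~\ref{lem::reg_Dirichlet_annular}, this $v$ is not periodic. Each $\arg\Theta_1(r;\cdot-\alpha_j)$ drops by $\pi$ per period, while $\arg\Theta_3$ on the top line rises correspondingly. Hence $v(z+2\pi)-v(z)=n\pi$, so $\varphicro{n}$ is multivalued with monodromy $n\pi$ around the hole. This is a different situation from the interior monodromy in the definition of the regularized energy. I will take $\|\cdot\|^2$ to mean the integral of $|\nabla\varphicro{n}|^2$, which is single-valued, regularized only at the boundary jumps.

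\textbf{Conformal covariance.} Each of the $2n$ boundary jumps has size $\pi$, and $|q'|=1$ on the bottom line and $\ee^{-r}$ on the top line. Exactly as in~\eqref{eqn::varphiann_energy_aux0}, this gives
\[
\|\varphicro{n}\|^2=\|v\|^2_{\nabla((0,2\pi)\times(0,\ii r)),\mathrm{reg}}-n\pi r.
\]

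\textbf{Green's formula on the rectangle.} I apply Green's formula to the rectangle with semicircles of radius $\eps$ removed. The semicircle contributions vanish, as in~\eqref{eqn::varphiann_energy_aux2}. The vertical sides no longer cancel. Since $\partial_\nu v$ is periodic with opposite normals and $v$ jumps by $n\pi$, they contribute
\[
n\pi\int_0^r \partial_x v(2\pi+\ii y)\,\ud y .
\]
With $w$ the harmonic conjugate (the $\log|\Theta|$ combination plus $\sum_j(\alpha_j-\beta_j)\Re z/(2r)$), Cauchy--Riemann turns this into $n\pi\,[w(\ii r)-w(0)]$, evaluated on the line $\Re z=2\pi$.

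The top and bottom pieces are handled as in~\eqref{eqn::varphiann_energy_aux3}--\eqref{eqn::varphiann_energy_aux4}. Because $v$ now takes values $k\pi$ on successive arcs rather than alternating between $0$ and $\pi$, summation by parts gives
\[
\pi\sum_j w(\alpha_j\pm\eps)
\]
and the analogous terms at the $\beta_j$, plus endpoint terms at $0$ and $2\pi$. These endpoint terms must cancel against the vertical contribution, as happened with $C_{\mathrm{horizon}}$ in Lemma~\ref{lem::varphiradial_spiral_energy}.

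\textbf{Assembling the energy.} Expanding $w(\alpha_j\pm\eps)$ produces three kinds of terms:
\begin{itemize}
\item pairwise terms $\log|\Theta_1(r;\alpha_k-\alpha_i)|$, now all with sign $+$ in $w$, which produces the $-2\pi\sum_{i<j}$ terms of~\eqref{eqn::varphicro_def};
\item cross terms $\log|\Theta_3(r;\beta_j-\alpha_i)|$;
\item self terms $\log|\Theta_1(r;\eps)|$, which combine with $2n\pi\log\eps$ and~\eqref{eqn::varphiann_energy_aux6} to give $-6n\pi\sum_m\log(1-\ee^{-2mr})+\tfrac{n}{2}\pi r$.
\end{itemize}
Together with the $-n\pi r$ from the covariance step, this yields the stated $-\tfrac{n}{2}\pi r$. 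The linear part of $w$ paired with the jump structure gives $\frac{\pi}{2r}\bigl(\sum_j(\alpha_j-\beta_j)\bigr)^2$.

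\textbf{The identities~\eqref{eqn::LZcro_varphicro}.} Comparing the resulting energy with~\eqref{eqn::LZcroell_def} at $\ell=0$ gives the first identity directly. For the second, Lemma~\ref{lem::Theta_basic} shows that $|\Theta_1|$ and $|\Theta_3|$ are invariant under $\beta_j\mapsto\beta_j+2\pi\ell$. The only change is then in the Gaussian factor, where $\sum_j(\alpha_j-\beta_j)$ becomes $\sum_j(\alpha_j-\beta_j)-2n\pi\ell$, which is exactly the factor in $\LZcro{n}^{(\ell)}$.

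\textbf{Main obstacle.} The delicate step is the bookkeeping of the non-cancelling vertical sides against the boundary endpoint terms at $0$ and $2\pi$, together with the branch values $-\ell\pi$ and $\frac{\pi}{2}-\frac{x}{2}+\ell\pi$ from~\eqref{eqn::Theta_bc}. I would first check that $v$ really is piecewise constant on the top line, and then verify the final constant and the quadratic term in the special case $n=1$ against~\cite{ZhanReversibilityWholeplaneSLE}.
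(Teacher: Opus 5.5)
Your proposal follows the paper's proof: lift by $q$, use covariance to get $-n\pi r$, apply Green's formula on $(0,2\pi)\times(0,r)$ with non-cancelling vertical sides and vanishing semicircles, and evaluate the top and bottom pieces by summation by parts against the harmonic conjugate $w$.

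There are two corrections in the step you flagged as delicate.

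\begin{itemize}
\item \textbf{Sign of the vertical contribution.} Since $\partial_x v=-\partial_y w$, the vertical sides contribute $-n\pi\,[w(\ii r)-w(0)]$, not $+n\pi[\,\cdot\,]$.
\item \textbf{Endpoint terms.} Even with the correct sign, the endpoint terms at $0$ and $2\pi$ do not cancel outright. Write $A=\sum_j(\alpha_j-\beta_j)$, and let $\alpha_{a+1},\ldots,\alpha_{a+n}$ and $\beta_{b+1},\ldots,\beta_{b+n}$ be the marked points lying in $(0,2\pi)$. Then the endpoint terms leave a remainder $\pi^2(b-a)A/r$. This remainder is cancelled exactly when you re-index the interior sums to $j=1,\ldots,n$ using $w(z+2\pi)=w(z)+\pi A/r$. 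What is left of the linear part of $w$ is then $\frac{\pi}{2r}A^2$.
\end{itemize}

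Two smaller points.

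\begin{itemize}
\item $\arg\Theta_3$ is $2\pi$-periodic, so the monodromy $n\pi$ comes entirely from the $\arg\Theta_1$ terms.
\item At the top points you need $|\Theta_1(r;x+\ii r)|=\ee^{r/4}|\Theta_3(r;x)|$ and $|\Theta_3(r;x+\ii r)|=\ee^{r/4}|\Theta_1(r;x)|$ from~\eqref{eqn::Theta_shift}. Counting the self term, the resulting $r/4$ contributions cancel at each $\beta_k$.
\end{itemize}

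The identity for $\LZcro{n}^{(\ell)}$ via $2\pi$-invariance of $|\Theta_1|$ and $|\Theta_3|$ is fine.
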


\begin{proof}
Recall the covering map $q$ defined in~\eqref{eqn::annulus_strip_def}. 
From~\eqref{eqn::Theta_bc}, the function
$v:=\varphicro{n}\circ q$ is the bounded harmonic function in the infinite strip $\S_r=q^{-1}(\A_r)$ with boundary data:\footnote{where we use the convention that $\alpha_{j+n\ell}=\alpha_{j}+2\pi \ell$ and $\beta_{j+n\ell}=\beta_{j}+2\pi \ell$.}
\begin{align*}
	j \pi \text{ on }  (\alpha_{j},\alpha_{j+1})\cup(\beta_{j}+\ii r,\beta_{j+1}+\ii r) \quad j\in \Z.
\end{align*}
By the conformal covariance of regularized Dirichlet energy~\eqref{eqn::reg_Dirichlet_COV}, we have
\begin{align}\label{eqn::varphicro_def_aux0}
	\begin{split}
		\| \varphicro{n} \|_{\nabla(\A_r),\mathrm{reg}}^2= & \| v \|_{\nabla((0,2\pi)\times (0,\ii r)),\mathrm{reg}}^2 + \pi \sum_{j=1}^{n} \left( \log |q'( \alpha_j)| + \log |q'( \beta_j+\ii r)| \right) \\
		= & \| v \|_{\nabla((0,2\pi)\times (0,\ii r)),\mathrm{reg}}^2 - n\pi r.	
	\end{split}
\end{align}
\medbreak
Let us calculate the regularized Dirichlet energy of $v$ in the square $(0,2\pi)\times(0,\ii r)$.
By~\eqref{eqn::Theta_bc}, the explicit expression of $v$ is given by:
\begin{equation*}
	v(z)=n\pi- \sum_{j=1}^{n} \arg \Theta_1(r; z-\alpha_j) + \sum_{j=1}^{n} \arg \Theta_3(r; z-\beta_j) + \sum_{j=1}^{n} (\alpha_j-\beta_j) \frac{\Im(z)}{2r}.
\end{equation*}
By Green's formula, we have
\begin{equation} \label{eqn::varphicro_def_aux1}
	\| v \|_{\nabla((0,2\pi)\times (0,\ii r)),\mathrm{reg}}^2 = \lim_{\eps\to 0} \left( \int_{C_{\mathrm{top}}\cup C_{\mathrm{bottom}} \cup C_{\mathrm{vertical}} \cup C_{\mathrm{circle}}}  v \partial_{\nu} v \ud s + 2n \pi \log \eps \right),
\end{equation}
where
\begin{equation*}
	C_{\mathrm{top}}=(\ii r,2\pi+\ii r) \setminus \bigcup_{j\in \Z} (\beta_j-\eps, \beta_j+\eps) , \quad C_{\mathrm{bottom}}=(0,2\pi)\setminus \bigcup_{j\in \Z} (\alpha_j-\eps, \alpha_j+\eps) , \quad C_{\mathrm{vertical}}=(0,\ii r) \cup (2\pi,2\pi+\ii r).
\end{equation*}
and $C_{\mathrm{circle}}$ denotes the union of small semicircles of radius $\eps$ pointing into the domain around each marked point on the boundaries.

\medbreak

Let us evaluate the integral in the RHS of~\eqref{eqn::varphicro_def_aux1}.
\begin{itemize}
\item First, we have
\begin{equation} \label{eqn::varphicro_def_aux2}
	\int_{C_{\mathrm{circle}}}  v \partial_{\nu} v \ud s \to 0,\quad \text{as } \eps\to 0,
\end{equation}
similarly as~\eqref{eqn::varphiann_energy_aux2}.
\item Second, let us investigate the integral over $C_{\mathrm{top}}$ and $C_{\mathrm{bottom}}$. Let $a,b\in \Z$ satisfying
\begin{align*}
	(0,2\pi) \cap \{\alpha_j:j\in \Z\}=& \{\alpha_j:a+1\le j\le a+n\},\\
	(0,2\pi) \cap \{\beta_j:j\in \Z\}=& \{\beta_j:b+1\le j\le b+n\}.	
\end{align*}
Let $w(z)$ be the harmonic conjugate of $v(z)$: 
\begin{align*}
	w(z)=- \sum_{j=1}^{n} \log |\Theta_1(r; z-\alpha_j)| + \sum_{j=1}^{n} \log |\Theta_3(r; z-\beta_j)| + \sum_{j=1}^{n} (\alpha_j-\beta_j) \frac{\Re (z)}{2r}.
\end{align*}
Then $w(z)+\ii v(z)$ is analytic and we have the Cauchy-Riemann equations
\begin{align*}
	\partial_x w(z)=\partial_y v(z), \quad \partial_y w(z)=-\partial_x v(z).
\end{align*}
On the boundary at bottom $C_{\mathrm{bottom}}$ ($y=0$), we have $\partial_{\nu} v = -\partial_y v = -\partial_x w$. Since $v=j \pi$ on the intervals $(\alpha_{j}, \alpha_{j+1})$, we have
	\begin{equation} \label{eqn::varphicro_def_aux3}
	\begin{split}
		\int_{C_{\mathrm{bottom}}} v \partial_{\nu} v \ud s =&  -\int_{C_{\mathrm{bottom}}} v \partial_x w \ud s \\
		= & -\pi (a+n) w(2\pi)+ \pi a w(0) \\
		&- \pi \sum_{j=a+1}^{a+n} \left( (j-1) w(\alpha_j-\eps)- j w(\alpha_j+\eps) \right).		
	\end{split}
	\end{equation}
	Similarly, for the boundary at top $C_{\mathrm{top}}$ ($y=r$), we have
	\begin{equation} \label{eqn::varphicro_def_aux4}
	\begin{split}
		\int_{C_{\mathrm{top}}} v \partial_{\nu} v \ud s = & \int_{C_{\mathrm{top}}} v \partial_x w \ud s \\
		= & \pi (b+n) w(2\pi+\ii r) - \pi b w(\ii r) \\
		& + \pi \sum_{j=b+1}^{b+n} \left( (j-1) w(\beta_j-\eps+\ii r)- j w(\beta_j+\eps+\ii r) \right).
	\end{split}
	\end{equation}
\item Finally, let us investigate the integral over $C_{\mathrm{vertical}}$. On the boundary at left ($x=0$), we have $\partial_{\nu} v = -\partial_x v = \partial_y w$. On the boundary at right ($x=2\pi$), we have $\partial_{\nu} v = \partial_x v = -\partial_y w$. Thus we have
\begin{align} \label{eqn::varphicro_def_aux5}
	\int_{C_{\mathrm{vertical}}} v \partial_{\nu} v \ud s =  \int_{0}^{r} (v(\ii y)-v(2\pi+\ii y)) \partial_y w(\ii y) \ud y = - n\pi (w(\ii r)- w(0)).
\end{align}
\end{itemize}
Plugging~(\ref{eqn::varphicro_def_aux2},\ref{eqn::varphicro_def_aux3},\ref{eqn::varphicro_def_aux4},\ref{eqn::varphicro_def_aux5}) into~\eqref{eqn::varphiann_energy_aux1}, we have
\begin{align} \label{eqn::varphicro_def_aux6}
	\begin{split}
		\| v \|_{\nabla((0,2\pi)\times (0,\ii r)),\mathrm{reg}}^2 
		= &\frac{\pi}{2r}\left(\sum_{j=1}^{n} (\alpha_j-\beta_j) \right)^2-2n \pi \lim_{\eps\to 0} \log \left| \Theta_1(r; \eps)/\eps \right|\\
		& - 2\pi \sum_{1\le i<j\le n} \left( \log \left| \Theta_1(r; \beta_{j}-\beta_i) \right| +\log \left| \Theta_1(r; \alpha_{j}-\alpha_i) \right| \right) \\
		&+2 \pi \sum_{i=1}^{n} \sum_{j=1}^{n} \log \left| \Theta_3(r; \beta_{j}-\alpha_i) \right|.
	\end{split}
\end{align}
Plugging~(\ref{eqn::varphiann_energy_aux6},\ref{eqn::varphicro_def_aux6}) into~\eqref{eqn::varphicro_def_aux0}, we obtain~\eqref{eqn::varphiann_energy} as desired.
\end{proof}

\subsection{Asymptotic}
\label{subsec::asymp}
\begin{lemma} \label{lem::reg_Dirichlet_annular_expansion}
Assume the same setup as in Lemma~\ref{lem::reg_Dirichlet_annular}. 
The regularized Dirichlet energy and the partition function have the following asymptotics as $r\to\infty$: 
\begin{align} \label{eqn::varphiann_asymptotic}
\begin{split}
	\| \varphiann{n} \|_{\nabla(\A_r),\mathrm{reg}}^2=& -n\pi r -2\pi \sum_{1\le i<j\le n} (-1)^{j-i} \left( \log \sin \left( \frac{\beta_{j}-\beta_i}{2} \right) + \log \sin \left( \frac{\alpha_{j}-\alpha_i}{2} \right) \right) \\
	& + 2\pi n\log 2 +\frac{\pi}{2r}\left(\sum_{j=1}^{n}(-1)^j (\alpha_j-\beta_j) \right)^2+ O(1)\ee^{-r};
\end{split}\\
\label{eqn::LZann_asymp}
\LZann{n}(r; \bs{\alpha}, \bs{\beta})=&\frac{\LZ_n(\bs{\alpha})\LZ_n(\bs{\beta})}{\sqrt{2}^n}\exp\left(\frac{nr}{4}-\frac{1}{8r}\left(\sum_{j=1}^{n}(-1)^j(\alpha_j-\beta_j)\right)^2+O(1)\ee^{-r}\right); 
\end{align}
where $\LZ_n$ is defined in~\eqref{eqn::LZunitdisc_def} and the $O(1)$ terms have upper bounds independent of $(\bs{\alpha}, \bs{\beta})$. 
\end{lemma}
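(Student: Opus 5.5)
The plan is to start from the exact formula \eqref{eqn::varphiann_energy} of Lemma~\ref{lem::reg_Dirichlet_annular} and feed in the uniform asymptotics \eqref{eqn::Theta_asymptotic} of Lemma~\ref{lem::Theta_basic} term by term. The exponential $\exp(\frac{nr}{8}-\cdots)$ and the quadratic term $\frac{\pi}{2r}(\sum_j(-1)^j(\alpha_j-\beta_j))^2$ are kept exactly. They are not expanded, so they appear unchanged in both \eqref{eqn::varphiann_asymptotic} and \eqref{eqn::LZann_asymp}.

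The remaining terms are handled one at a time.

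\emph{Eta factor.} Since $\log(1-\ee^{-2mr})=O(\ee^{-2mr})$, we have $-6n\pi\sum_m\log(1-\ee^{-2mr})=O(\ee^{-2r})$.

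\emph{The $\Theta_3$ terms.} By \eqref{eqn::Theta_asymptotic}, $\log|\Theta_3(r;\beta_j-\alpha_i)|=O(\ee^{-r})$ uniformly in the angles. So the double sum of $\Theta_3$ contributes $O(\ee^{-r})$.

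\emph{The $\Theta_1$ terms.} For $0<x<2\pi$, \eqref{eqn::Theta_asymptotic} gives
\[
\log|\Theta_1(r;x)|=\log 2-\tfrac{r}{4}+\log\sin\tfrac{x}{2}+O(\ee^{-2r}).
\]
Here $\sin\frac{x}{2}>0$ because all differences $\alpha_j-\alpha_i$ and $\beta_j-\beta_i$ with $i<j$ lie in $(0,2\pi)$ for points of $\LX_n$. The key point is that the product form \eqref{eqn::JacobiTheta_productexpansion} lets us factor out $\sin(x/2)$ exactly, so the error is uniform in $x$ even near the diagonal. This uniformity is what makes the $O(1)$ constants independent of $(\bs{\alpha},\bs{\beta})$.

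\emph{Constant and linear-in-$r$ terms.} For $n=2N$ we have the sign count
\[
\sum_{1\le i<j\le n}(-1)^{j-i}=-N.
\]
Indeed, among pairs with $j-i=k$ there are $n-k$ of them, and $\sum_{k=1}^{n-1}(-1)^k(n-k)=-n/2$. Each of the two $\Theta_1$-sums (over $\bs{\alpha}$ and over $\bs{\beta}$) therefore contributes
\[
-2\pi\cdot(-N)\bigl(\log 2-\tfrac{r}{4}\bigr)=n\pi\log 2-\tfrac{n\pi r}{4}.
\]
Adding both sums and the term $-\frac{n}{2}\pi r$ gives $-n\pi r+2\pi n\log 2$. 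The $\log\sin$ terms survive as written, and this yields \eqref{eqn::varphiann_asymptotic}.

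\emph{Passing to $\LZann{n}$.} Apply \eqref{eqn::LZann_varphiann}. Multiplying \eqref{eqn::varphiann_asymptotic} by $-\frac{1}{4\pi}$ gives:
\begin{itemize}
\item $\frac{nr}{4}$ from $-n\pi r$;
\item $-\frac{n}{2}\log 2$ from $2\pi n\log 2$, i.e.\ the factor $\sqrt2^{-n}$;
\item $-\frac{1}{8r}(\cdots)^2$ from the quadratic term;
\item the $\log\sin$ terms become $\frac12(-1)^{j-i}\log\sin\frac{\theta_j-\theta_i}{2}$.
\end{itemize}
The $\log\sin$ terms must be matched with $\LZ_n$, using $|\ee^{\ii\theta_i}-\ee^{\ii\theta_j}|=2\sin\frac{\theta_j-\theta_i}{2}$. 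This introduces an extra factor $2^{\frac12\sum(-1)^{j-i}}=2^{-N/2}$ for each of $\LZ_n(\bs{\alpha})$ and $\LZ_n(\bs{\beta})$. So $\LZ_n(\bs{\alpha})\LZ_n(\bs{\beta})$ carries $2^{-N}=\sqrt2^{-n}$ relative to the product of the $\sin$-powers.

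This power-of-$2$ bookkeeping is the main obstacle. The prefactors $2\ee^{-r/4}$ inside $\Theta_1$, the conversion between chords and sines, and the sign sum all contribute powers of $2$. I would first check \eqref{eqn::LZann_asymp} directly on the product formula \eqref{eqn::LZann_def} rather than through the energy, and then reconcile the constant with \eqref{eqn::varphiann_asymptotic}. If the net constant disagrees, the discrepancy must come from a miscount in $\sum_{i<j}(-1)^{j-i}$ or in the chord-to-sine conversion, so those two counts are where I would look.
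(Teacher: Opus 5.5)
You follow the paper's route: insert the uniform bounds \eqref{eqn::Theta_asymptotic} into the exact formula \eqref{eqn::varphiann_energy}, then exponentiate via \eqref{eqn::LZann_varphiann} and convert sines to chords. Your derivation of \eqref{eqn::varphiann_asymptotic} is correct and agrees with the paper's term by term: the sign sum is $-N$, each $\Theta_1$-sum contributes $n\pi\log 2-\frac{n\pi r}{4}$, and the $\Theta_3$ and $\prod_m(1-\ee^{-2mr})$ terms are uniformly $O(\ee^{-r})$.

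The gap is the last step, which you leave open and then misdiagnose. Both of your counts are right, and together they already fix the constant. Exponentiating gives $\ee^{-\frac{1}{4\pi}\cdot 2\pi n\log 2}=2^{-N}$ times $\prod_{i<j}\bigl(\sin\frac{\alpha_j-\alpha_i}{2}\sin\frac{\beta_j-\beta_i}{2}\bigr)^{\frac12(-1)^{j-i}}$. You note yourself that this sine product equals $2^{N}\LZ_n(\bs{\alpha})\LZ_n(\bs{\beta})$. Hence
\[
\LZann{n}(r;\bs{\alpha},\bs{\beta})=\LZ_n(\bs{\alpha})\LZ_n(\bs{\beta})\exp\Bigl(\frac{nr}{4}-\frac{1}{8r}\Bigl(\sum_{j=1}^n(-1)^j(\alpha_j-\beta_j)\Bigr)^2+O(1)\ee^{-r}\Bigr),
\]
with prefactor $1$, not $\sqrt{2}^{-n}$.

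The direct check on \eqref{eqn::LZann_def} that you propose confirms this. Since $|\Theta_1(r;x)|=\ee^{-r/4}|\ee^{\ii x}-1|(1+O(\ee^{-2r}))$, the factor $2$ in $\Theta_1$ is absorbed into the chord. The two $\Theta_1$-products then contribute $\ee^{-\frac{r}{4}\sum_{i<k}(-1)^{k-i}}\LZ_n(\bs{\alpha})\LZ_n(\bs{\beta})=\ee^{nr/8}\LZ_n(\bs{\alpha})\LZ_n(\bs{\beta})$. For $n=2$ this is visible at a glance: $\ee^{-r/2}\LZann{2}\to|\ee^{\ii\alpha_1}-\ee^{\ii\alpha_2}|^{-1/2}|\ee^{\ii\beta_1}-\ee^{\ii\beta_2}|^{-1/2}=\LZ_2(\bs{\alpha})\LZ_2(\bs{\beta})$, with no factor $\frac12$.

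So \eqref{eqn::LZann_asymp} as printed cannot be proved. The paper's one-line deduction appears to drop the chord-to-sine factor $2^{N/2}$ per $\LZ_n$ at exactly the step you flagged. The same slip produces the $\sqrt{2}^{n(n-1)}$ in \eqref{eqn::LZcroell_asymp}, which should also be $1$, and hence the $\sqrt{2}^{n^2}$ in Proposition~\ref{prop::asymptotics}. Your remark that a discrepancy must come from a miscount in $\sum(-1)^{j-i}$ or in the chord conversion is misdirected, since neither is wrong. What your proposal needs is to finish the arithmetic and state the corrected prefactor $\LZ_n(\bs{\alpha})\LZ_n(\bs{\beta})$.
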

\begin{proof}
From~\eqref{eqn::Theta_asymptotic}, we have 
\begin{align*}
\sum_{1\le i<j\le n} (-1)^{j-i} \log \left| \Theta_1(r; \beta_{j}-\beta_i) \right| =& \frac{nr}{8}-\frac{n\log 2}{2}+\sum_{1\le i<j\le n} (-1)^{j-i}  \log \sin \left( \frac{\beta_{j}-\beta_i}{2} \right) + O(1)\ee^{-2r};\\
\sum_{1\le i<j\le n} (-1)^{j-i} \log \left| \Theta_1(r; \alpha_{j}-\alpha_i) \right| =& \frac{nr}{8}-\frac{n\log 2}{2}+\sum_{1\le i<j\le n} (-1)^{j-i}  \log \sin \left( \frac{\alpha_{j}-\alpha_i}{2} \right) + O(1)\ee^{-2r} ;\\
\sum_{i=1}^{n} \sum_{j=1}^{n} (-1)^{j-i} \log \left| \Theta_3(r; \beta_{j}-\alpha_i) \right|=&O(1)\ee^{-r}. 
\end{align*}
Plugging into~\eqref{eqn::varphiann_energy}, we obtain~\eqref{eqn::varphiann_asymptotic} as desired. Combining~\eqref{eqn::LZann_varphiann} and~\eqref{eqn::varphiann_asymptotic} and noting that 
\[|\ee^{\ii\theta_i}-\ee^{\ii\theta_j}|=2\sin\left(\frac{\theta_j-\theta_i}{2}\right), \quad \text{for }\theta_i<\theta_j<\theta_i+2\pi,\]
we obtain~\eqref{eqn::LZann_asymp}.
\end{proof}

\begin{lemma} \label{lem::reg_Dirichlet_annular_cro_expansion}
Assume the same setup as in Lemma~\ref{lem::reg_Dirichlet_annular_cro}. The regularized Dirichlet energy and the partition function have the following asymptotic as $r\to\infty$: 
\begin{align} \label{eqn::varphicro_asymptotic}
	\| \varphicro{n} \|_{\nabla(\A_r),\mathrm{reg}}^2=& \frac{n(n-2) \pi r}{2} -2\pi \sum_{1\le i<j\le n} \left( \log \sin \left( \frac{\beta_{j}-\beta_i}{2} \right) + \log \sin \left( \frac{\alpha_{j}-\alpha_i}{2} \right) \right) \notag\\
	& -2\pi n(n-1)\log 2 +\frac{\pi}{2r}\left(\sum_{j=1}^{n} (\alpha_j-\beta_j) \right)^2+ O(1)\ee^{-r};
\\
\LZcro{n}^{(\ell)}(r; \bs{\alpha}, \bs{\beta})=&\sqrt{2}^{n(n-1)}\LZradial{n}(\bs{\alpha})\LZradial{n}(\bs{\beta})\notag\\
&\times\exp\left(\frac{n(2-n)r}{8}-\frac{1}{8r}\left(\sum_{j=1}^{n}(\alpha_j-\beta_{j})-2n\pi\ell\right)^2+O(1)\ee^{-r}\right);\label{eqn::LZcroell_asymp}
\end{align}
where $\LZradial{n}$ is defined in~\eqref{eqn::LZunitdisc_def} and the $O(1)$ terms have upper bounds independent of $(\bs{\alpha}, \bs{\beta})$. 
\end{lemma}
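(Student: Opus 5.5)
This is a direct analogue of Lemma~\ref{lem::reg_Dirichlet_annular_expansion}. I would substitute the uniform asymptotics \eqref{eqn::Theta_asymptotic} into the explicit formula \eqref{eqn::varphicro_def} of Lemma~\ref{lem::reg_Dirichlet_annular_cro}, and then exponentiate via \eqref{eqn::LZcro_varphicro}.

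\textbf{Energy expansion.} Every pair $i<j$ now comes with sign $+1$ rather than $(-1)^{j-i}$. By \eqref{eqn::Theta_asymptotic}, for $\theta_i<\theta_j<\theta_i+2\pi$,
\[
\log|\Theta_1(r;\theta_j-\theta_i)|=-\tfrac r4+\log 2+\log\sin\big(\tfrac{\theta_j-\theta_i}{2}\big)+O(1)\ee^{-2r}.
\]
Summing over the $n(n-1)/2$ pairs of $\bs\alpha$ and of $\bs\beta$ and multiplying by $-2\pi$ gives:
\begin{itemize}
\item a linear term $\tfrac{\pi n(n-1)r}{2}$;
\item a constant $-2\pi n(n-1)\log 2$;
\item the two $\log\sin$ sums, each with coefficient $-2\pi$.
\end{itemize}
The remaining pieces behave as follows:
\begin{itemize}
\item $\sum_{i,j}\log|\Theta_3(r;\beta_j-\alpha_i)|=O(1)\ee^{-r}$, again by \eqref{eqn::Theta_asymptotic};
\item $-6n\pi\sum_m\log(1-\ee^{-2mr})=O(1)\ee^{-2r}$;
\item $-\tfrac n2\pi r$ combines with the linear term to give $\tfrac{\pi n(n-1)r}{2}-\tfrac{\pi n r}{2}=\tfrac{n(n-2)\pi r}{2}$;
\item $\tfrac{\pi}{2r}\big(\sum_j(\alpha_j-\beta_j)\big)^2$ is kept unchanged.
\end{itemize}
Together these give \eqref{eqn::varphicro_asymptotic}. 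The error bounds are uniform in $(\bs\alpha,\bs\beta)$ because the bounds in \eqref{eqn::Theta_asymptotic} are independent of $x$.

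\textbf{Partition function for $\ell=0$.} Multiplying by $-1/(4\pi)$ and exponentiating produces:
\begin{itemize}
\item the exponent $\tfrac{n(2-n)r}{8}-\tfrac1{8r}\big(\sum_j(\alpha_j-\beta_j)\big)^2$;
\item the factor $2^{n(n-1)/2}=\sqrt2^{\,n(n-1)}$;
\item $\prod_{i<j}\sin(\tfrac{\theta_j-\theta_i}2)^{1/2}$ for $\bs\theta\in\{\bs\alpha,\bs\beta\}$.
\end{itemize}
Using $|\ee^{\ii\theta_i}-\ee^{\ii\theta_j}|=2\sin(\tfrac{\theta_j-\theta_i}{2})$, each $\sin$ product equals $2^{-n(n-1)/4}\LZradial{n}(\bs\theta)$. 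The two such factors contribute $2^{-n(n-1)/2}$.

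At this point the constants need care, and this is the main obstacle. Naively the $2^{-n(n-1)/2}$ cancels the $\sqrt2^{\,n(n-1)}$ from the energy, leaving no power of $\sqrt2$. That contradicts the claimed $\sqrt2^{\,n(n-1)}$. So I would redo the constant carefully: track the exact constant $\log 2$ from $\Theta_1\approx 2\ee^{-r/4}\sin(x/2)$ against the one-point term $\lim_\eps\log|\Theta_1(r;\eps)/\eps|$ from \eqref{eqn::varphiann_energy_aux6}. As a cross-check I would expand the product formula \eqref{eqn::LZcroell_def} directly, which is unambiguous. In that formula the prefactor $\ee^{nr/8}\prod_m(1-\ee^{-2mr})^{3n/2}$ and the $\Theta_3$ factors are $1+O(\ee^{-r})$ up to $\ee^{nr/8}$. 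Each $|\Theta_1|^{1/2}$ is $\ee^{-r/8}|\ee^{\ii\theta_i}-\ee^{\ii\theta_j}|^{1/2}(1+O(\ee^{-2r}))$. Whichever constant this direct expansion yields is the one to state, and I would reconcile it with \eqref{eqn::varphicro_asymptotic} through \eqref{eqn::LZcro_varphicro}.

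\textbf{General $\ell$.} By the second identity in \eqref{eqn::LZcro_varphicro}, replace $\bs\beta$ by $\bs\beta+2\pi\ell$. The functions $|\Theta_1|$ and $|\Theta_3|$ are $2\pi$-periodic in absolute value, and $\LZradial{n}(\bs\beta)$ is invariant under this shift. The only change is therefore $\sum_j(\alpha_j-\beta_j)\mapsto\sum_j(\alpha_j-\beta_j)-2n\pi\ell$ in the Gaussian term, which gives \eqref{eqn::LZcroell_asymp}.
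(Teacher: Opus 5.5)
Your route is the paper's route: substitute \eqref{eqn::Theta_asymptotic} into \eqref{eqn::varphicro_def}, exponentiate via \eqref{eqn::LZcro_varphicro}, and shift $\bs\beta$ by $2\pi\ell$. Your energy expansion is correct and agrees with \eqref{eqn::varphicro_asymptotic}. What your proposal lacks is a conclusion. The cancellation you found is real: the prefactor $\sqrt{2}^{\,n(n-1)}$ in \eqref{eqn::LZcroell_asymp} is an error in the statement, and a more careful count of constants will not bring it back.

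In particular, there is no hidden $\log 2$ in the one-point term. Since $2\sin(\eps/2)/\eps\to1$, \eqref{eqn::varphiann_energy_aux6} gives $\lim_{\eps\to0}\log|\Theta_1(r;\eps)/\eps|=-\frac r4+3\sum_{m\ge1}\log(1-\ee^{-2mr})$. This is already accounted for by the terms $-\frac n2\pi r-6n\pi\sum_m\log(1-\ee^{-2mr})$ in \eqref{eqn::varphicro_def}.

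Your direct cross-check settles the constant. Insert $|\Theta_1(r;\theta_k-\theta_i)|^{1/2}=\ee^{-r/8}|\ee^{\ii\theta_i}-\ee^{\ii\theta_k}|^{1/2}(1+O(\ee^{-2r}))$ and $|\Theta_3|=1+O(\ee^{-r})$ into \eqref{eqn::LZcroell_def}. This gives
\[
\LZcro{n}^{(\ell)}(r;\bs\alpha,\bs\beta)=\LZradial{n}(\bs\alpha)\LZradial{n}(\bs\beta)\exp\Big(\frac{n(2-n)r}{8}-\frac{1}{8r}\Big(\sum_{j=1}^{n}(\alpha_j-\beta_j)-2n\pi\ell\Big)^2+O(1)\ee^{-r}\Big),
\]
with prefactor $1$. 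Already for $n=2$ the stated formula is too large by a factor $2$.

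The paper's one-line proof gets $\sqrt{2}^{\,n(n-1)}$ as follows. It exponentiates $-2\pi n(n-1)\log 2$ to $2^{n(n-1)/2}$, and then identifies $\prod_{i<j}\sin(\frac{\theta_j-\theta_i}{2})^{1/2}$ with $\LZradial{n}(\bs\theta)$. That identification drops the factor $2^{-n(n-1)/4}$ for each of $\bs\alpha$ and $\bs\beta$, which you correctly kept.

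The same slip produces the $\sqrt{2}^{-n}$ in \eqref{eqn::LZann_asymp}, whose correct prefactor is also $1$. The spurious powers of $\sqrt{2}$ then propagate to \eqref{eqn::LZcro_asymp}, \eqref{eqn::LZcro_sum_asymp} and Proposition~\ref{prop::asymptotics}.

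So the lemma as written is false for $n\ge 2$. Drop the suggestion that a missing $\log 2$ might restore the claimed constant. State the corrected asymptotic with prefactor $1$; your argument already proves it.
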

\begin{proof}
From~\eqref{eqn::Theta_asymptotic}, we have 
\begin{align*}
	\sum_{1\le i<j\le n} \log \left| \Theta_1(r; \beta_{j}-\beta_i) \right| =& -\frac{n(n-1)r}{8}+\frac{n(n-1)\log 2}{2}+\sum_{1\le i<j\le n}  \log \sin \left( \frac{\beta_{j}-\beta_i}{2} \right) + O(1)\ee^{-2r};\\
	\sum_{1\le i<j\le n} \log \left| \Theta_1(r; \alpha_{j}-\alpha_i) \right| =& -\frac{n(n-1)r}{8}+\frac{n(n-1)\log 2}{2}+\sum_{1\le i<j\le n}  \log \sin \left( \frac{\alpha_{j}-\alpha_i}{2} \right) + O(1)\ee^{-2r} ;\\
	\sum_{i=1}^{n} \sum_{j=1}^{n} \log \left| \Theta_3(r; \beta_{j}-\alpha_i) \right|=&O(1)\ee^{-r}. 
\end{align*}
Plugging into~\eqref{eqn::varphicro_def}, we obtain~\eqref{eqn::varphicro_asymptotic} as desired.
Combining~\eqref{eqn::varphicro_asymptotic} and~\eqref{eqn::LZcro_varphicro}, we obtain~\eqref{eqn::LZcroell_asymp} as desired.
\end{proof}

\begin{lemma} \label{lem::LZcro_asymp}
Fix $n\ge 1$ and $\bs{\alpha}, \bs{\beta}\in\LX_n$. The partition function $\LZcro{n}$~\eqref{eqn::LZcro_def} has the following asymptotic as $r\to\infty$: 
\begin{align}\label{eqn::LZcro_asymp}
\LZcro{n}(r; \bs{\alpha}, \bs{\beta})=&\sqrt{2}^{n(n-1)}\frac{\LZradial{n}(\bs{\alpha})\LZradial{n}(\bs{\beta})}{n\sqrt{\pi/2}}\exp\left(\frac{n(2-n)r}{8}+\frac{\log r}{2}+O(1)\left(\ee^{-r}+\ee^{-2r/n^2}\right)\right);
\end{align}
where $\LZradial{n}$ is defined in~\eqref{eqn::LZunitdisc_def}; moreover, when $n=2N$ is even, we have
\begin{align} \label{eqn::LZcro_sum_asymp}
\sum_{m=1}^{N}\LZcro{n}(r; \bs{\alpha}, \beta_{2m+1}, \ldots, \beta_{2m+n})
=&\sqrt{2}^{n(n-1)} \frac{\LZradial{n}(\bs{\alpha})\LZradial{n}(\bs{\beta})}{\sqrt{2\pi}}\notag\\
&\times \exp\left(\frac{n(2-n)r}{8}+\frac{\log r}{2}+O(1)\left(\ee^{-r}+\ee^{-2r/n^2}\right)\right).
\end{align}
\end{lemma}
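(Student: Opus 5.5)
We sum the asymptotic \eqref{eqn::LZcroell_asymp} of Lemma~\ref{lem::reg_Dirichlet_annular_cro_expansion} over $\ell\in\Z$ and evaluate the resulting theta-type series with Poisson summation. Set $S=\sum_{j=1}^n(\alpha_j-\beta_j)$. By \eqref{eqn::LZcroell_asymp}, and since its $O(1)\ee^{-r}$ error is uniform in $(\bs\alpha,\bs\beta)$ and hence in $\ell$,
\[
\LZcro{n}(r;\bs\alpha,\bs\beta)=\sqrt2^{n(n-1)}\LZradial{n}(\bs\alpha)\LZradial{n}(\bs\beta)\,\ee^{\frac{n(2-n)r}{8}}\,(1+O(\ee^{-r}))\sum_{\ell\in\Z}\exp\Big(-\frac{(S-2n\pi\ell)^2}{8r}\Big).
\]
It remains to handle the Gaussian sum. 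Poisson summation gives
\[
\sum_{\ell}\ee^{-(S-2n\pi\ell)^2/(8r)}=\frac{\sqrt{8\pi r}}{2n\pi}\sum_{k\in\Z}\ee^{-2rk^2/n^2}\,\ee^{\ii kS/n}.
\]
The prefactor equals $\frac{\sqrt r}{n\sqrt{\pi/2}}$. The $k=0$ term is $1$, and the remaining terms contribute $O(\ee^{-2r/n^2})$ uniformly in $S$. Taking logarithms then yields \eqref{eqn::LZcro_asymp}. An equivalent alternative is to compare the sum with the Riemann integral $\int\ee^{-(S-2n\pi t)^2/(8r)}\ud t=\frac{\sqrt{8\pi r}}{2n\pi}$, but Poisson summation gives the exponential error bound directly.

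For \eqref{eqn::LZcro_sum_asymp}, apply \eqref{eqn::LZcro_asymp} to each shifted configuration $\tilde{\bs\beta}^{(m)}=(\beta_{2m+1},\ldots,\beta_{2m+n})$. This tuple lies in $\LX_n$ after the convention $\beta_{2N+j}=\beta_j+2\pi$. The first key observation is that $\LZradial{n}$ is invariant under this cyclic relabeling, since the pairwise distances $|\ee^{\ii\beta_i}-\ee^{\ii\beta_j}|$ are only permuted. Hence $\LZradial{n}(\tilde{\bs\beta}^{(m)})=\LZradial{n}(\bs\beta)$, so each of the $N=n/2$ summands has the same leading asymptotic. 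The $S$-dependence sits only inside the exponentially small error, which is uniform. Summing the $N$ terms multiplies the constant $\frac{1}{n\sqrt{\pi/2}}$ by $n/2$, giving $\frac{1}{2\sqrt{\pi/2}}=\frac{1}{\sqrt{2\pi}}$, which is \eqref{eqn::LZcro_sum_asymp}.

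The main point to verify carefully is uniformity. The error in \eqref{eqn::LZcroell_asymp} must be uniform in $\ell$ so that it factors out of the sum; this holds because it comes from \eqref{eqn::Theta_asymptotic}, which is uniform in the real arguments, and because shifting $\beta$ by $2\pi\ell$ changes only the explicit Gaussian factor. The Poisson tail bound must also be independent of $S$, which holds since $|\ee^{\ii kS/n}|=1$. Everything else is bookkeeping of constants.
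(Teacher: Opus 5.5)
Your proposal follows the paper's proof essentially verbatim: insert \eqref{eqn::LZcroell_asymp} into \eqref{eqn::LZcro_def}, where only the Gaussian factor depends on $\ell$ so the error factors out. Then apply Poisson summation to get $\frac{1}{n}\sqrt{2r/\pi}\,(1+O(\ee^{-2r/n^2}))$ uniformly in $S$, and sum the $N=n/2$ cyclically relabeled configurations, using the invariance of $\LZradial{n}$ under that relabeling (which the paper leaves implicit and you correctly make explicit).

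One caveat you inherit from the paper rather than introduce: computing directly from \eqref{eqn::LZcroell_def} with $|\Theta_1(r;x)|^{1/2}=\ee^{-r/8}|\ee^{\ii x}-1|^{1/2}(1+O(\ee^{-2r}))$ gives \eqref{eqn::LZcroell_asymp} without the prefactor $\sqrt{2}^{n(n-1)}$. The $2^{n(n-1)/2}$ coming from the energy is cancelled by $\sin\frac{\theta_j-\theta_i}{2}=\frac12|\ee^{\ii\theta_i}-\ee^{\ii\theta_j}|$, so that constant in the statement should be double-checked, although your summation argument itself is unaffected.
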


\begin{proof}
Plugging~\eqref{eqn::LZcroell_asymp} into~\eqref{eqn::LZcro_def}, we have
\begin{align} \label{eqn::LZcro_asymp_aux1}
	\begin{split}
		\LZcro{n}(r; \bs{\alpha}, \bs{\beta})=& \sqrt{2}^{n(n-1)}\LZradial{n}(\bs{\alpha})\LZradial{n}(\bs{\beta}) \\
		&\times\sum_{\ell\in\Z} \exp\left(\frac{n(2-n)r}{8}-\frac{1}{8r}\left(\sum_{j=1}^{n}(\alpha_j-\beta_{j})-2n\pi\ell\right)^2+O(1)\ee^{-r}\right),
	\end{split}
\end{align}
where the $O(1)$ terms have upper bounds independent of $(\bs{\alpha}, \bs{\beta}, \ell)$.
By Poisson summation formula, we have
\begin{equation}\label{eqn::LZcro_asymp_aux2}
	\sum_{\ell\in\Z} \exp\left( -\frac{1}{8r}\left(x-2n\pi\ell\right)^2 \right) = \frac{1}{n}\sqrt{\frac{2r}{\pi}} \sum_{k\in\Z} \exp\left( -\frac{2rk^2}{n^2}+ \frac{\ii k x}{n} \right) = \frac{1}{n}\sqrt{\frac{2r}{\pi}} \left(1+O(1)\ee^{-2r/n^2}\right),
\end{equation}
where $x=\sum_{j=1}^{n}(\alpha_j-\beta_{j})$ and the $O(1)$ term has upper bound independent of $x$. Plugging~\eqref{eqn::LZcro_asymp_aux2} into~\eqref{eqn::LZcro_asymp_aux1}, we obtain~\eqref{eqn::LZcro_asymp} as desired.
The same asymptotic holds for $(\beta_{2m+1}, \ldots, \beta_{2m+n})$. 
Summing~\eqref{eqn::LZcro_asymp} over $1\le m\le N$, we obtain~\eqref{eqn::LZcro_sum_asymp}.
\end{proof}

\subsection{Proof of Proposition~\ref{prop::LZcro_LZann_mono}}
\label{subsec::LZcro_LZann_mono}

To check~\eqref{eqn::LZcro_LZann_mono}, and also to facilitate our future analysis, it is more convenient to introduce boundary Poisson kernel for strips. 

\paragraph*{Boundary Poisson kernel.}
For the $2$-polygon $(\U; x_1, x_2)$, the \emph{boundary Poisson kernel} is defined as
\begin{align}\label{eqn::bPoisson_U}
	\Poisson(\U; x_1, x_2)=\frac{1}{|x_1-x_2|^2}, \qquad \text{for }x_1,x_2\in \partial\U. 
\end{align}
and for a general nice 2-polygon $(\Omega;x,y)$, we extend its definition via conformal covariance:
\begin{equation} \label{eqn::bPoisson_cov}
	\Poisson(\Omega;x,y):= |\varphi'(x)| |\varphi'(y)| \Poisson(\U;\varphi(x),\varphi(y)),
\end{equation}
where $\varphi$ is any conformal map from $\Omega$ to $\U$. Later, we will use the boundary Poisson kernel in the strip $\S_r=\{z\in\C: 0<\Im{z}<r\}$:  $\text{for } x,y\in\R,$
\begin{align}\label{eqn::bPoisson_strip}
\Poisson(\S_r; x, y+\ii r)=\frac{\pi^2}{4r^2} \left( \cosh \left(\frac{\pi(y-x)}{2r}\right) \right)^{-2}\quad\text{and}\quad \Poisson(\S_r; x, y)=\frac{\pi^2}{4r^2} \left( \sinh \left(\frac{\pi(y-x)}{2r}\right) \right)^{-2}. 
\end{align}
In addition, if $U\subset \Omega$ is a subdomain agreeing in neighborhoods of $x$ and $y$, then we have
	\begin{equation} \label{eqn::bPoisson_monotone}
		\Poisson(U;x,y)\le \Poisson(\Omega;x,y).
	\end{equation}

\begin{proof}[Proof of Proposition~\ref{prop::LZcro_LZann_mono}]
We first show the conclusion for $\ell=0$: 
\begin{align}\label{eqn::LZcro_LZann_mono_aux0}
	\LZcro{n}^{(0)}(r;\bs{\alpha},\bs{\beta})\le \LZann{n}(r;\bs{\alpha},\bs{\beta}). 
\end{align}
From~(\ref{eqn::LZann_def},\ref{eqn::LZcroell_def}), we have
\begin{align}\label{eqn::LZcro_LZann_mono_aux01}
	\frac{\LZcro{n}^{(0)}(r;\bs{\alpha},\bs{\beta})}{\LZann{n}(r;\bs{\alpha},\bs{\beta})}=&\exp\left(\frac{1}{8r}\left(\sum_{j=1}^n(-1)^j(\alpha_j-\beta_j)\right)^2-\frac{1}{8r}\left(\sum_{j=1}^n(\alpha_j-\beta_j)\right)^2\right)
	\notag\\
	&\times\prod_{\substack{1\le i<k\le n\\ 2\nmid (k-i)}}\frac{|\Theta_1(r; \alpha_k-\alpha_i)| |\Theta_1(r; \beta_k-\beta_i)|}{|\Theta_3(r; \beta_k-\alpha_i)||\Theta_3(r; \beta_i-\alpha_k)|}.
\end{align}
Combining~(\ref{eqn::bPoisson_strip},\ref{eqn::Jac_theta1},\ref{eqn::Jac_theta2}), we have, for $1\le i<k\le n$, 
\begin{align}\label{eqn::LZcro_LZann_mono_aux2}
	&\frac{|\Theta_1(r; \alpha_k-\alpha_i)| |\Theta_1(r; \beta_k-\beta_i)| }{|\Theta_3(r; \beta_k-\alpha_i)| |\Theta_3(r; \beta_i-\alpha_k)|}\notag\\=&\left(\frac{\Poisson(\S_r; \alpha_i, \beta_k+\ii r)\Poisson(\S_r; \alpha_k, \beta_i+\ii r)}{\Poisson(\S_r; \alpha_i, \alpha_k)\Poisson(\S_r; \beta_i+\ii r, \beta_k+\ii r)}\right)^{\frac{1}{2}}\ee^{\frac{(\alpha_i-\beta_i)(\alpha_k-\beta_k)}{2r}}\\
	&\times\prod_{m=1}^\infty \frac{\left(1-\ee^{\frac{-2m\pi^2}{r}-\frac{\pi (\alpha_k-\alpha_i)}{r}}\right)\left(1-\ee^{\frac{-2m\pi^2}{r}+\frac{\pi (\alpha_k-\alpha_i)}{r}}\right)\left(1-\ee^{\frac{-2m\pi^2}{r}-\frac{\pi (\beta_k-\beta_i)}{r}}\right)\left(1-\ee^{\frac{-2m\pi^2}{r}+\frac{\pi (\beta_k-\beta_i)}{r}}\right)}{\left(1+\ee^{\frac{-2m\pi^2}{r}-\frac{\pi (\beta_k-\alpha_i)}{r}}\right)\left(1+\ee^{\frac{-2m\pi^2}{r}+\frac{\pi (\beta_k-\alpha_i)}{r}}\right)\left(1+\ee^{\frac{-2m\pi^2}{r}-\frac{\pi (\beta_i-\alpha_k)}{r}}\right)\left(1+\ee^{\frac{-2m\pi^2}{r}+\frac{\pi (\beta_i-\alpha_k)}{r}}\right)}.\notag
\end{align}
Plugging~\eqref{eqn::LZcro_LZann_mono_aux2} into~\eqref{eqn::LZcro_LZann_mono_aux01}, we obtain 
\begin{align} \label{eqn::LZcro_LZann_mono_aux3}
	&\frac{\LZcro{n}^{(0)}(r;\bs{\alpha},\bs{\beta})}{\LZann{n}(r;\bs{\alpha},\bs{\beta})} \notag \\
	=&\prod_{\substack{1\le i<k\le n\\ 2\nmid (k-i)\\ m\ge 1}}
	\frac{\left(1-\ee^{-\frac{2m\pi^2}{r}-\frac{\pi(\alpha_k-\alpha_i)}{r}}\right) \left(1-\ee^{-\frac{2m\pi^2}{r}+\frac{\pi(\alpha_k-\alpha_i)}{r}}\right)}
	{\left(1+\ee^{-\frac{2m\pi^2}{r}-\frac{\pi(\beta_k-\alpha_i)}{r}}\right)
	\left(1+\ee^{-\frac{2m\pi^2}{r}+\frac{\pi(\beta_k-\alpha_i)}{r}}\right)}
	\frac{\left(1-\ee^{-\frac{2m\pi^2}{r}-\frac{\pi(\beta_k-\beta_i)}{r}}\right)
	\left(1-\ee^{-\frac{2m\pi^2}{r}+\frac{\pi(\beta_k-\beta_i)}{r}}\right)}
	{\left(1+\ee^{-\frac{2m\pi^2}{r}-\frac{\pi(\beta_i-\alpha_k)}{r}}\right)
	\left(1+\ee^{-\frac{2m\pi^2}{r}+\frac{\pi(\beta_i-\alpha_k)}{r}}\right)}\notag \\
	&\times \prod_{\substack{1\le i<k\le n\\ 2\nmid (k-i)}}
	\left(\frac{\Poisson(\S_r;\alpha_i,\beta_k+\ii r)\Poisson(\S_r;\alpha_k,\beta_i+\ii r)}
	{\Poisson(\S_r;\alpha_i,\alpha_k)\Poisson(\S_r;\beta_i+\ii r,\beta_k+\ii r)}\right)^{\frac12}.
\end{align}
Since the second line of~\eqref{eqn::LZcro_LZann_mono_aux3} is smaller than one (the numerator is smaller than one and the denominator is bigger that one), and the third line of~\eqref{eqn::LZcro_LZann_mono_aux3} is also smaller than one due to
\begin{equation} \label{eqn::LZcro_LZann_mono_aux4}
	\left( \frac{\Poisson(\S_r;\alpha_i,\beta_k+\ii r)\Poisson(\S_r;\alpha_k,\beta_i+\ii r)}
	{\Poisson(\S_r;\alpha_i,\alpha_k)\Poisson(\S_r;\beta_i+\ii r,\beta_k+\ii r)}\right)^{\frac{1}{2}}=\frac{\left( 1-\ee^{-\frac{\pi (\alpha_k-\alpha_i)}{r}} \right) \left( 1-\ee^{-\frac{\pi (\beta_k-\beta_i)}{r}} \right)}{\left( 1+\ee^{-\frac{\pi (\beta_k-\alpha_i)}{r}} \right) \left( 1+\ee^{-\frac{\pi (\alpha_k-\beta_i)}{r}} \right)} \le 1,
\end{equation}
we obtain~\eqref{eqn::LZcro_LZann_mono_aux0}.
\medbreak
Next, for general $\ell\in\Z$, from~\eqref{eqn::LZcro_LZann_mono_aux0} and~\eqref{eqn::Theta_period}, we have 
\begin{align*}
	\LZcro{n}^{(\ell)}(r; \bs{\alpha}, \bs{\beta})
	=\LZcro{n}^{(0)}(r; \bs{\alpha}, \bs{\beta}+2\pi\ell)
	\le \LZann{n}(r; \bs{\alpha}, \bs{\beta}+2\pi\ell)
	= \LZann{n}(r; \bs{\alpha}, \bs{\beta}).
\end{align*}
This completes the proof for~\eqref{eqn::LZcro_LZann_mono}.
\end{proof}

\section{Multi-annulus SLE: general framework}
\label{sec::multiannulus_SLE}
In this section, we will relate annulus BPZ equations with multi-time martingale and provide a general framework on defining and analyzing multi-annulus SLE process. 
Fix $\kappa>0$ and we use the following CFT/SLE parameters indexed by $\kappa > 0$:
\begin{align}\label{eqn::universal_parameters}
	\mathfrak{b} := \frac{6-\kappa}{2\kappa} ,\qquad 
	\textnormal{and} \qquad 
	\tilde{\mathfrak{b}} := \frac{(6-\kappa)(\kappa-2)}{8\kappa} , 
	\qquad \textnormal{and} \qquad 
	\mathfrak{c} := \frac{(6-\kappa)(3\kappa-8)}{2\kappa}.
\end{align}

\paragraph*{Annulus BPZ equations.}
Fix $\kappa>0$, and $n\ge 1$, and $\bs{\alpha}, \bs{\beta} \in \LX_{n}$. Let $\LZ=\LZ(r; \bs{\alpha}, \bs{\beta}):\R_{>0} \times \LX_{n} \times\LX_{n}\to \R$ be a positive $C^{1,2}$ differentiable function and let $F(r):\R_{>0}\to \R$. 
Recall that $H_1, H_3$ are derivatives of rescaled Jacobi theta functions defined in~\eqref{eqn::H1H3_def}. 
The annulus BPZ equations for the coordinates $\alpha_j$ and $\beta_j$ are given by: for all $j\in\{1, \ldots, n\}$, 
\begin{align} \label{eqn::annulus_BPZ_alpha} 
\begin{split}
(\partial_r-F(r))\LZ=\LD_{\alpha_j}^{(n)}\LZ,\quad
\text{where }
\LD_{\alpha_j}^{(n)} = \frac{\kappa}{2} \partial_{\alpha_j}^2 + &\sum_{\ell\neq j} \left( H_1(r;\alpha_\ell-\alpha_j) \partial_{\alpha_\ell}  + \mathfrak{b}\partial_z H_1(r;\alpha_\ell-\alpha_j) \right) \\
+&\sum_{\ell=1}^n \left( H_3(r;\beta_\ell-\alpha_j) \partial_{\beta_\ell} + \mathfrak{b} \partial_z H_3(r;\beta_\ell-\alpha_j) \right) ;
\end{split}
\end{align}
\begin{align} \label{eqn::annulus_BPZ_beta}
\begin{split}		(\partial_r-F(r))\LZ=\LD_{\beta_j}^{(n)}\LZ,\quad\text{where }\LD_{\beta_j}^{(n)}= \frac{\kappa}{2} \partial_{\beta_j}^2 +   &\sum_{\ell\neq j} \left( H_1(r;\beta_\ell-\beta_j) \partial_{\beta_\ell} + \mathfrak{b} \partial_z H_1(r;\beta_\ell-\beta_j) \right) \\
		 +&  \sum_{\ell=1}^n \left( H_3(r;\alpha_\ell-\beta_j) \partial_{\alpha_\ell}  + \mathfrak{b} \partial_z H_3(r;\alpha_\ell-\beta_j) \right). 
		 \end{split}
\end{align}
The function $F(r)$ represents a degree of freedom in the system. For the case $n=1$, explicit solutions for various $\kappa$ values were investigated in~\cite[Section~8]{ZhanReversibilityWholeplaneSLE}. The multi-annulus SLE partition function from~\cite{ZhanRestrictionAnnulusSLE, JahangoshahiLawlerMultiplepathsSLE} satisfies annulus BPZ equations and has the following properties. 

\begin{proposition}\label{prop::multiannulu_pf_construction}
Fix $\kappa\in (0,4]$ and $n\ge 1$ and $\ell\in\Z$. 
Fix $r>0$ and $\bs{\alpha}, \bs{\beta}\in\LX_n$. For $1\le j\le n$, let $\gamma^j$ be annulus $\SLE_{\kappa}(\LFcro{1}^{(\kappa;\ell)})$ in $(\A_r;\ee^{\ii \alpha_j};\ee^{\ii \beta_j-r})$. 
Let $\Pind{n}^{(\kappa;\ell)}=\Pind{n}^{(\kappa;\ell)}(\A_r; \ee^{\ii\bs{\alpha}}, \ee^{\ii\bs{\beta}})$ be the probability measure on $\bs{\gamma}=(\gamma^{1}, \ldots, \gamma^{n})$ under which the curves are independent.
Define $\LFcro{n}^{(\kappa; \ell)}(r; \bs{\alpha}, \bs{\beta})$ as in~\eqref{eqn::multiannulus_pf_def}: 
\begin{align*}
\LFcro{n}^{(\kappa; \ell)}(r; \bs{\alpha}, \bs{\beta})=\prod_{j=1}^n\LFcro{1}^{(\kappa;\ell)}(r; \alpha_j, \beta_j)\times \Eind{n}^{(\kappa;\ell)}\left[\one_{\LE_{\emptyset}(\bs{\gamma})}\exp\left(\frac{\mathfrak{c}}{2}\blm(\A_r; \gamma^1, \ldots, \gamma^n)\right)\right],
\end{align*}
where $\LE_{\emptyset}(\bs{\gamma}) = \{ \gamma^{j} \cap \gamma^{i}=\emptyset, \, \forall i\neq j\}$ is the event that different curves are disjoint, and $\blm$ is the Brownian loop measure defined in~\eqref{eqn::blm_def}. 
\begin{itemize}
\item The partition function $\LFcro{n}^{(\kappa; \ell)}$ is smooth and it satisfies annulus BPZ equations~\eqref{eqn::annulus_BPZ_alpha}-\eqref{eqn::annulus_BPZ_beta} with
\begin{equation} \label{eqn::F(r)inBPZ}
F(r)=	F_n(r):=\frac{6\tilde{\mathfrak{b}}-\mathfrak{b}}{r} + 6 \tilde{\mathfrak{b}} \LE(r) + \mathfrak{b} n. 
\end{equation} 
\item Define $\LFcro{n}^{(\kappa)}(r; \bs{\alpha}, \bs{\beta})$ as in~\eqref{eqn::multiannulus_withoutspiral_def}:
\begin{align*}
\LFcro{n}^{(\kappa)}(r; \bs{\alpha}, \bs{\beta})=\sum_{\ell\in\Z}\LFcro{n}^{(\kappa;\ell)}(r; \bs{\alpha}, \bs{\beta}).
\end{align*}
Then there exists a constant $C=C(n)\in (0,\infty)$ independent of $(r; \bs{\alpha}, \bs{\beta})$ such that
\begin{align}\label{eqn::multiannulus_pf_upperbound}
\LFcro{n}^{(\kappa)}(r; \bs{\alpha}, \bs{\beta})\le C\exp\left(\left(n\mathfrak{b}-\frac{(8-\kappa)}{\kappa}\right)r+\left(\frac{3\kappa}{2}-2n\right)\mathfrak{b}\log r\right). 
\end{align}
\end{itemize}
\end{proposition}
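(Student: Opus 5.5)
Proposition~\ref{prop::multiannulu_pf_construction} has two parts. For the BPZ part I will use a multi-time martingale argument, and for the upper bound I will drop the interaction term and use Poisson summation.

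\textbf{BPZ equations.} For $n=1$, smoothness of $\LFcro{1}^{(\kappa;\ell)}$ and the BPZ equation with $F_1$ are the known single-curve results of~\cite{ZhanReversibilityWholeplaneSLE,lawlerDefiningSLEwithBrownianLoop}; I take them as input.

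For general $n$, I grow $\gamma^1$ under the independent measure $\Pind{n}^{(\kappa;\ell)}$ for a small time $t$. I parametrize by the annulus Loewner flow, so the modulus decreases to $r-t$. I then condition on $\gamma^1[0,t]$ and map $\A_r\setminus\gamma^1[0,t]$ conformally onto $\A_{r-t}$.

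Next I compare two things:
\begin{itemize}
\item the $n$-curve measure with curves $1,\dots,n$ started in the slit annulus;
\item the $n$-curve measure in $\A_{r-t}$ at the mapped marked points.
\end{itemize}
The restriction property of annulus $\SLE_\kappa$ relates them. Each of $\gamma^2,\dots,\gamma^n$ restricted to avoid $\gamma^1[0,t]$ has Radon--Nikodym derivative of the form
\[
\one\{\text{avoid}\}\,\exp\!\left(\tfrac{\mathfrak c}{2}\blm(\A_r;\gamma^j,\gamma^1[0,t])\right)\times(\text{boundary derivative factors})^{\mathfrak b}\times\frac{\LFcro{1}\text{ ratio}}{\cdots}.
\]
The Brownian loop measure terms combine by inclusion--exclusion into $\blm(\A_r;\gamma^1,\dots,\gamma^n)$.

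From this I will show that
\[
M_t=\prod_{j\ge 2} g_t'(x_j)^{\mathfrak b}\, g_t'(y_j)^{\mathfrak b}\,\exp\!\left(\int_0^t F\right)\LFcro{n}^{(\kappa;\ell)}(r-t;g_t(\cdot))
\]
is a martingale for the annulus $\SLE_\kappa(\LFcro{1}^{(\kappa;\ell)})$ driving $\gamma^1$, after reweighting by the $\gamma^1$ marginal. This identity is the cascade relation of Lemma~\ref{lem::earlierconstruction_marginal_conditional}.

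Writing Itô's formula for $M_t$ using the annulus Loewner equation gives the drift. The marked points move by $H_1$ (same boundary) and $H_3$ (opposite boundary). The derivatives $g_t'$ contribute the $\mathfrak b\,\partial_z H$ terms. The $\partial_r$ term comes from the modulus change. Setting the drift to zero yields~\eqref{eqn::annulus_BPZ_alpha} for $j=1$, and symmetry gives all $j$ and the $\beta$-equations (the latter by reversibility/inversion $z\mapsto \ee^{-r}/\bar z$).

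The constant $F_n$ is fixed by collecting the $r$-only terms. The single-curve contributions give $n$ copies of the $\mathfrak b$ term, while the central-charge and $1/r$ terms, coming from the loop measure's dependence on modulus (Lemma~\ref{lem::mt_blm_annulus_diff} with $\LE(r)$), appear once. This produces $F_n=\frac{6\tilde{\mathfrak b}-\mathfrak b}{r}+6\tilde{\mathfrak b}\LE(r)+\mathfrak b n$.

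Smoothness is what lets me apply Itô to $\LFcro{n}$ rather than merely concluding in a weak sense. I plan to get it by Hörmander hypoellipticity of the operator $\partial_r-\LD_{\alpha_1}^{(n)}$ after first deriving the equation in the distributional sense. This is the main technical obstacle: the operator has second derivatives in only one variable, so hypoellipticity needs the bracket condition. The drift fields $H_1,H_3$ are non-degenerate, so I will try to verify Hörmander's condition with them, summing over $j$ as needed.

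\textbf{Upper bound.} For~\eqref{eqn::multiannulus_pf_upperbound} I drop the interaction. Since $\kappa\le 4$ we have $\mathfrak c\le 1$, but $\mathfrak c$ may be positive, so the loop term cannot be discarded directly. Instead I bound
\[
\Eind{n}\!\left[\one_{\LE_\emptyset}\exp\!\left(\tfrac{\mathfrak c}{2}\blm\right)\right]
\]
using the fact that disjoint crossings force each curve into a sector. The single-curve restriction formula then gives each factor at most $\exp(-\text{(arm exponent)}\,r)$, up to powers of $r$.

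I then insert the $n=1$ asymptotics of~\cite{JahangoshahiLawlerMultiplepathsSLE} for $\LFcro{1}^{(\kappa;\ell)}$, which give Gaussian decay in $\ell$, and sum over $\ell$ by Poisson summation as in Lemma~\ref{lem::LZcro_asymp}. Matching the exponents to $n\mathfrak b-\frac{8-\kappa}{\kappa}$ and $(\frac{3\kappa}{2}-2n)\mathfrak b\log r$ is routine bookkeeping. The real input is a crude disjointness estimate, which I expect to obtain by iterating the $n=2$ bound from~\cite{JahangoshahiLawlerMultiplepathsSLE}.
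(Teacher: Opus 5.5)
Your derivation of the BPZ equations is the paper's argument. The cascade relation \eqref{eqn::multiannulus_pf_2nd} identifies the Doob martingale $N_t=\Ecro{1}^{(\kappa;\ell)}[\LZ_{\rainbow}^{(\kappa)}(\A_r\setminus\gamma^1;\ldots)\mid\gamma^1_{[0,t]}]$ with the explicit expression \eqref{eqn::multiannulus_pf_general_BPZ_aux1}. That expression carries $\ee^{(n-1)\mathfrak b t}$ and has $\LFcro{1}^{(\kappa;\ell)}(r-t;\zeta_t,\Re\mathfrak{\covmap}_t(\beta_1+\ii r))$ in the denominator; it is the correctly normalized version of your $M_t$. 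The It\^o computation behind Proposition~\ref{prop::multitime_mart_annulus} then forces the drift $(F_n-\partial_r+\LD^{(n)}_{\alpha_1})\LFcro{n}^{(\kappa;\ell)}$ to vanish.

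The only difference is smoothness. The paper cites \cite{JahangoshahiLawlerMultiplepathsSLE,KarrilaViitasaari}, while you propose H\"ormander. That route is workable: $\partial_z^kH_3(r;w)=\partial_z^kH_1(r;w-\ii r)$, and $\partial_zH_1(r;\cdot)$ has double poles exactly on $2\pi\Z+2\ii r\Z$. Hence the brackets $\mathrm{ad}^k_{\partial_{\alpha_1}}$ of the drift, together with $\partial_{\alpha_1}$ and the drift itself, span at every configuration. You would still need to justify the distributional form of the equation.

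The upper bound \eqref{eqn::multiannulus_pf_upperbound} is where the proposal breaks down. Disjoint crossings are not confined to deterministic sectors: all $n$ curves may spiral together arbitrarily, and in $\mathbb{S}_r$ a lift of $\gamma^j$ is only trapped between the random lifts of its neighbours. So the single-curve restriction formula \eqref{eqn::bp_annulus}, which needs a fixed test domain, gives no per-curve factor $\ee^{-(\text{arm exponent})r}$. Moreover, for $\kappa\in(8/3,4]$ nothing in your argument controls the unbounded weight $\exp(\frac{\mathfrak c}{2}\blm)\ge 1$.

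``Iterating the $n=2$ bound'' is also not an argument as stated. The loop term is not a sum of pairwise terms, and after removing two curves you need a bound on the remaining $n-2$ that is uniform over the random domain. The missing idea is the two-curve cascade relation \eqref{eqn::multiannulus_pf_3rd_aux1},
\[
\LFcro{n}^{(\kappa;\ell)}(r;\bs{\alpha},\bs{\beta})=\LFcro{2}^{(\kappa;\ell)}(r;\alpha_1,\alpha_2,\beta_1,\beta_2)\,\Ecro{2}^{(\kappa;\ell)}\big[\LZ_{\rainbow}^{(\kappa)}(\A_r\setminus(\gamma^1\cup\gamma^2);\ldots)\big].
\]
It follows from $\blm(\A_r;\gamma^1,\ldots,\gamma^n)=\blm(\A_r;\gamma^1,\gamma^2)+\blm(\A_r;\gamma^1\cup\gamma^2,\gamma^3,\ldots,\gamma^n)$, Proposition~\ref{prop::ac_general} for $n=2$, Lemma~\ref{lem::rainbow_pf_blm_rep} and \eqref{eqn::bp_annulus}. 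This relation absorbs the possibly positive loop weight into a rainbow partition function. That function is bounded deterministically by \eqref{eqn::rainbow_pf_upperbound} together with Poisson-kernel monotonicity in the lifted strip: each factor is at most $\ee^{r}\pi^2/(4r^2)$ regardless of winding, as in \eqref{eqn::rainbow_RN_aux2}. Summing over $\ell$ gives $\LFcro{n}^{(\kappa)}\le\LFcro{2}^{(\kappa)}\,(\ee^{r}\pi^2/(4r^2))^{(n-2)\mathfrak b}$, and the $n=2$ estimate of \cite{JahangoshahiLawlerMultiplepathsSLE} finishes. No per-winding asymptotics of $\LFcro{1}^{(\kappa;\ell)}$ (which has no explicit formula for general $\kappa$) and no Poisson summation are needed.

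Finally, the bookkeeping cannot reach the exponent you aim for. The argument yields $r$-coefficient $2\mathfrak b-\frac{8-\kappa}{8}+(n-2)\mathfrak b=n\mathfrak b-\frac{8-\kappa}{8}$ and $\log r$-coefficient $\frac{\mathfrak c}{2}-2(n-2)\mathfrak b=(\frac{3\kappa}{2}-2n)\mathfrak b$. The $\frac{8-\kappa}{\kappa}$ in the displayed statement should read $\frac{8-\kappa}{8}$. Indeed, at $\kappa=4$, $n=2$, Proposition~\ref{prop::two_pf_equal} and Lemma~\ref{lem::LZcro_asymp} give $\LFcro{2}^{(4)}\asymp\sqrt r$, whereas the displayed bound would be $C\ee^{-r/2}\sqrt r$. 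The argument also needs $n\ge 2$: for $n=1$ the bound fails, e.g.\ $\LFcro{1}^{(4)}\asymp\ee^{r/8}\sqrt r$.
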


We will show in Proposition~\ref{prop::multitime_mart_annulus} that any solution to the annulus BPZ equations gives a mutli-time martingale in the annulus setup. As $\LFcro{n}^{(\kappa; \ell)}$ satisfies annulus BPZ equations, it provides us with a multi-time martingale due to Proposition~\ref{prop::multitime_mart_annulus}. 
We define $n$-annulus $\SLE_{\kappa}(\LFcro{n}^{(\kappa;\ell)})$ in Definition~\ref{def::multiannulus_SLE}, using multi-time martingale. 
We prefer to define multi-annulus SLE via multi-time martingale, instead of the construction in~\cite{ZhanRestrictionAnnulusSLE} and~\cite{JahangoshahiLawlerMultiplepathsSLE}, for the following reason. The construction in~\cite{ZhanRestrictionAnnulusSLE} and~\cite{JahangoshahiLawlerMultiplepathsSLE} gives one candidate for multi-annulus SLE, but this is not the only reasonable one, see Remark~\ref{rem::annulus_otherpatterns}. In the case of $\kappa=4$, we have at least two reasonable candidates for multi-annulus $\SLE_4$: the two given by the two partition functions $\LZcro{n}^{(\ell)}$ and $\LZann{n}$. We would like to provide a unified framework to treat all reasonable multi-annulus SLE with commutation relation. With this purpose, we relate annulus BPZ equations with multi-time martingale and define multi-annulus SLE using the martingale.

The multi-time martingale provides us with a powerful tool in analyzing the multiple SLE process, see e.g. Theorem~\ref{thm::GFF_levellines_annulus}. 
But it is not clear that the process defined in this way coincides with previous construction of multi-annulus SLE~\cite[Section~2.3]{JahangoshahiLawlerMultiplepathsSLE} and~\cite[Section~8.2]{ZhanRestrictionAnnulusSLE}. 
We show in Proposition~\ref{prop::ac_general} that multi-annulus SLE in Definition~\ref{def::multiannulus_SLE} is absolutely continuous with respect to independent annulus SLEs, which implies that our construction coincides with previous construction. 
\begin{proposition}\label{prop::ac_general}
Assume the same setup as in Proposition~\ref{prop::multiannulu_pf_construction}. The law of $n$-annulus $\SLE_{\kappa}(\LFcro{n}^{(\kappa;\ell)})$ in $(\A_r; \ee^{\ii\bs{\alpha}}, \ee^{\ii\bs{\beta}-r})$ is the same as $\Pind{n}^{(\kappa;\ell)}$ weighted by the Radon-Nikodym derivative 
\begin{align}\label{eqn::ac_RN}
\frac{\prod_{j=1}^n\LFcro{1}^{(\kappa;\ell)}(r; \alpha_j, \beta_j)}{\LFcro{n}^{(\kappa;\ell)}(r; \bs{\alpha}, \bs{\beta})}\one_{\LE_{\emptyset}(\bs{\gamma})}\exp\left(\frac{\mathfrak{c}}{2}\blm(\A_r; \gamma^1, \ldots, \gamma^n)\right), 
\end{align}
where $\LE_{\emptyset}(\bs{\gamma}) = \{ \gamma^{j} \cap \gamma^{i}=\emptyset, \, \forall i\neq j\}$ is the event that different curves are disjoint, and $\blm$ is the Brownian loop measure defined in~\eqref{eqn::blm_def}.
\end{proposition}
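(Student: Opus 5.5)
To prove Proposition~\ref{prop::ac_general}, I would compare the two measures through their marginal and conditional structure. Under $n$-annulus $\SLE_{\kappa}(\LFcro{n}^{(\kappa;\ell)})$ from Definition~\ref{def::multiannulus_SLE}, growing the curves at multiple times is governed by the multi-time martingale of Proposition~\ref{prop::multitime_mart_annulus}. Denote by $\mathsf{Q}$ the measure $\Pind{n}^{(\kappa;\ell)}$ reweighted by~\eqref{eqn::ac_RN}. It is a probability measure because of the definition~\eqref{eqn::multiannulus_pf_def} of $\LFcro{n}^{(\kappa;\ell)}$.

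The plan is to show that $\mathsf{Q}$ has the same multi-time Radon--Nikodym density with respect to $\Pind{n}^{(\kappa;\ell)}$, restricted to the $\sigma$-algebra generated by initial segments $(\gamma^1[0,t_1],\ldots,\gamma^n[0,t_n])$, as the multi-time martingale defining the multi-annulus SLE.

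\textbf{Step 1: condition $\mathsf{Q}$ on initial segments.} Condition on $\gamma^j[0,t_j]$ for all $j$. By the domain Markov property of each single annulus $\SLE_{\kappa}(\LFcro{1}^{(\kappa;\ell)})$, the remaining pieces are independent single-curve annulus SLEs in the slit domain $\A_r\setminus\cup_j\gamma^j[0,t_j]$. Their marginal densities produce the ratios of one-curve partition functions, in the usual form.

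\textbf{Step 2: split the loop term.} Use the Brownian loop measure cascade
\[
\blm(\A_r;\gamma^1,\ldots,\gamma^n)=\blm(\A_r;\gamma^1[0,t_1],\ldots,\gamma^n[0,t_n])+\blm(A_{\bs t};\text{remaining pieces})+\text{cross terms}.
\]
Here the cross terms are loop masses that hit one initial segment and a different remaining curve. After mapping $A_{\bs t}$ back to a standard annulus $\A_{r_{\bs t}}$ by a uniformizing map $g_{\bs t}$, these terms combine with the one-curve restriction formulas (the Brownian loop corrections appearing in single annulus SLE Radon--Nikodym derivatives) into the conformal covariance factors $\prod_j|g_{\bs t}'|^{\mathfrak b}$ at the marked points. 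They also produce the exponential of $\tfrac{\mathfrak c}{2}\blm$ of the initial segments.

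\textbf{Step 3: identify the density.} Integrating out the remaining curves then gives a density
\[
\frac{\LFcro{n}^{(\kappa;\ell)}(r_{\bs t};g_{\bs t}(\cdot))}{\LFcro{n}^{(\kappa;\ell)}(r;\bs\alpha,\bs\beta)}\times(\text{covariance and loop factors}),
\]
which is exactly the multi-time martingale of Proposition~\ref{prop::multitime_mart_annulus}, with the normalization $F_n$ in~\eqref{eqn::F(r)inBPZ} tracking the change of modulus. Two measures whose finite-time marginals agree coincide on the full path $\sigma$-algebra. The disjointness event $\LE_\emptyset$ must be handled on the stopped initial segments, by localizing to stopping times before the curves come close; this gives the equality up to the terminal time.

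\textbf{Main obstacle.} I expect the hardest step to be Step 2: checking that the boundary-covariance and loop-measure factors from the single annulus SLE Radon--Nikodym derivatives in the changing annulus exactly match those in Proposition~\ref{prop::multitime_mart_annulus}. In particular, the contribution from the modulus change and the $\LE(r)$ term in $F_n$ have to be matched against the Brownian loops separating the boundary components. I would handle this first for $n=2$, growing one curve at a time and using the one-curve identity from~\cite{ZhanRestrictionAnnulusSLE}, and then induct on $n$.
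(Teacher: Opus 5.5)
Your overall strategy is correct, but it is a genuinely different route from the paper's. The paper never computes the multi-time density of the loop-weighted measure $\widehat{\mathsf P}$ directly. Instead it:
\begin{itemize}
\item identifies the marginal law of $\gamma^1$ under both measures, via the rainbow cascade in Lemma~\ref{lem::earlierconstruction_marginal_conditional} for $\widehat{\mathsf P}$ and via the one-time martingale $N_t$ in Lemma~\ref{lem::nannulus_marginal_gamma1} for the tilted measure;
\item proves a domain Markov property for $\widehat{\mathsf P}$ (Lemma~\ref{lem::earlierconstruction_DMP});
\item inducts over the curves grown;
\item checks separately that both measures are carried by $\LE_{\emptyset}(\bs\gamma)$.
\end{itemize}
You instead compute $\Eind{n}^{(\kappa;\ell)}[R\mid\mathcal F_{\bs t}]$, with $R$ the weight~\eqref{eqn::ac_RN}, for the whole multi-time filtration at once. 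This needs neither rainbow partition functions nor the BPZ equations. Since $R$ is integrable, it also shows that $M_{\bs t}/M_{\bs 0}$ is a uniformly integrable martingale closed by $R$. So no localization is required, and full mass on $\LE_\emptyset(\bs\gamma)$ is automatic. The paper's route, in exchange, reuses the cascade relation~\eqref{eqn::multiannulus_pf_2nd} and the single-curve marginal, which it needs anyway for Proposition~\ref{prop::two_pf_equal}.

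Two corrections are needed.

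\textbf{Step 1 is misstated.} Under $\Pind{n}^{(\kappa;\ell)}$, given $K_j:=\gamma^j[0,t_j]$, the remainder $\eta^j:=\gamma^j[t_j,r]$ is annulus SLE in $\A_r\setminus K_j$ only, and it can hit $K_i$ for $i\neq j$. The correct mechanism goes as follows. Set $A_{\bs t}:=\A_r\setminus\cup_iK_i$. On $\LE_\emptyset(\bs\gamma)$, comparing loop by loop the number of curves hit minus one gives exactly
\[
\blm(\A_r;\gamma^1,\ldots,\gamma^n)=\blm(\A_r;K_1,\ldots,K_n)+\blm(A_{\bs t};\eta^1,\ldots,\eta^n)+\sum_{j=1}^n\blm\Big(\A_r\setminus K_j;\eta^j,\bigcup_{i\neq j}K_i\Big).
\]
The $j$-th cross term, together with $\one\{\eta^j\subset A_{\bs t}\}$, is exactly the density in~\eqref{eqn::bp_annulus_doubly_connected} with ambient domain $\A_r\setminus K_j$ and subdomain $A_{\bs t}$. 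It converts $\eta^j$ into annulus SLE in $A_{\bs t}$, at the cost of the factor
\[
\LFcro{1}^{(\kappa;\ell)}(A_{\bs t};\gamma^j_{t_j},y_j)\big/\LFcro{1}^{(\kappa;\ell)}(\A_r\setminus K_j;\gamma^j_{t_j},y_j).
\]
The remaining expectation times $\prod_j\LFcro{1}^{(\kappa;\ell)}(A_{\bs t};\cdot)$ is then $\LFcro{n}^{(\kappa;\ell)}(A_{\bs t};\cdot)$, by~\eqref{eqn::multiannulus_pf_def} transported conformally.

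\textbf{The obstacle you anticipate does not exist.} $F_n$ and $\LE$ never appear in $M_{\bs t}$; they appear only in its It\^o differential. The factors $\mathfrak{h}'_{\bs t,j}(\zeta^j_{t_j})^{\mathfrak b}$ and
\[
\exp\Big(\mathfrak b\sum_j(r-t_j)-n\mathfrak bR_{\bs t}\Big)\prod_j\mathfrak{h}'_{\bs t}(\beta_j+\ii r)^{\mathfrak b}/(\mathfrak{h}^j_{t_j})'(\beta_j+\ii r)^{\mathfrak b}
\]
are exactly the ratios of $|\mathfrak g_{\bs t}'|^{\mathfrak b}$ to $|(\mathfrak g^j_{t_j})'|^{\mathfrak b}$ at the tips and at $y_j$, computed via $|q'(z)|=\ee^{-\Im z}$. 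At the tips only the ratio is meaningful, after first applying $\mathfrak g^j_{t_j}$. So $\Eind{n}^{(\kappa;\ell)}[R\mid\mathcal F_{\bs t}]=M_{\bs t}/M_{\bs 0}$ is a pure conformal-covariance identity. Neither an $n=2$ base case nor an induction on $n$ is needed.
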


This section is organized as follows. In Section~\ref{subsec::polyon_pre}, we give preliminaries on rainbow SLE and its partition functions. 
In Section~\ref{subsec::multiannulus_pre}, we give preliminaries on annulus Loewner chain  and summarize known results for a single annulus SLE. 
In Section~\ref{subsec::multitime_mart}, we introduce multi-time martingale. 
In Sections~\ref{subsec::earlierconstruction} and~\ref{subsec::multiannulus_pf_proof}, we complete the proof of Propositions~\ref{prop::multiannulu_pf_construction} and~\ref{prop::ac_general}. The first item in Proposition~\ref{prop::multiannulu_pf_construction} was argued in~\cite[Proposition~4 and Proof of Theorem~1]{JahangoshahiLawlerMultiplepathsSLE}, see also Lemma~\ref{lem::multiannulus_pf_BPZ}. 
The proof of these two propositions relies heavily on earlier result for a single annulus SLE from~\cite{ZhanRestrictionAnnulusSLE} and~\cite{JahangoshahiLawlerMultiplepathsSLE} and earlier result for multi-chordal SLE partition functions~\cite{PeltolaWuGlobalMultipleSLEs}. 

\subsection{Preliminaries: partition functions in polygons}
\label{subsec::polyon_pre}

\paragraph*{Brownian loop measure.}
Brownian loop measure $\blm^\mathrm{loop}$ is a $\sigma$-finite measure on planar unrooted Brownian loops
--- see~\cite{LawlerWernerBrownianLoopsoup} for its definition and properties. 
While the total mass of $\blm^\mathrm{loop}$ is infinite, the mass on macroscopic loops is finite: 
if $\Omega$ is a domain and $K_1, K_2 \subset \overline{\Omega}$ are two disjoint compact subsets, 
then the total mass $\blm(\Omega; K_1, K_2)$ of Brownian loops that stay in $\Omega$ and intersect both $K_1$ and $K_2$ is finite.
In general, for $n\ge 2$ disjoint compact subsets $K_1, \ldots, K_n$ of $\overline{\Omega}$, we denote
\begin{align}\label{eqn::blm_def}
	\blm(\Omega; K_1, \ldots, K_n) := \sum_{j=2}^n \blm^{\mathrm{loop}} \big[ \ell\subset\Omega: \ell\cap K_i\neq \emptyset\textnormal{ for at least }j \textnormal{ of the }i\in\{1, \ldots, n\} \big].
\end{align}
See~\cite{LawlerPartitionFunctionsSLE,PeltolaWangSLELDP} for more properties and~\cite{DubedatEulerIntegralsCommutingSLEs, DubedatCommutationSLE, KozdronLawlerMultipleSLEs, PeltolaWuGlobalMultipleSLEs} 
for alternative forms for~\eqref{eqn::blm_def}.

\paragraph*{Polygon.} We say that $(\Omega; \bs{x})=(\Omega; x_1, \ldots, x_n)$ 
is a (topological) $n$-\emph{polygon} if $\Omega\subsetneq\C$ is simply connected, $\partial\Omega$ is locally connected, and $x_1, x_2, \ldots, x_n \in \partial\Omega$ are distinct points lying counterclockwise along the boundary. 
We say that $(\Omega; \bs{x})=(\Omega; x_1, \ldots, x_n)$ 
is a \emph{nice} polygon if we assume further that the marked boundary points $x_1, x_2, \ldots, x_n$ lie on $C^{1+\eps}$-boundary segments, for some $\eps>0$, so that derivatives of conformal maps on $\Omega$ are defined there.

\paragraph*{Chordal SLE.}
For a $2$-polygon $(\Omega; x_1, x_2)$, denote by $\chamber(\Omega; x_1, x_2)$ the set of continuous simple unparameterized curves in $\Omega$ connecting $x_1$ and $x_2$ such that they only touch the boundary $\partial\Omega$ in $\{x_1,x_2\}$. Fix $\kappa\in (0,4]$. Chordal $\SLE_{\kappa}$ in $(\Omega; x_1, x_2)$ is a probability measure on $\chamber(\Omega; x_1, x_2)$ that satisfies conformal invariance and domain Markov property. Its definition is usually given in the upper-half plane via chordal Loewner chain.
As we mainly focus on the annulus setting in this article, we do not plan to introduce chordal Loewner chain. Readers may check~\cite{WernerRandomPlanarcurves}.
We denote by $\Ptwo^{(\kappa)}(\Omega; x_1, x_2)$ the law of chordal $\SLE_{\kappa}$ in $(\Omega; x_1, x_2)$. Its partition function is given by
\begin{align*}
\LZtwo(\Omega; x_1, x_2):=\Poisson(\Omega; x_1, x_2)^{\mathfrak{b}}, 
\end{align*}
where $\Poisson(\Omega; x_1, x_2)$ is boundary Poisson kernel~\eqref{eqn::bPoisson_U}-\eqref{eqn::bPoisson_cov}. 

\paragraph*{Rainbow SLE.}
Fix $\kappa\in (0,4]$ and a $2N$-polygon $(\Omega; x_1, \ldots, x_{2N})$, denote by $\chamber_{\rainbow}(\Omega; x_1, \ldots, x_{2N})$ the set of collections of $N$ curves $(\gamma^1, \ldots, \gamma^N)$ where $\gamma^j$ is a continuous simple curve in $\Omega$ connecting $x_j$ and $x_{2N+1-j}$ for $1\le j\le N$ and $\gamma^1, \ldots, \gamma^N$ are disjoint. 
We call a probability measure on $(\gamma^1, \ldots, \gamma^N)\in \chamber_{\rainbow}(\Omega; x_1, \ldots, x_{2N})$ an $N$-rainbow $\SLE_{\kappa}$ if, for each $j\in\{1, \ldots, N\}$, the conditional law of the curve $\gamma^j$ given the other curves $(\gamma^1, \ldots, \gamma^{j-1}, \gamma^{j+1}, \ldots, \gamma^N)$ is the chordal $\SLE_{\kappa}$ connecting $x_j$ and $x_{2N+1-j}$ in the component of the domain $\Omega\setminus\cup_{i\neq j}\gamma^i$ that has $x_j$ and $x_{2N+1-j}$ on the boundary. 
For $\kappa\in (0,4]$, the existence and uniqueness of $N$-rainbow $\SLE_{\kappa}$ is proved in~\cite{PeltolaWuGlobalMultipleSLEs, BeffaraPeltolaWuUniqueness}. Moreover, the law of rainbow $\SLE_{\kappa}$ is encoded by the following rainbow partition functions. 

\paragraph*{Rainbow partition functions.}
In the upper half-plane $\HH$, rainbow partition functions are functions 
	\begin{align*}
		\LZrainbow{N}^{(\kappa)}(\HH; \cdot) \colon \{ \bs{x} = (x_1,\ldots,x_{2N})\in\R^{2N} \,|\, x_1<\cdots<x_{2N} \} \to (0,\infty) ,
	\end{align*}
	which are defined recursively 
	via the following four properties, motivated by CFT (see~\cite{FengLiuPeltolaWu2024} and references therein):
	\begin{itemize}
		\item[(PDE)] {\bf\emph{Chordal BPZ equations}}\textnormal{:} 
		\begin{align*}
			\bigg(
			\frac{\kappa}{2} \partial_j^2
			+  \underset{1\leq i\neq j \leq 2N}{\sum} \,  \bigg( \frac{2\partial_{i}}{x_i-x_j}
			- \frac{2\mathfrak{b}}{(x_i-x_j)^{2}} \bigg) \bigg)
			\LZrainbow{N}^{(\kappa)}(\HH; \bs{x}) =  0 , \qquad \textnormal{for all }j\in \{1,\ldots,2N\} .
		\end{align*}
		
		\item[(COV)] {\bf\emph{M\"{o}bius covariance}}\textnormal{:} 
		for all M\"obius maps $\varphi \colon \HH \to \HH$ such that $\varphi(x_1) < \cdots < \varphi(x_{2N})$, we have
		\begin{align*}
			\LZrainbow{N}^{(\kappa)}(\HH; x_1,\ldots,x_{2N}) = 
			\Big( \prod_{j=1}^{2N} \varphi'(x_j) \Big)^{\mathfrak{b}} 
			\, \LZrainbow{N}^{(\kappa)}(\HH; \varphi(x_1),\ldots,\varphi(x_{2N})).
		\end{align*}
		
		\item[(ASY)] {\bf\emph{Asymptotics}}\textnormal{:} 
		with $\LZrainbow{0}^{(\kappa)} \equiv 1$, the collection $\{\LZrainbow{N}^{(\kappa)}\}_{N\ge 0} $ satisfies the following recursive asymptotics property. Fix $N\ge 1$ and $j \in \{1,2, \ldots, 2N-1 \}$. 
		Then, we have
		\begin{align*}
			\lim_{x_j,x_{j+1}\to\xi} \frac{\LZrainbow{N}^{(\kappa)}(\HH; x_1,\ldots, x_{2N})}{ (x_{j+1}-x_j)^{-2 \mathfrak{b} } }
			= 
			\begin{cases}
				\LZrainbow{N-1}^{(\kappa)}(\HH; x_1, \ldots, x_{j-1}, x_{j+2}, \ldots, x_{2N}), 
				& \quad j = N, \\
				0 ,
				& \quad j \neq N ,
			\end{cases}
		\end{align*}
		where $\xi \in (x_{j-1}, x_{j+2})$ (with the convention that $x_0 = -\infty$ and $x_{2N+1} = +\infty$). 
		
		\item[(PLB)] {\bf\emph{Power-law bound}}\textnormal{:} 
		there exist constants $C>0$ and $r>0$ such that for all $N \geq 1$, we have
		\begin{align*}
			\LZrainbow{N}^{(\kappa)}(\HH; x_1,\ldots,x_{2N}) \le  \; & C\prod_{1\le i<j\le 2N}(x_j-x_i)^{\nu^{ij}(r)}, 
			\qquad \textnormal{for all } x_1<\cdots<x_{2N} ,
			\\
			\nonumber
			\textnormal{where } \quad
			\nu^{ij}(r) := \; &
			\begin{cases}
				r , & \textnormal{if }|x_j-x_i|>1,\\
				-r , & \textnormal{if }|x_j-x_i|\le 1.
			\end{cases}
		\end{align*}
	\end{itemize}
As the number of boundary points is usually clear from the context, we also denote $\LZrainbow{N}^{(\kappa)}(\HH; x_1,\ldots,x_{2N})$ by $\LZ_{\rainbow}^{(\kappa)}(\HH; x_1,\ldots,x_{2N})$.
	We extend the rainbow partition function to general nice polygons by conformal covariance.
	For a nice $2N$-polygon $(\Omega; x_1,\ldots,x_{2N})$, letting $\varphi:\Omega\to\HH$ be any conformal map such that $\varphi(x_1)<\cdots<\varphi(x_{2N})$, we define
	\begin{align}\label{eqn::rainbow_pf_polygon_def}
		\LZ_{\rainbow}^{(\kappa)}(\Omega; x_1,\ldots,x_{2N})
		:= \prod_{j=1}^{2N} |\varphi'(x_j)|^{\mathfrak{b}} \times
		\LZ_{\rainbow}^{(\kappa)}(\HH; \varphi(x_1),\ldots,\varphi(x_{2N})).
	\end{align}
We have the following refined power-law bound for $\LZ_{\rainbow}^{(\kappa)}$ by~\cite[Eq.~(1.4) and Theorem~1.5]{PeltolaWuGlobalMultipleSLEs}:
	\begin{align}\label{eqn::rainbow_pf_upperbound}
		\LZ_{\rainbow}^{(\kappa)}(\Omega; x_1, \ldots, x_{2N}) \le \prod_{j=1}^{N} \Poisson(\Omega;x_{j},x_{2N+1-j})^{\mathfrak{b}}. 
\end{align}

Rainbow partition functions have the following equivalent description, which is a key ingredient for the proof of Proposition~\ref{prop::ac_general}. 
\begin{lemma}[{\cite[Eq.~(3.7, 3.8)]{PeltolaWuGlobalMultipleSLEs}}]
\label{lem::rainbow_pf_blm_rep}
Fix $\kappa\in (0,4]$ and $2N$-polygon $(\Omega; x_1, \ldots, x_{2N})$. 
For $1\le j\le N$, let $\gamma^j$ be chordal $\SLE_{\kappa}$ in $(\Omega; x_j, x_{2N+1-j})$. Let $\PP_N^{(\kappa)}$ be the probability measure on $\bs{\gamma}=(\gamma^1, \ldots, \gamma^N)$ under which the curves are independent. Then the law of rainbow $\SLE_{\kappa}$ in $(\Omega; x_1, \ldots, x_{2N})$ has the same law as $\PP_N^{(\kappa)}$ weighted by the Radon-Nikodym derivative 
\begin{align}\label{eqn::rainbow_blm_rep}
\frac{\prod_{j=1}^{N}\LZtwo(\Omega;x_j,x_{2N+1-j})}{\LZ_{\rainbow}^{(\kappa)}(\Omega;x_1,\ldots,x_{2N})}\one_{\LE_{\emptyset}(\bs{\gamma})}
		\exp\left(\frac{\mathfrak{c}}{2}\blm(\Omega;\gamma^1,\ldots,\gamma^N)\right), 
\end{align}
where $\LE_{\emptyset}(\bs{\gamma}) = \{ \gamma^{j} \cap \gamma^{i}=\emptyset, \, \forall i\neq j\}$ is the event that different curves are disjoint, and $\blm$ is the Brownian loop measure defined in~\eqref{eqn::blm_def}.
In particular, we have 
\begin{align}\label{eqn::rainbow_pf_blm_rep}
	\begin{split}
		\LZ_{\rainbow}^{(\kappa)}(\Omega;x_1,\ldots,x_{2N})
		= \prod_{j=1}^{N}\LZtwo(\Omega;x_j,x_{2N+1-j}) \times
		\E_N^{(\kappa)} \left[
		\one_{\LE_{\emptyset}(\bs{\gamma})}
		\exp\left(\frac{\mathfrak{c}}{2}\blm(\Omega;\gamma^1,\ldots,\gamma^N)\right)
		\right].
	\end{split}
\end{align}
\end{lemma}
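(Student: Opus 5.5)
This lemma is quoted from \cite[Eq.~(3.7, 3.8)]{PeltolaWuGlobalMultipleSLEs}. To make the plan self-contained, I would argue by induction on $N$, using the cascade structure of rainbow $\SLE_{\kappa}$ and the conformal restriction formula for chordal $\SLE_{\kappa}$ with $\kappa\in(0,4]$.

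The basic restriction formula is the following. For a chordal $\SLE_{\kappa}$ curve $\eta$ in $(\Omega;x,y)$ and a subdomain $U\subset\Omega$ agreeing with $\Omega$ near $x$ and $y$, the law of $\eta$ conditioned to stay in $U$, i.e. the law of chordal $\SLE_{\kappa}$ in $(U;x,y)$, has Radon--Nikodym derivative
\[
\frac{\LZtwo(\Omega;x,y)}{\LZtwo(U;x,y)}\one\{\eta\subset U\}\exp\left(\frac{\mathfrak{c}}{2}\blm^{\mathrm{loop}}[\ell\subset\Omega:\ \ell\cap\eta\neq\emptyset,\ \ell\not\subset U]\right)
\]
with respect to chordal $\SLE_{\kappa}$ in $(\Omega;x,y)$. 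Both laws are probability measures, so this identity in particular fixes the normalizing constant $\LZtwo(\Omega)/\LZtwo(U)$.

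The key steps, in order, are as follows.
\begin{enumerate}[label=(\roman*)]
\item Define the candidate measure $\QQ$ as $\PP_N^{(\kappa)}$ weighted by $\one_{\LE_\emptyset}\exp\left(\frac{\mathfrak{c}}{2}\blm(\Omega;\gamma^1,\ldots,\gamma^N)\right)$. First check that the total mass is finite. I would obtain this from the inductive structure below together with the bound $\LZtwo(U)\le\LZtwo(\Omega)$, which follows from \eqref{eqn::bPoisson_monotone} since $\mathfrak{b}\ge0$ for $\kappa\le4$; this is where $\kappa\le 4$ is used.
\item Show that $\QQ$ has the rainbow resampling property. Fix $j$ and condition on the curves $\gamma^i$ with $i\neq j$. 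The loop term splits as
\[
\blm(\Omega;\gamma^1,\ldots,\gamma^N)=\blm(\Omega;\{\gamma^i\}_{i\neq j})+\blm^{\mathrm{loop}}\big[\ell\subset\Omega:\ \ell\cap\gamma^j\neq\emptyset,\ \ell\not\subset U_j\big],
\]
where $U_j$ is the component of $\Omega\setminus\cup_{i\neq j}\gamma^i$ containing $x_j$ and $x_{2N+1-j}$. This is the standard inclusion--exclusion identity for \eqref{eqn::blm_def}: a loop hitting $\gamma^j$ and leaving $U_j$ must hit some other $\gamma^i$. By the restriction formula above, the conditional law of $\gamma^j$ is then chordal $\SLE_{\kappa}$ in $(U_j;x_j,x_{2N+1-j})$.
\item Invoke the uniqueness of $N$-rainbow $\SLE_{\kappa}$ \cite{BeffaraPeltolaWuUniqueness} to identify the normalized $\QQ$ with rainbow $\SLE_{\kappa}$.
\item Identify the total mass $\E_N^{(\kappa)}[\cdots]$ with $\LZ_{\rainbow}^{(\kappa)}/\prod_j\LZtwo$, which gives \eqref{eqn::rainbow_pf_blm_rep}.
\end{enumerate}

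Step (iv) is the main obstacle, because the normalizing constant must match the specific function singled out by (PDE), (COV), (ASY) and (PLB). I would first try the following route.
\begin{itemize}
\item Define the right-hand side of \eqref{eqn::rainbow_pf_blm_rep} as $\widetilde{\LZ}$.
\item Integrate out the outermost curve $\gamma^1$ using the restriction formula. This gives the cascade identity $\widetilde{\LZ}(\Omega)=\LZtwo(\Omega;x_1,x_{2N})\,\E[\widetilde{\LZ}(\Omega\setminus\gamma^1;x_2,\ldots,x_{2N-1})]$, with $\gamma^1$ a chordal $\SLE_\kappa$ in $(\Omega;x_1,x_{2N})$. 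By induction, $\widetilde{\LZ}(\Omega\setminus\gamma^1;\cdot)=\LZ_{\rainbow}^{(\kappa)}(\Omega\setminus\gamma^1;\cdot)$.
\item Show that $\LZ_{\rainbow}^{(\kappa)}$ satisfies the same cascade identity. This holds because $\LZ_{\rainbow}^{(\kappa)}(\Omega\setminus\gamma^1_t;\cdot)\,\LZtwo(\Omega\setminus\gamma^1_t;\gamma^1(t),x_{2N})$ is a bounded martingale for the Loewner growth of $\gamma^1$, by (PDE) and (COV). Its boundedness comes from \eqref{eqn::rainbow_pf_upperbound}, and (ASY) fixes the terminal value as $\gamma^1$ completes.
\end{itemize}
Comparing the two cascade identities gives equality.
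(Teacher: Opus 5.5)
Your steps (i)--(iii) and the cascade identity for $\widetilde{\LZ}$ are sound. The loop decomposition in (ii) is correct: a loop meeting $\gamma^j$ fails to stay in $U_j$ exactly when it meets some other $\gamma^i$, and then both sides count the number of other curves it meets. Tonelli plus induction gives finiteness together with $\widetilde{\LZ}\le\prod_{j}\LZtwo(\Omega;x_j,x_{2N+1-j})$.

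The gap is in the last bullet of (iv), which is where the real content of \eqref{eqn::rainbow_pf_blm_rep} lies. There are two problems.
\begin{itemize}
\item Set $\Omega_t=\Omega\setminus\gamma^1_{[0,t]}$. The local martingale that (PDE) and (COV) produce for chordal $\SLE_\kappa$ in $(\Omega;x_1,x_{2N})$ is the ratio $M_t=\LZ_{\rainbow}^{(\kappa)}(\Omega_t;\gamma^1(t),x_2,\ldots,x_{2N})/\LZtwo(\Omega_t;\gamma^1(t),x_{2N})$, not a product. For this ratio, boundedness does follow from \eqref{eqn::rainbow_pf_upperbound} and \eqref{eqn::bPoisson_monotone}.
\item The actual obstruction is that (ASY) does not identify $\lim_t M_t$ with $\LZ_{\rainbow}^{(\kappa)}(\Omega\setminus\gamma^1;x_2,\ldots,x_{2N-1})$. (ASY) concerns a deterministic collapse of two points that are adjacent in the order $x_1<\cdots<x_{2N}$ on $\R$, with all other points held fixed. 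After uniformizing $\Omega_t$, the images of the tip and of $x_{2N}$ merge while the images of $x_2,\ldots,x_{2N-1}$ move at the same time and only converge in the limit. What you need is a joint limit, i.e.\ the asymptotics holding locally uniformly in the remaining points.
\end{itemize}
There is a further issue with the second point. The pair (tip, $x_{2N}$) is the outermost link, adjacent only cyclically, so it is not among the collapses controlled by (ASY) at all; for the rainbow only $j=N$ gives a nonzero limit. Growing the innermost curve $\gamma^N$ instead would fix this, but not the uniformity issue.

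Nothing in (PDE), (COV), (ASY), (PLB) as stated supplies that uniformity. Optional stopping therefore only gives $\LZ_{\rainbow}^{(\kappa)}(\Omega;x_1,\ldots,x_{2N})=\LZtwo(\Omega;x_1,x_{2N})\,\E[M_\infty]$ with $M_\infty$ unidentified, and the induction does not close. Compare the annulus analogue, Lemma~\ref{lem::multiannulusSLE4_pf_2nd}: even with explicit $\kappa=4$ formulas, the terminal value there needs the separate geometric estimates of Lemma~\ref{lem::RNcontrol_aux}.

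The paper gives no argument of its own and imports \cite[Eq.~(3.7, 3.8)]{PeltolaWuGlobalMultipleSLEs}, where the comparison runs the other way. There, the loop expression $\widetilde{\LZ}$ is the candidate: one checks that $\widetilde{\LZ}$ itself satisfies (PDE), (COV), (ASY) and (PLB), and concludes by uniqueness of solutions to that boundary value problem. In that route (ASY) is easy.
\begin{itemize}
\item For a linked adjacent pair, integrate out the curve joining them with your cascade identity and let it shrink to a point, using dominated convergence with the product bound.
\item For an unlinked pair, the product bound forces the limit to vanish.
\end{itemize}
The cost moves to proving smoothness and (PDE) for $\widetilde{\LZ}$. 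So either replace the last bullet of (iv) by this verification, or supply a locally uniform version of the asymptotics for $\LZ_{\rainbow}^{(\kappa)}$.

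Be aware also that in \cite{PeltolaWuGlobalMultipleSLEs} the bound \eqref{eqn::rainbow_pf_upperbound} is established for the functions built from the loop representation. Invoking it for the axiomatically defined $\LZ_{\rainbow}^{(\kappa)}$ is legitimate here only because the paper treats it as an imported fact. A genuinely self-contained version of your argument would have to prove that bound independently.
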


\subsection{Preliminaries: annulus SLE}
\label{subsec::multiannulus_pre}

\paragraph*{Schwartz kernel.}
For a modulus parameter $r > 0$, the Schwartz kernel $S_r(z)$ is defined as
\begin{align*}
	S_r(z) := \lim_{N \to \infty} \sum_{k=-N}^N \frac{\ee^{2kr} + z}{\ee^{2kr} - z}.
\end{align*}
The derivatives of the rescaled Jacobi theta functions defined in~\eqref{eqn::JacobiTheta_productexpansion} is related to the Schwartz kernel: 
\begin{align} \label{eqn::H1H3}
	H_1(r;z) := 2 \partial_z \log \Theta_1(r;z) = -\ii S_r(\ee^{\ii z}), \quad
	H_3(r;z) := 2 \partial_z \log \Theta_3(r;z)= -\ii S_r(\ee^{\ii z - r}) + \ii.
\end{align}
\begin{lemma}\label{lem::H1H3}
We have the asymptotic as $z\to 0$: 
\begin{equation} \label{eqn::Laurrent_expand_LE}
	H_1(t;z) = \frac{2}{z} + \LE(t) z + O(z^3), \quad \text{as } z \to 0, 
\end{equation}
where $\LE$ is defined in~\eqref{eqn::Def_LE}. 
\end{lemma}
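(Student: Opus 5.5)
We need to prove $H_1(t;z)=2/z+\LE(t)z+O(z^3)$, where $H_1=2\partial_z\log\Theta_1$. The plan is to work directly from the product expansion~\eqref{eqn::JacobiTheta_productexpansion}. Taking the logarithmic derivative term by term gives
\begin{align*}
\tfrac12 H_1(t;z)=\tfrac12\cot\left(\tfrac z2\right)+\sum_{m\ge1}\left(\frac{-\ii q^{m}\ee^{\ii z}}{1-q^{m}\ee^{\ii z}}+\frac{\ii q^{m}\ee^{-\ii z}}{1-q^{m}\ee^{-\ii z}}\right),\qquad q:=\ee^{-2t}.
\end{align*}
The series converges uniformly for $z$ near $0$ because $q<1$, so termwise differentiation and expansion are justified. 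Since $\Theta_1(t;\cdot)$ is odd in $z$ (the product is symmetric under $z\mapsto -z$ and $\sin$ is odd), $H_1$ is odd. Hence $H_1-2/z$ is analytic and odd near $0$, and only the coefficient of $z$ needs to be computed; the remainder is then automatically $O(z^3)$.

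\textbf{Step 1 (the trigonometric factor).} We have $\cot(z/2)=2/z-z/6+O(z^3)$, which contributes $2/z-z/6$ to $H_1$.

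\textbf{Step 2 (one factor of the product).} For a single $m$, write $a=q^m$ and $f(z)=\frac{a\ee^{\ii z}}{1-a\ee^{\ii z}}$. The summand is $-\ii\,(f(z)-f(-z))$, so only the odd part of $f$ matters. Its linear coefficient is
\begin{align*}
f'(0)=\frac{\ii a}{(1-a)^2}.
\end{align*}
Therefore the summand equals
\begin{align*}
-\ii\cdot 2\cdot\frac{\ii a}{(1-a)^2}\,z+O(z^3)=\frac{2a}{(1-a)^2}\,z+O(z^3).
\end{align*}

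\textbf{Step 3 (identifying the constant).} Using $a=\ee^{-2mt}$,
\begin{align*}
\frac{a}{(1-a)^2}=\frac{1}{(\ee^{mt}-\ee^{-mt})^2}=\frac{1}{4\sinh^2(mt)}.
\end{align*}
So each summand of $\tfrac12H_1$ is $\frac{z}{2\sinh^2(mt)}+O(z^3)$, and summing over $m$ contributes $\sum_m\frac{z}{\sinh^2(mt)}$ to $H_1$.

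Combining Steps 1--3 gives
\begin{align*}
H_1(t;z)=\frac2z+\left(-\frac16+\sum_{m\ge1}\frac{1}{\sinh^2(mt)}\right)z+O(z^3)=\frac2z+\LE(t)z+O(z^3),
\end{align*}
which is the claim by~\eqref{eqn::Def_LE}.

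The only delicate point is justifying that the $O(z^3)$ remainders are summable over $m$. This follows from uniform convergence of the analytic series on a small disc: the $m$-th term is bounded by $Cq^m$ there, so by Cauchy estimates the Taylor coefficients are uniformly summable. Alternatively, one can start from the Schwarz-kernel representation~\eqref{eqn::H1H3}, $H_1=-\ii S_t(\ee^{\ii z})$, and expand each term $\frac{\ee^{2kt}+\ee^{\ii z}}{\ee^{2kt}-\ee^{\ii z}}$; this gives the same $\sinh^{-2}$ sum. The $k=0$ term gives $-\ii\cdot\ii\cot(z/2)=\cot(z/2)$, which yields the $2/z-z/6$ part.
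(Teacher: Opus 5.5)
Your proposal is correct and follows essentially the same route as the paper: logarithmic differentiation of the product formula for $\Theta_1$, the expansion $\cot(z/2)=2/z-z/6+O(z^3)$, and termwise expansion of the summands, with the coefficient identified via $4\ee^{-2mt}/(1-\ee^{-2mt})^2=1/\sinh^2(mt)$. You also make explicit two details the paper leaves implicit: oddness of $H_1$, which gives the $O(z^3)$ remainder directly, and the uniform summability of the remainders over $m$.
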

\begin{proof}
For the Laurent expansion of $H_1(r; \cdot)$, differentiating the product formula for $\Theta_1$ gives
\[
H_1(r;z)=\cot(z/2)+2\sum_{m=1}^{\infty}\left(
-\frac{\ii \ee^{-2mr}\ee^{\ii z}}{1-\ee^{-2mr}\ee^{\ii z}}
+\frac{\ii \ee^{-2mr}\ee^{-\ii z}}{1-\ee^{-2mr}\ee^{-\ii z}}
\right).
\]
Using $\cot(z/2)=2/z-z/6+O(z^3)$ and expanding the summands at $z=0$, we obtain
\[
H_1(r;z)=\frac{2}{z}+\left(-\frac{1}{6}+\sum_{m=1}^{\infty}\frac{4\ee^{-2mr}}{(1-\ee^{-2mr})^2}\right)z+O(z^3).
\]
Since $4\ee^{-2mr}/(1-\ee^{-2mr})^2=1/\sinh^2(mr)$, the coefficient of $z$ is precisely $\LE(r)$.
\end{proof}

\paragraph*{Annulus Loewner chain.}
Every doubly connected domain $D$ with non-degenerate boundary is conformally equivalent to a unique standard annulus $\A_r$ for some $r > 0$. We call $\Mod(D) := r$ the modulus of $D$. 
An $\A_r$-hull is a relatively closed subset $K \subsetneq \A_r$ such that $\A_r \setminus K$ is a doubly connected domain that has the inner boundary of $\A_r$ as one of its boundary components. The mapping-out function from $\A_r \setminus K$ to $\A_{s}$ with $s=\Mod(\A_r\setminus K)$, denoted by $\mathfrak{g}_K$, is unique up to rotation.

The annulus Loewner evolution involves the Schwartz kernel for annuli. 
Fix $r > 0$ and $T \in (0, r)$. Let $\zeta: [0, T) \to \R$ be a continuous function. 
An annulus Loewner chain driven by $\zeta$ is a family of $\A_r$-hulls $(K_t)_{t \in [0, T)}$ such that the mapping-out function $\mathfrak{g}_t$ from $\A_r \setminus K_t$ onto $\A_{r-t}$ satisfies the annulus Loewner equation (see~\cite{ZhanSLEannulus}):
\begin{align}\label{eqn::annulus_Loewner_equation}
	\partial_t \mathfrak{g}_t(z) = \mathfrak{g}_t(z) S_{r-t}( \mathfrak{g}_t(z)/\ee^{\ii \zeta_t} ), \qquad \mathfrak{g}_0(z) = z, \qquad z \in \overline{\A}_r.
\end{align}
In the universal cover $\S_r$, let $\mathfrak{\covmap}_t$ be the lifting of $\mathfrak{g}_t$ such that $\mathfrak{\covmap}_t \colon \S_r \setminus q^{-1}(K_t) \to \S_{r-t}$. The covering map $\mathfrak{\covmap}_t$ satisfies
\begin{align}\label{eqn::annulus_Loewner_equation_cov}
	\partial_t \mathfrak{\covmap}_t(z) = H_1(r-t;\mathfrak{\covmap}_t(z) - \zeta_t), \qquad \mathfrak{\covmap}_0(z) = z.
\end{align}
Furthermore, the evolution of the points on the inner boundary (i.e., $z = x + \ii r$) is governed by $H_3$. Specifically, for $z \in \R + \ii r$, the real part of the map satisfies 
\begin{equation}\label{eqn::annulus_Loewner_equation_cov_H3}
	\partial_t \Re(\mathfrak{\covmap}_t(z)) = H_3(r-t; \Re(\mathfrak{\covmap}_t(z)) - \zeta_t).
\end{equation}

For each $z \in \overline{\A}_r$, the flow $t \mapsto \mathfrak{g}_t(z)$ is well-defined up to the swallowing time
\[
\sigma_z := \sup\left\{ t \in [0, T) : \inf_{s \in [0, t]} |\mathfrak{g}_s(z) - \exp(\ii \zeta_s)| > 0 \right\}.
\]
The hulls are determined by $K_t = \{z \in \A_r : \sigma_z \le t\}$. The family $(K_t)_{t \in [0, T)}$ is parameterized by its modulus: $\Mod(\A_r \setminus K_t) = r - t$. The chain $(K_t)$ satisfies the local growth property if for $0 \le s < t < T$, the diameter of $g_s(K_t \setminus K_s)$ tends to $0$ as $t \downarrow s$ uniformly over $s$. 
Conversely, any increasing family of $\A_r$-hulls $(K_t)_{t \in [0, T)}$ satisfying the local growth property can be represented as an annulus Loewner chain driven by some continuous $\zeta$. For a simple curve $\gamma$ in $\A_r$ starting at $\ee^{\ii \theta}$ on the outer boundary, the driving function is given by $\zeta_t = \arg \mathfrak{g}_t(\gamma_t)$.

\paragraph*{Annulus SLE process.} 
Fix $\alpha,\beta\in \R$.
Let $\LZ=\LZ(r; \alpha,\beta): \R_{>0}\times \R^2\to \R$ be a positive $C^{1,2}$ differentiable function, the annulus $\SLE_{\kappa}(\LZ)$ in $(\A_r;\ee^{\ii\alpha};\ee^{\ii\beta-r})$ is the  annulus Loewner chain $(K_t)_{0\le t<r}$ driven by a continuous function $\zeta:[0,r)\to \R$ satisfying the SDE system
\begin{equation}\label{eqn::annuluSLEkappa}
	\begin{cases}
		\displaystyle \ud \zeta_t = \sqrt{\kappa} \ud B_t + \kappa \partial_{\alpha} \log \LZ(r-t; \zeta_t, V_t) \ud t, \qquad \zeta_0=\alpha, \\
		\displaystyle \ud V_t = H_3(r-t; V_t - \zeta_t) \ud t, \qquad V_0=\beta,
	\end{cases}
\end{equation}
where $\{B_t\}_{t\ge 0}$ is a standard Brownian motion. 
The construction of the partition function for annulus SLE appears in both~\cite[Section~5]{lawlerDefiningSLEwithBrownianLoop} and~\cite[Section~6]{ZhanReversibilityWholeplaneSLE}, albeit with different emphases. We briefly summarize the two constructions below.
\medbreak
Lawler~\cite{lawlerDefiningSLEwithBrownianLoop} constructs the partition function for annulus SLE using the boundary perturbation property.
Suppose $\Omega\subset\A_r$ is simply connected and it agrees with $\A_r$ in neighborhoods of $\ee^{\ii\alpha}$ and $\ee^{\ii\beta-r}$.
Recall that $\Ptwo^{(\kappa)}(\Omega; \ee^{\ii\alpha}, \ee^{\ii\beta-r})$ denotes the law of chordal $\SLE_{\kappa}$ in $(\Omega; \ee^{\ii\alpha}, \ee^{\ii\beta-r})$.
Lawler proves in~\cite[Sections~4-5]{lawlerDefiningSLEwithBrownianLoop} that there exists a unique probability measure $\Pcro{1}^{(\kappa)}(\A_r;\ee^{\ii \alpha},\ee^{\ii \beta-r})$ and a partition function $\LFcro{1}^{(\kappa)}(r; \alpha, \beta)$ such that the following boundary perturbation holds:
for any simply connected test domain $\Omega\subset \A_r$ that agrees with $\A_r$ in neighborhoods of $\ee^{\ii \alpha}$ and $\ee^{\ii \beta-r}$,
\begin{equation} \label{eqn::bp_annulus_Def}
	\frac{\ud \Ptwo^{(\kappa)}(\Omega;\ee^{\ii \alpha},\ee^{\ii \beta-r})}{\ud \Pcro{1}^{(\kappa)}(\A_r;\ee^{\ii \alpha},\ee^{\ii \beta-r})}(\gamma)=\frac{\LFcro{1}^{(\kappa)}(r;\alpha,\beta)}{\LZtwo(\Omega;\ee^{\ii \alpha},\ee^{\ii \beta-r})} \one\{\gamma\subset \Omega\} \exp\left(\frac{\mathfrak{c}}{2} \blm(\A_r;\gamma,\A_r\setminus \Omega)\right).
\end{equation}
For a curve $\gamma$ in $\A_r$ from $\ee^{\ii\alpha}$ to $\ee^{\ii\beta-r}$, we say that $\gamma$ has winding $\ell$ if $q^{-1}(\gamma)$ connects $\alpha$ to $\beta+2\pi\ell+\ii r$ and we denote $\wind(\gamma)=\ell$.
The measure $\Pcro{1}^{(\kappa)}(\A_r;\ee^{\ii \alpha},\ee^{\ii \beta-r})$ can be decomposed according to the winding numbers of the curves: 
\begin{equation} \label{eqn::winding_decomposition}
	\Pcro{1}^{(\kappa)}(\A_r;\ee^{\ii \alpha},\ee^{\ii \beta-r}) = \frac{1}{\LFcro{1}^{(\kappa)}(r;\alpha,\beta)} \sum_{\ell \in \Z} \LFcro{1}^{(\kappa;\ell)}(r;\alpha,\beta) \Pcro{1}^{(\kappa;\ell)}(\A_r;\ee^{\ii \alpha},\ee^{\ii \beta-r}),
\end{equation}
where $\Pcro{1}^{(\kappa;\ell)}(\A_r;\ee^{\ii \alpha},\ee^{\ii \beta-r})$ is the conditional measure of curves with winding $\ell$:
\begin{equation} \label{eqn::condition_winding}
	\Pcro{1}^{(\kappa;\ell)}(\A_r;\ee^{\ii \alpha},\ee^{\ii \beta-r})[\cdot]=\Pcro{1}^{(\kappa)}(\A_r;\ee^{\ii \alpha},\ee^{\ii \beta-r})[\cdot | \wind(\gamma)=\ell],
\end{equation}
and $\LFcro{1}^{(\kappa;\ell)}(r;\alpha,\beta)$ is the associated partition function and $\LFcro{1}^{(\kappa)}(r;\alpha,\beta)=\sum_{\ell\in \Z} \LFcro{1}^{(\kappa;\ell)}(r;\alpha,\beta)$.
The partition function $\LFcro{1}^{(\kappa;\ell)}(r;\alpha,\beta)$ satisfies
\begin{align*}
	\LFcro{1}^{(\kappa;\ell)}(r;\alpha,\beta)=\LFcro{1}^{(\kappa;\ell)}(r;\alpha+c,\beta+c), \quad \text{for } c\in \R;\qquad
	\LFcro{1}^{(\kappa;\ell)}(r;\alpha,\beta)=\LFcro{1}^{(\kappa;0)}(r;\alpha,\beta+2\ell \pi),\quad\text{for }\ell\in\Z;
\end{align*}
and the annulus BPZ equation (see~\cite[Proposition~7.1 and Eq.~(73)]{lawlerDefiningSLEwithBrownianLoop}):
\begin{equation}\label{eqn::annulusBPZ_single}
\begin{split}
	\left(\partial_r-F_1(r)\right)\LZ=& \LD_{\alpha}^{(2)}\LZ,\quad\text{where }\LD_{\alpha}^{(2)}=\frac{\kappa}{2} \partial_{\alpha}^2 +  H_3(r;\beta-\alpha) \partial_{\beta}  + \mathfrak{b} \partial_z H_3(r;\beta-\alpha);\\
	\left(\partial_r-F_1(r)\right)\LZ=& \LD_{\beta}^{(2)}\LZ,\quad\text{where }\LD_{\beta}^{(2)}=\frac{\kappa}{2} \partial_{\beta}^2 +  H_3(r;\alpha-\beta) \partial_{\alpha}  + \mathfrak{b} \partial_z H_3(r;\alpha-\beta);
\end{split}
\end{equation}
and
\begin{equation*}
	F_1(r)=\frac{6\tilde{\mathfrak{b}}-\mathfrak{b}}{r} + 6 \tilde{\mathfrak{b}} \LE(r) + \mathfrak{b},
\end{equation*}
and $H_3$ is derivative of rescaled Jacobi theta functions defined in~\eqref{eqn::H1H3_def}, constants $\mathfrak{b}, \tilde{\mathfrak{b}}$ are defined in~\eqref{eqn::universal_parameters} and $\LE(r)$ is defined in~\eqref{eqn::Def_LE}.

We extend the annulus SLE measure to general doubly connected domains. Let $D$ be a doubly connected domain, and $x,y\in \partial D$ lie on different boundary components. 
Let $\varphi:D\to\A_r$ be a conformal map that sends the boundary component containing $x$ to $\partial\U$ and the boundary component containing $y$ to $\ee^{-r}\partial\U$, with $\varphi(x)=\ee^{\ii\alpha}$ and $\varphi(y)=\ee^{\ii\beta-r}$.
We define $\Pcro{1}^{(\kappa;\ell)}(D;x,y)$ as the image of $\Pcro{1}^{(\kappa;\ell)}(\A_r;\ee^{\ii\alpha},\ee^{\ii\beta-r})$ under $\varphi^{-1}$. Assuming $\partial D$ is $C^{1+\eps}$ in neighborhoods of $x$ and $y$ for some $\eps>0$, we define
\begin{equation*}
	\LFcro{1}^{(\kappa;\ell)}(D;x,y)
	:=|\varphi'(x)|^{\mathfrak{b}}|\varphi'(y)|^{\mathfrak{b}}
	\LFcro{1}^{(\kappa;\ell)}(r;\alpha,\beta).
\end{equation*}

\medbreak
Zhan~\cite{ZhanReversibilityWholeplaneSLE} constructs the partition function for annulus SLE by constructing a particular solution $\LFcro{1}^{(\kappa;\ell)}$ to annulus BPZ equations~\eqref{eqn::annulusBPZ_single}. 
As shown in~\cite[Theorems~1.1 and~1.2]{ZhanRestrictionAnnulusSLE}, the annulus $\SLE_{\kappa}$ defined via the Loewner chain~\eqref{eqn::annuluSLEkappa} with $\LZ=\LFcro{1}^{(\kappa)}$ satisfies the boundary perturbation property~\eqref{eqn::bp_annulus_Def}, ensuring that the two approaches describe the same path measure. Note that the partition functions constructed in these works may differ by a multiplicative factor depending solely on the modulus $r$. As the drift of the driving process in the annulus Loewner chain depends only on the spatial derivatives (w.r.t. $\alpha$ or $\beta$), any $r$-dependent factor cancels out. This freedom corresponds to the $F(r)$ term in the system of annulus BPZ equations~(\ref{eqn::annulus_BPZ_alpha},\ref{eqn::annulus_BPZ_beta}). The partition function $\LFcro{1}^{(\kappa;\ell)}(r;\alpha,\beta)$ does not have explicit formula in general (see~\cite[Section~6.1-6.4]{ZhanReversibilityWholeplaneSLE} or~\cite[Section~4.2]{ZhanRestrictionAnnulusSLE}), but it coincides with $\LZcro{1}^{(\ell)}$ in~\eqref{eqn::LZcroell_def} when $\kappa=4$, see~\eqref{eqn::single_annulusSLE4_pf} and~\cite[Eq.~(4.7,4.9)]{ZhanRestrictionAnnulusSLE}.
\medbreak
We summarize in the following lemmas the properties of a single annulus $\SLE_{\kappa}$ and its partition function that will be used later. 
\begin{lemma}\label{lem::singleannulusSLE}
Fix $\kappa\in (0,4]$ and $\ell\in\Z$. 
\begin{itemize}
\item Transience: the process $\gamma$ driven by the driving function~\eqref{eqn::annuluSLEkappa} with $\LZ=\LFcro{1}^{(\kappa;\ell)}(r;\alpha,\beta)$ is a continuous curve that almost surely terminates at the target point $\ee^{\ii \beta-r}$ as $t \to r$ and it has winding $\ell$, i.e. $q^{-1}(\gamma)$ connects $\alpha$ to $\beta+2\pi\ell+\ii r$.
\item Boundary perturbation (in simply connected domain): for any simply connected test domain $\Omega\subset \A_r$ that agrees with $\A_r$ in neighborhoods of $\ee^{\ii \alpha}$ and $\ee^{\ii \beta-r}$, we denote by $\Ptwo^{(\kappa)}(\Omega;\ee^{\ii \alpha},\ee^{\ii \beta-r})$ the law of chordal $\SLE_{\kappa}$ in $(\Omega;\ee^{\ii \alpha};\ee^{\ii \beta-r})$, then 
	\begin{equation} \label{eqn::bp_annulus}
		\frac{\ud \Ptwo^{(\kappa)}(\Omega;\ee^{\ii \alpha},\ee^{\ii \beta-r})}{\ud \Pcro{1}^{(\kappa;\ell)}(\A_r;\ee^{\ii \alpha},\ee^{\ii \beta-r})}(\gamma)=\frac{\LFcro{1}^{(\kappa;\ell)}(r;\alpha,\beta)}{\LZtwo(\Omega;\ee^{\ii \alpha},\ee^{\ii \beta-r})}  \one\{ \gamma\subset \Omega\} \exp\left(\frac{\mathfrak{c}}{2} \blm(\A_r;\gamma,\A_r\setminus \Omega)\right). 
\end{equation}
\item Boundary perturbation (in doubly connected domain): 
	for any doubly connected test domain $D \subset \A_r$ that agrees with $\A_r$ in neighborhoods of $\ee^{\ii \alpha}$ and $\ee^{\ii \beta-r}$, then
	\begin{equation}\label{eqn::bp_annulus_doubly_connected}
		\frac{\ud \Pcro{1}^{(\kappa; \ell)}(D;\ee^{\ii \alpha},\ee^{\ii \beta-r})}{\ud \Pcro{1}^{(\kappa;\ell)}(\A_r;\ee^{\ii \alpha},\ee^{\ii \beta-r})}(\gamma)=
		\frac{\LFcro{1}^{(\kappa;\ell)}(r;\alpha,\beta)}{\LFcro{1}^{(\kappa;\ell)}(D;\ee^{\ii \alpha},\ee^{\ii \beta-r})}
		\one\{\gamma\subset D\}
		\exp\left(\frac{\mathfrak{c}}{2}\blm(\A_r;\gamma,\A_r \setminus D)\right).
	\end{equation}
\end{itemize}
\end{lemma}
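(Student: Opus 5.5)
The three items are essentially reformulations of results of Zhan~\cite{ZhanRestrictionAnnulusSLE, ZhanReversibilityWholeplaneSLE} and Lawler~\cite{lawlerDefiningSLEwithBrownianLoop}. I would derive each from the winding decomposition~\eqref{eqn::winding_decomposition}--\eqref{eqn::condition_winding} and the boundary perturbation~\eqref{eqn::bp_annulus_Def}, rather than redo any stochastic calculus.

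\textbf{Transience.} I would cite~\cite[Theorems~1.1 and~1.2]{ZhanRestrictionAnnulusSLE}. There, the annulus Loewner chain driven by~\eqref{eqn::annuluSLEkappa} with $\LZ=\LFcro{1}^{(\kappa)}$ is shown to be a continuous curve reaching $\ee^{\ii\beta-r}$ as $t\to r$, and its law is $\Pcro{1}^{(\kappa)}$. Replacing $\LFcro{1}^{(\kappa)}$ by $\LFcro{1}^{(\kappa;\ell)}$ in the drift is a Doob transform by the martingale $\LFcro{1}^{(\kappa;\ell)}/\LFcro{1}^{(\kappa)}$ evaluated at $(r-t;\zeta_t,V_t)$. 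This process is bounded by $1$ and is the conditional probability of $\{\wind(\gamma)=\ell\}$ given the initial segment. Hence the resulting law is exactly $\Pcro{1}^{(\kappa)}[\,\cdot\mid\wind(\gamma)=\ell]=\Pcro{1}^{(\kappa;\ell)}$. In particular the curve is continuous, terminates at $\ee^{\ii\beta-r}$, and has winding $\ell$ almost surely.

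\textbf{Simply connected perturbation.} First, every curve in a simply connected $\Omega$ joining the two marked points lies in one homotopy class in $\A_r$. So $\{\gamma\subset\Omega\}$ forces a single winding $\ell(\Omega)$, and the statement is meaningful exactly when $\ell=\ell(\Omega)$; I take this as implicit. Second, restricting~\eqref{eqn::winding_decomposition} to $\{\wind(\gamma)=\ell\}$ gives $\ud\Pcro{1}^{(\kappa)}=\frac{\LFcro{1}^{(\kappa;\ell)}}{\LFcro{1}^{(\kappa)}}\ud\Pcro{1}^{(\kappa;\ell)}$ on that event. Multiplying this with~\eqref{eqn::bp_annulus_Def} cancels the factor $\LFcro{1}^{(\kappa)}$ and yields~\eqref{eqn::bp_annulus}.

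\textbf{Doubly connected perturbation.} I first check that the measure $\Pcro{1}^{(\kappa;\ell)}(D;\cdot)$, defined by conformal transport to $\A_s$, also satisfies~\eqref{eqn::bp_annulus} inside $D$. This holds because $\LZtwo$, $\LFcro{1}^{(\kappa;\ell)}(D;\cdot)$ and $\blm$ all transform covariantly with the same weights $|\varphi'|^{\mathfrak{b}}$.

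Next, fix a simply connected $\Omega\subset D$ agreeing with $\A_r$ near both marked points and with winding $\ell$. Dividing the two instances of~\eqref{eqn::bp_annulus} gives, on $\{\gamma\subset\Omega\}$,
\[\frac{\ud\Pcro{1}^{(\kappa;\ell)}(D)}{\ud\Pcro{1}^{(\kappa;\ell)}(\A_r)}=\frac{\LFcro{1}^{(\kappa;\ell)}(r;\alpha,\beta)}{\LFcro{1}^{(\kappa;\ell)}(D;\ee^{\ii\alpha},\ee^{\ii\beta-r})}\exp\Big(\frac{\mathfrak{c}}{2}\big(\blm(\A_r;\gamma,\A_r\setminus\Omega)-\blm(D;\gamma,D\setminus\Omega)\big)\Big).\]
Splitting the loops that hit $\gamma$ and $\A_r\setminus\Omega$ according to whether they stay in $D$ shows that the difference of loop masses equals $\blm(\A_r;\gamma,\A_r\setminus D)$.

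Finally, I need to cover the event $\{\gamma\subset D,\ \wind(\gamma)=\ell\}$ by countably many such $\Omega$, for instance $D$ slit along polygonal crosscuts with rational vertices that avoid $\gamma$. Consistency on overlaps is automatic, since the right-hand side does not depend on $\Omega$. I expect this covering step to be the main technical obstacle: one must show that almost every curve avoids some slit, which follows from $\gamma$ being a simple curve not disconnecting the two boundary components. A secondary obstacle is checking the normalization of $\LFcro{1}^{(\kappa;\ell)}(D)$ by conformal covariance.
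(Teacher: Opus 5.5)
Your proposal is correct and follows the paper's proof. You obtain \eqref{eqn::bp_annulus} by combining \eqref{eqn::bp_annulus_Def} with the winding decomposition \eqref{eqn::winding_decomposition}. You obtain \eqref{eqn::bp_annulus_doubly_connected} from the loop-measure splitting $\blm(\A_r;\gamma,\A_r\setminus\Omega)=\blm(\A_r;\gamma,\A_r\setminus D)+\blm(D;\gamma,D\setminus\Omega)$, followed by exhaustion with simply connected test domains.

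Your additions are correct refinements of steps the paper leaves implicit:
\begin{itemize}
\item The Doob-transform identification of the $\LFcro{1}^{(\kappa;\ell)}$-driven chain with the conditioned measure, where the paper simply cites Zhan and \eqref{eqn::condition_winding}.
\item The explicit conformal transport of \eqref{eqn::bp_annulus} to $D$.
\item The countable crosscut covering.
\item The caveat that \eqref{eqn::bp_annulus} only makes sense when $\Omega$ forces winding $\ell$.
\end{itemize}
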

\begin{proof}
It is shown in~\cite[Proposition~6.4 and Theorem~7.3]{ZhanReversibilityWholeplaneSLE} that $\gamma$ driven by the driving function~\eqref{eqn::annuluSLEkappa} with $\LZ=\LFcro{1}^{(\kappa;\ell)}(r;\alpha,\beta)$ is a continuous curve that almost surely terminates at the target point $\ee^{\ii \beta-r}$ as $t \to r$, and~\eqref{eqn::condition_winding} implies it has winding $\ell$. The relation~\eqref{eqn::bp_annulus} follows from~\eqref{eqn::bp_annulus_Def} and~\eqref{eqn::winding_decomposition}. 
Moreover, for any simply connected test domain $\Omega\subset D$ that agrees with $D$ in neighborhoods of $x$ and $y$, we have
\begin{equation*}
	\blm(\A_r;\gamma,\A_r\setminus \Omega)=\blm(\A_r;\gamma,\A_r\setminus D)+\blm(D;\gamma, D\setminus\Omega).
\end{equation*}	
Plugging into~\eqref{eqn::bp_annulus}, we obtain that~\eqref{eqn::bp_annulus_doubly_connected} holds for $\gamma$ in any test domain $\Omega$.
Exhausting $\Omega$ by such simply connected test domains gives~\eqref{eqn::bp_annulus_doubly_connected}.
\end{proof}
\begin{lemma}
When $\kappa=4$, the partition function $\LFcro{1}^{(4;\ell)}$ coincides with $\LZcro{1}^{(\ell)}$ defined in~\eqref{eqn::LZcroell_def}:
\begin{equation}\label{eqn::single_annulusSLE4_pf}
	\LFcro{1}^{(4;\ell)}(r;\alpha,\beta)=\LZcro{1}^{(\ell)}(r;\alpha,\beta).
\end{equation}
\end{lemma}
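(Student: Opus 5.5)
The plan is to check that $\LZcro{1}^{(\ell)}$ satisfies every defining property of Zhan's particular solution $\LFcro{1}^{(4;\ell)}$, up to the multiplicative freedom in $r$. For $n=1$, formula~\eqref{eqn::LZcroell_def} reduces to
\[
\LZcro{1}^{(\ell)}(r;\alpha,\beta)=\exp\left(\frac{r}{8}-\frac{(\alpha-\beta-2\pi\ell)^2}{8r}\right)\prod_{m\ge1}(1-\ee^{-2mr})^{3/2}\,|\Theta_3(r;\beta-\alpha)|^{-1/2}.
\]
This function depends only on $\beta-\alpha$, it is invariant under the simultaneous shift $(\alpha,\beta)\mapsto(\alpha+c,\beta+c)$, and it satisfies $\LZcro{1}^{(\ell)}(r;\alpha,\beta)=\LZcro{1}^{(0)}(r;\alpha,\beta+2\pi\ell)$ because $\Theta_3$ is $2\pi$-periodic by~\eqref{eqn::Theta_period}. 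These are exactly the symmetries listed for $\LFcro{1}^{(\kappa;\ell)}$.

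Next I will verify the single BPZ equation~\eqref{eqn::annulusBPZ_single} with $\kappa=4$. Here $\mathfrak{b}=1/4$ and $\tilde{\mathfrak{b}}=1/16$, so $F_1(r)=\frac{1}{8r}+\frac38\LE(r)+\frac14$. Write $u=\beta-\alpha$ and $Z=G(r)\,\ee^{-(u-2\pi\ell)^2/(8r)}\,\Theta_3(r;u)^{-1/2}$. Using the heat equation~\eqref{eqn::Theta_heat} for $\Theta_3$ and $H_3=2\partial_z\log\Theta_3$, a direct computation of $\partial_r\log Z-2\partial_u^2 Z/Z-H_3(r;u)\partial_u\log Z-\frac14\partial_uH_3(r;u)$ should make all the $u$-dependent terms cancel:
\begin{itemize}
\item the Gaussian factor produces $\frac{1}{8r}$ plus cross terms with $H_3$;
\item these cross terms cancel against the $H_3\partial_\beta$ term;
\item the theta factor is handled by the heat equation.
\end{itemize}
What remains is a function of $r$ only, namely $\frac18+\frac32\partial_r\log\prod(1-\ee^{-2mr})$. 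I expect to identify it with $F_1(r)$ using $\partial_r\log\prod_m(1-\ee^{-2mr})=\sum_m \frac{2m\ee^{-2mr}}{1-\ee^{-2mr}}$ together with the relation between $\LE(r)$ and the logarithmic derivative of $\eta$, namely $E_2$ being the derivative of $\log\eta$. The $\beta$-equation follows from the symmetric computation.

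Then I will identify the two solutions. Zhan's solution $\LFcro{1}^{(4;\ell)}$ is characterized in~\cite{ZhanRestrictionAnnulusSLE,ZhanReversibilityWholeplaneSLE} as the solution of~\eqref{eqn::annulusBPZ_single} with prescribed winding, normalized by its $r\to\infty$ behaviour, and for $\kappa=4$ Zhan gives an explicit formula in~\cite[Eq.~(4.7,4.9)]{ZhanRestrictionAnnulusSLE}. The quickest route is to rewrite that formula with Lemma~\ref{lem::JacobiTheta_technical}, converting the Jacobi imaginary transform into our $\Theta_3$, and compare term by term. A more intrinsic route is to use the boundary perturbation~\eqref{eqn::bp_annulus_Def}, which pins down the multiplicative $r$-dependent constant. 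If the two differ by a factor $c(r)$, the fact that both solve~\eqref{eqn::annulusBPZ_single} with the same $F_1$ forces $c'=0$. The constant is then fixed by comparing the $r\to\infty$ asymptotics with the radial normalization, using~\eqref{eqn::LZcroell_asymp} for $n=1$, which gives $\exp\!\big(\frac r8-\frac{(\alpha-\beta-2\pi\ell)^2}{8r}+O(\ee^{-r})\big)$.

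The main obstacle is the matching of normalizations. The conventions for $F_1$ and for the leading constant in Zhan's and Lawler's works must agree with ours, and the term $\frac{1}{8r}$ from the Gaussian factor must match $\frac{6\tilde{\mathfrak{b}}-\mathfrak{b}}{r}=\frac{1}{8r}$ exactly. I will check this bookkeeping carefully, and I will confirm that $\frac32\partial_r\log\prod_m(1-\ee^{-2mr})+\frac18=\frac38\LE(r)+\frac14$, using $\frac{1}{\sinh^2(kr)}$ expansions and, if needed, the $q$-series identity $\sum_k \sinh^{-2}(kr)=\sum_m \frac{4m\ee^{-2mr}}{1-\ee^{-2mr}}$.
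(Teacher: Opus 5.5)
Your decisive step, rewriting Zhan's explicit $\kappa=4$ formula and comparing it with $\LZcro{1}^{(\ell)}$, is the same as the paper's. The steps around it do not work as written.

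First, at $\kappa=4$ one has $\tilde{\mathfrak b}=\frac{(6-\kappa)(\kappa-2)}{8\kappa}=\frac18$, not $\frac1{16}$. Hence $F_1(r)=\frac1{2r}+\frac34\LE(r)+\frac14$, which is the case $n=1$ of Proposition~\ref{prop::annulusBPZ_kappa4}. The Gaussian factor contributes $\frac1{2r}$, not $\frac1{8r}$; it comes from $2\partial_\alpha^2$ acting on $\ee^{-(\alpha-\beta-2\pi\ell)^2/(8r)}$. The identity you intend to confirm is false: using $\sum_k\sinh^{-2}(kr)=\sum_m 4m\ee^{-2mr}/(1-\ee^{-2mr})$ one gets $\frac18+\frac32\partial_r\log\prod_m(1-\ee^{-2mr})=\frac34\LE(r)+\frac14$, not $\frac38\LE(r)+\frac14$. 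With the corrected constants the check closes, but it carries no identifying power. For $n=1$ both equations reduce to a single parabolic equation in $(r,\beta-\alpha)$ whose solution space is infinite-dimensional. Every $\LZcro{1}^{(\ell')}$, and any positive combination of them, solves it with the same $F_1$ and has the same symmetries as a family. So BPZ plus the symmetries cannot single out $\LFcro{1}^{(4;\ell)}$, and everything rests on the external input from Zhan.

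Second, the term-by-term comparison does not produce equality. Zhan's function is
$\LZcro{1}^{(\ell)}\sqrt{\pi/r}\,\ee^{-r/8}\prod_k(1-\ee^{-2kr})^{-3/2}=\sqrt{\pi/r}\,\ee^{-(\alpha-\beta-2\pi\ell)^2/(8r)}\Theta_3(r;\beta-\alpha)^{-1/2}$,
which solves the equation with $F\equiv0$. The comparison therefore gives only equality of driving functions, i.e.\ $\partial_\alpha\log\LFcro{1}^{(4;\ell)}=\partial_\alpha\log\LZcro{1}^{(\ell)}$; this is exactly what the paper's proof records. Your $c'(r)=0$ argument is the right way to upgrade this. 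However, it must be applied to Lawler's normalized $\LFcro{1}^{(4;\ell)}$, which satisfies~\eqref{eqn::annulusBPZ_single} with $F_1$, not to Zhan's function.

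Third, your $r\to\infty$ comparison does not fix the final multiplicative constant. The asymptotic~\eqref{eqn::LZcroell_asymp} describes only $\LZcro{1}^{(\ell)}$. You would need the exact leading constant of $\LFcro{1}^{(4;\ell)}$ under Lawler's boundary-perturbation normalization. You neither derive it nor can read it off from the asymptotics quoted in the paper, which hold only up to constants; ``radial normalization'' is not an available input here. The paper closes this point by citing~\cite[Corollary~2.15]{AruBordereauSLEpartitionfunctionmultiplyGFF}, where the partition function satisfying~\eqref{eqn::bp_annulus} is identified. You need that, or an explicit boundary-perturbation computation in a test domain, to get literal equality rather than equality up to a constant.
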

\begin{proof}
When $\kappa=4$, the partition function constructed in~\cite[Section~6.1-6.4]{ZhanReversibilityWholeplaneSLE} (see also~\cite[Section~4.2]{ZhanRestrictionAnnulusSLE}) is \[ \LZcro{1}^{(\ell)}(r;\alpha,\beta)\times \sqrt{\frac{\pi}{r}} \ee^{-\frac{nr}{8}} \times \prod_{k=1}^{\infty}(1-\ee^{-2kr})^{-\frac{3n}{2}}, \] which generates the same driving function as $\LZcro{1}^{(\ell)}$ in~\eqref{eqn::annuluSLEkappa} and implies~\eqref{eqn::single_annulusSLE4_pf}. See also~\cite[Corollary~2.15]{AruBordereauSLEpartitionfunctionmultiplyGFF}, where the partition function satisfying the boundary perturbation property~\eqref{eqn::bp_annulus} is also identified.
\end{proof}

\subsection{Multi-time martingale}
\label{subsec::multitime_mart}


\paragraph*{Multi-time parameter.}
Fix $n \ge 1$, initial points $\bs{\theta} = (\theta_1, \ldots, \theta_n) \in \LX_n$, and an initial modulus $r > 0$. 
To define the multi-time framework in the annulus, we first fix the normalization of individual slit evolution. 
For $1\le j\le n$, let $(\zeta_t^j)_{0 \le t <r}$ be a continuous driving function with $\zeta_0^j = \theta_j$. 
We define $\mathfrak{g}_{t_j}^j$ as the unique solution to the annulus Loewner equation~\eqref{eqn::annulus_Loewner_equation}: 
\begin{equation}\label{eqn::multitime_aux1}
	\partial_{t_j} \mathfrak{g}_{t_j}^j(z) = \mathfrak{g}_{t_j}^j(z) S_{r-t_j}( \mathfrak{g}_{t_j}^j(z)/\ee^{\ii \zeta_{t_j}^j} ), \qquad \mathfrak{g}_0^j(z) = z, \qquad z \in \overline{\A}_r.
\end{equation}
Let $\mathfrak{\covmap}_{t_j}^j$ be the covering map of $\mathfrak{g}_{t_j}^j$. This fixes the normalization for each individual slit flow. Let $\gamma^j$ be the curve generated by the driving function $\zeta^j$ for $1\le j\le n$. Assume $\gamma_{[0,t_i]}^i\cap \gamma_{[0,t_j]}^j=\emptyset$ for $1\le i<j\le n$.
We define the following conformal transformations:
\begin{itemize}
	\item Let $\mathfrak{g}_{\bs{t}}$ be the conformal map from the component of $\mathbb{A}_r \setminus \cup_{j=1}^n \gamma^j_{[0, t_j]}$ containing the inner boundary to $\mathbb{A}_{\Mod(\A_r\setminus \bs{\gamma}_{[\bs{0},\bs{t}]})}$ with \[\mathfrak{g}_{\bs{t}} (\partial(\exp(-r)\U))=\partial( \exp(-\Mod(\A_r\setminus \bs{\gamma}_{[\bs{0},\bs{t}]})) \U).\] Note that $\mathfrak{g}_{\bs{t}}$ is unique up to rotation.
	\item Let $\mathfrak{g}_{\bs{t}, j}=\mathfrak{g}_{\bs{t}} \circ \mathfrak{g}_{t_j}^{-1}$. Then $\mathfrak{g}_{\bs{t}, j}$ is the conformal map from the component of $\mathbb{A}_{r-t_j} \setminus \mathfrak{g}_{t_j}^j \left( \cup_{i \neq j} \gamma^i_{[0, t_i]} \right)$ containing the inner boundary to $\mathbb{A}_{\Mod(\A_r\setminus \bs{\gamma}_{[\bs{0},\bs{t}]})}$ with \[\mathfrak{g}_{\bs{t},j} (\partial(\exp(t_j-r)\U))=\partial( \exp(-\Mod(\A_r\setminus \bs{\gamma}_{[\bs{0},\bs{t}]})) \U).\]
\end{itemize}
Let $\mathfrak{\covmap}_{\bs{t}}, \mathfrak{\covmap}_{\bs{t}, j}$ be the covering maps of $\mathfrak{g}_{\bs{t}}, \mathfrak{g}_{\bs{t}, j}$ respectively. Then $\mathfrak{\covmap}_{\bs{t}}, \mathfrak{\covmap}_{\bs{t}, j}$ are unique up to a translation $c \in \R$.
We define the multi-slit driving function $\bs{W}_{\bs{t}} = (W^1_{\bs{t}}, \ldots, W^n_{\bs{t}})$ by
\begin{equation}\label{eqn::multitime_driving_annulus}
	W^j_{\bs{t}} = \mathfrak{\covmap}_{\bs{t}, j}(\zeta^j_{t_j}), \qquad \text{for } 1 \le j \le n.
\end{equation}
Note that while $\mathfrak{\covmap}_{\bs{t}, j}$ is unique up to a translation $c \in \R$, the value $W^j_{\bs{t}}$ and the map $\mathfrak{\covmap}_{\bs{t}}$ shift by the same $c$, leaving the term $W^i_{\bs{t}} - W^j_{\bs{t}}$, $\mathfrak{\covmap}_{\bs{t}}(z) - W^j_{\bs{t}}$, $\mathfrak{\covmap}_{\bs{t}}(z_1) - \mathfrak{\covmap}_{\bs{t}}(z_2)$ and the derivative $\mathfrak{\covmap}'_{\bs{t}, j}$ well-defined.

\begin{lemma} \label{lem::mt_blm_annulus_diff}
Assume the same notation as in the paragraph of~\eqref{eqn::multitime_aux1}-\eqref{eqn::multitime_driving_annulus}.
The Brownian loop measure
\begin{equation}\label{eqn::mt_blm_annulus}
	\blm_{r,\bs{t}}:=\blm \left(\A_r;\gamma_{[0,t_1]}^{1},\ldots,\gamma_{[0,t_n]}^{n} \right)
\end{equation} 
solves the following differential equations: for all $j\in\{1, \ldots, n\}$, 
\begin{equation} \label{eqn::mt_blm_annulus_diff}
	\partial_{t_j} \blm_{r,\bs{t}} = -\frac{1}{3} \LS \mathfrak{\covmap}_{\bs{t},j} (\zeta_{t_j}^j) +  \mathfrak{\covmap}_{\bs{t},j} (\zeta_{t_j}^j)^2 \left( \LE (\Mod(\A_r \setminus \bs{\gamma}_{[\bs{0},\bs{t}]})) +\frac{1}{\Mod(\A_r \setminus \bs{\gamma}_{[\bs{0},\bs{t}]})} \right) - \left( \LE (r-t_j) + \frac{1}{r-t_j} \right),
\end{equation}
where $\LE(t)$ is defined in~\eqref{eqn::Def_LE}. 
\end{lemma}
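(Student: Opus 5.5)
The plan is to fix $j$, freeze all times $t_i$ with $i\neq j$, and reduce the claim to a one-parameter computation in which only $\gamma^j$ grows.

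\textbf{Isolating $t_j$.} With the convention~\eqref{eqn::blm_def}, a loop meeting exactly $k$ of the sets is counted $k-1$ times. Adding the set $\gamma^j_{[0,t_j]}$ therefore raises the count by exactly one for every loop that meets $\gamma^j_{[0,t_j]}$ and at least one other curve. This gives the identity
\[
\blm_{r,\bs{t}}=\blm\big(\A_r;(\gamma^i_{[0,t_i]})_{i\neq j}\big)+\blm\big(\A_r;\gamma^j_{[0,t_j]},K_j\big),\qquad K_j:=\cup_{i\neq j}\gamma^i_{[0,t_i]}.
\]
The first term does not depend on $t_j$. For the second term I would apply the restriction decomposition already used in Lemma~\ref{lem::singleannulusSLE}: loops that stay in $\A_r$ and hit both $\gamma^j$ and $K_j$ equal loops in $\A_r$ hitting $\gamma^j$, minus loops staying in $D_j:=\A_r\setminus K_j$ (the component containing the inner boundary) and hitting $\gamma^j$. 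Both of these masses are infinite, so the subtraction must be carried out at the level of increments.

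\textbf{Increment in $t_j$.} Between times $t_j$ and $t_j+\delta$, I would map forward by $\mathfrak{g}^j_{t_j}$ to $\A_{r-t_j}$, using conformal invariance of $\blm$. The new piece of curve becomes a small hull at $\ee^{\ii\zeta^j_{t_j}}$ whose $\A_{r-t_j}$-capacity increment is $\delta$. The standard Brownian-bubble argument (as in Lawler's treatment of restriction) then says the following. The mass of loops in a domain $U$ that hit such a small hull and also hit a fixed set $E$ away from it equals $\delta$ times the mass of Brownian bubbles rooted at the tip that hit $E$, up to $o(\delta)$.

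I would express that bubble mass as a difference of bubble densities, namely that of $\A_{r-t_j}$ minus that of the subdomain $\mathfrak{g}^j_{t_j}(D_j)$. Both densities are read off from the second-order coefficient of the boundary Poisson kernel on the diagonal. For the subdomain I would transport its density to $\A_{s}$, with $s=\Mod(\A_r\setminus\bs{\gamma}_{[\bs{0},\bs{t}]})$, by the map $\mathfrak{\covmap}_{\bs{t},j}$. Conformal covariance produces the factor $\mathfrak{\covmap}_{\bs{t},j}'(\zeta^j_{t_j})^2$ in front of the $\A_s$ density, together with the Schwarzian term $-\frac13\LS\mathfrak{\covmap}_{\bs{t},j}(\zeta^j_{t_j})$. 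The latter comes from the half-plane bubble identity $\text{bubble}(\HH;\text{hit }E)=-\frac16\LS\phi(0)$.

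\textbf{The annulus bubble density.} The remaining task is to compute the intrinsic bubble density of $\A_s$ at a boundary point, normalized relative to the half-plane. I would lift to the strip $\S_s$. There, the boundary Poisson kernel from $0$ to $x$ on the bottom line equals $\partial_y$ of the harmonic function with boundary jump at $0$, which is $\Im$ of a combination of $H_1(s;\cdot)$ and the linear term $\Im z/s$ appearing in~\eqref{eqn::varphiann_energy_harmonic_function}. By Lemma~\ref{lem::H1H3}, $H_1(s;z)=\frac2z+\LE(s)z+O(z^3)$. Comparing with the half-plane kernel $\frac{1}{\pi x^2}$, the constant correction to the kernel is proportional to $\LE(s)+\frac1s$: the $\LE(s)$ part comes from the Laurent coefficient, and the $\frac1s$ part from the non-periodic linear term, which accounts for Brownian paths that wind around or exit through the inner circle.

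After fixing the normalization (to be checked against the case $n=1$, where the formula must reduce to the known single-curve equation~\eqref{eqn::annulusBPZ_single}), the bubble density of $\A_s$ is $-(\LE(s)+\frac1s)$. Substituting it for both domains, the $\A_{r-t_j}$ contribution gives $-(\LE(r-t_j)+\frac1{r-t_j})$. The subdomain contribution, after covariance, gives $\mathfrak{\covmap}_{\bs{t},j}'(\zeta^j_{t_j})^2(\LE(s)+\frac1s)$ plus the Schwarzian term. Assembling these yields~\eqref{eqn::mt_blm_annulus_diff}.

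\textbf{Main obstacle.} I expect the hardest step to be this last one. It requires pinning down the exact constant and sign of the annulus bubble density, in particular justifying the $\frac1s$ term coming from the monodromy of the linear part. It also requires making the $o(\delta)$ bubble approximation rigorous in the doubly connected setting, for which I would rely on uniform local estimates for $H_1$ near $0$.
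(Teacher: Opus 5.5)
Your argument is correct, but after the first step it takes a different route from the paper.

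\textbf{Common first step.} Your isolation of $t_j$ is exactly the paper's decomposition, which the paper writes for $j=1$:
\[
\blm_{r,\bs{t}}=\blm\big(\A_r;\gamma^1_{[0,t_1]},\textstyle\cup_{i\ge 2}\gamma^i_{[0,t_i]}\big)+\blm\big(\A_r;\gamma^2_{[0,t_2]},\ldots,\gamma^n_{[0,t_n]}\big).
\]

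\textbf{What the paper does next.} It does no bubble computation. It quotes Zhan's integrated identity \cite[Lemma~7.2]{ZhanRestrictionAnnulusSLE},
\[
\blm\big(\A_r;\gamma^1_{[0,t_1]},\textstyle\cup_{i\ge2}\gamma^i_{[0,t_i]}\big)=-\frac13\int_0^{t_1}\LS\mathfrak{h}_{\bs{t},1}(\zeta^1_{s})\,\ud s+\int_r^{R'}\Big(\LE(u)+\frac1u\Big)\ud u-\int_{r-t_1}^{R}\Big(\LE(u)+\frac1u\Big)\ud u,
\]
where $R=\Mod(\A_r\setminus\bs{\gamma}_{[\bs{0},\bs{t}]})$ and $R'=\Mod(\A_r\setminus\cup_{i\ge2}\gamma^i_{[0,t_i]})$. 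It then differentiates in $t_1$ using $\partial_{t_1}R=-\mathfrak{h}_{\bs{t},1}'(\zeta^1_{t_1})^2$ from~\eqref{eqn::Annulus_basic1}.

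\textbf{What your route does.} You are in effect reproving the differential form of Zhan's lemma. Conformal invariance and the small-hull approximation turn the increment into $2\pi\lim_{x\to0}\big(H_{\S_{r-t_j}}(0,x)-H_{\mathrm{sub}}(0,x)\big)$, with boundary Poisson kernels taken in the strip coordinate. The factor $2\pi$ is the local capacity rate $2$ times the $\pi$ in the bubble normalization. This makes the origin of every term transparent. The cost is that you must justify the small-hull/bubble approximation uniformly in a doubly connected domain, which is precisely the technical content the paper delegates to Zhan.

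\textbf{Your ``main obstacle'' is routine.} By Lemma~\ref{lem::Theta_bc}, $\Im H_1(s;\cdot)$ vanishes on $\R$ and equals $-1$ on $\R+\ii s$. Hence the periodized Poisson kernel of $\S_s$ with pole at $0$ is $P(z)=-\frac{1}{2\pi}\big(\Im H_1(s;z)+\frac{\Im z}{s}\big)$. By Lemma~\ref{lem::H1H3}, the boundary kernel is then
\[
\partial_yP(x)=-\frac{1}{2\pi}\Big(\partial_zH_1(s;x)+\frac1s\Big)=\frac{1}{\pi x^2}-\frac{1}{2\pi}\Big(\LE(s)+\frac1s\Big)+O(x^2).
\]
Two small corrections to your description:
\begin{itemize}
\item The boundary kernel is two derivatives away from the jump function, not one.
\item The term $\Im z/s$ is $2\pi$-periodic. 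Its role is to cancel the value $-1$ of $\Im H_1$ on the inner line, not to supply non-periodicity.
\end{itemize}

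\textbf{Normalization.} The same factor $2\pi$ that turns the Schwarzian coefficient $\frac{1}{6\pi}$ into $-\frac13$ turns this constant into $-(\LE(s)+\frac1s)$. Assembling the two kernels then gives exactly~\eqref{eqn::mt_blm_annulus_diff}. So no separate calibration is needed. In any case the lemma at $n=1$ is the trivial identity $0=0$, so it could not serve as a calibration. Keep both kernels in the same strip coordinate: the split into ``densities'' depends on the coordinate, since $\LS q=\tfrac12$ for $q(z)=\ee^{\ii z}$, even though their difference does not.
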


\begin{proof}
From~\eqref{eqn::blm_def}, we have
\begin{align} \label{eqn::mt_blm_annulus_diff_aux1}
\begin{split}
	\; &\blm \big( \A_r;\gamma_{[0,t_1]}^{1},\ldots,\gamma_{[0,t_n]}^{n} \big) \\
	= \; & \sum_{j=1}^{n-1} \blm^{\mathrm{loop}}\big[ \ell\subset\A_r \colon \ell\cap \gamma_{[0,t_1]}^{1}\neq \emptyset , \textnormal{ and } \ell\cap \gamma_{[0,t_i]}^{i}\neq \emptyset\textnormal{ for at least }j \textnormal{ of the }i\in\{2, \ldots, n\}\big] \\
	\; &+ \sum_{j=2}^{n-1} \blm^{\mathrm{loop}}\big[\ell \subset \A_r \setminus \gamma_{[0,t_1]}^{1} \colon \ell\cap \gamma_{[0,t_i]}^{i} \neq \emptyset\textnormal{ for at least }j \textnormal{ of the }i\in\{2, \ldots, n\}\big] \\
	=\; &\blm \big( \A_r;\gamma_{[0,t_1]}^{1}, \cup_{j=2}^n \gamma_{[0,t_j]}^{j} \big)
	\, + \, \blm \big( \A_r;\gamma_{[0,t_2]}^{2},\ldots,\gamma_{[0,t_n]}^{n} \big) .	
\end{split}
\end{align}
It is proved in~\cite[Lemma~7.2]{ZhanRestrictionAnnulusSLE} that
\begin{equation} \label{eqn::mt_blm_annulus_diff_aux2}
\begin{split}
	& \blm \big( \A_r;\gamma_{[0,t_1]}^{1}, \cup_{j=2}^n \gamma_{[0,t_j]}^{j} \big)
	\\
	=& - \frac{1}{3} \int_{0}^{t_1} \LS \mathfrak{\covmap}_{\bs{t},1} (\zeta_{s_1}^1) \ud s_1 + \int_{r}^{\Mod(\A_r\setminus \cup_{j=2}^n \gamma_{[0,t_j]}^j)} \left( \LE(s)+\frac{1}{s} \right) \ud s - \int_{r-t_1}^{\Mod(\A_r\setminus \bs{\gamma}_{[\bs{0},\bs{t}]})} \left( \LE(s)+\frac{1}{s} \right)\ud s.
\end{split}
\end{equation}
Combining~(\ref{eqn::Annulus_basic1},\ref{eqn::mt_blm_annulus_diff_aux1},\ref{eqn::mt_blm_annulus_diff_aux2}), we prove~\eqref{eqn::mt_blm_annulus_diff} for $j=1$. The variations $\partial_{t_j} \blm_{r,\bs{t}}$ for $2\le j\le n$ can be calculated in the same way and we obtain~\eqref{eqn::mt_blm_annulus_diff} as desired.
\end{proof}

\paragraph*{Multi-time local martingale.}
Fix $n\ge 1$. 
Assume $\LZ_n(r;\bs{\alpha}, \bs{\beta}): \R_{>0}\times\LX_n\times\LX_n\to \R $ is positive, smooth and rotation-invariant, i.e.
\begin{equation}
	\LZ_n(r;\bs{\alpha}+c, \bs{\beta}+c)=\LZ_n(r;\bs{\alpha},\bs{\beta}),\qquad\text{for all }c\in\R, 
\end{equation} 
where $\bs{\theta}+c=(\theta_1+c, \ldots, \theta_n+c)$ for $\bs{\theta}\in\LX_n$. 
Suppose further that $\LZ_n$ satisfies annulus BPZ equations~\eqref{eqn::annulus_BPZ_alpha}. 
We construct local martingales associated with $\LZ_n$ below.

\begin{proposition}\label{prop::multitime_mart_annulus}
Fix $n\ge 1$ and $\kappa\in (0,4]$ and $\bs{\alpha}, \bs{\beta}\in\LX_n$. 
\begin{itemize}
\item Assume $\LF_1: \R_{>0}\times\R\times\R\to \R$ is positive, smooth and rotation-invariant.  
Assume further that $\LF_1$ satisfies annulus BPZ equations~\eqref{eqn::annulusBPZ_single}. 
\item Assume $\LF_n: \R_{>0}\times\LX_n\times\LX_n\to \R $ is positive, smooth and rotation-invariant. Assume further that $\LF_n$ satisfies the system of annulus BPZ equations~\eqref{eqn::annulus_BPZ_alpha} with $F(r)=F_n(r)$ defined in~\eqref{eqn::F(r)inBPZ}. 
\end{itemize}
For $1\le j\le n$, let $\gamma^j$ be the annulus $\SLE_{\kappa}(\LF_1)$ in $(\A_r; \ee^{\ii\alpha_j}, \ee^{\ii\beta_j-r})$. Let $\Pind{n}^{(\kappa)}(\LF_1)$ be the probability measure on $\bs{\gamma}=(\gamma^{1}, \ldots, \gamma^{n})$ under which the curves are independent. 
We parameterize $\bs{\gamma}$ by $n$-time parameter $\bs{t}$, and let $\bs{W}_{\bs{t}} = (W_{\bs{t}}^{1},\ldots,W_{\bs{t}}^{n})$ be the multi-slit driving function (it is unique up to rotation).
Then the following process is $n$-time local martingale under $\Pind{n}^{(\kappa)}(\LF_1)$: 
\begin{align}\label{eqn::multitime_mart_annulus}
\begin{split}
	M_{\bs{t}}(\LF_1; \LF_n)
	:= \; & \one_{\LE_{\emptyset}(\bs{\gamma}_{\bs{t}})} \, \exp\bigg(\frac{\mathfrak{c}}{2}\blm_{r,\bs{t}} + \mathfrak{b} \sum_{j=1}^{n} (r-t_j) - n \mathfrak{b} \Mod(\A_r\setminus \bs{\gamma}_{[\bs{0},\bs{t}]}) \bigg) 
	\; \times \prod_{j=1}^{n} \mathfrak{\covmap}_{\bs{t},j}'(\zeta_{t_j}^{j})^{\mathfrak{b}}  \\
	& \times \prod_{j=1}^{n} \frac{\mathfrak{\covmap}_{\bs{t}}'(\beta_j+\ii r)^{\mathfrak{b}}}{(\mathfrak{\covmap}_{t_j}^j)'( \beta_j + \ii r)^{\mathfrak{b}}}  \times \frac{\LF_n(\Mod(\A_r\setminus \bs{\gamma}_{[\bs{0},\bs{t}]});\bs{W}_{\bs{t}},\Re \mathfrak{\covmap}_{\bs{t}} (\bs{\beta}+\ii r)) }{\prod_{j=1}^{n} \LF_1(r-t_j;\zeta_{t_j}^j,\Re \mathfrak{\covmap}_{t_j}^j(\beta_j+\ii r))};
\end{split}
\end{align}
where $\LE_{\emptyset}(\bs{\gamma}_{\bs{t}}) = \{ \gamma_{[0,t_j]}^{j} \cap \gamma_{[0,t_i]}^{i}=\emptyset, \, \forall i\neq j\}$ is the event that different curves are disjoint, and 
$\blm_{r,\bs{t}}$ is the Brownian loop term defined in~\eqref{eqn::mt_blm_annulus}: 
\[	\blm_{r,\bs{t}}:=\blm \left(\A_r;\gamma_{[0,t_1]}^{1},\ldots,\gamma_{[0,t_n]}^{n} \right).\]
\end{proposition}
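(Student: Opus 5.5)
We will verify the local martingale property one time coordinate at a time. By symmetry it suffices to fix $j$ and show that $t_j\mapsto M_{\bs{t}}$ is a local martingale with the other times frozen. Under $\Pind{n}^{(\kappa)}(\LF_1)$, with the $t_i$ for $i\ne j$ fixed, $\gamma^j$ evolves as annulus $\SLE_\kappa(\LF_1)$. Hence $\zeta^j$ satisfies \eqref{eqn::annuluSLEkappa}: $\ud\zeta^j_{t_j}=\sqrt\kappa\,\ud B_{t_j}+\kappa\partial_\alpha\log\LF_1\,\ud t_j$, and $V^j=\Re\mathfrak{\covmap}^j_{t_j}(\beta_j+\ii r)$ follows the $H_3$-flow. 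Our first step is to record that $\LF_1(r-t_j;\zeta^j_{t_j},V^j_{t_j})^{-1}$ times the martingale density of $\SLE_\kappa(\LF_1)$ relative to plain annulus $\SLE_\kappa$ (or rather, the Girsanov change) cancels its drift. Concretely, we will rewrite $M_{\bs t}$ under $\Pind{n}$ as $\widetilde M_{\bs t}/\LF_1(\cdots)$, where $\widetilde M$ is the same expression without the denominator. We then use the single-curve BPZ equation \eqref{eqn::annulusBPZ_single} for $\LF_1$, with $F_1(r)$, to show that $M$ is a local martingale iff the drift of $\widetilde M$, computed with $\zeta^j$ a $\sqrt\kappa$-Brownian motion with drift $\kappa\partial_\alpha\log\LF_1$, matches. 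Standard Girsanov bookkeeping reduces this to checking that $\widetilde M_{\bs t}\exp(-\int F_1\text{-type terms})$ is a local martingale when $\zeta^j$ is driven by pure $\sqrt\kappa B$ in the $j$-th coordinate, after absorbing $\LF_1$'s own martingale.

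The second step is to compute the $t_j$-dynamics of every ingredient. By the standard commutation of Loewner flows, $\mathfrak{\covmap}_{\bs t}=\mathfrak{\covmap}_{\bs t,j}\circ\mathfrak{\covmap}^j_{t_j}$. The time change here is $\ud\,\Mod(\A_r\setminus\bs\gamma_{[\bs0,\bs t]})=-\mathfrak{\covmap}'_{\bs t,j}(\zeta^j)^2\,\ud t_j$, and the images of the other points (the $W^i$, and $\Re\mathfrak{\covmap}_{\bs t}(\beta_k+\ii r)$) move by $\mathfrak{\covmap}'^2_{\bs t,j}(\zeta^j)\,H_1(\cdot,W^i-W^j)$, respectively $H_3$. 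We will derive the It\^o formulas for $W^j_{\bs t}$, which is $\ud W^j=\mathfrak{\covmap}'_{\bs t,j}\,\ud\zeta^j+(\tfrac\kappa2-3)\mathfrak{\covmap}''_{\bs t,j}\,\ud t_j$, and for $\log\mathfrak{\covmap}'_{\bs t,j}(\zeta^j)$. The latter involves the Schwarzian $\LS\mathfrak{\covmap}_{\bs t,j}$ and the terms $\LE(\cdot)$ from the Laurent expansion \eqref{eqn::Laurrent_expand_LE}, exactly as in the chordal/radial computations. We will also derive the dynamics of $\log\mathfrak{\covmap}'_{\bs t}(\beta_k+\ii r)/(\mathfrak{\covmap}^j_{t_j})'(\beta_k+\ii r)$ via $\partial_z H_3$. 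The Brownian loop term is supplied by Lemma~\ref{lem::mt_blm_annulus_diff}.

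The third step is to assemble the drift of $\log M$. The Schwarzian contributions from $\mathfrak{b}\log\mathfrak{\covmap}'_{\bs t,j}$ and from $\frac{\mathfrak c}{2}\blm$ cancel, since $\mathfrak c/6$ matches $-\mathfrak{b}(\ldots)$ by the choice of parameters \eqref{eqn::universal_parameters}. The terms proportional to $\mathfrak{\covmap}'^2_{\bs t,j}$ collect into $\mathfrak{\covmap}'^2_{\bs t,j}\bigl[-\partial_r\LF_n+\LD^{(n)}_{\alpha_j}\LF_n+F_n\LF_n\bigr]/\LF_n$, which vanishes by \eqref{eqn::annulus_BPZ_alpha}. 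Here $F_n=\frac{6\tilde{\mathfrak b}-\mathfrak b}{r}+6\tilde{\mathfrak b}\LE+n\mathfrak b$ is matched by the $\LE+1/\Mod$ term of the loop measure (coefficient $\mathfrak c/2$) together with the $-n\mathfrak b\Mod$ factor. The unprimed-scale terms, which involve $r-t_j$, $\LE(r-t_j)$ and $\mathfrak b(r-t_j)$, cancel against the BPZ equation for $\LF_1$ with $F_1$. The indicator $\one_{\LE_\emptyset}$ is harmless before the first collision time.

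The main obstacle is the exact bookkeeping of the $r$-dependent constants. These are the $1/\Mod$ and $\LE(\Mod)$ pieces, the $\mathfrak b$ per-curve terms, and the relation $\mathfrak c/2\cdot\LE$ versus $6\tilde{\mathfrak b}\LE$, which must close with the precise $F_n$ and $F_1$. I would first do the $n=1$ consistency check, where $M\equiv1$ must hold, to fix signs, and then verify the coefficient identities $\frac{\mathfrak c}{2}=6\tilde{\mathfrak b}-\mathfrak b(\tfrac{\kappa}{2}\cdot\ldots)$ symbolically.
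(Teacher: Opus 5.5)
Your plan matches the paper's proof: a direct It\^o computation in each time coordinate using the standard annulus Loewner variations and Lemma~\ref{lem::mt_blm_annulus_diff}. In that computation the Schwarzian terms cancel because $\mathfrak c=\mathfrak b(3\kappa-8)$, and the remaining drift becomes $\mathfrak{\covmap}'_{\bs t,j}(\zeta^j_{t_j})^2$ times the $\LF_n$ BPZ expression minus the $\LF_1$ BPZ expression. Your optional Girsanov reduction to drift-free driving is only a cosmetic variant of the paper's direct tracking of the drift $\kappa\partial_\alpha\log\LF_1$, whose cross terms cancel.

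In the bookkeeping, the identity you need is $6\tilde{\mathfrak b}-\mathfrak b=\frac{\mathfrak c}{2}$. Also, the extra $\mathfrak b\,\LE(\Mod(\A_r\setminus\bs\gamma_{[\bs 0,\bs t]}))\,\mathfrak{\covmap}'_{\bs t,j}(\zeta^j_{t_j})^2$ that turns $\frac{\mathfrak c}{2}\LE$ into $6\tilde{\mathfrak b}\LE$ in $F_n$ comes from the factor $\mathfrak{\covmap}'_{\bs t,j}(\zeta^j_{t_j})^{\mathfrak b}$, not from the loop measure.
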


As we mentioned earlier, Proposition~\ref{prop::multitime_mart_annulus} can be viewed as the annulus counterpart of the chordal and radial multi-time martingales in~\cite[Proposition~2.1]{HuangWuYangMultipleSLEsDysonBM} and~\cite[Proposition~2.4]{HuangPeltolaWuMultiradialSLEResamplingBP}. It is also related to Zhan's two-time annulus commutation martingale in~\cite[Sections~4.2--4.3]{ZhanReversibilityWholeplaneSLE}, but the geometric configuration is different. In the present proposition the curves are treated from the same side: the starting points lie on one boundary component and the targets lie on the other, and all curves are encoded in a single $n$-time martingale. Since Zhan's goal there is to study reversibility, his two-curve martingale is designed for a commutation coupling in which, after applying the annulus inversion, the two growing curves are viewed from different sides of the annulus. The same idea should also allow a mixed-side generalization of Proposition~\ref{prop::multitime_mart_annulus}, with some curves grown from one boundary component and some from the other, but this extension is not needed in the present article.

To prove Proposition~\ref{prop::multitime_mart_annulus}, let us calculate the variation of terms in RHS of~\eqref{eqn::multitime_mart_annulus}. 
By~\eqref{eqn::annuluSLEkappa}, the driving function of $\gamma^j$ satisfies
\begin{equation*}
	\ud \zeta_{t_j}^j = \sqrt{\kappa} \ud B_{t_j}^j + \kappa \partial_{\alpha} \log \LF_1(r-t; \zeta_{t_j}, \Re \mathfrak{\covmap}_{\bs{t}}(\beta_j+\ii r)) \ud t_j,
\end{equation*}
where $B^1,\ldots,B^n$ are independent Brownian motions. 
To ease our notation, we simply write
\begin{equation*}
	R_{r;\bs{t}}=\Mod(\A_r\setminus \bs{\gamma}_{[\bs{0},\bs{t}]}).
\end{equation*}
We have the following standard calculations (see~\cite{ZhanSLEannulus,ZhanRestrictionAnnulusSLE,ZhanReversibilityWholeplaneSLE}):
\begin{align}
	\ud \; R_{r;\bs{t}} = & -\sum_{j=1}^{n} \mathfrak{\covmap}'_{\bs{t}, j}(\zeta_{t_j}^j)^2 \ud t_j \label{eqn::Annulus_basic1}\\
	\ud \left( \Re \mathfrak{\covmap}_{\bs{t}} (\beta+\ii r) - W^j_{\bs{t}} \right)  = & \sum_{i=1}^{n}  H_3 \left( R_{r;\bs{t}} ; 
	\Re \mathfrak{\covmap}_{\bs{t}}(\beta+\ii r) - W_{\bs{t}}^i 
	\right) (\mathfrak{\covmap}_{\bs{t},i}' (\zeta_{t_i}^{i}))^2 \ud t_i, \label{eqn::Annulus_basic2} \\
	& - \mathfrak{\covmap}'_{\bs{t},j}(\zeta_{t_j}^j) \ud \zeta_{t_j}^j +\kappa \mathfrak{b} \mathfrak{\covmap}''_{\bs{t},j}(\zeta_{t_j}^j) \ud t_j - \sum_{i\neq j} H_1 \left( R_{r;\bs{t}} ; W_{\bs{t}}^j -W_{\bs{t}}^i \right) (\mathfrak{\covmap}_{\bs{t},i}' (\zeta_{t_i}^{i}))^2 \ud t_i, \notag \\
	\frac{\ud \mathfrak{\covmap}'_{\bs{t}, j}(\zeta_{t_j}^j)}{\mathfrak{\covmap}'_{\bs{t}, j}(\zeta_{t_j}^j)}  = & 	\frac{ \mathfrak{\covmap}_{\bs{t},j}''(\zeta_{t_j}^{j})}{\mathfrak{\covmap}_{\bs{t},j}'(\zeta_{t_j}^{j})} \, \ud \zeta_{t_j}^{j} 
	+ \sum_{i \neq j} \partial_z H_1\left( R_{r;\bs{t}}; W_{\bs{t}}^{j}-W_{\bs{t}}^i  \right) (\mathfrak{\covmap}_{\bs{t},i}' (\zeta_{t_i}^{i}))^2 \ud t_i \label{eqn::Annulus_basic3} \\
	&+ \bigg( \bigg( \frac{3\kappa-8}{6} \bigg) \frac{ \mathfrak{\covmap}_{\bs{t},j}'''(\zeta_{t_j}^{j})}{\mathfrak{\covmap}_{\bs{t},j}'(\zeta_{t_j}^{j})} + \frac{1}{2} \bigg( \frac{ \mathfrak{\covmap}_{\bs{t},j}''(\zeta_{t_j}^{j})}{\mathfrak{\covmap}_{\bs{t},j}'(\zeta_{t_j}^{j})}\bigg)^2 - \LE(r-t_j) + \LE(R_{r;\bs{t}})(\mathfrak{\covmap}_{\bs{t},j}' (\zeta_{t_j}^{j}))^2 \bigg) \ud t_j,	\notag \\
	\frac{\ud \mathfrak{\covmap}'_{\bs{t}} (\beta+\ii r)}{\mathfrak{\covmap}'_{\bs{t}} (\beta+\ii r)} = & \sum_{i=1}^{n} \partial_z H_3 \left( R_{r;\bs{t}}; 
	\Re \mathfrak{\covmap}_{\bs{t}}(\beta+\ii r) - W_{\bs{t}}^i 
	\right) (\mathfrak{\covmap}_{\bs{t},i}' (\zeta_{t_i}^{i}))^2 \ud t_i. \label{eqn::Annulus_basic4}
\end{align}

\begin{proof}[Proof of Propositon~\ref{prop::multitime_mart_annulus}]
Applying It\^{o}'s formula and combining
~(\ref{eqn::annulus_Loewner_equation_cov}, \ref{eqn::annulus_Loewner_equation_cov_H3}, \ref{eqn::annuluSLEkappa}, \ref{eqn::Annulus_basic1}, \ref{eqn::Annulus_basic2}, \ref{eqn::Annulus_basic3}, \ref{eqn::Annulus_basic4}, \ref{eqn::mt_blm_annulus_diff}), 
we have (see details in Appendix~\ref{appendix::multitime_mart})
\begin{align}
 \label{eqn::multitime_mart_annulus_aux1}
	&\frac{\ud M_{\bs{t}}(\LF_1; \LF_n)}{M_{\bs{t}}(\LF_1; \LF_n)} \notag\\
	=  &\sum_{j=1}^{n} \sqrt{\kappa} \Bigg( \mathfrak{\covmap}_{\bs{t},j}' (\zeta_{t_j}^{j}) \frac{\partial_{\alpha_j} \LF_n (	R_{r;\bs{t}};\bs{W}_{\bs{t}},\Re \mathfrak{\covmap}_{\bs{t}} (\bs{\beta}+\ii r))}{\LF_n ( R_{r;\bs{t}};\bs{W}_{\bs{t}},\Re \mathfrak{\covmap}_{\bs{t}} (\bs{\beta}+\ii r))} + \mathfrak{b} \frac{\mathfrak{\covmap}''_{\bs{t},j}(\zeta_{t_j}^j)}{\mathfrak{\covmap}'_{\bs{t},j}(\zeta_{t_j}^j)} -	\frac{\partial_{\alpha} \LF_1 (r-t_j;\zeta_{t_j}^j,\Re \mathfrak{\covmap}_{t_j}^j(\beta_j+\ii r))  }{\LF_1 (r-t_j;\zeta_{t_j}^j,\Re \mathfrak{\covmap}_{t_j}^j(\beta_j+\ii r)) }  \Bigg) \ud B_{t_j}^j  \notag\\
	& + \sum_{j=1}^{n}  \frac{\left(F_n(r)-\partial_r+\LD^{(n)}_{\alpha_j}\right) \LF_n (	R_{r;\bs{t}};\bs{W}_{\bs{t}},\Re \mathfrak{\covmap}_{\bs{t}} (\bs{\beta}+\ii r)) }{\LF_n (	R_{r;\bs{t}};\bs{W}_{\bs{t}},\Re \mathfrak{\covmap}_{\bs{t}} (\bs{\beta}+\ii r)) } \mathfrak{\covmap}_{\bs{t},j}' (\zeta_{t_j}^{j})^2 \ud t_j\notag\\
	&- \sum_{j=1}^{n} \frac{\left(F_1(r)-\partial_r+\LD^{(2)}_{\alpha}\right) \LF_1(r-t_j;\zeta_{t_j}^j,\Re \mathfrak{\covmap}_{t_j}^j(\beta_j+\ii r)) }{\LF_1(r-t_j;\zeta_{t_j}^j,\Re \mathfrak{\covmap}_{t_j}^j(\beta_j+\ii r))} \mathfrak{\covmap}_{\bs{t},j}' (\zeta_{t_j}^{j})^2 \ud t_j,
\end{align}
where $\LD^{(2)}_{\alpha}, \LD^{(n)}_{\alpha_j}$ are the differential operators in the annulus BPZ equations~\eqref{eqn::annulusBPZ_single} and~\eqref{eqn::annulus_BPZ_alpha} and $F_1(r), F_n(r)$ are the same as in~\eqref{eqn::annulusBPZ_single} and~\eqref{eqn::F(r)inBPZ}: 
\begin{align*}
F_1(r)=\frac{6\tilde{\mathfrak{b}}-\mathfrak{b}}{r} + 6 \tilde{\mathfrak{b}} \LE(r) + \mathfrak{b},\qquad F_n(r)=\frac{6\tilde{\mathfrak{b}}-\mathfrak{b}}{r} + 6 \tilde{\mathfrak{b}} \LE(r) + \mathfrak{b} n.
\end{align*}
Note that by~\eqref{eqn::annulusBPZ_single}, we have $(F_1(r)-\partial_r+\LD^{(2)}_{\alpha}) \LF_1=0$. Plugging into~\eqref{eqn::multitime_mart_annulus_aux1}, we see that $M_{\bs{t}}(\LF_1; \LF_n)$ is $n$-time local martingale under $\Pind{n}^{(\kappa)}(\LF_1)$ if and only if all the drift terms vanish, which happens  when $\LF_n$ satisfies the system of annulus BPZ equations~\eqref{eqn::annulus_BPZ_alpha} with $F(r)=F_n(r)$ given by~\eqref{eqn::F(r)inBPZ}. This finishes the proof.
\end{proof}

\subsection{Construction from Brownian loop measure}
\label{subsec::earlierconstruction}
In this section, we work on the construction of multi-annulus SLE and its partition function in~\cite{ZhanRestrictionAnnulusSLE} and~\cite{JahangoshahiLawlerMultiplepathsSLE}, and derive their properties that will be used in the proof of Propositions~\ref{prop::multiannulu_pf_construction} and~\ref{prop::ac_general}. 
\begin{lemma}\label{lem::earlierconstruction_marginal_conditional}
Assume the same setup as in Proposition~\ref{prop::multiannulu_pf_construction}. 
We denote by $\widehat{\mathsf{P}}=\widehat{\mathsf{P}}(\A_r; \ee^{\ii\bs{\alpha}}, \ee^{\ii\bs{\beta}})$ the measure $\Pind{n}^{(\kappa;\ell)}$ weighted by~\eqref{eqn::ac_RN}:
\begin{align} \label{eqn::earlierconstruction}
\frac{\prod_{j=1}^n\LFcro{1}^{(\kappa;\ell)}(r; \alpha_j, \beta_j)}{\LFcro{n}^{(\kappa;\ell)}(r; \bs{\alpha}, \bs{\beta})}\one_{\LE_{\emptyset}(\bs{\gamma})}\exp\left(\frac{\mathfrak{c}}{2}\blm(\A_r; \gamma^1, \ldots, \gamma^n)\right). 
\end{align}
We denote $\dot{\bs{\alpha}}_1=(\alpha_2, \ldots, \alpha_n)$ and $\dot{\bs{\beta}}_1=(\beta_n, \ldots, \beta_2)$. 
Then the law of $\bs{\gamma}=(\gamma^1, \ldots, \gamma^n)$ under $\widehat{\mathsf{P}}$ has the following characterization.
\begin{itemize}
\item The marginal law of $\gamma^1$ under $\widehat{\mathsf{P}}$ is the same as $\Pcro{1}^{(\kappa; \ell)}(\A_r; \ee^{\ii\alpha_1}, \ee^{\ii\beta_1-r})$ weighted by Radon-Nikodym derivative
\begin{align}\label{eqn::rainbow_RN}
\frac{\LFcro{1}^{(\kappa;\ell)}(\A_r; \ee^{\ii\alpha_1}, \ee^{\ii\beta_1-r})}{\LFcro{n}^{(\kappa;\ell)}(\A_r; \ee^{\ii\bs{\alpha}}, \ee^{\ii\bs{\beta}-r})}\LZ_{\rainbow}^{(\kappa)}(\A_r\setminus\gamma^1; \ee^{\ii\dot{\bs{\alpha}}_1}, \ee^{\ii\dot{\bs{\beta}}_1-r}). 
\end{align} 
In particular, we have
\begin{align}\label{eqn::multiannulus_pf_2nd}
\LFcro{n}^{(\kappa; \ell)}(r; \bs{\alpha}, \bs{\beta})=\LFcro{1}^{(\kappa;\ell)}(r; \alpha_1, \beta_1)\Ecro{1}^{(\kappa;\ell)}\left[\LZ_{\rainbow}^{(\kappa)}(\A_r\setminus\gamma^1; \ee^{\ii\dot{\bs{\alpha}}_1}, \ee^{\ii\dot{\bs{\beta}}_1-r})\right]. 
\end{align}
\item The conditional law of $(\gamma^2, \ldots, \gamma^n)$ given $\gamma^1$ is rainbow $\SLE_{\kappa}$ in $(\A_r\setminus\gamma^1; \ee^{\ii\dot{\bs{\alpha}}_1}, \ee^{\ii\dot{\bs{\beta}}_1-r})$. 
\end{itemize}
\end{lemma}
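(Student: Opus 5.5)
The plan is to disintegrate the weighted measure $\widehat{\mathsf{P}}$ with respect to $\gamma^1$. Then we convert the Brownian loop weight of the remaining curves into the rainbow Radon--Nikodym derivative of Lemma~\ref{lem::rainbow_pf_blm_rep}, posed in the simply connected domain $\Omega:=\A_r\setminus\gamma^1$.

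First, by the decomposition~\eqref{eqn::mt_blm_annulus_diff_aux1}, we split
\[
\blm(\A_r;\gamma^1,\ldots,\gamma^n)=\blm(\A_r;\gamma^1,\cup_{j\ge2}\gamma^j)+\blm(\A_r;\gamma^2,\ldots,\gamma^n).
\]
The second term is rewritten by restricting to loops avoiding $\gamma^1$:
\[
\blm(\A_r;\gamma^2,\ldots,\gamma^n)=\blm(\Omega;\gamma^2,\ldots,\gamma^n)+\sum_{j\ge2}\blm(\A_r;\gamma^1,\gamma^j)-\blm(\A_r;\gamma^1,\cup_{j\ge2}\gamma^j).
\]
This inclusion--exclusion identity is the standard one used in~\cite{PeltolaWuGlobalMultipleSLEs}, and it can be checked by counting loops hitting $\gamma^1$ and at least $j$ of the others. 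Combining the two gives
\[
\blm(\A_r;\gamma^1,\ldots,\gamma^n)=\blm(\Omega;\gamma^2,\ldots,\gamma^n)+\sum_{j\ge2}\blm(\A_r;\gamma^1,\gamma^j).
\]

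Second, fix $\gamma^1$ on the event that it has winding $\ell$. For each $j\ge 2$, we apply the boundary perturbation~\eqref{eqn::bp_annulus} with the simply connected test domain $\Omega$. Here $\Omega$ agrees with $\A_r$ near $\ee^{\ii\alpha_j}$ and $\ee^{\ii\beta_j-r}$, and the boundary data are nice because $\gamma^1$ touches $\partial\A_r$ only at its endpoints. This gives
\[
\one\{\gamma^j\subset\Omega\}\exp\!\Big(\tfrac{\mathfrak{c}}{2}\blm(\A_r;\gamma^j,\gamma^1)\Big)\,\ud\Pcro{1}^{(\kappa;\ell)}(\gamma^j)=\frac{\LZtwo(\Omega;\ee^{\ii\alpha_j},\ee^{\ii\beta_j-r})}{\LFcro{1}^{(\kappa;\ell)}(r;\alpha_j,\beta_j)}\,\ud\Ptwo^{(\kappa)}(\Omega;\cdot)(\gamma^j).
\]
One subtlety is that a curve in $\Omega$ joining these endpoints automatically has the winding $\ell$ prescribed by $\gamma^1$. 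This is why the winding-$\ell$ measures, and not the full annulus SLE, are the right objects here. I expect this to be the main obstacle: I need to check that the winding labels match, so that the factor is exactly $\LZtwo/\LFcro{1}^{(\kappa;\ell)}$ with no leakage to other $\ell'$. The check goes through the lift $q^{-1}$: the curves $\gamma^j$ must lie in the same fundamental region cut out by $q^{-1}(\gamma^1)$.

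Third, substitute these identities into~\eqref{eqn::earlierconstruction}. The products of $\LFcro{1}^{(\kappa;\ell)}(r;\alpha_j,\beta_j)$ for $j\ge2$ cancel. What remains is
\[
\frac{\LFcro{1}^{(\kappa;\ell)}(r;\alpha_1,\beta_1)}{\LFcro{n}^{(\kappa;\ell)}}\prod_{j\ge2}\LZtwo(\Omega;\cdot)\,\one_{\LE_\emptyset}\exp\!\Big(\tfrac{\mathfrak{c}}{2}\blm(\Omega;\gamma^2,\ldots,\gamma^n)\Big),
\]
taken under $\Pcro{1}^{(\kappa;\ell)}(\gamma^1)\otimes\bigotimes_{j\ge2}\Ptwo^{(\kappa)}(\Omega;\ee^{\ii\alpha_j},\ee^{\ii\beta_j-r})$.

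In $\Omega$, traversed counterclockwise, the points appear in the order $\ee^{\ii\alpha_2},\ldots,\ee^{\ii\alpha_n},\ee^{\ii\beta_n-r},\ldots,\ee^{\ii\beta_2-r}$. Hence the pairing $\alpha_j\leftrightarrow\beta_j$ is exactly the rainbow pattern for $(\Omega;\ee^{\ii\dot{\bs\alpha}_1},\ee^{\ii\dot{\bs\beta}_1-r})$. Applying Lemma~\ref{lem::rainbow_pf_blm_rep}, specifically~\eqref{eqn::rainbow_pf_blm_rep}, and integrating out $\gamma^2,\ldots,\gamma^n$ yields the marginal density~\eqref{eqn::rainbow_RN} for $\gamma^1$. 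Reading off the normalization gives~\eqref{eqn::multiannulus_pf_2nd}. Finally, by~\eqref{eqn::rainbow_blm_rep}, the conditional law of $(\gamma^2,\ldots,\gamma^n)$ given $\gamma^1$ is the independent chordal product reweighted into rainbow $\SLE_\kappa$ in $\Omega$, which is the second item.
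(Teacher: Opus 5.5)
Your proposal is correct and is the paper's proof run in the opposite direction. The paper starts from $\LZ_{\rainbow}^{(\kappa)}(\A_r\setminus\gamma^1;\cdot)$, expands it by~\eqref{eqn::rainbow_pf_blm_rep}, converts each chordal $\SLE_\kappa$ back to annulus $\SLE_\kappa$ via~\eqref{eqn::bp_annulus}, and merges the loop terms by~\eqref{eqn::blm_split_gamma1}; you start from the weight~\eqref{eqn::earlierconstruction} and apply the same three ingredients in reverse, deriving~\eqref{eqn::blm_split_gamma1} by inclusion--exclusion where the paper simply asserts it. Your winding check, which the paper leaves implicit, is immediate: the component of $\S_r\setminus q^{-1}(\gamma^1)$ between the lifts from $\alpha_1$ and $\alpha_1+2\pi$ has top boundary $(\beta_1+2\pi\ell,\beta_1+2\pi\ell+2\pi)+\ii r$, and this interval contains $\beta_j+2\pi\ell+\ii r$ for every $j\ge 2$.
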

\begin{proof}
Let us first derive the marginal law of $\gamma^1$ under $\widehat{\mathsf{P}}$. We apply~\eqref{eqn::rainbow_pf_blm_rep} for the rainbow partition function $\LZ_{\rainbow}^{(\kappa)}(\A_r\setminus\gamma^1; \ee^{\ii\dot{\bs{\alpha}}_1}, \ee^{\ii\dot{\bs{\beta}}_1-r})$. Denote by $\PP_{n-1}^{(\kappa)}$ the law of independent chordal $\SLE_{\kappa}$ measure $\otimes_{j=2}^{n}\Ptwo^{(\kappa)}(\A_r\setminus\gamma^1;\ee^{\ii\alpha_j},\ee^{\ii\beta_j-r})$. 
From~\eqref{eqn::rainbow_pf_blm_rep}, we have
\begin{align}\label{eqn::multiannulus_pf_equiv_aux1}
\begin{split}
&\LZ_{\rainbow}^{(\kappa)}(\A_r\setminus \gamma^1;\ee^{\ii\dot{\bs{\alpha}}_1}, \ee^{\ii\dot{\bs{\beta}}_1-r})\\
=&\prod_{j=2}^n \LZtwo(\A_r\setminus\gamma^1; \ee^{\ii\alpha_j}, \ee^{\ii\beta_j-r})\times \E_{n-1}^{(\kappa)}\left[\one_{\LE_{\emptyset}(\bs{\gamma})}\exp\left(\frac{\mathfrak{c}}{2}\blm(\A_r\setminus\gamma^1; \gamma^2, \ldots, \gamma^n)\right)\right]. 
\end{split}
\end{align}
From the boundary perturbation property of annulus $\SLE_{\kappa}(\LFcro{1}^{(\kappa;\ell)})$ in~\eqref{eqn::bp_annulus}, for $2\le j\le n$, the law of chordal $\SLE_{\kappa}$ in $(\A_r\setminus\gamma^1; \ee^{\ii\alpha_j}, \ee^{\ii\beta_j-r})$ is the same as the law of annulus $\SLE_{\kappa}(\LFcro{1}^{(\kappa;\ell)})$ in $(\A_r; \ee^{\ii\alpha_j}, \ee^{\ii\beta_j-r})$ weighted by 
\begin{align}\label{eqn::multiannulus_pf_equiv_aux12}
\frac{\LFcro{1}^{(\kappa;\ell)}(r;\alpha_j,\beta_j)}{\LZtwo(\A_r\setminus\gamma^1;\ee^{\ii \alpha_j},\ee^{\ii \beta_j-r})}  \one\{ \gamma^j\subset \A_r\setminus\gamma^1\} \exp\left(\frac{\mathfrak{c}}{2} \blm(\A_r;\gamma^j, \gamma^1)\right). 
\end{align}
Plugging into~\eqref{eqn::multiannulus_pf_equiv_aux1}, we obtain
\begin{align}\label{eqn::multiannulus_pf_equiv_aux2}
\begin{split}
	&\LZ_{\rainbow}^{(\kappa)}(\A_r\setminus \gamma^1;\ee^{\ii\dot{\bs{\alpha}}_1}, \ee^{\ii\dot{\bs{\beta}}_1-r})\\
	=\;&\prod_{j=2}^{n}\LFcro{1}^{(\kappa;\ell)}(r;\alpha_j,\beta_j)\,
	\times \mathsf{E}_{n-1}^{(\kappa;\ell)}\left[
	\one_{\LE_{\emptyset}(\bs{\gamma})}
	\exp\left(\frac{\mathfrak{c}}{2}\left(\blm(\A_r\setminus \gamma^1;\gamma^2,\ldots,\gamma^n)
	+\sum_{j=2}^{n}\blm(\A_r;\gamma^j,\gamma^1)\right)\right)
	\right],\end{split}
\end{align}
where $\mathsf{P}_{n-1}^{(\kappa;\ell)}$ denotes the law of independent annulus $\SLE_{\kappa}$ measure $\otimes_{j=2}^n\Pcro{1}^{(\kappa;\ell)}(\A_r; \ee^{\ii\alpha_j}, \ee^{\ii\beta_j-r})$. For the Brownian loop measure, on the event $\LE_{\emptyset}(\bs{\gamma})$, we have 
\begin{align}\label{eqn::blm_split_gamma1}
	\blm(\A_r\setminus \gamma^1;\gamma^2,\ldots,\gamma^n)
	+\sum_{j=2}^{n}\blm(\A_r;\gamma^j,\gamma^1)
	=\blm(\A_r;\gamma^1,\ldots,\gamma^n).
\end{align}
Plugging into~\eqref{eqn::multiannulus_pf_equiv_aux2}, we have
\begin{align}\label{eqn::multiannulus_pf_equiv_aux3}
	\LZ_{\rainbow}^{(\kappa)}(\A_r\setminus \gamma^1;\ee^{\ii\dot{\bs{\alpha}}_1}, \ee^{\ii\dot{\bs{\beta}}_1-r})
	=\prod_{j=2}^{n}\LFcro{1}^{(\kappa;\ell)}(r;\alpha_j,\beta_j)\,
	\times \mathsf{E}_{n-1}^{(\kappa;\ell)}\left[
	\one_{\LE_{\emptyset}(\bs{\gamma})}
	\exp\left(\frac{\mathfrak{c}}{2}\blm(\A_r;\gamma^1,\ldots,\gamma^n)\right)
	\right]. 
\end{align}
Integrating over $\gamma^1$, we obtain~\eqref{eqn::multiannulus_pf_2nd} as desired. Moreover, the Radon-Nikodym derivative of the marginal law of $\gamma^1$ under $\widehat{\mathsf{P}}$ with respect to $\Pcro{1}^{(\kappa;\ell)}(\A_r;\ee^{\ii\alpha_1},\ee^{\ii\beta_1-r})$ is given by
\begin{align*}
&\frac{\prod_{j=1}^{n}\LFcro{1}^{(\kappa;\ell)}(r;\alpha_j,\beta_j)}
		{\LFcro{n}^{(\kappa;\ell)}(r;\bs{\alpha},\bs{\beta})}
		\mathsf{E}_{n-1}^{(\kappa;\ell)}\left[
		\one_{\LE_{\emptyset}(\bs{\gamma})}
		\exp\left(\frac{\mathfrak{c}}{2}\blm(\A_r;\gamma^1,\ldots,\gamma^n)\right)
		\right]\\
		=&\frac{\LFcro{1}^{(\kappa;\ell)}(r;\alpha_1,\beta_1)}
		{\LFcro{n}^{(\kappa;\ell)}(r;\bs{\alpha},\bs{\beta})} \LZ_{\rainbow}^{(\kappa)}(\A_r\setminus\gamma^1;\ee^{\ii\dot{\bs{\alpha}}_1},\ee^{\ii\dot{\bs{\beta}}_1-r}). \tag{due to~\eqref{eqn::multiannulus_pf_equiv_aux3}}
\end{align*}
This gives the desired marginal law of $\gamma^1$ under $\widehat{\mathsf{P}}$. 
\medbreak
Next, we derive the conditional law of $(\gamma^2, \ldots, \gamma^n)$ given $\gamma^1$ under $\widehat{\mathsf{P}}$. 
This conditional law is the law of independent annulus $\SLE_{\kappa}$ measure $\otimes_{j=2}^n\Pcro{1}^{(\kappa; \ell)}(\A_r; \ee^{\ii\alpha_j}, \ee^{\ii\beta_j-r})$ weighted by
\begin{equation*}
	\frac{\prod_{j=2}^{n}\LFcro{1}^{(\kappa;\ell)}(r;\alpha_j,\beta_j)}
	{\LZ_{\rainbow}^{(\kappa)}(\A_r \setminus \gamma^1;\ee^{\ii\dot{\bs{\alpha}}_1},\ee^{\ii\dot{\bs{\beta}}_1-r})}
	\one_{\LE_{\emptyset}(\bs{\gamma})}
	\exp\left(\frac{\mathfrak{c}}{2}\blm(\A_r;\gamma^1,\ldots,\gamma^n)\right). 
\end{equation*}
From~\eqref{eqn::multiannulus_pf_equiv_aux12} and~\eqref{eqn::blm_split_gamma1}, this is the same as $\PP_{n-1}^{(\kappa)}$ weighted by 
\[
\frac{\prod_{j=2}^{n}\LZtwo(\A_r \setminus \gamma^1;\ee^{\ii\alpha_j},\ee^{\ii\beta_j-r})}
{\LZ_{\rainbow}^{(\kappa)}(\A_r \setminus \gamma^1;\ee^{\ii\dot{\bs{\alpha}}_1},\ee^{\ii\dot{\bs{\beta}}_1-r})}
\one_{\LE_{\emptyset}(\bs{\gamma})}
\exp\left(\frac{\mathfrak{c}}{2}\blm(\A_r \setminus \gamma^1;\gamma^2,\ldots,\gamma^n)\right).
\]
By~\eqref{eqn::rainbow_blm_rep}, this is exactly the law of rainbow $\SLE_{\kappa}$ in $(\A_r \setminus \gamma^1;\ee^{\ii\dot{\bs{\alpha}}_1},\ee^{\ii\dot{\bs{\beta}}_1-r})$.
\end{proof}

\begin{lemma}\label{lem::rainbow_RN_bounded}
Fix $\kappa\in (0,4]$ and $r>0$. 
Fix $n\ge 1$ and $\bs{\alpha}, \bs{\beta}\in\LX_n$. 
Suppose $\gamma$ is a continuous simple curve in $\A_r$ from $\ee^{\ii\alpha_1}$ to $\ee^{\ii\beta_1-r}$.
We denote $\dot{\bs{\alpha}}_1=(\alpha_2, \ldots, \alpha_n)$ and $\dot{\bs{\beta}}_1=(\beta_n, \ldots, \beta_2)$. 
Then the rainbow partition function of the domain $\A_r\setminus\gamma$ is bounded:
\begin{equation}\label{eqn::rainbow_RN_bounded}
	\LZ_{\rainbow}^{(\kappa)}(\A_r\setminus\gamma; \ee^{\ii\dot{\bs{\alpha}}_1}, \ee^{\ii\dot{\bs{\beta}}_1-r}) \le \left(\ee^{r}\frac{\pi^2}{4r^2}\right)^{(n-1)\mathfrak{b}}.
\end{equation}
\end{lemma}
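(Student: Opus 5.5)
We will combine the refined power-law bound~\eqref{eqn::rainbow_pf_upperbound} with the monotonicity~\eqref{eqn::bPoisson_monotone} of the boundary Poisson kernel, and then evaluate the relevant kernel in the universal cover. Write $\Omega=\A_r\setminus\gamma$. This is a simply connected domain whose boundary contains the two sides of $\gamma$, the outer circle and the inner circle. The marked points $\ee^{\ii\alpha_2},\ldots,\ee^{\ii\alpha_n}$ on the outer boundary and $\ee^{\ii\beta_n-r},\ldots,\ee^{\ii\beta_2-r}$ on the inner boundary are arranged counterclockwise in the rainbow order. Consequently the $j$-th pair that is linked consists of $\ee^{\ii\alpha_j}$ and $\ee^{\ii\beta_j-r}$, for $2\le j\le n$. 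By~\eqref{eqn::rainbow_pf_upperbound} we get
\[
\LZ_{\rainbow}^{(\kappa)}(\Omega;\ee^{\ii\dot{\bs{\alpha}}_1},\ee^{\ii\dot{\bs{\beta}}_1-r})\le\prod_{j=2}^n\Poisson(\Omega;\ee^{\ii\alpha_j},\ee^{\ii\beta_j-r})^{\mathfrak{b}} .
\]
Since $\mathfrak{b}>0$ for $\kappa\in(0,4]$, it therefore suffices to prove
\[
\Poisson(\Omega;\ee^{\ii\alpha_j},\ee^{\ii\beta_j-r})\le \ee^{r}\frac{\pi^2}{4r^2}\qquad\text{for each }j .
\]

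To obtain this bound, we lift $\Omega$ to the strip. Because $\Omega$ is simply connected and lies in $\A_r$, the branch of $q^{-1}$ on $\Omega$ maps it conformally onto a simply connected subdomain $\tilde\Omega$ of $\S_r$. The image $\tilde\Omega$ is the region between two consecutive lifts of $\gamma$. Under this map, $\ee^{\ii\alpha_j}$ goes to some real point $a$ and $\ee^{\ii\beta_j-r}$ goes to $b+\ii r$ for some real $b$. By the covariance~\eqref{eqn::bPoisson_cov}, the kernel picks up a factor $|(q^{-1})'|$ at each endpoint. Since $|q'(z)|=|\ee^{\ii z}|=\ee^{-\Im z}$, this factor equals $1$ at the outer point and $\ee^{r}$ at the inner point. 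Hence
\[
\Poisson(\Omega;\ee^{\ii\alpha_j},\ee^{\ii\beta_j-r})=\ee^{r}\,\Poisson(\tilde\Omega;a,b+\ii r).
\]

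Next we compare $\tilde\Omega$ with the whole strip. The domain $\tilde\Omega$ is contained in $\S_r$ and coincides with it near $a$ and near $b+\ii r$, because these points avoid $\gamma$, which touches the boundary only at its endpoints. By~\eqref{eqn::bPoisson_monotone} and then~\eqref{eqn::bPoisson_strip},
\[
\Poisson(\tilde\Omega;a,b+\ii r)\le\Poisson(\S_r;a,b+\ii r)=\frac{\pi^2}{4r^2}\cosh^{-2}\!\Big(\frac{\pi(b-a)}{2r}\Big)\le\frac{\pi^2}{4r^2}.
\]
Combining this with the previous display and raising to the power $\mathfrak{b}$ gives~\eqref{eqn::rainbow_RN_bounded}.

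The main point requiring care is the bookkeeping, not any estimate. First, one must check that the rainbow linking in $(\Omega;\ee^{\ii\dot{\bs{\alpha}}_1},\ee^{\ii\dot{\bs{\beta}}_1-r})$ really pairs $\alpha_j$ with $\beta_j$. This holds because the inner points are listed in the reversed order $\beta_n,\ldots,\beta_2$. Second, one must check that the marked points are smooth boundary points of $\Omega$ that are not on $\gamma$. This is clear except possibly at the endpoints, and the hypothesis that $\gamma$ runs from $\ee^{\ii\alpha_1}$ to $\ee^{\ii\beta_1-r}$ excludes that case as well.
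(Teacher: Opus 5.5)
Your proof is correct and follows the paper's argument step for step. You use the refined power-law bound \eqref{eqn::rainbow_pf_upperbound}, lift to $\S_r$ picking up the factor $\ee^{r}$ from conformal covariance, apply the monotonicity \eqref{eqn::bPoisson_monotone}, and bound the explicit strip kernel \eqref{eqn::bPoisson_strip} using $\cosh^{-2}\le 1$; the only cosmetic difference is that the paper names the lifted inner point $\beta_j+2\pi\ell+\ii r$ via the winding $\ell$ of $\gamma$, while your generic $b+\ii r$ suffices because the final bound is uniform.
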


\begin{proof}
From~\eqref{eqn::rainbow_pf_upperbound}, we have 
\begin{align}\label{eqn::rainbow_RN_aux1}
\LZ_{\rainbow}^{(\kappa)}(\A_r\setminus\gamma; \ee^{\ii\dot{\bs{\alpha}}_1}, \ee^{\ii\dot{\bs{\beta}}_1-r})\le \prod_{j=2}^n \Poisson(\A_r\setminus\gamma; \ee^{\ii\alpha_j}, \ee^{\ii\beta_j-r})^{\mathfrak{b}}.
\end{align} 
Note that $\gamma$ is a continuous simple curve in $\A_r$ from $\ee^{\ii\alpha_1}$ to $\ee^{\ii\beta_1-r}$. 
Suppose $\gamma$ has winding $\ell\in\Z$, i.e. $q^{-1}(\gamma)$ connects $\alpha_1$ to $\beta_1+2\pi\ell+\ii r$. 
For $2\le j\le n$, we have
\begin{align*}
	\Poisson(\A_r\setminus\gamma; \ee^{\ii\alpha_j},\ee^{\ii\beta_j-r})
	= & \ee^{r}\Poisson(q^{-1}(\A_r\setminus\gamma);\alpha_j,\beta_j+2\pi\ell+\ii r) \tag{due to~\eqref{eqn::bPoisson_cov}}\\
	\le & \ee^{r}\Poisson(\S_r;\alpha_j,\beta_j+2\pi\ell+\ii r) \tag{due to~\eqref{eqn::bPoisson_monotone}} \\
	= & \ee^{r}\frac{\pi^2}{4r^2} \left( \cosh \left(\frac{\pi(\beta_j+2\pi\ell-\alpha_j)}{2r}\right) \right)^{-2}. \tag{due to~\eqref{eqn::bPoisson_strip}}
\end{align*}
Thus,
\begin{align}
\Poisson(\A_r\setminus\gamma; \ee^{\ii\alpha_j},\ee^{\ii\beta_j-r})\le & \ee^{r}\frac{\pi^2}{4r^2}. \label{eqn::rainbow_RN_aux2}
\end{align}
Plugging into~\eqref{eqn::rainbow_RN_aux1}, we obtain~\eqref{eqn::rainbow_RN_bounded} as desired. 
\end{proof}

\begin{lemma}\label{lem::earlierconstruction_DMP}
Assume the same setup as in Lemma~\ref{lem::earlierconstruction_marginal_conditional}.
The law of $\bs{\gamma}=(\gamma^1,\ldots,\gamma^n)$ under $\widehat{\mathsf{P}}$ enjoys the domain Markov property: for $t_1,\ldots,t_n<r$, given $(\gamma_{[0,t_1]}^1,\ldots,\gamma_{[0,t_n]}^n)$, the law of $(\gamma_{[t_1,r]}^1,\ldots,\gamma_{[t_n,r]}^n)$ is 
\begin{align*}
	\widehat{\mathsf{P}}(\A_r\setminus \cup_{j=1}^n \gamma_{[0,t_j]}^j ;\gamma_{t_1}^1,\ldots,\gamma_{t_n}^n,\ee^{\ii \bs{\beta}-r}).
\end{align*}
\end{lemma}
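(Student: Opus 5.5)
The plan is to prove the domain Markov property directly from the Radon--Nikodym description \eqref{eqn::earlierconstruction} of $\widehat{\mathsf{P}}$. The two ingredients are the domain Markov property of each single annulus $\SLE_\kappa(\LFcro{1}^{(\kappa;\ell)})$ and the additivity of the Brownian loop measure under cutting. Write $D_{\bs t}=\A_r\setminus\cup_j\gamma^j_{[0,t_j]}$, restricted to the component containing the inner boundary. This is a doubly connected domain with marked boundary points $\gamma^j_{t_j}$ on the outer component and $\ee^{\ii\beta_j-r}$ on the inner one. The target law $\widehat{\mathsf{P}}(D_{\bs t};\ldots)$ is defined by transporting via the conformal map $D_{\bs t}\to\A_{R}$, $R=\Mod(D_{\bs t})$. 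Equivalently, it is the independent product of $\Pcro{1}^{(\kappa;\ell)}(D_{\bs t};\gamma^j_{t_j},\ee^{\ii\beta_j-r})$ weighted by $\one_{\LE_\emptyset}\exp(\frac{\mathfrak c}{2}\blm(D_{\bs t};\tilde\gamma^1,\ldots,\tilde\gamma^n))$, suitably normalised. Here the winding label $\ell$ of each remainder is whatever is forced by the winding of the full curve.

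First I would condition each independent curve separately. Under $\Pind{n}^{(\kappa;\ell)}$, the $\gamma^j$ are independent annulus SLEs, each driven by \eqref{eqn::annuluSLEkappa}. A single annulus $\SLE_\kappa(\LFcro{1})$ is a diffusion in $(r-t,\zeta_t,V_t)$, so given $\gamma^j_{[0,t_j]}$ its future is $\Pcro{1}^{(\kappa;\ell)}(\A_r\setminus\gamma^j_{[0,t_j]};\gamma^j_{t_j},\ee^{\ii\beta_j-r})$. Next, I would use the doubly connected boundary perturbation \eqref{eqn::bp_annulus_doubly_connected}, applied in the domain $\A_r\setminus\gamma^j_{[0,t_j]}$ with subdomain $D_{\bs t}$ obtained by removing the other initial segments. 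This rewrites each future law restricted to $\{\tilde\gamma^j\subset D_{\bs t}\}$ as $\Pcro{1}^{(\kappa;\ell)}(D_{\bs t};\ldots)$, up to a ratio of partition functions (measurable with respect to the past) times $\exp(\frac{\mathfrak c}{2}\blm(\A_r\setminus\gamma^j_{[0,t_j]};\tilde\gamma^j,\cup_{i\ne j}\gamma^i_{[0,t_i]}))$. The disjointness indicator forces $\tilde\gamma^j\subset D_{\bs t}$, so this restriction is harmless.

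The key step is a loop-measure decomposition, generalising \eqref{eqn::blm_split_gamma1}. On $\LE_\emptyset$, the total $\blm(\A_r;\gamma^1,\ldots,\gamma^n)$ should equal a past-measurable term, plus $\blm(D_{\bs t};\tilde\gamma^1,\ldots,\tilde\gamma^n)$, plus the sum of the cross terms produced in the previous step. The argument is to classify each loop by which pieces it hits: the initial segments $\gamma^j_{[0,t_j]}$, or the futures $\tilde\gamma^j$. Loops hitting only pieces of a single curve do not contribute to the total. Loops that hit some initial segment but avoid the rest of that curve are accounted for by the terms $\blm(\A_r\setminus\gamma^j_{[0,t_j]};\tilde\gamma^j,\cdot)$ together with the loop term among the initial segments. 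The remaining loops stay in $D_{\bs t}$ and hit at least two futures, and these are counted by $\blm(D_{\bs t};\cdots)$. I expect this inclusion--exclusion bookkeeping, matching the counting convention of \eqref{eqn::blm_def} (``at least $j$ of the $i$''), to be the main obstacle. I would do it by induction on $n$, splitting off one curve at a time as in \eqref{eqn::mt_blm_annulus_diff_aux1}.

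After combining, the conditional density of the futures with respect to $\otimes_j\Pcro{1}^{(\kappa;\ell)}(D_{\bs t};\ldots)$ is a past-measurable constant times $\one_{\LE_\emptyset}\exp(\frac{\mathfrak c}{2}\blm(D_{\bs t};\tilde{\bs\gamma}))$. Since conditional laws are probability measures, normalisation identifies the constant as $\prod_j\LFcro{1}(D_{\bs t};\ldots)/\LFcro{n}(D_{\bs t};\ldots)$, which is exactly the weight \eqref{eqn::earlierconstruction} in $D_{\bs t}$. Conformal invariance, from the definition of $\Pcro{1}^{(\kappa;\ell)}(D;\cdot)$ by transport, then finishes the proof.
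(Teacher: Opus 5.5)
Your argument is correct, but it is organised differently from the paper's proof.

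\textbf{The paper's route.} The paper conditions only on $\gamma^1_{[0,t_1]}$. It uses the single-curve domain Markov property for $\gamma^1$, and applies \eqref{eqn::bp_annulus_doubly_connected} to the untouched curves $\gamma^2,\ldots,\gamma^n$ between $\A_r$ and $\A_r\setminus\gamma^1_{[0,t_1]}$. It then needs only the two-piece split \eqref{eqn::earlierconstruction_DMP_aux3} of the loop measure. This shows the conditional law is $\widehat{\mathsf{P}}$ in the slit domain, and the full statement follows by iterating curve by curve, using conformal invariance to restart in each new slit domain.

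\textbf{Your route.} You condition on all initial segments at once and apply the boundary perturbation to each future $\tilde\gamma^j$, from $\A_r\setminus\gamma^j_{[0,t_j]}$ to $D_{\bs t}$. This forces you to use the full multi-curve loop decomposition. That step is easier than you expect and needs no induction. By \eqref{eqn::blm_def}, a loop hitting $N$ of the sets carries weight $(N-1)_+$. Suppose a loop hits the initial segments with indices in $A$ and the futures with indices in $B$. Your three groups of terms then contribute $(|A|-1)_+$, $\one_{\{A=\emptyset\}}(|B|-1)_+$ and $\one_{\{A\neq\emptyset\}}|B\setminus A|$ respectively. These sum to $(|A\cup B|-1)_+$, so the identity holds with past term $\blm(\A_r;\gamma^1_{[0,t_1]},\ldots,\gamma^n_{[0,t_n]})$.

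\textbf{A technical point.} Your boundary perturbation starts from the non-smooth tip $\gamma^j_{t_j}$, where $\LFcro{1}^{(\kappa;\ell)}(D;\cdot)$ is not defined as stated in the paper. This is harmless, because you only use that the prefactor is past-measurable and normalisation identifies it. Alternatively, map forward by $\mathfrak{g}^j_{t_j}$ first, which turns the tip into a smooth boundary point.

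\textbf{Comparison.} Your version gives the joint Markov property in one step, with no iteration. The paper's version keeps the loop-measure bookkeeping down to two sets at each stage.
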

\begin{proof}
We first show that the conditional law of $(\gamma_{[t_1,r]}^1,\gamma^2,\ldots,\gamma^n)$ given $\gamma_{[0,t_1]}^1$ is $\widehat{\mathsf{P}}(\A_r\setminus \gamma_{[0,t_1]}^1 ;\gamma_{t_1}^1,\ee^{\ii \dot{\bs{\alpha}}_1},\ee^{\ii \bs{\beta}-r})$. 
We have the following observation:
\begin{itemize}
	\item By the boundary perturbation property~\eqref{eqn::bp_annulus_doubly_connected}, for $2\le j\le n$, we have
	\begin{align} \label{eqn::earlierconstruction_DMP_aux2}
	\begin{split}
		\frac{\ud \Pcro{1}^{(\kappa;\ell)}(\A_r\setminus \gamma^1_{[0,t_1]};\ee^{\ii\alpha_j},\ee^{\ii\beta_j-r})}{\ud \Pcro{1}^{(\kappa;\ell)}(\A_r;\ee^{\ii\alpha_j},\ee^{\ii\beta_j-r})}(\gamma^j) = &\frac{\LFcro{1}^{(\kappa;\ell)}(r; \alpha_j, \beta_j)}{\LFcro{1}^{(\kappa;\ell)}(\A_r\setminus \gamma^1_{[0,t_1]};\ee^{\ii\alpha_j},\ee^{\ii\beta_j-r})}\\
		&\one\{ \gamma^j\subset \A_r\setminus \gamma^1_{[0,t_1]} \}
		\exp\left(\frac{\mathfrak{c}}{2}
		\blm(\A_r;\gamma^j,\gamma^1_{[0,t_1]})\right).
		\end{split}
	\end{align}
	\item We have the following Brownian loop measure decomposition: on the event $\{\gamma^2,\ldots,\gamma^n\subset \A_r\setminus\gamma^1_{[0,t_1]}\}$, we have
	\begin{equation} \label{eqn::earlierconstruction_DMP_aux3}
		\blm(\A_r;\gamma^1_{[0,t_1]}\cup \gamma^1_{[t_1,r]},\gamma^2,\ldots,\gamma^n)
		=\sum_{j=2}^n\blm(\A_r;\gamma^j,\gamma^1_{[0,t_1]})
		+\blm(\A_r\setminus \gamma^1_{[0,t_1]};\gamma^1_{[t_1,r]},\gamma^2,\ldots,\gamma^n).
	\end{equation}
\end{itemize}
Under $\Pind{n}^{(\kappa;\ell)}$, conditionally on $\gamma^1_{[0,t_1]}$, the curve $\gamma^1_{[t_1,r]}$ has law $\Pcro{1}^{(\kappa;\ell)}(\A_r\setminus \gamma^1_{[0,t_1]};\gamma^1_{t_1},\ee^{\ii\beta_1-r})$ by the domain Markov property of the single annulus $\SLE_{\kappa}$, and it is independent of $(\gamma^2,\ldots,\gamma^n)$. Combining with~(\ref{eqn::earlierconstruction},\ref{eqn::earlierconstruction_DMP_aux2},\ref{eqn::earlierconstruction_DMP_aux3}), under $\widehat{\mathsf{P}}(\A_r ;\ee^{\ii \bs{\alpha}},\ee^{\ii \bs{\beta}-r})$, the conditional law of the $(\gamma^1_{[t_1,r]},\gamma^2,\ldots,\gamma^n)$ given $\gamma^1_{[0,t_1]}$ is the same as 
\[
\Pcro{1}^{(\kappa;\ell)}(\A_r\setminus \gamma^1_{[0,t_1]};\gamma^1_{t_1},\ee^{\ii\beta_1-r})
\otimes\left(\otimes_{j=2}^n
\Pcro{1}^{(\kappa;\ell)}(\A_r\setminus \gamma^1_{[0,t_1]};\ee^{\ii\alpha_j},\ee^{\ii\beta_j-r})\right),
\]
weighted by
\[
\one_{\LE_{\emptyset}(\gamma^1,\ldots,\gamma^n)}
\exp\left(\frac{\mathfrak{c}}{2}\blm(\A_r\setminus \gamma^1_{[0,t_1]};\gamma_{[t_1,r]}^1,\gamma^2,\ldots,\gamma^n)\right). 
\]
This confirms that the conditional law of $(\gamma_{[t_1,r]}^1,\gamma^2,\ldots,\gamma^n)$ given $\gamma_{[0,t_1]}^1$ is $\widehat{\mathsf{P}}(\A_r\setminus \gamma_{[0,t_1]}^1 ;\gamma_{t_1}^1,\ee^{\ii \dot{\bs{\alpha}}_1},\ee^{\ii \bs{\beta}-r})$. 
Then, iterating the first step to $\gamma_{[0,t_j]}^j$ in the domain $\A_r\setminus \cup_{i=1}^{j-1} \gamma_{[0,t_i]}^i$ for $2\le j\le n$, we obtain the desired domain Markov property. 
\end{proof}

\subsection{Construction from multi-time martingale: proof of Propositions~\ref{prop::multiannulu_pf_construction} and~\ref{prop::ac_general}}
\label{subsec::multiannulus_pf_proof}

\begin{lemma}\label{lem::multiannulus_pf_BPZ}
The partition function $\LFcro{n}^{(\kappa; \ell)}$ is smooth and it satisfies annulus BPZ equations~\eqref{eqn::annulus_BPZ_alpha} with $F(r)=F_n(r)$ given by~\eqref{eqn::F(r)inBPZ}.
\end{lemma}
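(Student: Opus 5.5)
The plan is to identify a local martingale under the independent product measure $\Pind{n}^{(\kappa;\ell)}$ whose value at the initial time is a constant multiple of $\LFcro{n}^{(\kappa;\ell)}$. Proposition~\ref{prop::multitime_mart_annulus} then turns the martingale property into the BPZ system; in that proposition the BPZ equations are a sufficient condition, but the drift computation~\eqref{eqn::multitime_mart_annulus_aux1} is an identity, so it can also be read in the converse direction.

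By~\eqref{eqn::multiannulus_pf_def} and the domain Markov property of $\widehat{\mathsf{P}}$ (Lemma~\ref{lem::earlierconstruction_DMP}), together with the boundary perturbation~\eqref{eqn::bp_annulus_doubly_connected} and the loop-measure splitting~\eqref{eqn::earlierconstruction_DMP_aux3}, the conditional expectation under $\Pind{n}^{(\kappa;\ell)}$ of the total weight $\one_{\LE_\emptyset}\exp(\frac{\mathfrak c}{2}\blm(\A_r;\bs\gamma))$, given the initial segments $\bs\gamma_{[\bs0,\bs t]}$, should equal
\[
\one_{\LE_\emptyset(\bs\gamma_{\bs t})}\exp\Big(\tfrac{\mathfrak c}{2}\blm_{r,\bs t}\Big)\,\frac{\LFcro{n}^{(\kappa;\ell)}(\A_r\setminus\bs\gamma_{[\bs0,\bs t]};\bs\gamma_{\bs t},\ee^{\ii\bs\beta-r})}{\prod_j\LFcro{1}^{(\kappa;\ell)}(\A_r\setminus\gamma^j_{[0,t_j]};\gamma^j_{t_j},\ee^{\ii\beta_j-r})}.
\]
This conditional expectation is a genuine $n$-time martingale. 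Transporting both partition functions to standard annuli via~\eqref{eqn::reg_Dirichlet_COV}-type conformal covariance (with the exponent $\mathfrak b$ on boundary derivatives) produces exactly the derivative factors appearing in~\eqref{eqn::multitime_mart_annulus}. The remaining factors $\exp(\mathfrak b\sum(r-t_j)-n\mathfrak b\,\Mod)$ arise from the rotation-invariant normalization of $|\mathfrak g'|$ versus $|\mathfrak h'|$ on the two boundary circles, since $|q'|=\ee^{-\Im}$. This identifies the martingale with $M_{\bs t}(\LFcro{1}^{(\kappa;\ell)};\LFcro{n}^{(\kappa;\ell)})$.

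Next comes smoothness. Using~\eqref{eqn::multiannulus_pf_2nd}, I write $\LFcro{n}^{(\kappa;\ell)}$ as $\LFcro1^{(\kappa;\ell)}$ times the expectation of the rainbow partition function in $\A_r\setminus\gamma^1$. That partition function is bounded by Lemma~\ref{lem::rainbow_RN_bounded}, and it is smooth in the marked points by the chordal theory~\cite{PeltolaWuGlobalMultipleSLEs}. Showing smoothness in $(r;\bs\alpha,\bs\beta)$ is the main obstacle. I would first establish continuity through this representation, using dominated convergence and a coupling of the annulus SLEs as the points vary. I would then upgrade continuity to smoothness by a hypoellipticity argument: a continuous function $\LF$ making~\eqref{eqn::multitime_mart_annulus} a martingale is a distributional solution of each operator $\partial_r-F_n-\LD^{(n)}_{\alpha_j}$. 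The sum of these operators over $j$, together with the analogous $\beta_j$-equations coming from reversibility or from growing curves from the inner boundary, satisfies Hörmander's condition, which gives smoothness. Alternatively, one could cite~\cite[Proposition~4]{JahangoshahiLawlerMultiplepathsSLE} for smoothness directly.

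Once smoothness is known, I apply Itô's formula to the martingale. Its drift is given by~\eqref{eqn::multitime_mart_annulus_aux1}. The single-curve drift term vanishes by~\eqref{eqn::annulusBPZ_single}, and the $dt_j$ coefficients are independent across $j$, so the vanishing drift forces $(F_n-\partial_r+\LD^{(n)}_{\alpha_j})\LFcro{n}^{(\kappa;\ell)}=0$ at every reachable configuration. Since $\mathfrak h'_{\bs t,j}>0$ and small times allow the configuration to reach every point of $\LX_n\times\LX_n$ and every $r$, this gives~\eqref{eqn::annulus_BPZ_alpha}. The $\beta_j$ equations~\eqref{eqn::annulus_BPZ_beta} follow by symmetry: under the inversion $z\mapsto \ee^{-r}/\bar z$ the construction~\eqref{eqn::multiannulus_pf_def} is invariant, because single annulus SLE is reversible~\cite{ZhanReversibilityWholeplaneSLE} and Brownian loop measure is conformally invariant. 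The inversion exchanges the roles of $\bs\alpha$ and $\bs\beta$.
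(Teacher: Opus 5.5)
Your argument is correct and uses the same mechanism as the paper. You build a conditional-expectation martingale, identify it with the form in Proposition~\ref{prop::multitime_mart_annulus}, and read the It\^o identity \eqref{eqn::multitime_mart_annulus_aux1} backwards, taking smoothness from \cite{JahangoshahiLawlerMultiplepathsSLE, KarrilaViitasaari}.

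The difference is which martingale you use.
\begin{itemize}
\item The paper grows only $\gamma^1$ under $\Pcro{1}^{(\kappa;\ell)}$ and uses the rainbow representation \eqref{eqn::multiannulus_pf_2nd}. The conditional expectation $N_t$ then follows from conformal covariance of the rainbow partition function alone, because Lemma~\ref{lem::earlierconstruction_marginal_conditional} has already done the loop-measure work. The other $\alpha_j$ follow by symmetry.
\item You take the full $n$-time conditional expectation under $\Pind{n}^{(\kappa;\ell)}$. This needs a three-way splitting of the loop measure: loops hitting at least two initial segments, loops hitting one initial segment and a remaining piece of another curve, and loops in $\A_r\setminus\bs\gamma_{[\bs0,\bs t]}$. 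It also needs boundary perturbation in each slit domain $\A_r\setminus\gamma^j_{[0,t_j]}$, after first mapping out $\gamma^j_{[0,t_j]}$ so that the tip factor becomes $\mathfrak{h}'_{\bs t,j}(\zeta^j_{t_j})$.
\end{itemize}
That bookkeeping does close. Your accounting of the factor $\exp(\mathfrak b\sum_j(r-t_j)-n\mathfrak b\,\Mod)$ via $|\mathfrak g'|=\ee^{r-R}|\mathfrak h'|$ on the inner circle is right. Your route hands you directly the $n$-time martingale $M_{\bs t}(\LFcro{1}^{(\kappa;\ell)};\LFcro{n}^{(\kappa;\ell)})$ used in Definition~\ref{def::multiannulus_SLE}. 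The paper's one-curve route is lighter.

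Two remarks.
\begin{itemize}
\item Your hypoellipticity route to smoothness is only a sketch. You do not carry out either the derivation of the distributional equations from the martingale property or the verification of H\"ormander's condition for the combined operator. You should rely on the citation, as the paper does.
\item The $\beta_j$-equations \eqref{eqn::annulus_BPZ_beta} are not part of this lemma, so your reversibility argument is extra. If you keep it, note that $z\mapsto\ee^{-r}/\bar z$ acts on the strip as $w\mapsto\bar w+\ii r$. Combined with time reversal, it sends winding $\ell$ to $-\ell$. This is harmless, since $\ell$ does not enter the equations.
\end{itemize}
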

\begin{proof}
Assume the same setup as in Lemma~\ref{lem::earlierconstruction_marginal_conditional}. 
The smoothness of $\LFcro{n}^{(\kappa;\ell)}$ is proved in~\cite{JahangoshahiLawlerMultiplepathsSLE, KarrilaViitasaari}. 
Let us show that it satisfies annulus BPZ equations~\eqref{eqn::annulus_BPZ_alpha} with $F(r)=F_n(r)$ given by~\eqref{eqn::F(r)inBPZ}.
By symmetry, it suffices to prove the equation~\eqref{eqn::annulus_BPZ_alpha} for $\alpha_1$.	From~\eqref{eqn::rainbow_pf_polygon_def} and~\eqref{eqn::multiannulus_pf_2nd}, we have
\begin{align} \label{eqn::multiannulus_pf_general_BPZ_aux1}
	N_t:=& \Ecro{1}^{(\kappa;\ell)}\left[ \LZ_{\rainbow}^{(\kappa)}(\A_r\setminus\gamma^1; \ee^{\ii\dot{\bs{\alpha}}_1},\ee^{\ii\dot{\bs{\beta}}_1-r}) \;\middle|\;\gamma^1_{[0,t]}\right] \notag\\
	= & \exp\left((n-1)\mathfrak{b}t\right)
	\prod_{i=2}^{n}
	\left((\mathfrak{\covmap}_t^1)'(\alpha_i)
	(\mathfrak{\covmap}_t^1)'(\beta_i+\ii r)\right)^{\mathfrak{b}} \Ecro{1}^{(\kappa;\ell)}\left[ \LZ_{\rainbow}^{(\kappa)}(\A_{r-t}\setminus\mathfrak{g}_t^1(\gamma^1); \mathfrak{g}_t^1(\ee^{\ii\dot{\bs{\alpha}}_1}),\mathfrak{g}_t^1(\ee^{\ii\dot{\bs{\beta}}_1-r})) \right] \notag\\
	= & \exp\left((n-1)\mathfrak{b}t\right)
	\prod_{i=2}^{n}
	\left((\mathfrak{\covmap}_t^1)'(\alpha_i)
	(\mathfrak{\covmap}_t^1)'(\beta_i+\ii r)\right)^{\mathfrak{b}} \frac{\LFcro{n}^{(\kappa;\ell)}
	\left(r-t;\zeta_t^1,\mathfrak{\covmap}_t^1(\dot{\bs{\alpha}}_1),
	\Re\mathfrak{\covmap}_t^1(\bs{\beta}+\ii r)\right)}
	{\LFcro{1}^{(\kappa;\ell)}(r-t;\zeta_t^1,\Re\mathfrak{\covmap}_t^1(\beta_1+\ii r))}. 
\end{align}
The process $N_t$ 
is a local martingale under $\Pcro{1}^{(\kappa;\ell)}$. 
Applying Proposition~\ref{prop::multitime_mart_annulus} and~\eqref{eqn::multitime_mart_annulus_aux1}, we conclude that $\LFcro{n}^{(\kappa; \ell)}$ satisfies the annulus BPZ equations~\eqref{eqn::annulus_BPZ_alpha} with $F(r)=F_n(r)$ given by~\eqref{eqn::F(r)inBPZ}.
\end{proof}
\begin{definition}\label{def::multiannulus_SLE}
Fix $\kappa\in (0,4]$ and $n\ge 1$ and $\ell\in\Z$. 
Fix $\bs{\alpha}, \bs{\beta}\in\LX_n$. 
For $1\le j\le n$, let $\gamma^j$ be annulus $\SLE_{\kappa}(\LFcro{1}^{(\kappa;\ell)})$ in $(\A_r;\ee^{\ii \alpha_j};\ee^{\ii \beta_j})$. 
Let $\Pind{n}^{(\kappa;\ell)}$ be the probability measure on $\bs{\gamma}=(\gamma^{1}, \ldots, \gamma^{n})$ under which the curves are independent.
As $\LFcro{n}^{(\kappa; \ell)}$ satisfies annulus BPZ equations~\eqref{eqn::annulus_BPZ_alpha} with $F(r)=F_n(r)$ given by~\eqref{eqn::F(r)inBPZ}, 
the process $M_{\bs{t}}(\LFcro{1}^{(\kappa;\ell)}; \LFcro{n}^{(\kappa;\ell)})$ is $n$-time local martingale under $\Pind{n}^{(\kappa;\ell)}$ due to Proposition~\ref{prop::multitime_mart_annulus}. 
We define $n$-annulus $\SLE_{\kappa}(\LFcro{n}^{(\kappa;\ell)})$ in $(\A_r; \ee^{\ii\bs{\alpha}}, \ee^{\ii\bs{\beta}-r})$ as the probability measure obtained by tilting $\Pind{n}^{(\kappa;\ell)}$ by the $n$-time local martingale $M_{\bs{t}}(\LFcro{1}^{(\kappa;\ell)}; \LFcro{n}^{(\kappa;\ell)})$. 
We denote by $\Pcro{n}^{(\kappa; \ell)}=\Pcro{n}^{(\kappa; \ell)}(\A_r; \ee^{\ii\bs{\alpha}}, \ee^{\ii\bs{\beta}})$ the law of $n$-annulus $\SLE_{\kappa}(\LFcro{n}^{(\kappa;\ell)})$. 
\end{definition}

\begin{lemma} \label{lem::nannulus_marginal_gamma1}
Assume the same setup as in Definition~\ref{def::multiannulus_SLE}. 
Suppose $\bs{\gamma}=(\gamma^1, \ldots, \gamma^n)$ is $n$-annulus $\SLE_{\kappa}(\LFcro{n}^{(\kappa; \ell)})$. 
We denote $\dot{\bs{\alpha}}_1=(\alpha_2, \ldots, \alpha_n)$ and $\dot{\bs{\beta}}_1=(\beta_n, \ldots, \beta_2)$. 
Then the marginal law of $\gamma^1$ under $\Pcro{n}^{(\kappa; \ell)}$ is absolutely continuous with respect to $\Pcro{1}^{(\kappa; \ell)}$ in $(\A_r; \ee^{\ii\alpha_1}, \ee^{\ii\beta_1-r})$ with Radon-Nikodym derivative given by~\eqref{eqn::rainbow_RN}. 
In particular, $\gamma^1$ is a continuous transient curve in $\A_r$ from $\ee^{\ii\alpha_1}$ to $\ee^{\ii\beta_1-r}$. 
\end{lemma}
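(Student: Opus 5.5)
The plan is to identify the marginal of $\gamma^1$ by growing $\gamma^1$ alone, keeping the other curves at time zero. First I would specialise the $n$-time local martingale $M_{\bs{t}}(\LFcro{1}^{(\kappa;\ell)}; \LFcro{n}^{(\kappa;\ell)})$ of Proposition~\ref{prop::multitime_mart_annulus} to $\bs{t}=(t,0,\ldots,0)$. In this setting $\blm_{r,\bs{t}}=0$, and $\mathfrak{\covmap}_{\bs{t}}=\mathfrak{\covmap}^1_t$ with $\mathfrak{\covmap}_{\bs{t},1}=\mathrm{id}$. The modulus is $r-t$, and for $j\ge 2$ we have $W^j_{\bs{t}}=\mathfrak{\covmap}^1_t(\alpha_j)$ and $\mathfrak{\covmap}'_{\bs{t},j}(\alpha_j)=(\mathfrak{\covmap}^1_t)'(\alpha_j)$. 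Comparing term by term with~\eqref{eqn::multiannulus_pf_general_BPZ_aux1}, I expect the one-time process $t\mapsto M_{(t,0,\ldots,0)}$ to equal $N_t/N_0$, where
\[
N_t=\Ecro{1}^{(\kappa;\ell)}\left[\LZ_{\rainbow}^{(\kappa)}(\A_r\setminus\gamma^1;\ee^{\ii\dot{\bs{\alpha}}_1},\ee^{\ii\dot{\bs{\beta}}_1-r})\,\middle|\,\gamma^1_{[0,t]}\right],
\]
and the normalisation $N_0=\LFcro{n}^{(\kappa;\ell)}/\LFcro{1}^{(\kappa;\ell)}(r;\alpha_1,\beta_1)$ comes from~\eqref{eqn::multiannulus_pf_2nd}. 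The exponential factors $\mathfrak{b}\sum_j(r-t_j)-n\mathfrak{b}(r-t)$ should reduce to $(n-1)\mathfrak{b}t$, and the $\beta$-derivative ratios reduce to $(\mathfrak{\covmap}^1_t)'(\beta_j+\ii r)^{\mathfrak{b}}$ for $j\ge2$ (for $j=1$ they cancel). Checking this bookkeeping is routine.

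So under Definition~\ref{def::multiannulus_SLE}, the marginal law of $\gamma^1$ up to time $t$ is $\Pcro{1}^{(\kappa;\ell)}$ tilted by the martingale $N_t/N_0$, for every $t<r$. This holds up to localisation: I would use stopping times $\tau_\eps$, the first time $\gamma^1$ comes within distance $\eps$ of the other marked points or of the inner boundary, on which the tilt is unambiguous. Now $N_t$ is a genuine closed martingale of Doob type, with terminal value $\LZ_{\rainbow}^{(\kappa)}(\A_r\setminus\gamma^1;\cdot)$. Lemma~\ref{lem::rainbow_RN_bounded} gives the uniform bound $(\ee^r\pi^2/(4r^2))^{(n-1)\mathfrak{b}}$, so $N$ is bounded and uniformly integrable. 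Hence the consistent family of tilted laws on $\gamma^1_{[0,t]}$ extends to the full curve, and the Radon--Nikodym derivative with respect to $\Pcro{1}^{(\kappa;\ell)}$ on the whole path is $N_r/N_0$, which is exactly~\eqref{eqn::rainbow_RN}.

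The main obstacle is this passage from the local tilt to the whole curve. One must check that the tilting in Definition~\ref{def::multiannulus_SLE}, restricted to the first coordinate, agrees with tilting the single-curve law by $N_t/N_0$ before localisation. One must also check that no mass is lost when $\gamma^1$ approaches $\ee^{\ii\alpha_j}$ or $\ee^{\ii\beta_j-r}$. Boundedness of $N$ handles both issues: $N_{t\wedge\tau_\eps}$ is a bounded martingale, so letting $\eps\to0$ and $t\to r$ is justified by dominated convergence. Under $\Pcro{1}^{(\kappa;\ell)}$, almost surely $\gamma^1$ avoids the other marked points and terminates at $\ee^{\ii\beta_1-r}$ by Lemma~\ref{lem::singleannulusSLE}, so $\tau_\eps\to r$.

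Finally, absolute continuity transfers almost-sure properties. The transience and continuity statements for $\Pcro{1}^{(\kappa;\ell)}$ in Lemma~\ref{lem::singleannulusSLE} therefore hold for the marginal of $\gamma^1$ under $\Pcro{n}^{(\kappa;\ell)}$, giving the last claim.
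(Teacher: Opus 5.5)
Your proposal is correct and is essentially the paper's argument: specialising $M_{\bs{t}}(\LFcro{1}^{(\kappa;\ell)};\LFcro{n}^{(\kappa;\ell)})$ to $\bs{t}=(t,0,\ldots,0)$ gives the tilt $N_t/N_0$ with $N_t$ as in~\eqref{eqn::multiannulus_pf_general_BPZ_aux1}. Since $N_t$ is the Doob martingale of $\LZ_{\rainbow}^{(\kappa)}(\A_r\setminus\gamma^1;\ee^{\ii\dot{\bs{\alpha}}_1},\ee^{\ii\dot{\bs{\beta}}_1-r})$, transience of $\gamma^1$ under $\Pcro{1}^{(\kappa;\ell)}$ then yields the terminal Radon--Nikodym derivative~\eqref{eqn::rainbow_RN}, and your additional care (boundedness via Lemma~\ref{lem::rainbow_RN_bounded}, localisation, and the harmless normalisation that $M_{(t,0,\ldots,0)}$ itself equals $N_t$ divided by the constant $\prod_{j\ge 2}\LFcro{1}^{(\kappa;\ell)}(r;\alpha_j,\beta_j)$, so it is $M_{(t,0,\ldots,0)}/M_{\bs{0}}$ that equals $N_t/N_0$) only makes explicit what the paper's short proof leaves implicit.
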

\begin{proof}
By~\eqref{eqn::multitime_mart_annulus}, the marginal law of $\gamma^1_{[0,t]}$ under $\Pcro{n}^{(\kappa; \ell)}$ is the same as  $\Pcro{1}^{(\kappa;\ell)}$ weighted by $N_t/N_0$ where $N_t$ is defined in~\eqref{eqn::multiannulus_pf_general_BPZ_aux1}. Note that the curve is a transient curve from $\ee^{\ii\alpha_1}$ to $\ee^{\ii\beta_1-r}$ under $\Pcro{1}^{(\kappa;\ell)}$ due to Lemma~\ref{lem::singleannulusSLE}, thus
\begin{equation*}
\begin{split}
	N_t/N_0=& \frac{\LFcro{1}^{(\kappa;\ell)}(r;\alpha_1,\beta_1)}{\LFcro{n}^{(\kappa;\ell)}(r;\bs{\alpha},\bs{\beta})} \Ecro{1}^{(\kappa;\ell)}\left[ \LZ_{\rainbow}^{(\kappa)}(\A_r\setminus\gamma^1; \ee^{\ii\dot{\bs{\alpha}}_1},\ee^{\ii\dot{\bs{\beta}}_1-r}) \;\middle|\;\gamma^1_{[0,t]}\right] \\
	\to & \frac{\LFcro{1}^{(\kappa;\ell)}(r;\alpha_1,\beta_1)}{\LFcro{n}^{(\kappa;\ell)}(r;\bs{\alpha},\bs{\beta})} \LZ_{\rainbow}^{(\kappa)}(\A_r\setminus\gamma^1;
	\ee^{\ii\dot{\bs{\alpha}}_1},\ee^{\ii\dot{\bs{\beta}}_1-r}) \quad\text{as }t\to r,	
\end{split}
\end{equation*}
which gives~\eqref{eqn::rainbow_RN} as desired.
\end{proof}

\begin{proof}[Proof of Proposition~\ref{prop::ac_general}]
We assume the same setup as in Lemma~\ref{lem::earlierconstruction_marginal_conditional}. 
It suffices to show that $\widehat{\mathsf{P}}$ is the same as $n$-annulus $\SLE_{\kappa}(\LFcro{n}^{(\kappa; \ell)})$.  
We parameterize $\bs{\gamma}$ by $n$-time parameter $\bs{t}$, and let $\bs{W}_{\bs{t}} = (W_{\bs{t}}^{1},\ldots,W_{\bs{t}}^{n})$ be the multi-slit driving function. 
We write $\bs{\gamma}_{[0,\bs{t}]}=(\gamma^1_{[0,t_1]}, \ldots, \gamma^n_{[0,t_n]})$ and we also write $\bs{\gamma}_{\bs{t}}=\cup_{j=1}^n \gamma^j_{[0,t_j]}$. 
The event that different curves are disjoint is denoted by $\LE_{\emptyset}(\bs{\gamma}_{\bs{t}}) = \{ \gamma_{[0,t_j]}^{j} \cap \gamma_{[0,t_i]}^{i}=\emptyset, \, \forall i\neq j\}$.

We claim that on the event $\LE_{\emptyset}(\bs{\gamma}_{\bs{t}})$, for $1\le i\le n$, the joint law of $(\gamma^1_{[0,t_1]}, \ldots, \gamma^i_{[0,t_i]})$ under $\widehat{\mathsf{P}}$ is the same as it is under $\Pcro{n}^{(\kappa; \ell)}$. The claim holds for $i=1$ due to Lemma~\ref{lem::earlierconstruction_marginal_conditional} and Lemma~\ref{lem::nannulus_marginal_gamma1}. Assume the claim holds for $i$, let us consider the joint law of $(\gamma^1_{[0,t_1]}, \ldots, \gamma^i_{[0,t_i]}, \gamma^{i+1}_{[0,t_{i+1}]})$. From the induction hypothesis, the joint law of $(\gamma^1_{[0,t_1]}, \ldots, \gamma^i_{[0,t_i]})$ under $\widehat{\mathsf{P}}$ is the same as it is under $\Pcro{n}^{(\kappa; \ell)}$. It remains to check the conditional law of $\gamma^{i+1}_{[0,t_{i+1}]}$ given $(\gamma^1_{[0,t_1]}, \ldots, \gamma^i_{[0,t_i]})$. 
\begin{itemize}
\item From the domain Markov property of $\widehat{\mathsf{P}}$ in Lemma~\ref{lem::earlierconstruction_DMP}, the conditional law of $\gamma^{i+1}_{[0,t_{i+1}]}$ given $(\gamma^1_{[0,t_1]}, \ldots, \gamma^i_{[0,t_i]})$ under $\widehat{\mathsf{P}}(\A_r; \ee^{\ii\bs{\alpha}}, \ee^{\ii\bs{\beta}-r})$ is the same as the law of $\gamma^{i+1}_{[0,t_{i+1}]}$ under \[\widehat{\mathsf{P}}(\A_r\setminus \cup_{j=1}^i \gamma_{[0,t_j]}^j;\gamma_{t_1}^1,\ldots,\gamma_{t_i}^i,\ee^{\ii\alpha_{i+1}},\ldots,\ee^{\ii\alpha_{n}},\ee^{\ii\bs{\beta}-r}).\]
\item From the domain Markov property of $\Pcro{n}^{(\kappa; \ell)}$, the conditional law of $\gamma^{i+1}_{[0,t_{i+1}]}$ given $(\gamma^1_{[0,t_1]}, \ldots, \gamma^i_{[0,t_i]})$ under $\Pcro{n}^{(\kappa; \ell)}(\A_r; \ee^{\ii\bs{\alpha}}, \ee^{\ii\bs{\beta}-r})$ is the same as the law of $\gamma^{i+1}_{[0,t_{i+1}]}$ under \[\Pcro{n}^{(\kappa; \ell)}(\A_r\setminus \cup_{j=1}^i \gamma_{[0,t_j]}^j;\gamma_{t_1}^1,\ldots,\gamma_{t_i}^i,\ee^{\ii\alpha_{i+1}},\ldots,\ee^{\ii\alpha_{n}},\ee^{\ii\bs{\beta}-r}).\]
\end{itemize}
Combining with Lemma~\ref{lem::earlierconstruction_marginal_conditional} and Lemma~\ref{lem::nannulus_marginal_gamma1}, the above two conditional laws are the same. This completes the proof the claim.

From the above analysis, on the event $\LE_{\emptyset}(\bs{\gamma}_{\bs{t}})$, the joint law of $\bs{\gamma}_{[0,\bs{t}]}$ under $\widehat{\mathsf{P}}$ is the same as it is under $\Pcro{n}^{(\kappa; \ell)}$. The conclusion will follow once we know $\widehat{\mathsf{P}}[\LE_{\emptyset}(\bs{\gamma})]=1$ and $\Pcro{n}^{(\kappa; \ell)}[\LE_{\emptyset}(\bs{\gamma})]=1$.
The fact $\widehat{\mathsf{P}}[\LE_{\emptyset}(\bs{\gamma})]=1$ follows from the construction of $\widehat{\mathsf{P}}$ in Lemma~\ref{lem::earlierconstruction_marginal_conditional}. It remains to check $\Pcro{n}^{(\kappa; \ell)}[\LE_{\emptyset}(\bs{\gamma})]=1$.
From Lemma~\ref{lem::nannulus_marginal_gamma1}, the curve $\gamma^1$ under $\Pcro{n}^{(\kappa; \ell)}$ is a continuous simple curve in $\A_r$ from $\ee^{\ii\alpha_1}$ to $\ee^{\ii\beta_1-r}$ and it does not hit any other point on the boundary. For $\bs{t}=(t_1, \ldots, t_n)$, given $\bs{\gamma}_{\bs{t}}$ and on the event $\LE_{\emptyset}(\bs{\gamma}_{\bs{t}})$, from the domain Markov property of $\Pcro{n}^{(\kappa; \ell)}$ and Lemma~\ref{lem::nannulus_marginal_gamma1}, the curve segment $\gamma^i_{[t_i, r]}$ is a continuous simple curve in $\A_r\setminus\bs{\gamma}_{\bs{t}}$ from $\gamma^i_{t_i}$ to $\ee^{\ii\beta_i-r}$ and it does not hit any other point on the boundary. In particular, the curve segment $\gamma^i_{[t_i, r]}$ does not hit $\cup_{j}\gamma^j_{[0,t_j]}$ except at the starting point $\gamma^i_{t_i}$.  This holds for all $\bs{t}$ and thus $\Pcro{n}^{(\kappa; \ell)}[\LE_{\emptyset}(\bs{\gamma})]=1$ as desired. This completes the proof. 
\end{proof}

\begin{proof}[Proof of Proposition~\ref{prop::multiannulu_pf_construction}]
The annulus BPZ equations are checked in Lemma~\ref{lem::multiannulus_pf_BPZ}. It remains to derive the bound in~\eqref{eqn::multiannulus_pf_upperbound}. 
Fix $\kappa\in (0,4]$ and $n\ge 2$ and $\ell\in\Z$. 
Fix $r>0$ and $\bs{\alpha}, \bs{\beta}\in\LX_n$. 
Suppose $(\gamma^1, \gamma^2)$ is annulus $\SLE_{\kappa}(\LFcro{2}^{(\kappa;\ell)})$ in $(\A_r; \ee^{\ii\alpha_1}, \ee^{\ii\alpha_2}, \ee^{\ii\beta_1-r}, \ee^{\ii\beta_2-r})$ and we denote $\ddot{\bs{\alpha}}_{12}=(\alpha_3, \ldots, \alpha_n)$ and $\ddot{\bs{\beta}}_{12}=(\beta_n, \ldots, \beta_3)$. 
We claim that
\begin{align}\label{eqn::multiannulus_pf_3rd_aux1}
\LFcro{n}^{(\kappa; \ell)}(r; \bs{\alpha}, \bs{\beta})=\LFcro{2}^{(\kappa;\ell)}(r; \alpha_1, \alpha_2, \beta_1, \beta_2)\Ecro{2}^{(\kappa;\ell)}\left[\LZ_{\rainbow}^{(\kappa)}(\A_r\setminus(\gamma^1\cup\gamma^2); \ee^{\ii\ddot{\bs{\alpha}}_{12}}, \ee^{\ii\ddot{\bs{\beta}}_{12}-r})\right]. 
\end{align}
Let us check the two terms in RHS of~\eqref{eqn::multiannulus_pf_3rd_aux1}. 
\begin{itemize}
\item Using a similar idea as in the proof of Lemma~\ref{lem::earlierconstruction_marginal_conditional}, by applying~\eqref{eqn::rainbow_pf_blm_rep} and~\eqref{eqn::bp_annulus} for the rainbow partition function,
we obtain
\begin{align*}
&\LZ_{\rainbow}^{(\kappa)}(\A_r\setminus(\gamma^1\cup\gamma^2); \ee^{\ii\ddot{\bs{\alpha}}_{12}}, \ee^{\ii\ddot{\bs{\beta}}_{12}-r})\\
	=\;& \prod_{j=3}^{n}\LFcro{1}^{(\kappa;\ell)}(r;\alpha_j,\beta_j)\,
	\times \mathsf{E}_{n-2}^{(\kappa;\ell)}\left[
	\one\{(\gamma^1\cup\gamma^2)\cap\gamma^j=\emptyset, \forall 3\le j\le n\}
	\exp\left(\frac{\mathfrak{c}}{2}\blm(\A_r; (\gamma^1\cup\gamma^2), \gamma^3,\ldots,\gamma^n)\right)
	\right],
\end{align*}
where $\mathsf{P}_{n-2}^{(\kappa;\ell)}$ denotes the law of independent annulus $\SLE_{\kappa}$ measure $\otimes_{j=3}^n \Pcro{1}^{(\kappa; \ell)}(\A_r; \ee^{\ii\alpha_j}, \ee^{\ii\beta_j-r})$. Integrating over $(\gamma^1, \gamma^2)$, we have
\begin{align}
&\Ecro{2}^{(\kappa; \ell)}\left[\LZ_{\rainbow}^{(\kappa)}(\A_r\setminus(\gamma^1\cup\gamma^2); \ee^{\ii\ddot{\bs{\alpha}}_{12}}, \ee^{\ii\ddot{\bs{\beta}}_{12}-r})\right]\label{eqn::multiannulus_pf_3rd_aux2}\\
=&\prod_{j=3}^{n}\LFcro{1}^{(\kappa;\ell)}(r;\alpha_j,\beta_j)\,
	\notag\\
	&\times \Ecro{2}^{(\kappa;\ell)}\left[\mathsf{E}_{n-2}^{(\kappa;\ell)}\left[
	\one\{(\gamma^1\cup\gamma^2)\cap\gamma^j=\emptyset, \forall 3\le j\le n\}
	\exp\left(\frac{\mathfrak{c}}{2}\blm(\A_r; (\gamma^1\cup\gamma^2), \gamma^3,\ldots,\gamma^n)\right)
	\right]\right]. \notag
\end{align}
\item 
Applying Proposition~\ref{prop::ac_general} for $(\gamma^1, \gamma^2)$, the law of $(\gamma^1, \gamma^2)$ under $\Pcro{2}^{(\kappa; \ell)}$ is the same as the law of $\Pind{2}^{(\kappa; \ell)}$ weighted by the Radon-Nikodym derivative 
\begin{align}\label{eqn::multiannulus_pf_3rd_aux3}
\frac{\prod_{j=1}^2\LFcro{1}^{(\kappa;\ell)}(r; \alpha_j, \beta_j)}{\LFcro{2}^{(\kappa;\ell)}(r; \alpha_1, \alpha_2, \beta_1, \beta_2)}\one\{\gamma^1\cap\gamma^2=\emptyset\}\exp\left(\frac{\mathfrak{c}}{2}\blm(\A_r; \gamma^1, \gamma^2)\right). 
\end{align}
\end{itemize}
Combining~\eqref{eqn::multiannulus_pf_3rd_aux2} and~\eqref{eqn::multiannulus_pf_3rd_aux3}, we have
\begin{align*}
\text{RHS of~\eqref{eqn::multiannulus_pf_3rd_aux1}}
=&\prod_{j=1}^{n}\LFcro{1}^{(\kappa;\ell)}(r;\alpha_j,\beta_j)\,
	\times \Eind{n}^{(\kappa;\ell)}\left[
	\one_{\LE_{\emptyset}(\bs{\gamma})}
	\exp\left(\frac{\mathfrak{c}}{2}\blm(\A_r; \gamma^1, \gamma^2)+\frac{\mathfrak{c}}{2}\blm(\A_r; (\gamma^1\cup\gamma^2), \gamma^3,\ldots,\gamma^n)\right)
	\right]\\
	=&\prod_{j=1}^{n}\LFcro{1}^{(\kappa;\ell)}(r;\alpha_j,\beta_j)\,
	\times \Eind{n}^{(\kappa;\ell)}\left[
	\one_{\LE_{\emptyset}(\bs{\gamma})}
	\exp\left(\frac{\mathfrak{c}}{2}\blm(\A_r; \gamma^1, \ldots, \gamma^n)\right)
	\right]=\text{LHS of~\eqref{eqn::multiannulus_pf_3rd_aux1}}.
\end{align*}
This completes the proof of~\eqref{eqn::multiannulus_pf_3rd_aux1}. 
\medbreak
Next, let us check~\eqref{eqn::multiannulus_pf_upperbound}. We have the following two estimates. 
\begin{itemize}
\item From~\cite[Theorem~4]{JahangoshahiLawlerMultiplepathsSLE}, there exists $C\in (0, \infty)$ independent of $(r; \bs{\alpha}, \bs{\beta})$ such that 
\begin{align*}
\LFcro{2}^{(\kappa)}(r; \alpha_1, \alpha_2, \beta_1, \beta_2)\le C\exp\left(\left(2\mathfrak{b}-\frac{(8-\kappa)}{8}\right)r+\frac{\mathfrak{c}}{2}\log r\right). 
\end{align*}
\item For the rainbow partition function, from~\eqref{eqn::rainbow_pf_upperbound} and~\eqref{eqn::rainbow_RN_aux2}, we have
\begin{align*}
\LZ_{\rainbow}^{(\kappa)}(\A_r\setminus(\gamma^1\cup\gamma^2); \ee^{\ii\ddot{\bs{\alpha}}_{12}}, \ee^{\ii\ddot{\bs{\beta}}_{12}-r})\le &\prod_{j=3}^n \Poisson(\A_r\setminus\gamma; \ee^{\ii\alpha_j}, \ee^{\ii\beta_j-r})^{\mathfrak{b}}
\le  \left(\ee^{r}\frac{\pi^2}{4r^2}\right)^{(n-2)\mathfrak{b}}.
\end{align*}
\end{itemize}
Plugging them into~\eqref{eqn::multiannulus_pf_3rd_aux1} and~\eqref{eqn::multiannulus_withoutspiral_def}, we have
\begin{align*}
\LFcro{n}^{(\kappa)}(r; \bs{\alpha}, \bs{\beta})=&\sum_{\ell\in\Z}\LFcro{n}^{(\kappa;\ell)}(r; \bs{\alpha}, \bs{\beta})\\
\le& \sum_{\ell\in\Z} \LFcro{2}^{(\kappa;\ell)}(r; \alpha_1, \alpha_2, \beta_1, \beta_2)\times \left(\ee^{r}\frac{\pi^2}{4r^2}\right)^{(n-2)\mathfrak{b}}\\
=& \LFcro{2}^{(\kappa)}(r; \alpha_1, \alpha_2, \beta_1, \beta_2)\times \left(\ee^{r}\frac{\pi^2}{4r^2}\right)^{(n-2)\mathfrak{b}}\\
\le & C\exp\left(\left(2\mathfrak{b}-\frac{(8-\kappa)}{8}\right)r+\frac{\mathfrak{c}}{2}\log r\right)\times \left(\ee^{r}\frac{\pi^2}{4r^2}\right)^{(n-2)\mathfrak{b}}. 
\end{align*}
This gives~\eqref{eqn::multiannulus_pf_upperbound} as desired. 
\end{proof}

\begin{figure}[ht!]
\begin{subfigure}[t]{0.2\textwidth}
\begin{center}
\includegraphics[width=\textwidth]{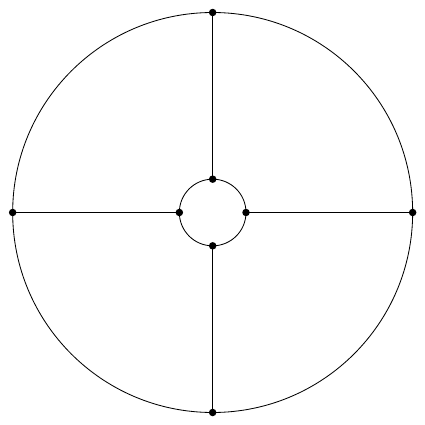}
\end{center}
\caption{}
\end{subfigure}
$\quad$
\begin{subfigure}[t]{0.2\textwidth}
\begin{center}
\includegraphics[width=\textwidth]{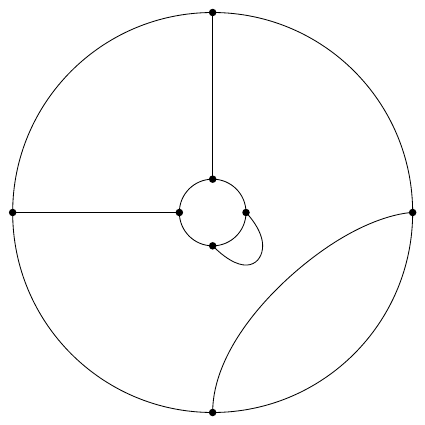}
\end{center}
\caption{}
\end{subfigure}
$\quad$
\begin{subfigure}[t]{0.2\textwidth}
\begin{center}
\includegraphics[width=\textwidth]{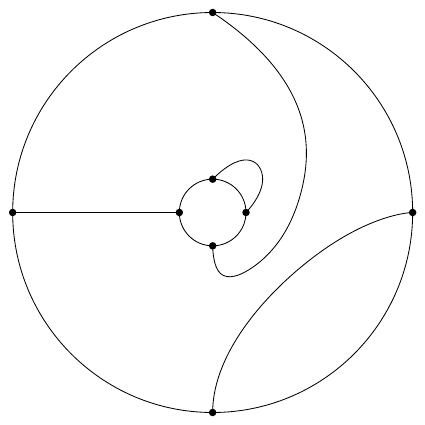}
\end{center}
\caption{}
\end{subfigure}\\
\begin{subfigure}[t]{0.2\textwidth}
\begin{center}
\includegraphics[width=\textwidth]{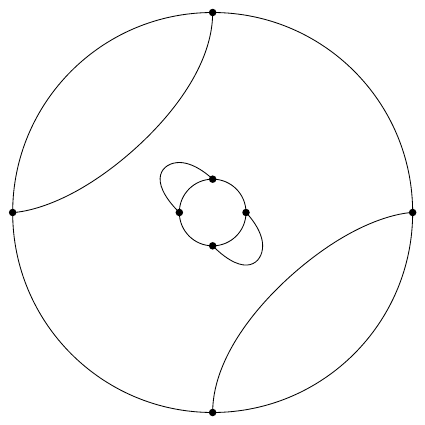}
\end{center}
\caption{}
\end{subfigure}
$\quad$
\begin{subfigure}[t]{0.2\textwidth}
\begin{center}
\includegraphics[width=\textwidth]{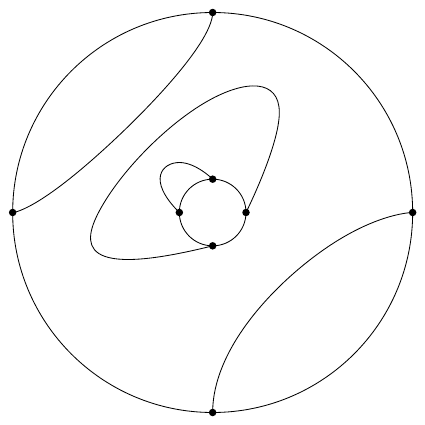}
\end{center}
\caption{}
\end{subfigure}
$\quad$
\begin{subfigure}[t]{0.2\textwidth}
\begin{center}
\includegraphics[width=\textwidth]{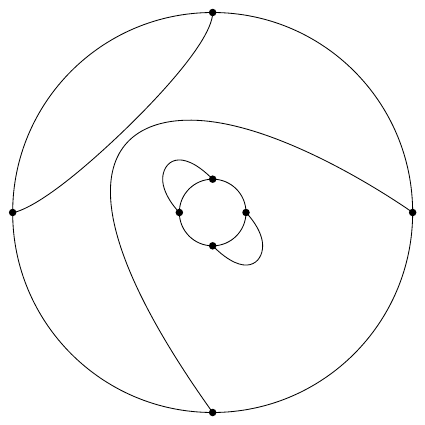}
\end{center}
\caption{}
\end{subfigure}
$\quad$
\begin{subfigure}[t]{0.2\textwidth}
\begin{center}
\includegraphics[width=\textwidth]{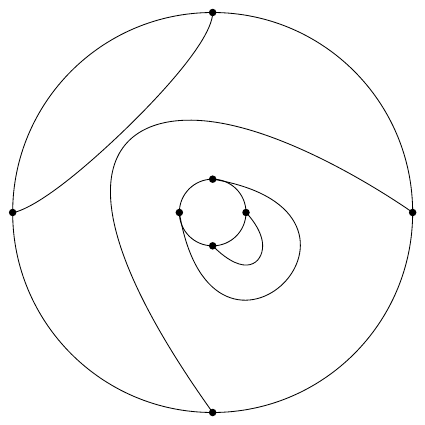}
\end{center}
\caption{}
\end{subfigure}
\caption{\label{fig::levellines_annulus_patterns} Various connectivity patterns for four curves in the annulus.}
\end{figure}

\begin{remark}\label{rem::annulus_otherpatterns}
The authors in~\cite{ZhanRestrictionAnnulusSLE, JahangoshahiLawlerMultiplepathsSLE} introduce multiple SLE in annulus by weighting the law of independent annulus SLEs by Brownian loop terms, as in~\eqref{eqn::ac_RN}. 
In their construction, all curves cross the annulus. 
The same idea can also be used to define multiple SLE in annulus with other connectivity patterns, see Figure~\ref{fig::levellines_annulus_patterns}. In particular, all resulting partition functions satisfy the annulus BPZ system~\eqref{eqn::annulus_BPZ_alpha}-\eqref{eqn::annulus_BPZ_beta} with $F(r)=F_n(r)$ defined in~\eqref{eqn::F(r)inBPZ}.  
Therefore, all of them generate multi-time martingales as in Proposition~\ref{prop::multitime_mart_annulus}. It is an interesting question to investigate all possible connectivity patterns in annulus and relate them to the basis of the solution space of annulus BPZ systems. 
\end{remark}

\section{Proof of Propositions~\ref{prop::annulusBPZ_kappa4} and~\ref{prop::two_pf_equal}}
\label{sec::multiannulu_SLE4}

\subsection{Proof of Proposition~\ref{prop::annulusBPZ_kappa4}}
\label{subsec::annulusBPZ}
\begin{proof}[Proof of Proposition~\ref{prop::annulusBPZ_kappa4}]
Let us fix $\kappa=4$. It suffices to show that $\LZann{n}$ defined in~\eqref{eqn::LZann_def} and $\LZcro{n}^{(0)}$ defined in~\eqref{eqn::LZcroell_def} with $\ell=0$ both satisfy the annulus BPZ equation~(\ref{eqn::annulus_BPZ_alpha}) for $\LD_{\alpha_1}^{(n)}$ with $\kappa=4$: 
\begin{align}\label{eqn::LZannLZcro_BPZ_alpha1}
(\partial_r-F_n(r))\LZ=\LD_{\alpha_1}^{(n)}\LZ, \qquad \text{where }F_n(r)=\frac{1}{2r} + \frac{3}{4} \LE(r) + \frac{n}{4},
\end{align}
and $\LE(t)$ is defined in~\eqref{eqn::Def_LE}. 
Let $\eps=(\eps_1, \ldots, \eps_n)\in \{\pm 1\}^n$, we set 
\begin{align*}
\LZ_{\eps}(r; \bs{\alpha}, \bs{\beta})=&\prod_{1\le i<j\le n}|\Theta_1(r; \alpha_j-\alpha_i)|^{\frac{1}{2}\eps_i\eps_j}|\Theta_1(r; \beta_j-\beta_i)|^{\frac{1}{2}\eps_i\eps_j}  \\
	& \times\prod_{1\le i, j\le n}|\Theta_3(r; \beta_j-\alpha_i)|^{-\frac{1}{2}\eps_i\eps_j} \times \exp\left(-\frac{1}{8r}\left(\sum_{j=1}^n\eps_j(\alpha_j-\beta_j)\right)^2\right). 
\end{align*}
Then, we write 
\begin{align}\label{eqn::LZannLZcro_BPZ_aux1}
\begin{split}
	\LZann{n}(r; \bs{\alpha}, \bs{\beta})=&\LZ_{\pm}(r; \bs{\alpha}, \bs{\beta}) \times \exp\left(\frac{nr}{8}\right)\times \prod_{k=1}^{\infty}(1-\ee^{-2kr})^{\frac{3n}{2}},\\
	\LZcro{n}^{(0)}(r; \bs{\alpha}, \bs{\beta})=&\LZ_{+}(r; \bs{\alpha}, \bs{\beta}) \times \exp\left(\frac{nr}{8}\right)\times \prod_{k=1}^{\infty}(1-\ee^{-2kr})^{\frac{3n}{2}},
	\end{split}
\end{align}
where $\LZ_{\pm}$ is the same as $\LZ_{\eps}$ when $\eps_j=(-1)^{j-1}$ for $1\le j\le n$; and that $\LZ_+$ is the same as $\LZ_{\eps}$ when $\eps_j=1$ for $1\le j\le n$. 
Let us consider the two parts in~\eqref{eqn::LZannLZcro_BPZ_aux1}. 
\begin{itemize}
\item On the one hand, for $\eps=(\eps_1, \ldots, \eps_n)\in\{\pm 1\}^n$, using~\eqref{eqn::H1H3_def}, we have 
\begin{align}\label{eqn::LZannLZcro_BPZ_partZeps}
\left(\partial_r-\LD_{\alpha_1}^{(n)}\right)\LZ_{\eps}=\left(\frac{1}{2r}+G_{\eps}(r; \bs{\alpha}, \bs{\beta})\right)\LZ_{\eps}, 
\end{align}
with
\begin{align}\label{eqn::Geps_def}
&G_{\eps}(r; \bs{\alpha}, \bs{\beta})\notag\\
=& - \sum_{i,j=1}^{n} \eps_i \eps_{j} \left( \frac{1}{4} \partial_z H_3(r;\beta_{j}-\alpha_i) + \frac{1}{8} H_3(r;\beta_{j}-\alpha_i)^2 \right)  \notag\\
& + \sum_{1\le i<j\le n} \eps_i \eps_{j} \left( \frac{1}{4} \partial_z H_1(r;\alpha_{j}-\alpha_i) + \frac{1}{8} H_1(r;\alpha_{j}-\alpha_i)^2 + \frac{1}{4} \partial_z H_1(r;\beta_{j}-\beta_i) + \frac{1}{8} H_1(r;\beta_{j}-\beta_i)^2 \right) \notag\\
& - \frac{1}{8} \left( \sum_{i=1}^{n} \eps_i H_3(r;\alpha_1-\beta_i) - \sum_{i=2}^{n} \eps_i H_1(r;\alpha_1-\alpha_i) \right)^2 \notag\\
& + \frac{1}{2} \left( \sum_{i=1}^{n} \eps_i \partial_z H_3(r;\alpha_1-\beta_i) - \sum_{i=2}^{n} \eps_i \partial_z H_1(r;\alpha_1-\alpha_i) \right) \notag\\
& - \frac{1}{4} \sum_{j=2}^{n} \partial_z H_1(r;\alpha_j-\alpha_1) - \frac{1}{4} \sum_{j=1}^{n} \partial_z H_3(r;\beta_j-\alpha_1) \notag\\
& + \frac{1}{4} \sum_{j=2}^{n}  H_1(r;\alpha_j-\alpha_1) \left( \sum_{i=1}^{n} \eps_i \eps_j H_3(r;\alpha_j-\beta_i) - \sum_{i\neq j} \eps_i \eps_j H_1(r;\alpha_j-\alpha_i) \right) \notag\\
& + \frac{1}{4} \sum_{j=1}^{n} H_3(r;\beta_j-\alpha_1) \left( \sum_{i=1}^{n} \eps_i \eps_j H_3(r;\beta_j-\alpha_i) - \sum_{i\neq j} \eps_i \eps_j H_1(r;\beta_j-\beta_i) \right).
\end{align}
We denote $G_{\eps}$ by $G_{\pm}$ when $\eps_j=(-1)^{j-1}$ for $1\le j\le n$; and denote $G_{\eps}$ by $G_{+}$ when $\eps_j=1$ for $1\le j\le n$. 
\item On the other hand, the $r$-dependent factor in~\eqref{eqn::LZannLZcro_BPZ_aux1} satisfies
\begin{equation} \label{eqn::LZannLZcro_BPZ_partr}
	\partial_r \log \left(\exp\left(\frac{nr}{8}\right)\times \prod_{k=1}^{\infty}(1-\ee^{-2kr})^{\frac{3n}{2}} \right) = \frac{n}{8} + \frac{3n}{2} \sum_{k=1}^{\infty} \frac{k \ee^{-2kr}}{1-\ee^{-2kr}} = \frac{3n}{4} \LE(r) + \frac{n}{4}.
\end{equation}
\end{itemize}
Plugging~\eqref{eqn::LZannLZcro_BPZ_partZeps} and~\eqref{eqn::LZannLZcro_BPZ_partr} into~\eqref{eqn::LZannLZcro_BPZ_alpha1}, it remains to show 
\begin{align}\label{eqn::LZannLZcro_BPZ_Geps_goal}
G_{\pm}(r; \bs{\alpha}, \bs{\beta})=G_{+}(r; \bs{\alpha}, \bs{\beta})=-\frac{3}{4}(n-1)\LE(r).  
\end{align}
\medbreak
Next, let us check~\eqref{eqn::LZannLZcro_BPZ_Geps_goal}. 
Although the coordinates $\bs{\alpha}$ and $\bs{\beta}$ are originally defined in the real configuration space $\mathcal{X}_n$, the expression for $G_{\eps}$ is composed of the meromorphic functions $H_1(r;\cdot)$ and $H_3(r;\cdot)$. Thus, we can treat $G_{\eps}$ as a meromorphic function on $\mathbb{C}^{2n}$. This allows us to utilize the theory of elliptic functions. We have the following observations for $G_{\eps}$. 
\begin{itemize}
\item For $\eps\in\{\pm, +\}$, the function $G_{\eps}(r; \cdot, \cdot)$ is double-periodic:
\begin{equation}\label{eqn::Geps_doubleperiodic}
\begin{split}
	G_{\eps}(r;\bs{\alpha},\bs{\beta})= & G_{\eps}(r;\bs{\alpha}+2\pi e_j, \bs{\beta})=G_{\eps}(r;\bs{\alpha},\bs{\beta}+2\pi e_j), \\
	G_{\eps}(r;\bs{\alpha},\bs{\beta})= & G_{\eps}(r;\bs{\alpha}+2\ii r e_j, \bs{\beta})=G_{\eps}(r;\bs{\alpha}, \bs{\beta}+2\ii r e_j),
\end{split}
\end{equation}
where $e_j=(0,\ldots,0,1,0,\ldots,0)$ is the vector whose $j$-th component is $1$. 
Combining~(\ref{eqn::Theta_period},\ref{eqn::Theta_shift},\ref{eqn::H1H3}), we have the following identities for $H_1$ and $H_3$:
\begin{equation} \label{eqn::H_shift_identities}
\begin{split}
	&H_1(r; z + 2\ii r) = H_1(r; z) - 2\ii, \quad H_3(r; z + 2\ii r) = H_3(r; z) - 2\ii,\\ 
	&H_1(r; z + 2\pi) = H_1(r; z), \quad H_3(r; z + 2\pi) = H_3(r; z). 
\end{split}
\end{equation}
The double-periodicity~\eqref{eqn::Geps_doubleperiodic} follows from~\eqref{eqn::H_shift_identities}, see details in Lemma~\ref{lem::Geps_doubleperiodic}. 

\item For $\eps\in\{\pm, +\}$, the function $G_{\eps}(r; \cdot, \cdot)$ does not have any poles. Note that~(\ref{eqn::JacobiTheta_productexpansion},\ref{eqn::H1H3}) implies $H_1(r;\cdot)$ is a meromorphic function in $\C$ with poles $2\pi\mathbb{Z} + 2\ii r\mathbb{Z}$ and $H_3(r;\cdot)$ is a meromorphic function in $\C$ with poles $2\pi\mathbb{Z} + (2\mathbb{Z}+1)\ii r$. By the double-periodicity of $G_{\eps}$ in~\eqref{eqn::Geps_doubleperiodic}, it suffices to investigate potential singularities of $G_{\eps}$ when $\alpha_i \to \alpha_j$ or $\alpha_i \to \beta_j + \ii r$.  Combining~(\ref{eqn::Theta_shift},\ref{eqn::H1H3}), we have
\begin{equation} \label{eqn::H1H3_shift}
	H_3(r; z \pm \ii r) = H_1(r; z) \mp \ii.
\end{equation}
As $\alpha_i\to \alpha_j$ ($i\neq j$), plugging~\eqref{eqn::Laurrent_expand_LE} into~\eqref{eqn::Geps_def}, we find the $(\alpha_i-\alpha_j)^{-2}$ and $(\alpha_i-\alpha_j)^{-1}$ singular terms in $G_{\eps}$ cancel out. As $\alpha_i\to \beta_j+ \ii r$, plugging~(\ref{eqn::Laurrent_expand_LE},\ref{eqn::H1H3_shift}) into~\eqref{eqn::Geps_def}, we find the $(\alpha_i-\beta_j\ii r)^{-2}$ and $(\alpha_i-\beta_j\ii r)^{-1}$ singular terms in $G_{\eps}$ cancel out. See details in Lemma~\ref{lem::Geps_nopoles}. 
Therefore $G_{\eps}(r;\bs{\alpha},\bs{\beta})$ does not have poles when $(\bs{\alpha},\bs{\beta})\in \C^{2n}$.
\end{itemize}
For $\eps\in\{\pm, +\}$, since $G_{\eps}$ is a doubly periodic function with no poles in its fundamental domain, by Liouville's Theorem, $G_{\eps}$ must be a constant depending only on $r$ and $n$.
Plugging~\eqref{eqn::Laurrent_expand_LE} into~\eqref{eqn::Geps_def} with $\alpha_j = x_j \delta$ and $\beta_j = y_j \delta$. By evaluating the constant term in the expansion of $G_{\eps}$ as $\delta\to 0$, we obtain~\eqref{eqn::LZannLZcro_BPZ_Geps_goal}. This completes the proof. 
\end{proof}

\subsection{Proof of Proposition~\ref{prop::two_pf_equal}}
\label{subsec::LZcron_ac}

The proof of Proposition~\ref{prop::two_pf_equal} relies on the following lemma.
\begin{lemma}\label{lem::multiannulusSLE4_pf_2nd}
Fix $n\ge 1$ and $r>0$ and $\bs{\alpha}, \bs{\beta}\in\LX_n$. Suppose $\gamma^1$ is annulus $\SLE_4(\LZcro{1}^{(\ell)})$ in $(\A_r; \ee^{\ii\alpha_1}, \ee^{\ii\beta_1-r})$ and denote its law by $\Pcro{1}^{(\ell)}$. 
We denote $\dot{\bs{\alpha}}_1=(\alpha_2, \ldots, \alpha_n)$ and $\dot{\bs{\beta}}_1=(\beta_n, \ldots, \beta_2)$.
Then we have
\begin{align}\label{eqn::multiannulus_SLE4_pf_rainbow}
\LZcro{n}^{(\ell)}(r; \bs{\alpha}, \bs{\beta})=\LZcro{1}^{(\ell)}(r; \alpha_1, \beta_1)
\Ecro{1}^{(\ell)}\left[\LZ_{\rainbow}^{(4)}(\A_r\setminus\gamma^1; \ee^{\ii\dot{\bs{\alpha}}_1}, \ee^{\ii\dot{\bs{\beta}}_1-r})\right].
\end{align}
\end{lemma}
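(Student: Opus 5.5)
Our strategy is a martingale argument along the growth of $\gamma^1$. We show that the process
\[
N_t:=\ee^{(n-1)t/2}\prod_{i=2}^n\big((\mathfrak{\covmap}_t^1)'(\alpha_i)(\mathfrak{\covmap}_t^1)'(\beta_i+\ii r)\big)^{1/2}\,\frac{\LZcro{n}^{(\ell)}\big(r-t;\zeta^1_t,\mathfrak{\covmap}^1_t(\dot{\bs{\alpha}}_1),\Re\mathfrak{\covmap}^1_t(\bs{\beta}+\ii r)\big)}{\LZcro{1}^{(\ell)}\big(r-t;\zeta^1_t,\Re\mathfrak{\covmap}^1_t(\beta_1+\ii r)\big)}
\]
(the analogue of~\eqref{eqn::multiannulus_pf_general_BPZ_aux1}, with $\mathfrak{b}=1/4$ at $\kappa=4$) is a bounded martingale under $\Pcro{1}^{(\ell)}$ with terminal value $\LZ_{\rainbow}^{(4)}(\A_r\setminus\gamma^1;\ee^{\ii\dot{\bs{\alpha}}_1},\ee^{\ii\dot{\bs{\beta}}_1-r})$. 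Taking expectations then gives~\eqref{eqn::multiannulus_SLE4_pf_rainbow}, since $N_0=\LZcro{n}^{(\ell)}(r;\bs\alpha,\bs\beta)/\LZcro{1}^{(\ell)}(r;\alpha_1,\beta_1)$.

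\emph{Local martingale.} By Proposition~\ref{prop::annulusBPZ_kappa4}, $\LZcro{n}^{(\ell)}$ satisfies~\eqref{eqn::annulus_BPZ_alpha} with $\kappa=4$ and $F=F_n$. By~\eqref{eqn::single_annulusSLE4_pf} and~\eqref{eqn::annulusBPZ_single}, $\LZcro{1}^{(\ell)}$ satisfies the single BPZ equation. Specializing the computation~\eqref{eqn::multitime_mart_annulus_aux1} of Proposition~\ref{prop::multitime_mart_annulus} to the case where only $t_1$ moves (so the Brownian loop term vanishes), the drift of $N_t$ cancels. Hence $N$ is a local martingale.

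\emph{Boundedness.} We bound $N_t$ by rewriting it as a ratio of explicit theta products using the product formulas~\eqref{eqn::Jac_theta1}--\eqref{eqn::Jac_theta2}. The $\Theta_1(\cdot;\alpha_1-\alpha_i)$ and $\Theta_3$ cross factors then become square roots of boundary Poisson kernel ratios, exactly as in~\eqref{eqn::LZcro_LZann_mono_aux2}. Each remaining factor has the form $\Poisson(\S_{r-t};\cdot,\cdot)^{1/2}$ times Jacobian factors. This is, up to bounded corrections, $\Poisson(\A_r\setminus\gamma^1_{[0,t]};\ee^{\ii\alpha_j},\ee^{\ii\beta_j-r})^{1/2}$ lifted to the strip. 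Monotonicity~\eqref{eqn::bPoisson_monotone} and the strip formula~\eqref{eqn::bPoisson_strip} then bound $N_t$ uniformly, as in Lemma~\ref{lem::rainbow_RN_bounded}. Consequently $N$ is a true martingale and $\Ecro{1}^{(\ell)}[N_r]=N_0$.

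\emph{Terminal value (main obstacle).} As $t\to r$, $\gamma^1$ reaches $\ee^{\ii\beta_1-r}$ with winding $\ell$ (Lemma~\ref{lem::singleannulusSLE}), and the modulus $r-t\to0$. We must show $N_t\to\LZ_{\rainbow}^{(4)}(\A_r\setminus\gamma^1;\dots)$. The plan is to use the modular-transformed expressions~\eqref{eqn::Jac_theta1}--\eqref{eqn::Jac_theta2} with $s=r-t\to0$. In that regime $\ee^{-2m\pi^2/s}\to0$, so the infinite products tend to $1$. The remaining $\sinh,\cosh$ factors together with the Gaussian prefactors $\ee^{-z^2/4s}$ combine into expressions in $\ee^{\pi z/s}$, i.e., the conformal map of the strip $\S_s$ of width $s$ to $\HH$. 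For the Gaussian terms, the cross terms $\ee^{(\alpha_i-\beta_i)(\alpha_k-\beta_k)/2s}$ must cancel against the explicit quadratic $\ee^{-(\sum(\alpha_j-\beta_j)-2n\pi\ell)^2/8s}$ and the single-curve normalization; the winding $\ell$ enters here. The surviving product should match the explicit $\kappa=4$ rainbow formula~\eqref{eqn::rainbow_pf4} from~\cite[Theorem~1.5]{PeltolaWuGlobalMultipleSLEs}, which is a product of $|x_i-x_j|^{\pm1/2}$ terms, evaluated on the image of $\A_r\setminus\gamma^1$ under the limiting strip-to-$\HH$ map. The covariance factors $((\mathfrak{\covmap}^1_t)')^{1/2}$ supply exactly the Jacobians required by~\eqref{eqn::rainbow_pf_polygon_def}. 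Justifying this limit uniformly — that the thin-strip degeneration of $\mathfrak{\covmap}^1_t$ converges to a uniformizing map of the slit domain — is the delicate step. If direct control is hard, I would instead compare with the boundedly convergent martingale of Lemma~\ref{lem::nannulus_marginal_gamma1}, whose terminal value is already identified.
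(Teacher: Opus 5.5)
Your skeleton matches the paper's proof. It has three parts: a one-curve local martingale $N_t$ obtained from Proposition~\ref{prop::multitime_mart_annulus}; boundedness, by rewriting $N_t$ as a rainbow partition function in $\Omega_t:=\S_r\setminus q^{-1}(\gamma^1[0,t])$ times factors $\le 1$; and optional stopping. The terminal-value step, however, has a genuine gap, and there are two smaller fixes.

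\textbf{Exponent slip.} At $\kappa=4$ one has $\mathfrak b=1/4$, so $N_t$ must carry $\ee^{(n-1)t/4}$ and Jacobian powers $1/4$. Correspondingly, $|\Theta_3|^{-1/2}$ produces $\Poisson^{1/4}$, not $\Poisson^{1/2}$. With your exponents $1/2$, $N_t$ is the correct martingale multiplied by a non-constant finite-variation factor, so the drift in \eqref{eqn::multitime_mart_annulus_aux1} does not vanish.

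\textbf{Boundedness.} The ``bounded corrections'' in your boundedness step are really an exact cancellation. The prefactors $(\pi/(r-t))^{\pm3/4}\ee^{\mp\pi^2/(8(r-t))}$ and the Gaussian factors blow up individually as $t\to r$. They must cancel through \eqref{eqn::eta} and the quadratic identity, leaving $\ee^{(n-1)r/4}F(t,0)^{n-1}$ with $F(t,0)=\prod_{m\ge1}(1-\ee^{-2m\pi^2/(r-t)})$, as in \eqref{eqn::multiannulusSLE4_pf_aux4}.

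\textbf{The gap: terminal value.} Your reason ``$\ee^{-2m\pi^2/s}\to0$, so the infinite products tend to $1$'' is not valid. Write $s=r-t$. The factors are $1-\exp(\pi(z-2m\pi)/s)$, where $z$ is a difference of \emph{moving} mapped points, for instance $z=\mathfrak{\covmap}_t(\alpha_n)-\zeta_t\in(0,2\pi)$. If $\zeta_t+2\pi-\mathfrak{\covmap}_t(\alpha_n)=O(s)$, the $m=1$ factor stays bounded away from $1$.

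A second problem: the factors coming from pairs that involve index $1$ do not appear in the rainbow formula at all. These are the cross-ratios in $\S_s$ of $\zeta_t$, $\mathfrak{\covmap}_t(\alpha_k)$, $\Re\mathfrak{\covmap}_t(\beta_1+\ii r)$, $\Re\mathfrak{\covmap}_t(\beta_k+\ii r)$. You must show they tend to $1$, and your sketch does not address them.

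Both points need the quantitative geometric input of Lemma~\ref{lem::RNcontrol_aux}. It says the images of $\alpha_2,\ldots,\alpha_n,\beta_2+\ii r,\ldots,\beta_n+\ii r$ stay within $O(r-t)$ of one another. It also says their distances to $\zeta_t$, to $\zeta_t+2\pi$, to $\Re\mathfrak{\covmap}_t(\beta_1+\ii r)$ and to its $2\pi$-translate are all $\gg r-t$; see \eqref{eqn::terminal_alpha}--\eqref{eqn::finite}. The paper proves this in three steps:
\begin{itemize}
\item Fix $z$ in the component $\Omega_r$ between the lifts of $\gamma^1$ from $\alpha_1$ and from $\alpha_1+2\pi$.
\item Show by Beurling's estimate that $\Im\mathfrak{\covmap}_t(z)/(r-t)$ stays in $(c,1-c)$.
\item Compare harmonic measures of boundary arcs seen from $z$ in $\Omega_r$, which are bounded below, with the explicit harmonic measures in $\S_{r-t}$.
\end{itemize}

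\textbf{No limit of maps is needed.} Your ``thin-strip degeneration converges to a uniformizing map'' framing points at the wrong difficulty. For each fixed $t$, $\ee^{\pi\mathfrak{\covmap}_t/(r-t)}$ is already an exact uniformization of $\Omega_t$ onto $\HH$. So the main part of $N_t$ equals $\ee^{(n-1)r/4}\LZ_{\rainbow}^{(4)}(\Omega_t;\dot{\bs{\alpha}}_1,\dot{\bs{\beta}}_1+\ii r)$ exactly at every time. It converges by convergence of boundary Poisson kernels. The only delicate point is the correction factors described above.

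\textbf{The fallback is circular.} The martingale of Lemma~\ref{lem::nannulus_marginal_gamma1} is built from $\LFcro{n}^{(4;\ell)}$. Its terminal value is known only because \eqref{eqn::multiannulus_pf_2nd} holds by construction. The present lemma is exactly what the paper uses to prove $\LZcro{n}^{(\ell)}=\LFcro{n}^{(4;\ell)}$ (Proposition~\ref{prop::two_pf_equal}). Since the annulus BPZ system has no uniqueness, you cannot transfer that terminal value to the $\LZcro{n}^{(\ell)}$-martingale without already knowing the result.
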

To prove Lemma~\ref{lem::multiannulusSLE4_pf_2nd}, we first address the rainbow partition functions in RHS of~\eqref{eqn::multiannulus_SLE4_pf_rainbow}. 
The rainbow partition functions are usually defined as solutions to chordal BPZ equations as in Section~\ref{subsec::polyon_pre}, and they do not have explicit formula in general.
However, when $\kappa=4$, they do enjoy explicit formula. They are derived in~\cite[Theorem~1.5]{PeltolaWuGlobalMultipleSLEs}: for $2N$-polygon $(\Omega; \bs{x})$ with $\bs{x}=(x_1, \ldots, x_{2N})$, 
\begin{align}\label{eqn::rainbow_pf4}
\LZ_{\rainbow}^{(4)}(\Omega; {\bs{x}})=\prod_{j=1}^{N} \Poisson(\Omega;x_{j},x_{2N+1-j})^{\frac{1}{4}}\times \prod_{1\le i<k\le N}\left(\frac{\Poisson(\Omega; x_i, x_{2N+1-k})\Poisson(\Omega; x_k, x_{2N+1-i})}{\Poisson(\Omega; x_i, x_k)\Poisson(\Omega; x_{2N+1-i}, x_{2N+1-k})}\right)^{\frac{1}{4}}. 
\end{align}
Moreover, the upper bound~\eqref{eqn::rainbow_pf_upperbound} becomes
\begin{equation}\label{eqn::rainbow_pf4_bound}
\LZ_{\rainbow}^{(4)}(\Omega; {\bs{x}})\le \prod_{j=1}^{N} \Poisson(\Omega;x_{j},x_{2N+1-j})^{\frac{1}{4}}.
\end{equation}

\begin{proof}[Proof of Lemma~\ref{lem::multiannulusSLE4_pf_2nd}]
It suffices to show the conclusion for $\ell=0$. 
We need to compare the two partition functions: $\LZ_{\rainbow}^{(4)}$ defined in~\eqref{eqn::rainbow_pf4} and $\LZcro{n}^{(0)}$ defined in~\eqref{eqn::LZcroell_def}. The building blocks for $\LZ_{\rainbow}^{(4)}$ are boundary Poisson kernels and the building blocks for $\LZcro{n}^{(0)}$ are Jacobi theta functions. 
To compare the two partition functions $\LZ_{\rainbow}^{(4)}$ and $\LZcro{n}^{(0)}$, we use the equivalent expression for the rescaled Jacobi theta functions $\Theta_1, \Theta_3$ in Lemma~\ref{lem::JacobiTheta_technical}. 
In particular, we use the relation between the boundary Poisson kernel and the rescaled Jacobi theta functions~\eqref{eqn::LZcro_LZann_mono_aux2} and 
for $1\le j\le n$, 
\begin{align}\label{eqn::LZcro_LZann_mono_aux1}
	|\Theta_3(r; \beta_j-\alpha_j)|^{-1}=&\Poisson(\S_r; \alpha_j, \beta_j+\ii r)^{\frac{1}{2}} \times\left(\frac{\pi}{r}\right)^{-\frac{3}{2}}\ee^{\frac{(\beta_j-\alpha_j)^2}{4r}} \ee^{\frac{\pi^2}{4r}}\notag\\
	&\times\left(\prod_{m=1}^\infty\left(1-\ee^{\frac{-2m\pi^2}{r}}\right)\left(1+\ee^{\frac{-2m\pi^2}{r}-\frac{\pi (\beta_j - \alpha_j)}{r}}\right)\left(1+\ee^{\frac{-2m\pi^2}{r}+\frac{\pi (\beta_j - \alpha_j)}{r}}\right)\right)^{-1}. 
\end{align}

Recall that $\Pcro{n}^{(0)}$ denotes the law of $n$-annulus $\SLE_4(\LZcro{n}^{(0)})$. The marginal law of $\gamma^1$ under $\Pcro{n}^{(0)}$ is the same as tilting $\Pcro{1}^{(0)}$ by the local martingale
\begin{align*}
N_t=\exp\left((n-1)\frac{1}{4}t\right)
	\prod_{i=2}^{n}
	\left((\mathfrak{\covmap}_t^1)'(\alpha_i)
	(\mathfrak{\covmap}_t^1)'(\beta_i+\ii r)\right)^{\frac{1}{4}} \frac{\LZcro{n}^{(0)}
	\left(r-t;\zeta_t^1,\mathfrak{\covmap}_t^1(\dot{\bs{\alpha}}_1),
	\Re\mathfrak{\covmap}_t^1(\bs{\beta}+\ii r)\right)}
	{\LZcro{1}^{(0)}(r-t;\zeta_t^1,\Re\mathfrak{\covmap}_t^1(\beta_1+\ii r))}. 
\end{align*}
Define $D_t:=\A_r\setminus \gamma^1[0,t]$ and define $\Omega_t:=\S_r\setminus q^{-1}(\gamma^1[0,t])$. Denote by $\Omega_r$ the connected component of $\S_r\setminus q^{-1}(\gamma^1)$ having $\dot{\bs{\alpha}}_1$ and $\dot{\bs{\beta}}_1+\ii r$ on its boundary. For $t\in[0,r)$ and $z\in\C$, define
\begin{equation} \label{eqn::F(t,z)G(t,z)}
	F(t,z):=\prod_{m=1}^\infty\left(1-\ee^{\frac{-2m\pi^2}{r-t}+\frac{\pi z}{r-t}}\right)\quad\text{and}\quad G(t,z):=F(t,-z).
\end{equation}
From~\eqref{eqn::LZcroell_def},~\eqref{eqn::rainbow_pf4},~\eqref{eqn::eta},~\eqref{eqn::LZcro_LZann_mono_aux1} and~\eqref{eqn::LZcro_LZann_mono_aux2}, we have
\begin{align}\label{eqn::multiannulusSLE4_pf_aux4}
N_t=&\ee^{\frac{(n-1)r}{4}}\LZ_{\rainbow}^{(4)}(\Omega_t; \dot{\bs{\alpha}}_1,\dot{\bs{\beta}}_1+\ii r)\times F(t,0)^{(n-1)}\notag\\
&\times\underbrace{\left(\frac{\prod_{2\le i<j\le n}F(t,\mathfrak{\covmap}_t(\alpha_j)-\mathfrak{\covmap}_t(\alpha_i))\prod_{2\le j\le n}F(t,\mathfrak{\covmap}_t(\alpha_j)-\zeta_t)\prod_{1\le i<j\le n}F(t,\mathfrak{\covmap}_t(\beta_j+\ii r)-\mathfrak{\covmap}_t(\beta_i+\ii r))}{\prod_{2\le j\le n}F(t,\mathfrak{\covmap}_t(\beta_j+\ii r)-\zeta_t)\prod_{2\le i\le n,1\le j\le n}F(t,\mathfrak{\covmap}_t(\beta_j+\ii r)-\mathfrak{\covmap}_t(\alpha_i))}\right)^{\frac{1}{2}}}_{:=R_{1,t}}\notag\\
&\times\underbrace{\left(\frac{\prod_{2\le i<j\le n}G(t,\mathfrak{\covmap}_t(\alpha_j)-\mathfrak{\covmap}_t(\alpha_i))\prod_{2\le j\le n}G(t,\mathfrak{\covmap}_t(\alpha_j)-\zeta_t)\prod_{1\le i<j\le n}G(t,\mathfrak{\covmap}_t(\beta_j+\ii r)-\mathfrak{\covmap}_t(\beta_i+\ii r))}{\prod_{2\le j\le n}G(t,\mathfrak{\covmap}_t(\beta_j+\ii r)-\zeta_t)\prod_{2\le i\le n,1\le j\le n}G(t,\mathfrak{\covmap}_t(\beta_j+\ii r)-\mathfrak{\covmap}_t(\alpha_i))}\right)^{\frac{1}{2}}}_{:=R_{2,t}}\notag\\
&\times \prod_{2\le j\le n}\underbrace{\left|\frac{\left(1-\ee^{\frac{-\pi(\mathfrak{\covmap}_t(\alpha_j)-\zeta_t)}{r-t}}\right)\left(1-\ee^{\frac{-\pi(\mathfrak{\covmap}_t(\beta_j+\ii r)-\mathfrak{\covmap}_t(\beta_1+\ii r))}{r-t}}\right)}{\left(1+\ee^{\frac{-\pi(\Re\mathfrak{\covmap}_t(\beta_j+\ii r)-\zeta_t)}{r-t}}\right)\left(1+\ee^{\frac{-\pi(\mathfrak{\covmap}_t(\alpha_j)-\Re\mathfrak{\covmap}_t(\beta_1+\ii r))}{r-t}}\right)}\right|}_{:=L_{j,t}}.
\end{align}

We first prove $\{N_t\}_{t\in [0,r)}$ is uniformly bounded. By definition, we have
\begin{equation}\label{eqn::bound_aux1}
F(t,0)\le 1,\quad R_{1,t}\le 1,\quad R_{2,t}\le 1,\quad L_{j,t}\le 1\quad\text{for }2\le j\le n.
\end{equation}
By~\eqref{eqn::rainbow_pf4_bound} and the monotonicity~\eqref{eqn::bPoisson_monotone}, we have
\begin{equation}\label{eqn::bound_aux2}
\LZ_{\rainbow}^{(4)}(\Omega_t; \dot{\bs{\alpha}}_1,\dot{\bs{\beta}}_1+\ii r)\le \prod_{2\le j\le n}\Poisson(\S_r; \alpha_j, \beta_j+\ii r)^{\frac{1}{4}}.
\end{equation}
Combining~\eqref{eqn::bound_aux1} and~\eqref{eqn::bound_aux2} together, we have that $\{N_t\}_{t\in [0,r)}$ is uniformly bounded.
\medbreak
It remains to derive the terminal value $\lim_{t\to r}N_t$. 
We will prove in Lemma~\ref{lem::RNcontrol_aux} that 
\begin{align}\label{eqn::terminal_alpha}
&\lim_{t\to r}\frac{\mathfrak{\covmap}_t(\alpha_2)-\zeta_t}{r-t}=\infty,\qquad \lim_{t\to r}\frac{\zeta_t+2\pi-\mathfrak{\covmap}_t(\alpha_n)}{r-t}=\infty,\\
&\lim_{t\to r}\frac{\mathfrak{\covmap}_t(\beta_{2} + \ii r)-\mathfrak{\covmap}_t(\beta_{1} + \ii r)}{r-t}=\infty,\qquad\lim_{t\to r}\frac{\mathfrak{\covmap}_t(\beta_{1} + \ii r)+2\pi-\mathfrak{\covmap}_t(\beta_{n} + \ii r)}{r-t}=\infty,\label{eqn::terminal_beta}
\end{align}
and there exists a constant $C=C(\gamma^1)>0$, such that for every $2 \le i,j\le n$,
\begin{equation}\label{eqn::finite}
\mathfrak{\covmap}_t(\beta_{n} + \ii r)-\mathfrak{\covmap}_t(\beta_{2} + \ii r)\le C(r-t),\quad\quad\mathfrak{\covmap}_t(\alpha_{n})-\mathfrak{\covmap}_t(\alpha_{2})\le C(r-t), \quad 
\quad |\mathfrak{\covmap}_t(\beta_{j} + \ii r)-\mathfrak{\covmap}_t(\alpha_{i})|\le C(r-t).
\end{equation}
Assuming these, let us show that
\begin{align}\label{eqn::terminal_aux0}
\lim_{t\to r}N_t=\LZ_{\rainbow}^{(4)}(\A_r\setminus\gamma^1; \ee^{\ii\dot{\bs{\alpha}}_1}, \ee^{\ii\dot{\bs{\beta}}_1-r}).
\end{align}
Let check all terms in RHS of~\eqref{eqn::multiannulusSLE4_pf_aux4}. 
\begin{itemize}
\item By the convergence of boundary Poisson kernel and~\eqref{eqn::rainbow_pf4}, we have
\begin{equation*}
\lim_{t\to r}\LZ_{\rainbow}^{(4)}(\Omega_t; \dot{\bs{\alpha}}_1,\dot{\bs{\beta}}_1+\ii r)=\LZ_{\rainbow}^{(4)}(\Omega_r; \dot{\bs{\alpha}}_1,\dot{\bs{\beta}}_1+\ii r).
\end{equation*}
\item For the deterministic term, we have
\begin{equation*}
\lim_{t\to r}F(t,0)=1.
\end{equation*}
\item We argue that $\lim_{t\to r}R_{1,t}=1$. 
\begin{itemize}
\item
By~\eqref{eqn::terminal_alpha}, we have
\begin{equation*}
\lim_{t\to r}F(t,\mathfrak{\covmap}_t(\alpha_j)-\zeta_t)=1,\qquad\text{ for }2\le j\le n.
\end{equation*}
\item
By~\eqref{eqn::terminal_beta}, we have
\begin{equation*}
\lim_{t\to r}F(t,\mathfrak{\covmap}_t(\beta_j+\ii r)-\mathfrak{\covmap}_t(\beta_1+\ii r))=1,\qquad\text{ for }2\le j\le n.
\end{equation*}
\item
By~\eqref{eqn::finite}, for $2\le i<j\le n$, we have
\begin{equation*}
\begin{split}
&\lim_{t\to r}F(t,\mathfrak{\covmap}_t(\alpha_j)-\mathfrak{\covmap}_t(\alpha_i))=1,\qquad \lim_{t\to r}F(t,\mathfrak{\covmap}_t(\beta_j+\ii r)-\mathfrak{\covmap}_t(\beta_i+\ii r))=1,\\
&\lim_{t\to r}F(t,\mathfrak{\covmap}_t(\beta_j+\ii r)-\mathfrak{\covmap}_t(\alpha_i))=1.
\end{split}
\end{equation*}
\item
By~\eqref{eqn::terminal_beta} and~\eqref{eqn::finite}, we have
\[\lim_{t\to r}\frac{\zeta_t+2\pi-\Re\mathfrak{\covmap}_t(\beta_j+\ii r)}{r-t}=\infty,\qquad\text{for }2\le j\le n.\]
This implies that
\begin{equation*}
\lim_{t\to r}F(t,\mathfrak{\covmap}_t(\beta_j+\ii r)-\zeta_t)=1,\qquad\text{ for }2\le j\le n.
\end{equation*}
Similarly, we have
\begin{equation*}
\lim_{t\to r}F(t,\mathfrak{\covmap}_t(\beta_1+\ii r)-\mathfrak{\covmap}_t(\alpha_i))=1,\qquad\text{ for }2\le i\le n.
\end{equation*}
\end{itemize}
Combining the observations above, we obtain $\lim_{t\to r}R_{1,t}=1$. 
\item Similarly, we can check $\lim_{t\to r}R_{2,t}=1$ and $\lim_{t\to r}L_{j,t}= 1$ for $2\le j\le n$. 
\end{itemize}
Collecting the above limits, we obtain~\eqref{eqn::terminal_aux0} as desired. 
The uniform boundedness and the terminal value~\eqref{eqn::terminal_aux0} together gives~\eqref{eqn::multiannulus_SLE4_pf_rainbow} and completes the proof.
\end{proof}

\begin{lemma}\label{lem::RNcontrol_aux}
Assume the same setup as in the proof of Lemma~\ref{lem::multiannulusSLE4_pf_2nd}, the limits~\eqref{eqn::terminal_alpha},~\eqref{eqn::terminal_beta} and~\eqref{eqn::finite} hold. 
\end{lemma}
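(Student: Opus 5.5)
We will read all three statements as assertions about the lifted curve $\tilde\gamma:=q^{-1}(\gamma^1)$ in $\S_r$, the component of $\S_r\setminus q^{-1}(\gamma^1)$ lying between $\tilde\gamma$ and its translate $\tilde\gamma+2\pi$, and the harmonic-measure meaning of the covering map $\mathfrak{\covmap}_t$. Our normalisation is that $\mathfrak{\covmap}_t$ maps $\Omega_t$ onto $\S_{r-t}$ and conjugates the translation $z\mapsto z+2\pi$. The two sides of $\tilde\gamma[0,t]$ go to the two sides of $\zeta_t$ (the right side to $\zeta_t^+$, the left side of $\tilde\gamma+2\pi$ to $\zeta_t+2\pi$). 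The basic tool is the extremal-length/strip comparison. For two boundary arcs $I,J$ of a periodic strip domain, the distance between their images under $\mathfrak{\covmap}_t$ in $\S_{r-t}$, measured in units of the width $r-t$, is comparable to $\pi$ times the extremal distance between $I$ and $J$ in the quotient domain. More precisely, $\pi\cdot\mathrm{dist}/(r-t)$ is, up to $O(1)$ errors, the logarithm of the inverse boundary Poisson kernel, as in~\eqref{eqn::bPoisson_strip}.

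For~\eqref{eqn::terminal_alpha}, consider the quadrilateral in $\Omega_t$ with the following two opposite arcs. The first is the right side of $\tilde\gamma[0,t]$ together with the boundary segment $[\alpha_1,\alpha_2]$ near $\alpha_1$. The second is the whole top boundary together with everything beyond $\alpha_2$. The quantity $(\mathfrak{\covmap}_t(\alpha_2)-\zeta_t)/(r-t)$ is the conformal modulus of the image rectangle. Under $\mathfrak{\covmap}_t$ this is the rectangle $[\zeta_t,\mathfrak{\covmap}_t(\alpha_2)]\times[0,r-t]$, up to errors from the hyperbolic ends. This modulus equals (up to additive constants) the extremal length in $\Omega_t$ between the arc $(\alpha_1,\alpha_2)$ together with the right side of $\tilde\gamma[0,t]$, and the top boundary.

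As $t\to r$, the curve $\tilde\gamma[0,t]$ approaches $\beta_1+\ii r$ (by the transience in Lemma~\ref{lem::singleannulusSLE}). Any curve in $\Omega_t$ connecting the right side of $\tilde\gamma[0,t]$ near its tip to the top boundary must squeeze through the region near $\beta_1+\ii r$ between $\tilde\gamma$ and the top. Because $\gamma^1$ is a simple curve touching $\partial\A_r$ only at its endpoints, there is a fixed neighbourhood of $\beta_2+\ii r$ and of $[\alpha_2,\alpha_n]$ that the curve avoids. Hence the family of paths from $[\alpha_1,\alpha_2]$ side to the top boundary that avoid a shrinking neighbourhood of the tip has extremal length bounded below. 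Meanwhile, the family of paths near the tip has extremal length tending to $\infty$, since the gap between $\gamma^1_t$ and the inner circle shrinks to $0$. I will make this precise via a standard Beurling-type estimate: the harmonic measure seen from the top boundary of the tip region tends to $0$.

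Equivalently, and more cleanly, I would phrase everything via harmonic measure of boundary arcs seen from the far end. The limit $(\mathfrak{\covmap}_t(\alpha_2)-\zeta_t)/(r-t)\to\infty$ is equivalent to the harmonic measure in $\S_{r-t}$, from the top boundary, of the bottom segment $[\zeta_t,\mathfrak{\covmap}_t(\alpha_2)]$ not decaying exponentially slower. I will use the explicit strip Poisson kernel~\eqref{eqn::bPoisson_strip} together with the monotonicity~\eqref{eqn::bPoisson_monotone} and the conformal covariance~\eqref{eqn::bPoisson_cov} of Poisson kernels. The key comparison is
\[
\Poisson(\Omega_t;\alpha_2,\beta_2+\ii r)\asymp \mathfrak{\covmap}_t'(\alpha_2)\,\mathfrak{\covmap}_t'(\beta_2+\ii r)\,(r-t)^{-2}\cosh^{-2}\!\Big(\tfrac{\pi(\Re\mathfrak{\covmap}_t(\beta_2+\ii r)-\mathfrak{\covmap}_t(\alpha_2))}{2(r-t)}\Big),
\]
and likewise with $\sinh$ for same-side pairs. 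As $t\to r$, the kernels $\Poisson(\Omega_t;\cdot,\cdot)$ converge to $\Poisson(\Omega_r;\cdot,\cdot)$, which is finite and positive for pairs among $\dot{\bs\alpha}_1,\dot{\bs\beta}_1+\ii r$. The derivatives $\mathfrak{\covmap}_t'$ at these points decay like $\ee^{-\pi(\cdot)/(r-t)}$ relative to each other in a controlled way. Comparing the mixed top/bottom kernel with the same-side kernels, these ratios pin down the differences. This gives~\eqref{eqn::finite}: mixed pairs inside the component $\Omega_r$ stay at distance $O(r-t)$. By contrast, pairs separated by $\gamma^1$ (involving $\zeta_t$ or $\beta_1$) have kernels $\Poisson(\Omega_t;\alpha_2,\gamma^1_t)$-type quantities that degenerate. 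This forces divergence of the normalized gap and gives~\eqref{eqn::terminal_alpha}--\eqref{eqn::terminal_beta}.

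The main obstacle is converting the convergence $\Poisson(\Omega_t;x,y)\to\Poisson(\Omega_r;x,y)\in(0,\infty)$ into two-sided bounds on the normalized gaps. The derivatives $\mathfrak{\covmap}_t'(\alpha_j)$ are unknown and must be eliminated. I would first try to eliminate them using cross-ratio-type combinations, such as the one in~\eqref{eqn::LZcro_LZann_mono_aux4}, in which derivatives cancel. Convergence of these cross ratios to values strictly inside $(0,1)$ gives~\eqref{eqn::finite}. For the divergent gaps, I would use the cross ratio involving the tip, which tends to $0$ because the strip $\Omega_t$ pinches at $\beta_1+\ii r$.
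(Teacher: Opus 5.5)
Your proposal has a genuine gap: the step that is supposed to produce \eqref{eqn::finite} cannot work.

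The derivative-free combinations you plan to use, such as \eqref{eqn::LZcro_LZann_mono_aux4}, are conformal invariants of the marked domain. Write $s=r-t$, $a_j=\mathfrak{h}_t(\alpha_j)$, $b_j=\Re\mathfrak{h}_t(\beta_j+\ii r)$, and map $\S_{s}$ to $\HH$ by $w=\ee^{\pi z/s}$. Then these combinations are M\"obius-invariant cross ratios of the points $\ee^{\pi a_j/s}$ and $-\ee^{\pi b_j/s}$, $2\le j\le n$. The normalized gaps in \eqref{eqn::finite} are \emph{not} M\"obius invariant: they record where $0$ and $\infty$ (the two ends of the strip) sit relative to this cluster. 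An automorphism of $\HH$ moving $0$ toward $\ee^{\pi a_2/s}$ leaves every such cross ratio unchanged while sending $(a_3-a_2)/s\to\infty$.

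You can see this directly. Take \eqref{eqn::LZcro_LZann_mono_aux4} with $(i,k)=(2,3)$ and $r$ replaced by $s$, and let $a_2\to-\infty$ with $a_3,b_2,b_3$ fixed. The expression tends to $(1-\ee^{-\pi(b_3-b_2)/s})/(1+\ee^{-\pi(a_3-b_2)/s})\in(0,1)$. So convergence of these cross ratios to values in $(0,1)$ is compatible with the cluster spreading out.

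Keeping the Poisson kernels themselves does not help. The unknown factors $\mathfrak{h}_t'(\alpha_j)$ and $\mathfrak{h}_t'(\beta_j+\ii r)$ carry exactly the missing information, and your claim that they ``decay in a controlled way'' is unsupported and essentially the content of the lemma.

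The extremal-length paragraph also runs the wrong way. The extremal distance in $\S_s$ between $[\zeta_t,\mathfrak{h}_t(\alpha_2)]$ and the top line behaves like $s/(\mathfrak{h}_t(\alpha_2)-\zeta_t)$. So you need the corresponding extremal distance in $\Omega_t$ to tend to $0$; lower bounds on extremal lengths of sub-families prove nothing toward \eqref{eqn::terminal_alpha}. Also, ``harmonic measure seen from the top boundary'' is undefined without an interior base point.

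What is missing is an anchor tying the cluster to the geometry of the strip. The paper fixes an interior point $z$ of the limiting component $\Omega_r$. The arcs $(\alpha_j,\alpha_{j+1})$ and $(\beta_j+\ii r,\beta_{j+1}+\ii r)$, including the end arcs $(\alpha_1,\alpha_2)$ and $(\alpha_n,\alpha_1+2\pi)$, lie on $\partial\Omega_r\cap\partial\Omega_t$. Each has harmonic measure at least some $a(\gamma)>0$ from $z$. Since $\Omega_r\subset\Omega_t$, these lower bounds pass by conformal invariance to harmonic measure from $\mathfrak{h}_t(z)$ in $\S_{r-t}$. This yields three things.
\begin{itemize}
\item First, $\Im\mathfrak{h}_t(z)/(r-t)\in(c,1-c)$, since in the strip the exit probability through the top is exactly $\Im\mathfrak{h}_t(z)/(r-t)$; the paper uses Beurling instead.
\item Second, a Brownian motion from $z$ whose image exits $\S_{r-t}$ on $(-\infty,\zeta_t)$ must leave $\Omega_r$ through the not-yet-traced part of the lifted curve near $\beta_1+\ii r$. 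The harmonic measure of that part tends to $0$, which with the height bound forces $|\mathfrak{h}_t(z)-\zeta_t|/(r-t)\to\infty$. The same holds relative to $\zeta_t+2\pi$ and to the images of $\beta_1+\ii r$ and $\beta_1+2\pi+\ii r$.
\item Third, the lower bounds on the harmonic measure of the arcs on either side of $\alpha_j$ force $|\mathfrak{h}_t(\alpha_j)-\mathfrak{h}_t(z)|\le C(r-t)$. Likewise $|\mathfrak{h}_t(\beta_j+\ii r)-\mathfrak{h}_t(z)|\le C(r-t)$.
\end{itemize}
Then \eqref{eqn::terminal_alpha}, \eqref{eqn::terminal_beta} and \eqref{eqn::finite} all follow by the triangle inequality through $\mathfrak{h}_t(z)$.
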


\begin{proof}
Denote $\gamma^1$ by $\gamma$ for simplicity. Let $\ell_1$ and $\ell_2$ be the lift of $\gamma$ starting from $\alpha_1$ and  $\alpha_1+2\pi$ respectively. Denote by $\Omega_r$ the connected component of $\S_r\setminus q^{-1}(\gamma)$ having $\ell_1$ and $\ell_2$ on its boundary. Fix $z\in \Omega_r$. We first prove that there exists a constant $c=c(\gamma)\in(0,1)$, such that for every $t\in [0,r)$,
\begin{equation}\label{eqn::heightratio}
\frac{\Im\mathfrak{\covmap}_t(z)}{r-t}\in (c,1-c).
\end{equation}
Denote by $\text{hm}(z;\cdot)$ the harmonic measure seen from $z$ in $\Omega_r$. It is clear that there exists a constant $a=a(\gamma)>0$, such that for every $2\le j\le n-1$, 
\begin{align}\label{eqn::harmonic_control}
\begin{split}
&\text{hm}(z;\ell_1\cup(\alpha_1,\alpha_2))\ge a,\qquad \text{hm}(z;(\alpha_n,\alpha_1+2\pi)\cup\ell_2)\ge a, \\
&\text{hm}(z;(\alpha_j,\alpha_{j+1}))\ge a,\qquad\text{hm}(z;(\beta_j + \ii r,\beta_{j+1} + \ii r))\ge a.
\end{split}
\end{align}
Without loss of generality, we may assume that $\Im\mathfrak{\covmap}_t(z)<\frac{r-t}{2}$. By Beurling estimate, there exists a constant $M>0$, such that
\begin{align*}
\text{hm}(z;(\beta_j + \ii r,\beta_{j+1} + \ii r))\le \PP[\LB_1\text{ hits }\partial B(\mathfrak{\covmap}_t(z),r-t-\Im\mathfrak{\covmap}_t(z))\text{ before hitting }\R]\le M\sqrt{\frac{\Im\mathfrak{\covmap}_t(z)}{r-t-\Im\mathfrak{\covmap}_t(z)}},
\end{align*}
where $\LB_1$ is the Brownian motion starting from $\mathfrak{\covmap}_t(z)$. Combining with~\eqref{eqn::harmonic_control}, we obtain~\eqref{eqn::heightratio}. 
\medbreak
We next prove that
\begin{equation}\label{eqn::inftyaux2}
\lim_{t\to r}\left|\frac{\mathfrak{\covmap}_t(z)-\zeta_t}{r-t}\right|=\infty.
\end{equation}
Let $\LB_2$ be a Brownian motion starting from $z$. We have
\begin{equation*}
	\PP[\LB_2\text{ hits }\partial \Omega_r\text{ at }\ell_1[r-t,r]]\ge \PP[\LB_1\text{ hits }\partial\S_{r-t}\text{ at }(-\infty,\zeta_t)]=
	\frac{1}{\pi}\left(\arg\left(\ee^{\frac{\pi\mathfrak{\covmap}_t(z)}{r-t}}-\ee^{\frac{\pi\zeta_t}{r-t}}\right)-\arg\left(\ee^{\frac{\pi\mathfrak{\covmap}_t(z)}{r-t}}\right)\right).
\end{equation*}
Letting $t\to r$, combining~\eqref{eqn::heightratio}, we obtain~\eqref{eqn::inftyaux2}.
Similarly, we also obtain
\begin{equation}\label{eqn::inftyaux1}
\lim_{t\to r}\left|\frac{\zeta_t+2\pi-\mathfrak{\covmap}_t(z)}{r-t}\right|=\infty,\qquad \lim_{t\to r}\left|\frac{\mathfrak{\covmap}_t(z)-\mathfrak{\covmap}_t(\beta_{1}+ \ii r)}{r-t}\right|=\infty,\qquad\lim_{t\to r}\left|\frac{\mathfrak{\covmap}_t(\beta_{1}+ \ii r)+2\pi-\mathfrak{\covmap}_t(z)}{r-t}\right|=\infty.
\end{equation}

\medbreak
Now, we will conclude the proof of~\eqref{eqn::terminal_alpha}-\eqref{eqn::terminal_beta} and~\eqref{eqn::finite} by showing that there exists a constant $C=C(\gamma)$, such that for $2\le j\le n$,
\begin{equation}\label{eqn::harm_control_aux1}
\left|\frac{\mathfrak{\covmap}_t(z)-\mathfrak{\covmap}_t(\alpha_j)}{r-t}\right|\le C\quad\text{ and }\quad\left|\frac{\mathfrak{\covmap}_t(z)-\mathfrak{\covmap}_t(\beta_j+ \ii r)}{r-t}\right|\le C.
\end{equation}
We will only prove the above estimate for $\alpha_j$ for $2\le j\le n$ and the other case is similar. 
We have
\begin{align*}
	\text{hm}(z;\ell_1[0,t]\cup(\alpha_1,\alpha_j))
	&\le \PP[\LB_1\text{ hits }\partial\S_{r-t}\text{ at }(\zeta_t,\mathfrak{\covmap}_t(\alpha_j))]
	= \frac{1}{\pi} \arg \left(  \frac{\ee^{\frac{\pi\mathfrak{\covmap}_t(z)}{r-t}}-\ee^{\frac{\pi\mathfrak{\covmap}_t(\alpha_j)}{r-t}}}{\ee^{\frac{\pi\mathfrak{\covmap}_t(z)}{r-t}}-\ee^{\frac{\pi\zeta_t}{r-t}}} \right), 
\end{align*}
and
\begin{align*}
	\text{hm}(z;(\alpha_j,\alpha_1+2\pi)\cup\ell_2[0,t])
	&\le \PP[\LB_1\text{ hits }\partial\S_{r-t}\text{ at }(\mathfrak{\covmap}_t(\alpha_j),\zeta_t+2\pi)]
	= \frac{1}{\pi} \arg \left(  \frac{\ee^{\frac{\pi\mathfrak{\covmap}_t(z)}{r-t}}-\ee^{\frac{\pi(\zeta_t+2\pi)}{r-t}}}{\ee^{\frac{\pi\mathfrak{\covmap}_t(z)}{r-t}}-\ee^{\frac{\pi\mathfrak{\covmap}_t(\alpha_j)}{r-t}}} \right).
\end{align*}
Combining with~\eqref{eqn::heightratio},~\eqref{eqn::harmonic_control},~\eqref{eqn::inftyaux2} and~\eqref{eqn::inftyaux1}, we obtain~\eqref{eqn::harm_control_aux1} and completes the proof.  
\end{proof}

\begin{proof}[Proof of Proposition~\ref{prop::two_pf_equal}]
The conclusion follows from~\eqref{eqn::single_annulusSLE4_pf} and~\eqref{eqn::multiannulus_pf_2nd} and Lemma~\ref{lem::multiannulusSLE4_pf_2nd}. 
\end{proof}
\section{Level lines of GFF in the annulus}
\label{sec::GFF_levellines_annulus}
\paragraph*{Green's function.}
For a domain $D\subsetneq \mathbb{C}$ and $z\in D$, the Green's function $w\mapsto\Green_D(z, w)$ is the unique function such that $\Green_D(z,w)+\log|z-w|$ is harmonic with respect to $w$ throughout the domain $D$, including at $z$, and that is zero on the boundary $\partial D$. 
For $z_1,z_2\in \S_r$, the Green's function in $\A_r$ is given by
\begin{equation} \label{eqn::multiannulus_levelline_aux11}
	\Green_{\A_r}(q(z_1),q(z_2))= - \log \left| \frac{\Theta_1(r;z_1-z_2)}{\Theta_1(r;z_1-\overline{z}_2)} \right| - \frac{\Im(z_1) \Im(z_2)}{r}. 
\end{equation}
\paragraph*{Gaussian free field (GFF).}
For a domain $D\subsetneq \mathbb{C}$ and two functions $ f, g \in L^2(D)$, we denote by $(f, g)$ their inner product in $L^2(D)$. 
We denote by $H_s(D)$ the space of real-valued smooth functions which are compactly supported in $D$. This space has a Dirichlet inner product defined by
\[
(f, g)_{\nabla} := \frac{1}{2\pi} \int_D \nabla f(z) \cdot \nabla g(z)d^2z.
\]
We denote by $H(D)$ the Hilbert space completion of \( H_s(D) \) with respect to the Dirichlet inner product.

The Dirichlet GFF on $D$ is a random sum of the form $\Gamma = \sum_{j=1}^\infty f_j X_j$, where $X_j$ are i.i.d. standard normal random variables and $ (f_j)_{j \geq 0} $ an orthonormal basis for $H(D)$. This sum almost surely diverges within $H(D)$; however, it does converge almost surely in the space of distributions. The limiting value as a function of \( g \) is almost surely a continuous functional on \( H_s(D) \). See e.g.~\cite{SheffieldGFFMath} and~\cite{BerestyckiPowellGFFandLQG} for more details.  
In general, for any harmonic function $\varphi$ on $D$, we define the GFF with boundary data $\varphi$ by the Dirichlet GFF plus $\varphi$. 

In this section, we first describe the law of the level lines of GFF in annulus $\A_r$ with alternating boundary data~\eqref{eqn::boundarydata}. 
Fix $r>0$ and an even number $n=2N$ and $\bs{\alpha}, \bs{\beta}\in\LX_n$. 
For $1\le j\le n$, let $\gamma^j$ be annulus $\SLE_{4}(\LZcro{1})$ in $(\A_r;\ee^{\ii \alpha_j};\ee^{\ii \beta_j-r})$.
Let $\Pind{n}(\LZcro{1})$ be the probability measure on $\bs{\gamma}=(\gamma^{1}, \ldots, \gamma^{n})$ under which the curves are independent.
We parameterize $\bs{\gamma}$ by $n$-time parameter $\bs{t}$, and let $\bs{W}_{\bs{t}} = (W_{\bs{t}}^{1},\ldots,W_{\bs{t}}^{n})$ be the multi-slit driving function (it is unique up to rotation). Using the same notation as in Proposition~\ref{prop::multitime_mart_annulus} and fixing $\kappa=4$, we define 
\begin{align}\label{eqn::multitime_mart_kappa4}
\begin{split}
	M_{\bs{t}}(\LZcro{1}; \LZann{n})=&\one_{\LE_{\emptyset}(\bs{\gamma}_{\bs{t}})} \, \exp\bigg(\frac{1}{2}\blm_{r,\bs{t}} + \frac{1}{4} \sum_{j=1}^{n} (r-t_j) - \frac{n}{4} \Mod(\A_r\setminus \bs{\gamma}_{[\bs{0},\bs{t}]}) \bigg) 
	\; \times \prod_{j=1}^{n} \mathfrak{\covmap}_{\bs{t},j}'(\zeta_{t_j}^{j})^{\frac{1}{4}}  \\
	& \times \prod_{j=1}^{n} \frac{\mathfrak{\covmap}_{\bs{t}}'(\beta_j+\ii r)^{\frac{1}{4}}}{(\mathfrak{\covmap}_{t_j}^j)'( \beta_j + \ii r)^{\frac{1}{4}}}  \times \frac{\LZann{n} (\Mod(\A_r\setminus \bs{\gamma}_{[\bs{0},\bs{t}]});\bs{W}_{\bs{t}},\Re \mathfrak{\covmap}_{\bs{t}} (\bs{\beta}+\ii r)) }{\prod_{j=1}^{n} \LZcro{1} (r-t_j;\zeta_{t_j}^j,\Re \mathfrak{\covmap}_{t_j}^j(\beta_j+\ii r))}.
\end{split}
\end{align}
As $\LZann{n}$ satisfies annulus BPZ equations~\eqref{eqn::annulus_BPZ_kappa4_alpha}, the process $M_{\bs{t}}$ is a multi-time local martingale, due to Proposition~\ref{prop::multitime_mart_annulus}. Our first conclusion in this section is about the law of the level lines of the GFF in annulus. 

\begin{theorem}\label{thm::GFF_levellines_annulus}
Fix $r>0$ and an even number $n=2N$. 
For $\bs{\alpha}, \bs{\beta}\in\LX_n$, we write $\bs{x}=\ee^{\ii\bs{\alpha}}$ and $\bs{y}=\ee^{\ii\bs{\beta}-r}$. 
Suppose $\Gamma$ is Dirichlet GFF in annulus $\A_r$. For $j\in \{1, \ldots, n\}$, let $\gamma^j$ be the level line of $\Gamma+\varphiann{n}$ starting from $x_j$, where $\varphiann{n}$ is the bounded harmonic function in annulus $\A_r$ with alternating boundary data~\eqref{eqn::boundarydata}. 
\begin{itemize}
\item The level lines $\bs{\gamma}=(\gamma^1, \ldots, \gamma^n)$ are continuous curves and they are deterministic functions of the field $\Gamma$ almost surely. 
\item We parameterize $\bs{\gamma}=(\gamma^1, \ldots, \gamma^n)$ by the $n$-time parameter. For $\bs{t}=(t_1, \ldots, t_n)$ such that $t_j<r$ for all $j$ and that $\gamma_{[0,t_i]}^i\cap \gamma_{[0,t_j]}^j=\emptyset$ for $i\neq j$, the law of $\bs{\gamma}_{[\bs{0},\bs{t}]}$ is the same as tilting $\Pind{n}(\LZcro{1})$ by the $n$-time local martingale $M_{\bs{t}}(\LZcro{1};\LZann{n})$ defined in~\eqref{eqn::multitime_mart_kappa4}. 
\end{itemize}
\end{theorem}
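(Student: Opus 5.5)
We follow the standard Schramm--Sheffield/Dub\'edat strategy: build the coupling from the curve side, then deduce that the curves are determined by the field.

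\textbf{Step 1: the candidate curve.} Let $\gamma^1$ be the annulus Loewner chain driven by $\ud\zeta_t=2\ud B_t+4\partial_{\alpha_1}\log\LZann{n}(r-t;\zeta_t,\ldots)\ud t$. The other marked points evolve by the flow~\eqref{eqn::annulus_Loewner_equation_cov} and~\eqref{eqn::annulus_Loewner_equation_cov_H3}. By Proposition~\ref{prop::multitime_mart_annulus} with $\LZann{n}$ (which solves the BPZ system by Proposition~\ref{prop::annulusBPZ_kappa4}), its law up to any stopping time before disconnection is $\Pcro{1}$ tilted by the one-time restriction of $M_{\bs t}(\LZcro{1};\LZann{n})$. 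Hence the curve exists, is continuous, and is absolutely continuous with respect to annulus $\SLE_4$ locally.

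\textbf{Step 2: the martingale observable.} Let $h_t$ be the harmonic function in $\A_r\setminus\gamma^1[0,t]$ with boundary values $\varphiann{n}$ on the original boundary, and $\pi$/$0$ on the two sides of $\gamma^1$. We must show that for each fixed $z$, $h_t(z)$ is a continuous local martingale with $\ud\langle h(z),h(w)\rangle_t=-\ud\Green_{\A_r\setminus\gamma^1[0,t]}(z,w)$. Lift to $\S_r$ and write $h_t(q(z))$ in the form~\eqref{eqn::varphiann_energy_harmonic_function}, evaluated at $\mathfrak{\covmap}_t(z)$ with modulus $r-t$ and marked points $(\zeta_t,\mathfrak{\covmap}_t(\alpha_j),\Re\mathfrak{\covmap}_t(\beta_j+\ii r))$. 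As announced, we take the analytic lift
\[
\Phi_t(z)=\sum_j(-1)^j\log\Theta_1(\cdot)-\sum_j(-1)^j\log\Theta_3(\cdot)+\ii\,(\text{linear term in } \mathfrak{\covmap}_t(z)/2r),
\]
chosen so that $h_t=\Im\Phi_t$. We then apply It\^o to $\Phi_t$, using the heat equations~\eqref{eqn::Theta_heat}, the Loewner flows, and the drift $4\partial\log\LZann{n}$ written via $H_1,H_3$.

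The complex drift $A_t(w)$, viewed as a function of $w=\mathfrak{\covmap}_t(z)\in\S_{r-t}$, is analytic and $2\pi$-periodic. On $\R$ and on $\R+\ii(r-t)$ its imaginary part vanishes: there $h$ has constant boundary values, and the shift identities~\eqref{eqn::Theta_shift}, \eqref{eqn::H1H3_shift} make the top-boundary contribution real up to terms that cancel. Hence $\Im A_t$ is a bounded periodic harmonic function with zero boundary values, so it vanishes. This is the main obstacle. The singularities at the marked points must be checked to be removable (or real) using the Laurent expansion~\eqref{eqn::Laurrent_expand_LE}, and the $1/(2r)$ linear term, with its $r$-derivative, must be tracked against the $\sum(-1)^j(\alpha_j-\beta_j)$ drift coming from the exponential factor in $\LZann{n}$.

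The quadratic covariation equals $4\,\Im H_1(r-t;w_1-\zeta_t)\Im H_1(r-t;w_2-\zeta_t)\,\ud t$ (up to the constant $\frac12$ normalizations). By Lemma~\ref{lem::identity_H1} this matches $-\partial_t\Green$ computed from~\eqref{eqn::multiannulus_levelline_aux11} under the Loewner flow, including the $\Im z_1\Im z_2/r$ term differentiated in the modulus.

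\textbf{Step 3: coupling and determinism.} From Step 2, standard arguments (as in~\cite{SchrammSheffieldContinuumGFF,DubedatSLEFreeField}) give a coupling of $\gamma^1$ with $\Gamma+\varphiann{n}$. Conditionally on $\gamma^1[0,\tau]$, the field is a GFF in the slit domain with boundary data $h_\tau$, so $\gamma^1$ is a level line. Determinism follows from the uniqueness argument of Schramm--Sheffield applied locally in simply connected subdomains, which reduces to the chordal result by absolute continuity. The multi-curve statement then follows by conditioning: given $\gamma^1[0,t_1]$, the remaining field is again of the same type in $\A_r\setminus\gamma^1[0,t_1]$. Iterating over the curves and using the consistency of the $n$-time martingale (commutation via Proposition~\ref{prop::multitime_mart_annulus}) identifies the joint law of $\bs\gamma_{[\bs0,\bs t]}$ on the disjointness event as $\Pind{n}(\LZcro{1})$ tilted by $M_{\bs t}$.
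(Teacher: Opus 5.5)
Your route is the paper's. You use Girsanov to identify $\gamma^1$ as $\Pcro{1}$ tilted by~\eqref{eqn::singletime_mart_kappa4}. You take the analytic lift $\Phi$ with $h_t=\Im\Phi$, show its drift is real on both boundary lines of the strip, and conclude by periodicity and harmonicity. Then Lemma~\ref{lem::identity_H1} handles the covariation and the domain Markov property gives the $n$-time statement. Your remark that the drift must also be checked for poles at the marked boundary points is a genuine point the paper does not spell out. However, your covariation step, as stated, would fail.

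The linear term $-\sum_j(-1)^j(\alpha_j-\beta_j)\frac{\Im z}{2r}$ of the observable depends on $\alpha_1=\zeta_t$, so it contributes to the martingale part. With $w=\mathfrak{h}_t(z)$,
\[
\ud h_t(z)=\Big(\Im H_1(r-t;w-\zeta_t)+\frac{\Im w}{r-t}\Big)\ud B_t ,
\]
with unit coefficient. Lemma~\ref{lem::identity_H1}, through~\eqref{eqn::multiannulus_levelline_aux13}, identifies $\Im H_1(r-t;w_1-\zeta_t)\Im H_1(r-t;w_2-\zeta_t)$ only with $\frac{\ud}{\ud t}\log\big|\Theta_1(r-t;w_1-w_2)/\Theta_1(r-t;w_1-\overline{w_2})\big|$. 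The Green's function~\eqref{eqn::multiannulus_levelline_aux11} also contains $\Im w_1\Im w_2/(r-t)$. Using $\partial_t\Im w=\Im H_1(r-t;w-\zeta_t)$, its time derivative is
\[
\frac{\Im H_1(r-t;w_1-\zeta_t)\,\Im w_2+\Im w_1\,\Im H_1(r-t;w_2-\zeta_t)}{r-t}+\frac{\Im w_1\Im w_2}{(r-t)^2}.
\]
These are exactly the cross terms you discard by omitting $\Im w/(r-t)$ from the diffusion coefficient. With your formula, $\ud\langle h(z_1),h(z_2)\rangle\neq-\ud\Green_{\A_{r-t}}$, contrary to your claim that the $\Im z_1\Im z_2/r$ term is matched. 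Relatedly, to get $h_t=\Im\Phi_t$ the linear part of $\Phi$ must be $-\sum_j(-1)^j(\alpha_j-\beta_j)\,w/(2r)$ with a real coefficient, not $\ii$ times a linear term.

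A smaller point concerns the first item. Local absolute continuity with respect to annulus $\SLE_4$, and a local Schramm--Sheffield argument, give continuity and determinism only up to stopping times before $\gamma^1$ reaches $\partial\A_r$. Continuity up to and including the terminal time, and the list of possible endpoints, need a further argument. The paper imports this from~\cite[Proposition~3.18]{AruLupuSepulvedaFPSGFF}.
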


The first part of Theorem~\ref{thm::GFF_levellines_annulus} is a special case of~\cite[Proposition~3.18]{AruLupuSepulvedaFPSGFF}. We will prove the second part of Theorem~\ref{thm::GFF_levellines_annulus} in Section~\ref{subsec::GFF_levellines_annulus}. 
We then complete the proof of Theorem~\ref{thm::crossingproba} and of Proposition~\ref{prop::asymptotics} in Section~\ref{subsec::GFF_crossingproba}.


\subsection{GFF and multi-annulus SLE$_4$: proof of Theorem~\ref{thm::GFF_levellines_annulus}}
\label{subsec::GFF_levellines_annulus}
\begin{lemma} \label{lem::multiannulus_levelline}
Fix $r>0$ and an even number $n=2N$. 
For $\bs{\alpha}, \bs{\beta}\in\LX_n$, we write $\bs{x}=\ee^{\ii\bs{\alpha}}$ and $\bs{y}=\ee^{\ii\bs{\beta}-r}$.  
We denote by $\Pcro{1}$ the law of annulus $\SLE_4(\LZcro{1})$ in $(\A_r; x_1, y_1)$. 
Suppose $\gamma^1$ is the annulus Loewner chain whose law is the same as tilting $\Pcro{1}$ by the local martingale
\begin{align}\label{eqn::singletime_mart_kappa4}
M_t(\LZcro{1}; \LZann{n})=\ee^{\frac{n-1}{4}t}
	\prod_{j=2}^{n}
	\left(\mathfrak{\covmap}_t'(\alpha_j)
	\mathfrak{\covmap}_t'(\beta_j+\ii r)\right)^{\frac14}
	\frac{\LZann{n}\left(r-t;\zeta_t,\mathfrak{\covmap}_t(\alpha_2),
		\ldots,\mathfrak{\covmap}_t(\alpha_n),
		\Re\mathfrak{\covmap}_t(\bs{\beta}+\ii r)\right)}
	{\LZcro{1}\left(r-t;\zeta_t,
		\Re\mathfrak{\covmap}_t(\beta_1+\ii r)\right)},
\end{align}
where $t<T$ and $T$ is the life-time of $\gamma^1$, i.e. the minimum between $r$ and swallowing time of points $\{x_2, x_n\}$.
\begin{itemize}
\item The driving function $\zeta:[0,r)\to \R$ for $\gamma^1$ satisfies the SDE system
\begin{equation}\label{eqn::GFF_ann_SDE}
	\begin{cases}
		\displaystyle \ud \zeta_t = 2 \ud B_t + 4 \partial_{\alpha_1} \log \LZann{n}(r-t; \zeta_t, U_t^2,\ldots, U_t^n,V_t^1,\ldots, V_t^n) \ud t, \qquad \zeta_0=\alpha_1, \\
		\displaystyle \ud U_t^j = H_1(r-t; U_t^j-\zeta_t)\ud t, \qquad U_0^j=\alpha_j,\quad 2\le j\le n,\\
		\displaystyle \ud V_t^j = H_3(r-t; V_t^j-\zeta_t)\ud t, \qquad V_0^j=\beta_j,\quad 1\le j\le n,
	\end{cases}
\end{equation}
where $\{B_t\}_{t\ge 0}$ is a standard Brownian motion.
\item The Dirichlet GFF $\Gamma$ in annulus $\A_r$ can be coupled with $\gamma^1$ so that $\gamma^1$ is a  level line of $\Gamma+\varphiann{n}$ starting from $x_1$.
\end{itemize}
\end{lemma}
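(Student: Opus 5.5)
For the first item I will apply Girsanov's theorem to the tilt. Under $\Pcro{1}$ the driving function satisfies $\ud\zeta_t=2\,\ud B_t+4\partial_\alpha\log\LZcro{1}(r-t;\zeta_t,V^1_t)\,\ud t$ by~\eqref{eqn::annuluSLEkappa}. The points evolve by~\eqref{eqn::annulus_Loewner_equation_cov} and~\eqref{eqn::annulus_Loewner_equation_cov_H3}, which gives the equations for $U^j_t=\mathfrak{\covmap}_t(\alpha_j)$ and $V^j_t=\Re\mathfrak{\covmap}_t(\beta_j+\ii r)$. The martingale $M_t$ in~\eqref{eqn::singletime_mart_kappa4} is the one-curve specialisation of~\eqref{eqn::multitime_mart_kappa4}, with $t_2=\dots=t_n=0$. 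Its martingale part is read off from~\eqref{eqn::multitime_mart_annulus_aux1}. The derivative factors $\mathfrak{\covmap}_t'(\alpha_j)$ and $\mathfrak{\covmap}_t'(\beta_j+\ii r)$ have finite variation, so the $\ud B_t$ coefficient of $\ud M_t/M_t$ is $2\big(\partial_{\alpha_1}\log\LZann{n}-\partial_\alpha\log\LZcro{1}\big)$. Girsanov then adds $4(\partial_{\alpha_1}\log\LZann{n}-\partial_\alpha\log\LZcro{1})\,\ud t$ to the drift, and this yields~\eqref{eqn::GFF_ann_SDE}. Proposition~\ref{prop::annulusBPZ_kappa4} ensures that $M_t$ is a local martingale up to $T$, so the tilt is well defined after localization.

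For the second item I will use the standard Schramm--Sheffield/Dub\'edat criterion. Let $\eta_t(z)$ be the harmonic function in $\A_r\setminus\gamma^1[0,t]$ whose boundary data are those of~\eqref{eqn::boundarydata}, changed to $-\lambda$ on the left side of $\gamma^1[0,t]$ and $\lambda$ on the right side (shifted so that they match the $0$ and $\pi$ conventions, with $\lambda=\pi/2$). It suffices to show two things for each fixed $z$. First, $\eta_t(z)$ is a continuous local martingale. Second, $\ud\langle\eta(z),\eta(w)\rangle_t=-\ud\,\Green_{\A_r\setminus\gamma^1[0,t]}(z,w)$. Together these give the coupling $\Gamma+\varphiann{n}=\eta_t+\Gamma_t$, where $\Gamma_t$ is an independent Dirichlet GFF in the slit domain, for all stopping times $t<T$. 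To obtain an explicit formula I lift to the strip. Pulled back by $\mathfrak{\covmap}_t$, the function $\eta_t$ is the bounded harmonic function in $\S_{r-t}$ with alternating data at the points $\zeta_t,U^j_t$ on the bottom and $V^j_t+\ii(r-t)$ on the top. By~\eqref{eqn::varphiann_energy_harmonic_function} it can be written as the imaginary part of
\[
\Phi_t(w)=\sum_j \pm \log\Theta_1(r-t;w-\cdot)\mp\sum_j\log\Theta_3(r-t;w-V^j_t)+c_t\,\frac{\ii w}{2(r-t)}+\text{const},
\]
evaluated at $w=\mathfrak{\covmap}_t(z)$, with $c_t=\sum_j(-1)^j(\alpha_j-\beta_j)$ taken in the evolved coordinates.

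The main obstacle is checking the drift. I will follow the analytic-lift strategy announced in the introduction. I apply It\^o's formula to the analytic function $\Phi_t(\mathfrak{\covmap}_t(z))$, using the heat equation~\eqref{eqn::Theta_heat}, the identity $\partial_t\mathfrak{\covmap}_t=H_1(r-t;\mathfrak{\covmap}_t-\zeta_t)$, and the drift $4\partial_{\alpha_1}\log\LZann{n}$ computed from~\eqref{eqn::LZann_def}. This gives a complex drift $\mathcal{D}_t(w)$, which is meromorphic in $w$ and has poles only at the marked points. It is $2\pi$-periodic, and it is quasi-periodic under $w\mapsto w+2\ii(r-t)$ by~\eqref{eqn::H_shift_identities}. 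On the bottom line $\Im w=0$ and on the top line $\Im w=r-t$ I will check that $\mathcal{D}_t$ is real. On the bottom this holds because $\Theta_1$ and $\Theta_3$ are real there up to constant phases (Lemma~\ref{lem::Theta_bc}). On the top it follows from~\eqref{eqn::Theta_shift}, and the linear term $c_t/(r-t)$ together with the derivative in $r$ are exactly what make the imaginary part cancel. Then $\Im\mathcal{D}_t$ is bounded and harmonic in the strip, since the possible singularities at boundary points have real principal parts. It vanishes on both boundary lines and is periodic, so it vanishes identically.

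For the quadratic covariation, the martingale part of $\Im\Phi_t(\mathfrak{\covmap}_t(z))$ is $2\,\Im\big(\partial_{\zeta}\Phi_t\big)\,\ud B_t$, with $\partial_\zeta\Phi_t=-\tfrac12 H_1(r-t;w-\zeta_t)+\dots$; after normalisation this is proportional to $\Im H_1(r-t;w-\zeta_t)$. The cross-variation is therefore a multiple of $\Im H_1(w_1-\zeta_t)\Im H_1(w_2-\zeta_t)$. Differentiating~\eqref{eqn::multiannulus_levelline_aux11} in $t$ along the Loewner flow, with the $r-t$ dependence entering through~\eqref{eqn::Theta_heat}, gives exactly the expression on the right-hand side of Lemma~\ref{lem::identity_H1}. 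That lemma then matches the two, so $\ud\langle\eta(z_1),\eta(z_2)\rangle_t=-\ud\Green_t(z_1,z_2)$. The constants ($\lambda$ and the factors $1/2\pi$) will be fixed against the chordal case.
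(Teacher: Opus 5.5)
Your first item, and the overall architecture of the second (lift to $\S_{r-t}$, the analytic lift $\Phi$, reality of the complex drift on both boundary lines plus periodicity, then Lemma~\ref{lem::identity_H1}), coincide with the paper's proof. The covariation step, however, is wrong as stated.

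The coefficient $c_t$ of the linear term of $\Phi$ contains $\zeta_t$, so $\partial_\zeta$ also hits that term. With $w=\mathfrak{\covmap}_t(z)$ and the paper's normalization $\eps_j=(-1)^j$, the martingale part of $\eta_t(z)$ is $\bigl(\Im H_1(r-t;w-\zeta_t)+\frac{\Im w}{r-t}\bigr)\,\ud B_t$; see~\eqref{eqn::multiannulus_levelline_aux10}. No normalization makes this proportional to $\Im H_1$.

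Dually, the annulus Green's function~\eqref{eqn::multiannulus_levelline_aux11} contains the term $-\Im z_1\Im z_2/r$; without it, it would not vanish on the inner circle. Lemma~\ref{lem::identity_H1} only identifies the time derivative of the $\Theta_1$-ratio part, as in~\eqref{eqn::multiannulus_levelline_aux14}. So both of your claims are false: that the cross-variation is a multiple of $\Im H_1(w_1-\zeta_t)\Im H_1(w_2-\zeta_t)$, and that $-\partial_t\Green$ equals the right-hand side of~\eqref{eqn::identity_H1}.

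The correct matching is as follows. The product of the full increments equals $\Im H_1(w_1-\zeta_t)\Im H_1(w_2-\zeta_t)$ plus $\frac{\Im H_1(w_1-\zeta_t)\Im w_2+\Im w_1\Im H_1(w_2-\zeta_t)}{r-t}+\frac{\Im w_1\Im w_2}{(r-t)^2}$. This extra piece is exactly the derivative of $\frac{\Im w_1\Im w_2}{r-t}$ along the Loewner flow, computed in~\eqref{eqn::multiannulus_levelline_aux15}. You need to add this step.

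There are two further corrections.

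First, the linear term must be $-c_t\frac{w}{2(r-t)}$ with a real coefficient, as in~\eqref{eqn::varphiann_energy_harmonic_function}. Your $c_t\frac{\ii w}{2(r-t)}$ has imaginary part proportional to $\Re w$, which is not $2\pi$-periodic and breaks the boundary reality of the drift. On $\Im w=r-t$, it is precisely $\partial_{\alpha_j}$ and $\partial_{\beta_j}$ of $-c_t w/(2(r-t))$ that make $\partial_{\alpha_j}\Phi,\partial_{\beta_j}\Phi$ real. They do so by cancelling the imaginary constants produced by $H_1(w+\ii r)=H_3(w)-\ii$ and $H_3(w+\ii r)=H_1(w)-\ii$.

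Second, real principal parts do not make $\Im\mathcal{D}_t$ bounded. For instance, $\Im(1/w)$ is a nonzero harmonic function on the upper half-plane vanishing on $\R\setminus\{0\}$, so your maximum-principle conclusion does not follow from that reason. What is needed, and true (with the paper's conventions), is that the principal parts vanish:
\begin{itemize}
\item At $w=\zeta_t$, the $w^{-2}$ terms of $2\partial_{\alpha_1}^2\Phi$ and $H_1(w-\zeta_t)\partial_w\Phi$ cancel. The remaining residue cancels against that of $4\partial_{\alpha_1}\log\LZann{n}\,\partial_{\alpha_1}\Phi$, by oddness of $H_1,H_3$.
\item At $U^j_t$, the residues of $H_1(U^j_t-\zeta_t)\partial_{\alpha_j}\Phi$ and $H_1(w-\zeta_t)\partial_w\Phi$ cancel.
\item At $V^j_t+\ii(r-t)$, the terms $H_3(V^j_t-\zeta_t)\partial_{\beta_j}\Phi$ and $H_1(w-\zeta_t)\partial_w\Phi$ leave a residue $\ii\eps_j$. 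This is cancelled by $-\partial_r\Phi$, which has a pole there because the zero of $\Theta_3(r;\cdot)$ at $\ii r$ moves with $r$.
\end{itemize}
Then $\mathcal{D}_t$ is holomorphic near the closed strip, and your argument goes through. The paper itself states this step without comment.
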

\begin{proof}
Let us prove the first item.
By Proposition~\ref{prop::multitime_mart_annulus}, the process	$M_t(\LZcro{1};\LZann{n})$ in~\eqref{eqn::singletime_mart_kappa4} is a local martingale under $\Pcro{1}$, because $\LZann{n}$ satisfies annulus BPZ equations~\eqref{eqn::annulus_BPZ_kappa4_alpha}. Set
\[
	U_t^j:=\mathfrak{\covmap}_t(\alpha_j),\quad 2\le j\le n,
	\quad
	V_t^j:=\Re\mathfrak{\covmap}_t(\beta_j+\ii r),\quad 1\le j\le n, \quad \bs{V}_t=(V_t^1,\ldots,V_t^n).
\] 
By~\eqref{eqn::multitime_mart_annulus_aux1}, we have
\begin{align*}
\begin{split}
	\frac{\ud M_t(\LZcro{1};\LZann{n})}{M_t(\LZcro{1};\LZann{n})}
	=2\left( \partial_{\alpha_1} \log \LZann{n} (r-t;\zeta_t,U_t^2,\ldots,U_t^n,V_t^1,\ldots,V_t^n) - \partial_\alpha \log \LZcro{1}(r-t;\zeta_t,V_t^1) \right) \ud B_t^1,
\end{split}
\end{align*}
where $B_t^1$ is a standard Brownian motion under $\Pcro{1}$.
Girsanov's theorem shows that under the measure obtained by tilting $\Pcro{1}$ by $M_t(\LZcro{1};\LZann{n})$, we have
\begin{align*}
	\ud\zeta_t =2\,\ud B_t +4\,\partial_{\alpha_1}\log\LZann{n}	(r-t;\zeta_t,U_t^2,\ldots,U_t^n,V_t^1,\ldots,V_t^n)\,\ud t,
\end{align*}
where $B_t$ is a standard Brownian motion under the measure obtained by tilting $\Pcro{1}$ by $M_t(\LZcro{1};\LZann{n})$. This finishes the proof of the first item.
\medbreak
Next, we prove the second item. Let us derive a martingale process for $\gamma^1$.
Denote the driving function of $\gamma^1$ by $\zeta_t$, the mapping-out function of $\gamma^1$ by $\mathfrak{g}_t$, and the covering map of $\mathfrak{g}_t$ by $\mathfrak{\covmap}_t$. Let $\eps_j=(-1)^j$. For $r>0$, $\bs{\alpha},\bs{\beta}\in \LX_n$, $z\in \S_r$, we define
\begin{equation*} 
	v(r;\bs{\alpha},\bs{\beta};z)=\sum_{j=1}^{2N} \eps_j \arg \Theta_1(r; z-\alpha_j) - \sum_{j=1}^{2N} \eps_j \arg \Theta_3(r; z-\beta_j) - \sum_{j=1}^{2N} \eps_j (\alpha_j-\beta_j) \frac{\Im(z)}{2r},
\end{equation*}
which is the harmonic function derived in~\eqref{eqn::varphiann_energy_harmonic_function}. 
We define
\begin{equation*}  
	M_t(z)=v(r-t;\zeta_t^1,\mathfrak{\covmap}_t(\alpha_2),\ldots,\mathfrak{\covmap}_t(\alpha_n),\Re \mathfrak{\covmap}_t(\bs{\beta}+\ii r);\mathfrak{\covmap}_t(z)). 
\end{equation*}
We will show in Lemma~\ref{lem::multiannulus_levelline_aux10} that
\begin{equation} \label{eqn::multiannulus_levelline_aux10}
	\ud M_t(z) = \left( \Im H_1(r-t;\mathfrak{\covmap}_t(z)-\zeta_t^1) + \frac{\Im \mathfrak{\covmap}_t(z)}{r-t} \right) \ud B_t,
\end{equation}
which implies that $M_t(z)$ is a local martingale. Moreover, the quadratic variation for the local martingale $M_t$ is 
\begin{equation} \label{eqn::multiannulus_levelline_aux12}
	\ud \langle M_t(z_1),M_t(z_2) \rangle=-\ud \Green_{\A_{r-t}}(\mathfrak{g}_{t}(q(z_1)),\mathfrak{g}_{t}(q(z_2))).
\end{equation}
Assuming~\eqref{eqn::multiannulus_levelline_aux10} and~\eqref{eqn::multiannulus_levelline_aux12}, let us derive the coupling between the GFF $\Gamma_{\A_r}$ and $\gamma^1$.
Combining~(\ref{eqn::multiannulus_levelline_aux10},\ref{eqn::multiannulus_levelline_aux12}), the pair $(\Gamma_{\A_r},\gamma^1)$ can be coupled such that given $\gamma_{[0,t]}^1$, the field $\Gamma_{\A_r}(\cdot)+ \varphiann{n}(r;\bs{\alpha},\bs{\beta};\cdot)$, restricted to $\A_r\setminus {\gamma_{[0,t]}^1}$, has the same law as
\begin{align*}
\left( \Gamma_{\A_{r-t}}(\cdot)+ \varphiann{n}(r-t;\zeta_t^1,\mathfrak{\covmap}_t(\alpha_2),\ldots,\mathfrak{\covmap}_t(\alpha_n);\Re \mathfrak{\covmap}_t(\bs{\beta}+\ii r);\cdot)  \right)\circ \mathfrak{g}_t .
\end{align*}
This confirms the coupling so that $\gamma^1$ is level of $\Gamma+\varphiann{n}$. 
\end{proof}
\begin{lemma}\label{lem::multiannulus_levelline_aux10}
Assume the same setup as in the proof of Lemma~\ref{lem::multiannulus_levelline}, the relation~\eqref{eqn::multiannulus_levelline_aux10} holds. 
\end{lemma}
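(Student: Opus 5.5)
The plan is to work with an analytic lift of $v$ rather than with $v$ itself. For $s>0$, $\bs{\alpha},\bs{\beta}\in\LX_n$ and $w\in\S_s$, set
\[
f(s;\bs{\alpha},\bs{\beta};w)=\sum_{j=1}^{2N}\eps_j\log\Theta_1(s;w-\alpha_j)-\sum_{j=1}^{2N}\eps_j\log\Theta_3(s;w-\beta_j)-\frac{w}{2s}\sum_{j=1}^{2N}\eps_j(\alpha_j-\beta_j),
\]
with the branches chosen as in Lemma~\ref{lem::Theta_bc}, so that $\Im f=v$.

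Write $U^j_t=\mathfrak{\covmap}_t(\alpha_j)$ and $V^j_t=\Re\mathfrak{\covmap}_t(\beta_j+\ii r)$. I would apply It\^o's formula to $f(r-t;\zeta_t,U^2_t,\ldots,U^n_t,\bs{V}_t;\mathfrak{\covmap}_t(z))$, using the following inputs:
\begin{itemize}
\item $\ud\mathfrak{\covmap}_t(z)=H_1(r-t;\mathfrak{\covmap}_t(z)-\zeta_t)\,\ud t$ from~\eqref{eqn::annulus_Loewner_equation_cov};
\item $\ud U^j_t=H_1(r-t;U^j_t-\zeta_t)\,\ud t$ and $\ud V^j_t=H_3(r-t;V^j_t-\zeta_t)\,\ud t$;
\item the SDE~\eqref{eqn::GFF_ann_SDE} for $\zeta$, so that $\ud\langle\zeta\rangle_t=4\,\ud t$.
\end{itemize}
Only $\zeta$ carries noise, so the martingale part is $2\partial_{\alpha_1}f\,\ud B_t$. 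Since $\eps_1=-1$ and $H_1=2\partial_z\log\Theta_1$, we get $2\partial_{\alpha_1}f=H_1(r-t;\mathfrak{\covmap}_t(z)-\zeta_t)+\mathfrak{\covmap}_t(z)/(r-t)$. Its imaginary part is exactly the integrand claimed in~\eqref{eqn::multiannulus_levelline_aux10}. It therefore remains to show that the complex drift $\mathcal{D}_t(w)$, evaluated at $w=\mathfrak{\covmap}_t(z)$, has vanishing imaginary part.

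For fixed $t$, regard $\mathcal{D}_t$ as a function of $w\in\S_{r-t}$. It is built from $H_1,H_3,\partial_zH_1,\partial_zH_3$ and the explicit drift $4\partial_{\alpha_1}\log\LZann{n}$, so it is meromorphic in $w$. It is $2\pi$-periodic in $\Re w$ by~\eqref{eqn::H_shift_identities}; the linear term contributes only $w$-independent or affine pieces, and one checks these combine periodically. I would then establish three facts about $\Im\mathcal{D}_t$.

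\textbf{Step 1 (bottom boundary).} For real $w$ away from the marked points, $\Im f$ is locally constant, equal to $0$ or $\pi$, in all of $(s,\bs{\alpha},\bs{\beta},w)$ by~\eqref{eqn::Theta_bc}. Hence every derivative of $\Im f$ entering It\^o's formula vanishes there, and $\Im\mathcal{D}_t=0$ on $\R$.

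\textbf{Step 2 (top boundary).} On the line $w=x+\ii s$, the map $(s,\bs{\alpha},\bs{\beta},x)\mapsto\Im f(s;\bs{\alpha},\bs{\beta};x+\ii s)$ is again locally constant by~\eqref{eqn::Theta_bc}, including the linear term. Points on the inner boundary satisfy $\Im\mathfrak{\covmap}_t=r-t$, so $M_t$ is constant for such points. Since $\partial_{\alpha_1}\Im f=0$ there, there is no martingale part either. Extending the It\^o expansion continuously up to the top line then gives $\Im\mathcal{D}_t=0$ there as well.

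\textbf{Step 3 (interior singularities).} The possible singularities of $\mathcal{D}_t$ are at $w\to U^j_t$, at $w\to V^j_t+\ii(r-t)$, and at $w\to\zeta_t$, modulo $2\pi$. I would show they are removable, or at least that $\Im\mathcal{D}_t$ stays bounded near them, by Laurent expansion using~\eqref{eqn::Laurrent_expand_LE} and~\eqref{eqn::H1H3_shift}. Near $U^j_t$, the pole coming from $H_1(w-\zeta)\,\partial_w f$ cancels against $H_1(U^j-\zeta)\,\partial_{\alpha_j}f$. The same mechanism handles $V^j_t$.

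Given Steps 1--3, $\Im\mathcal{D}_t$ is a bounded harmonic function on the cylinder $\S_{r-t}/2\pi\Z$ that vanishes on both boundary lines, so the maximum principle gives $\Im\mathcal{D}_t\equiv0$.

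The main obstacle is the singularity at $w\to\zeta_t$ in Step 3. There the terms $2\partial_{\alpha_1}^2f$, $H_1(w-\zeta)\,\partial_wf$ and the drift $4\partial_{\alpha_1}\log\LZann{n}\cdot\partial_{\alpha_1}f$ each produce poles of order up to two. Their cancellation is the $\kappa=4$ coincidence underlying the chordal GFF/$\SLE_4$ coupling. I would first expand the drift using the explicit product form of $\LZann{n}$; its singular part near $\zeta$ involves only $H_1(U^j-\zeta)$ and $H_3(V^j-\zeta)$, which are regular in $w$. I would then check that the $(w-\zeta)^{-2}$ terms cancel between $2\partial^2_{\alpha_1}$ and $H_1\partial_w$, leaving at most real-coefficient simple poles whose imaginary parts stay bounded.
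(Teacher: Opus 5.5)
Your overall route is the paper's: analytic lift, It\^o's formula, reality of the complex drift on both boundary lines, then harmonicity and $2\pi$-periodicity. Getting Steps~1--2 from local constancy of $\Im f$ is a clean substitute for the paper's direct check via~\eqref{eqn::H1H3_shift}.

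The gap is in Step~3 at $w\to\zeta_t$. A simple pole $c/(w-\zeta_t)$ with $c\in\R$ does \emph{not} have bounded imaginary part. Indeed $\Im\bigl(c/(w-\zeta_t)\bigr)=-c\,\Im w/|w-\zeta_t|^2$, which blows up as $w\to\zeta_t$ from inside the strip. In fact any nonzero polar part at a boundary point makes $\Im$ unbounded nearby, so ``bounded'' and ``removable'' are the same requirement here. Such a residue is also invisible to your boundary checks. With $s=r-t$, the function $P(w)=\Im H_1(s;w-\zeta_t)+\Im w/s$ is harmonic and $2\pi$-periodic, vanishes on both boundary lines away from $\zeta_t+2\pi\Z$, and is not identically zero. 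Steps~1--2 never use the drift: replacing $4\partial_{\alpha_1}\log\LZann{n}$ by another real value changes $\mathcal{D}_t$ by a real multiple of $\partial_{\alpha_1}f=\tfrac12H_1(s;w-\zeta_t)+\tfrac{w}{2s}$. That function is real on both boundary lines, and its imaginary part is $\tfrac12P$. So your Step~3 criterion (double poles cancel, real simple poles tolerated) would accept every real drift and ``prove'' a false statement.

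What you need is that the residue at $\zeta_t$ vanishes exactly, and this is the one place where the specific drift matters. Write $\partial_wf=-\tfrac12H_1(s;w-\zeta_t)+R_t(w)$ with $R_t$ regular at $\zeta_t$, and use~\eqref{eqn::Laurrent_expand_LE}. Then the $(w-\zeta_t)^{-2}$ terms of $2\partial_{\alpha_1}^2f$ and $H_1\partial_wf$ cancel. The term $-\partial_sf$ is regular there, since the zero of $\Theta_1(s;\cdot)$ at $0$ does not move with $s$. The residue is $2R_t(\zeta_t)+4\partial_{\alpha_1}\log\LZann{n}$, which vanishes by~\eqref{eqn::multiannulus_levelline_aux9} and oddness of $H_1,H_3$.

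A smaller inaccuracy: at $V^j_t+\ii s$ the cancellation is not ``the same mechanism''. The poles of $H_1\partial_wf$ and $H_3(s;V^j_t-\zeta_t)\partial_{\beta_j}f$ leave a residue $\ii\eps_j$. This is cancelled by $-\partial_sf$, because the zero $\ii s$ of $\Theta_3(s;\cdot)$ moves with $s$, so $\partial_s\log\Theta_3(s;y)$ has residue $-\ii$ at $y=\ii s$. Once these residues are shown to vanish, $\mathcal{D}_t$ is analytic up to the boundary and your maximum-principle conclusion goes through.
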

\begin{proof}
We introduce the analytic function
\begin{equation*}  
	\Phi (r;\bs{\alpha},\bs{\beta};z)=\sum_{j=1}^{2N} \eps_j \log \Theta_1(r; z-\alpha_j) - \sum_{j=1}^{2N} \eps_j \log \Theta_3(r; z-\beta_j) - \sum_{j=1}^{2N} \eps_j (\alpha_j-\beta_j) \frac{z}{2r},
\end{equation*}
and let
\begin{equation*}  
	\mathcal{M}_t(z)=\Phi(r-t;\zeta_t^1,\mathfrak{\covmap}_t(\alpha_2),\ldots,\mathfrak{\covmap}_t(\alpha_n),\Re \mathfrak{\covmap}_t(\bs{\beta}+\ii r);\mathfrak{\covmap}_t(z)).
\end{equation*}
Then we have $M_t(z)=\Im(\mathcal{M}_t(z))$. Applying It\^{o}'s formula and~(\ref{eqn::annulus_Loewner_equation_cov},\ref{eqn::annulus_Loewner_equation_cov_H3},\ref{eqn::GFF_ann_SDE}), we have
\begin{equation}  \label{eqn::multiannulus_levelline_aux5}
	\ud \mathcal{M}_t(z) =2 \partial_{\alpha_1} \Phi \ud B_t + \mathcal{L} \Phi \ud t,
\end{equation}
where $B$ is a standard Brownian motion, and $\mathcal{L}$ is a differential operator defined as
\begin{equation}  \label{eqn::multiannulus_levelline_aux6}
\begin{split}
	\mathcal{L} \Phi(r;\bs{\alpha},\bs{\beta};z)= \bigg( & -\partial_r +4(\partial_{\alpha_1}\log\LZann{n}(r;\bs{\alpha},\bs{\beta})) \partial_{\alpha_1}
	+\sum_{j=2}^n H_1(r;\alpha_j-\alpha_1)\partial_{\alpha_j}\\
	&
	+\sum_{j=1}^n H_3(r;\beta_j-\alpha_1)\partial_{\beta_j}
	+2\partial_{\alpha_1}^2
	+H_1(r;z-\alpha_1)\partial_z \bigg) \Phi(r;\bs{\alpha},\bs{\beta};z),
\end{split}
\end{equation}
and 
\begin{equation*}  
	\Phi=\Phi(r-t;\zeta_t^1,\mathfrak{\covmap}_t(\alpha_2),\ldots,\mathfrak{\covmap}_t(\alpha_n),\Re \mathfrak{\covmap}_t(\bs{\beta}+\ii r);\mathfrak{\covmap}_t(z)).
\end{equation*}
Applying~(\ref{eqn::H1H3_def},\ref{eqn::Theta_heat}), we have
\begin{align}   \label{eqn::multiannulus_levelline_aux8}
\begin{split}
		\partial_z \Phi(r;\bs{\alpha},\bs{\beta};z) = & \frac{1}{2}\sum_{j=1}^{2N}\eps_j H_1(r;z-\alpha_j)-\frac{1}{2}\sum_{j=1}^{2N}\eps_j H_3(r;z-\beta_j) -\frac{1}{2r}\sum_{j=1}^{2N}\eps_j(\alpha_j-\beta_j),\\
	\partial_{\alpha_j} \Phi(r;\bs{\alpha},\bs{\beta};z) = & -\frac{\eps_j}{2} H_1(r;z-\alpha_j)-\frac{\eps_j z}{2r}, \quad \partial_{\beta_j} \Phi(r;\bs{\alpha},\bs{\beta};z) =  \frac{\eps_j}{2} H_3(r;z-\beta_j)+\frac{\eps_j z}{2r},\\
	\partial_{\alpha_1}^2 \Phi(r;\bs{\alpha},\bs{\beta};z) = & -\frac{1}{2}\partial_z H_1(r;z-\alpha_1),\\
	\partial_{r} \Phi(r;\bs{\alpha},\bs{\beta};z) = & \sum_{j=1}^{2N}\eps_j \left(\frac{1}{2}\partial_z H_1(r;z-\alpha_j)+\frac{1}{4}H_1(r;z-\alpha_j)^2\right)\\
	&-\sum_{j=1}^{2N}\eps_j \left(\frac{1}{2}\partial_z H_3(r;z-\beta_j)+\frac{1}{4}H_3(r;z-\beta_j)^2\right)+\frac{z}{2r^2}\sum_{j=1}^{2N}\eps_j(\alpha_j-\beta_j). 
\end{split}
\end{align}
Moreover, we have
\begin{equation}  \label{eqn::multiannulus_levelline_aux9}
	4\partial_{\alpha_1}\log\LZann{n}(r;\bs{\alpha},\bs{\beta})=
	\frac{1}{r}\sum_{j=1}^{2N}\eps_j(\alpha_j-\beta_j)
	+\sum_{j=2}^{2N}\eps_j H_1(r;\alpha_j-\alpha_1)
	-\sum_{j=1}^{2N}\eps_j H_3(r;\beta_j-\alpha_1).
\end{equation}
Combining~(\ref{eqn::H1H3_shift},\ref{eqn::multiannulus_levelline_aux6},\ref{eqn::multiannulus_levelline_aux8},\ref{eqn::multiannulus_levelline_aux9}), we find that 
\[
\mathcal{L} \Phi(r;\bs{\alpha},\bs{\beta};z) \in \R, \quad \text{ for } z\in \R \cup (\R+\ii r),
\]
which implies $\Im(\mathcal{L} \Phi(r;\bs{\alpha},\bs{\beta};z))=0$ for $z\in \partial \S_r$. 
Since $\Im(\mathcal{L} \Phi(r;\bs{\alpha},\bs{\beta};z))$ is a harmonic $2\pi$-periodic function in $\S_r$, we conclude that $\Im(\mathcal{L} \Phi(r;\bs{\alpha},\bs{\beta};z))=0$ for $z\in \S_r$. Plugging into~\eqref{eqn::multiannulus_levelline_aux5}, we obtain
\[	\ud M_t(z) = 2 \Im(\partial_{\alpha_1} \Phi) \ud B_t + \Im (\mathcal{L} \Phi) \ud t= \left( \Im H_1(r-t;\mathfrak{\covmap}_t(z)-\zeta_t^1) + \frac{\Im \mathfrak{\covmap}_t(z)}{r-t} \right) \ud B_t,
\]
which completes the proof of~\eqref{eqn::multiannulus_levelline_aux10}. 
\end{proof}

\begin{lemma}
Assume the same setup as in the proof of Lemma~\ref{lem::multiannulus_levelline}, the relation~\eqref{eqn::multiannulus_levelline_aux12} holds. 
\end{lemma}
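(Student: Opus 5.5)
The goal is the covariation identity \eqref{eqn::multiannulus_levelline_aux12}: $\ud\langle M_t(z_1),M_t(z_2)\rangle=-\ud \Green_{\A_{r-t}}(\mathfrak{g}_t(q(z_1)),\mathfrak{g}_t(q(z_2)))$. The left side comes directly from \eqref{eqn::multiannulus_levelline_aux10}. Write $w_k=\mathfrak{\covmap}_t(z_k)-\zeta_t$ and $s=r-t$. Then
\[
\ud\langle M(z_1),M(z_2)\rangle_t=\Big(\Im H_1(s;w_1)+\tfrac{\Im w_1}{s}\Big)\Big(\Im H_1(s;w_2)+\tfrac{\Im w_2}{s}\Big)\,\ud t .
\]
For the right side, use formula \eqref{eqn::multiannulus_levelline_aux11} with $z_k$ replaced by $\mathfrak{\covmap}_t(z_k)$ and modulus $s$. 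By translation invariance this gives
\[
\Green_{\A_s}=-\log|\Theta_1(s;w_1-w_2)|+\log|\Theta_1(s;w_1-\overline{w}_2)|-\Im w_1\Im w_2/s .
\]
We then differentiate in $t$ along the Loewner flow. This step is deterministic and involves no Itô correction, since $w_1-w_2$ and $w_1-\overline w_2$ do not involve $\zeta_t$.

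For the time derivative we use three ingredients. First, \eqref{eqn::annulus_Loewner_equation_cov} gives $\partial_t\mathfrak{\covmap}_t(z_k)=H_1(s;w_k)$. Second, $\partial_s$ of $\log\Theta_1$ is handled by the heat equation \eqref{eqn::Theta_heat}: $\partial_s\log\Theta_1=\tfrac12\partial_zH_1+\tfrac14H_1^2$, and $\partial_t=-\partial_s$. Third, $\partial_t\Im\mathfrak{\covmap}_t(z_k)=\Im H_1(s;w_k)$. Collecting terms, $-\partial_t\Green$ equals
\[
\Re\Big(-\tfrac12\big(\partial_zH_1(u)-\partial_zH_1(\tilde u)\big)-\tfrac14\big(H_1(u)^2-H_1(\tilde u)^2\big)+\tfrac12H_1(u)\big(H_1(w_1)-H_1(w_2)\big)-\tfrac12H_1(\tilde u)\big(H_1(w_1)-\overline{H_1(w_2)}\big)\Big)
\]
\[
{}+\tfrac{1}{s}\big(\Im w_2\Im H_1(w_1)+\Im w_1\Im H_1(w_2)\big)+\tfrac{\Im w_1\Im w_2}{s^2},
\]
where $u=w_1-w_2$ and $\tilde u=w_1-\overline w_2$. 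Here we use $\partial_t\overline{\mathfrak{\covmap}_t(z_2)}=\overline{H_1(s;w_2)}$, and the sign conventions follow from $\partial_t\log|\cdot|=\Re\,\partial_t\log(\cdot)$.

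By Lemma~\ref{lem::identity_H1}, the real part in the first line equals exactly $\Im H_1(s;w_1)\Im H_1(s;w_2)$. The remaining terms are precisely the cross terms and the square term in the expansion of the product computed from \eqref{eqn::multiannulus_levelline_aux10}. The two sides therefore agree, which proves \eqref{eqn::multiannulus_levelline_aux12}.

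The main obstacle is bookkeeping: getting the signs of the $\partial_s$ heat-equation terms and of the $\overline{w}_2$ conjugations to match the precise form of Lemma~\ref{lem::identity_H1}. A useful check is the behaviour as $w_1\to w_2$, where the logarithmic singularity must cancel. I would verify the signs first in the degenerate limit $s\to\infty$, where $H_1(z)\to\cot(z/2)$, before trusting the general computation.
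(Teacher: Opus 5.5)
Your proposal is correct and follows essentially the same route as the paper. It differentiates the theta-function expression \eqref{eqn::multiannulus_levelline_aux11} along the flow using the heat equation and $\partial_t\mathfrak{\covmap}_t(z_k)=H_1(r-t;w_k)$, and applies Lemma~\ref{lem::identity_H1} with modulus $r-t$ (valid since $w_k\in\S_{r-t}$) to turn the theta-ratio derivative into $\Im H_1(w_1)\Im H_1(w_2)$. The remaining $\Im w_k/(r-t)$ terms then match the cross and square terms of the product obtained from \eqref{eqn::multiannulus_levelline_aux10}.
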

\begin{proof}
Applying~(\ref{eqn::H1H3_def},\ref{eqn::Theta_heat}), we have
\begin{align} \label{eqn::multiannulus_levelline_aux13}
\begin{split}
	& \frac{\ud}{\ud t} \log \Theta_1(r-t;\mathfrak{\covmap}_t(z_1)-\mathfrak{\covmap}_t(z_2)) \\
	= & -\frac{1}{2}\partial_z H_1(r-t;\mathfrak{\covmap}_t(z_1)-\mathfrak{\covmap}_t(z_2))-\frac{1}{4}H_1(r-t;\mathfrak{\covmap}_t(z_1)-\mathfrak{\covmap}_t(z_2))^2 \\
	&+\frac{1}{2}H_1(r-t;\mathfrak{\covmap}_t(z_1)-\mathfrak{\covmap}_t(z_2))\left(H_1(r-t;\mathfrak{\covmap}_t(z_1)-\zeta_t^1)-H_1(r-t;\mathfrak{\covmap}_t(z_2)-\zeta_t^1)\right),\\
	& \frac{\ud}{\ud t} \log \Theta_1(r-t;\mathfrak{\covmap}_t(z_1)-\overline{\mathfrak{\covmap}_t(z_2)}) \\
	= & -\frac{1}{2}\partial_z H_1(r-t;\mathfrak{\covmap}_t(z_1)-\overline{\mathfrak{\covmap}_t(z_2)})-\frac{1}{4}H_1(r-t;\mathfrak{\covmap}_t(z_1)-\overline{\mathfrak{\covmap}_t(z_2)})^2 \\
	&+\frac{1}{2}H_1(r-t;\mathfrak{\covmap}_t(z_1)-\overline{\mathfrak{\covmap}_t(z_2)})\left(H_1(r-t;\mathfrak{\covmap}_t(z_1)-\zeta_t^1)-H_1(r-t;\overline{\mathfrak{\covmap}_t(z_2)}-\zeta_t^1)\right).
\end{split}
\end{align}
Plugging~\eqref{eqn::identity_H1} into~\eqref{eqn::multiannulus_levelline_aux13} with  $w_1=\mathfrak{\covmap}_t(z_1)-\zeta_t^1$ and $w_2=\mathfrak{\covmap}_t(z_2)-\zeta_t^1$, we obtain
\begin{align} \label{eqn::multiannulus_levelline_aux14}
	\frac{\ud}{\ud t} \log \left| \frac{\Theta_1(r-t;\mathfrak{\covmap}_t(z_1)-\mathfrak{\covmap}_t(z_2))}{\Theta_1(r-t;\mathfrak{\covmap}_t(z_1)-\overline{\mathfrak{\covmap}_t(z_2)})} \right| =\Im H_1(r-t;\mathfrak{\covmap}_t(z_1)-\zeta_t^1)\Im H_1(r-t;\mathfrak{\covmap}_t(z_2)-\zeta_t^1).
\end{align}
Moreover, we have
\begin{align} \label{eqn::multiannulus_levelline_aux15}
	\frac{\ud}{\ud t} \left( \frac{\Im \mathfrak{\covmap}_t(z_1)  \Im \mathfrak{\covmap}_t(z_2)}{r-t} \right)
	= &\frac{\Im H_1(r-t;\mathfrak{\covmap}_t(z_1)-\zeta_t^1)\Im \mathfrak{\covmap}_t(z_2)}{r-t}+\frac{\Im \mathfrak{\covmap}_t(z_1)\Im H_1(r-t;\mathfrak{\covmap}_t(z_2)-\zeta_t^1)}{r-t} \notag \\
	&+\frac{\Im \mathfrak{\covmap}_t(z_1)\Im \mathfrak{\covmap}_t(z_2)}{(r-t)^2}.
\end{align}
Combining~(\ref{eqn::multiannulus_levelline_aux10},\ref{eqn::multiannulus_levelline_aux11},\ref{eqn::multiannulus_levelline_aux14},\ref{eqn::multiannulus_levelline_aux15}), we conclude that
\begin{equation*}
	\ud\langle M_t(z_1),M_t(z_2)\rangle
	=-\ud \Green_{\A_{r-t}}\big(q(\mathfrak{\covmap}_t(z_1)),q(\mathfrak{\covmap}_t(z_2))\big) =-\ud \Green_{\A_{r-t}}\big(\mathfrak{g}_t(q(z_1)),\mathfrak{g}_t(q(z_2))\big).
\end{equation*}
This completes the proof of~\eqref{eqn::multiannulus_levelline_aux12}.
\end{proof}

\begin{proof}[Proof of Theorem~\ref{thm::GFF_levellines_annulus}]
By~\cite[Proposition~3.18]{AruLupuSepulvedaFPSGFF}, the level line $\gamma^j$ is continuous up to and including the hitting time of $\partial \A_r$, where 
the possible terminal points are $
\{x_{j+1},x_{j+3},\ldots,x_{j+2N-1}, y_j,y_{j+2},\ldots,y_{j+2N-2}\}
$; and $\gamma^j$ is measurable with respect to $\Gamma$, which gives the first item. 

For $\bs{t}=(t_1, \ldots, t_n)$ such that $t_j<r$ for all $j$ and that $\gamma_{[0,t_i]}^i\cap \gamma_{[0,t_j]}^j=\emptyset$ for $i\neq j$, we define measure $\widehat{\PP}$ locally by tilting $\Pind{n}(\LZcro{1})$ by the $n$-time local martingale $M_{\bs{t}}(\LZcro{1};\LZann{n})$. 
Applying Lemma~\ref{lem::multiannulus_levelline} and the domain Markov property of GFF and $\widehat{\PP}$, we conclude that $(\Gamma,\bs{\gamma})$ can be coupled such that given $\bs{\gamma}_{[\bs{0},\bs{t}]}=\left(\gamma_{[0,t_1]}^1,\ldots,\gamma_{[0,t_n]}^n\right)$, the field $\Gamma(\cdot)+ \varphiann{n}(r;\bs{\alpha},\bs{\beta};\cdot)$, restricted to $\A_r\setminus \bs{\gamma}_{[\bs{0},\bs{t}]}$, has the same law as 
\begin{align*}
	\left( \Gamma(\cdot)+ \varphiann{n}\left(\Mod(\A_r\setminus \bs{\gamma}_{[\bs{0},\bs{t}]});\bs{W}_{\bs{t}};\Re \mathfrak{\covmap}_{\bs{t}}(\bs{\beta}+\ii r);\cdot\right)  \right)\circ \mathfrak{g}_{\bs{t}},
\end{align*}
where $\mathfrak{g}_{\bs{t}}$ is the mapping-out function of $\bs{\gamma}_{[\bs{0},\bs{t}]}$ whose covering map sends $\gamma_{t_j}^j$ to $W_{\bs{t}}^j$, which implies that the law of $\bs{\gamma}_{[\bs{0},\bs{t}]}$ is the same as $\widehat{\PP}$. This finishes the proof of the second item.
\end{proof}

\subsection{Proof of Theorem~\ref{thm::crossingproba} and of Proposition~\ref{prop::asymptotics}}
\label{subsec::GFF_crossingproba}
We will complete the proof of Theorem~\ref{thm::crossingproba} in this section. Let us recall the strategy mentioned in the introduction.
We first run $\gamma^1$ and construct a martingale from the ratio of two annulus partition functions. When $\gamma^1$ completes the prescribed crossing, this ratio degenerates into a ratio of partition functions in the simply connected domain $\A_r\setminus\gamma^1$, reducing the remaining problem to the probability that the other level lines form the rainbow pattern; this is exactly the result from~\cite{PeltolaWuGlobalMultipleSLEs} recalled in the following Lemma~\ref{lem::PW19_rainbow_connection}.

\begin{lemma}[{\cite[Theorem~1.4]{PeltolaWuGlobalMultipleSLEs}}]
\label{lem::PW19_rainbow_connection}
Fix an even number $n=2N$ and a nice $n$-polygon $(\Omega;x_1,\ldots,x_n)$. Let $\Gamma$ be the GFF in $\Omega$ with alternating boundary data\footnote{with the convention that $x_{2N+j}=x_j$.}
\begin{align*}
	0 \text{ on } \bigcup_{j=1}^N (x_{2j-1}x_{2j}), \qquad \pi \text{ on }\bigcup_{j=1}^N (x_{2j}x_{2j+1}).
\end{align*} 
Let $\eta^j$ be the level line of the field starting from $x_{j}$ for $1\le j\le N$. Then the probability of the rainbow connectivity is
\begin{align*}
	\PP\left[ \eta^j \text{ terminates at } x_{n-j+1} \text{ for } 1\le j\le N \right]
	=\frac{\LZ_{\rainbow}^{(4)}(\Omega;x_1,\ldots,x_n)}{\LZ_n(\Omega;x_1,\ldots,x_n)},
\end{align*}
where $\LZ_{\rainbow}^{(4)}$ is defined in~\eqref{eqn::rainbow_pf4} and $\LZ_n$ is defined by conformal covariance from~\eqref{eqn::LZunitdisc_def}: if $\varphi:\Omega\to\U$ is conformal with $\varphi(x_j)=\ee^{\ii\theta_j}$ and $\bs{\theta}\in\LX_n$, then
\begin{align}\label{eqn::LZn_pf_polygon_Def}
	\LZ_n(\Omega;x_1,\ldots,x_n)
	:=\prod_{j=1}^{n}|\varphi'(x_j)|^{\frac{1}{4}}\LZ_n(\bs{\theta})
	=\prod_{1\le i<j\le n}\Poisson(\Omega;x_i,x_j)^{-\frac14(-1)^{j-i}}.
\end{align}
\end{lemma}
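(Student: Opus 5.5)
The second equality in \eqref{eqn::LZn_pf_polygon_Def} is a direct computation, and the probability formula I would prove by the standard martingale argument of~\cite{PeltolaWuGlobalMultipleSLEs}, with induction on $N$.

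\textbf{The identity \eqref{eqn::LZn_pf_polygon_Def}.} Let $\varphi:\Omega\to\U$ be conformal with $\varphi(x_j)=\ee^{\ii\theta_j}$. By \eqref{eqn::bPoisson_U} and \eqref{eqn::bPoisson_cov},
\[\Poisson(\Omega;x_i,x_j)=|\varphi'(x_i)||\varphi'(x_j)|\,|\ee^{\ii\theta_i}-\ee^{\ii\theta_j}|^{-2}.\]
Raising this to the power $-\frac14(-1)^{j-i}$ and taking the product over pairs gives two pieces.
\begin{itemize}
\item The $|\ee^{\ii\theta_i}-\ee^{\ii\theta_j}|$ factors produce $\LZ_n(\bs\theta)$ from \eqref{eqn::LZunitdisc_def}.
\item Each $|\varphi'(x_j)|$ appears with exponent $-\frac14\sum_{i\neq j}(-1)^{j-i}$. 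Since $n$ is even, among the $n-1$ indices $i\neq j$ there are $n/2$ odd differences and $n/2-1$ even ones, so $\sum_{i\neq j}(-1)^{j-i}=-1$. The exponent is therefore $\frac14$, which is exactly the conformal covariance factor in \eqref{eqn::LZn_pf_polygon_Def}.
\end{itemize}
The boundary data in the lemma are the swap of \eqref{eqn::alternating_boundary_data}. The map $\Gamma\mapsto\pi-\Gamma$ exchanges the two, preserves the level lines, and leaves $\LZ_n$ unchanged, so the convention causes no issue.

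\textbf{The connection probability, by induction on $N$.} The case $N=1$ is trivial. For general $N$:
\begin{itemize}
\item \emph{Marginal law of $\eta^1$.} By the Schramm--Sheffield/Dub\'edat coupling, $\eta^1$ is an $\SLE_4$ variant whose partition function is $\LZ_n$, consistent with Lemma~\ref{lem::reg_Dirichlet_polygon}.
\item \emph{Conditional law of the rest.} Given $\eta^1$, the field in each complementary component is a GFF with alternating data. So given that $\eta^1$ ends at $x_n$, the remaining lines $\eta^2,\dots,\eta^N$ form the rainbow in the $(n-2)$-polygon $(\Omega\setminus\eta^1;x_2,\dots,x_{n-1})$ with probability $\LZ_{\rainbow}^{(4)}/\LZ_{n-2}$ of that polygon, by the induction hypothesis.
\item \emph{The martingale.} The process $N_t=\LZ_{\rainbow}^{(4)}(\Omega_t;\eta^1_t,x_2,\dots,x_n)/\LZ_n(\Omega_t;\eta^1_t,x_2,\dots,x_n)$ is a local martingale for $\eta^1$. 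This follows because both functions solve the chordal BPZ system (PDE) and have the same covariance (COV).
\end{itemize}

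\textbf{Boundedness and terminal values (the main obstacle).} The induction closes once I show that $N_t$ is uniformly bounded and identify its limit as $\eta^1$ terminates.
\begin{itemize}
\item \emph{Bounded.} I would substitute the explicit formulas \eqref{eqn::rainbow_pf4} and \eqref{eqn::LZn_pf_polygon_Def}, so that $N_t$ becomes a product of Poisson-kernel cross-ratios. I would then bound each cross-ratio by $1$ using the monotonicity \eqref{eqn::bPoisson_monotone}, in the same way as \eqref{eqn::LZcro_LZann_mono_aux4}.
\item \emph{Limit when $\eta^1$ ends at $x_n$.} Using the asymptotics (ASY) and the matching fusion behaviour of $\LZ_n$, $N_t$ should converge to $\LZ_{\rainbow}^{(4)}/\LZ_{n-2}$ of the remaining polygon.
\item \emph{Limit when $\eta^1$ ends elsewhere.} If $\eta^1$ ends at some $x_{2k}$ with $2k\neq n$, then (ASY) with $j\neq N$ should give $N_t\to 0$.
\end{itemize}
These terminal limits must be justified using only the curve geometry near its endpoint, with harmonic-measure estimates of the kind in Lemma~\ref{lem::RNcontrol_aux}.

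\textbf{Conclusion.} By optional stopping,
\[N_0=\E\Big[\one\{\eta^1\to x_n\}\,\tfrac{\LZ_{\rainbow}^{(4)}}{\LZ_{n-2}}(\Omega\setminus\eta^1)\Big],\]
and by the second bullet of the induction step the right-hand side equals the rainbow probability. Since $N_0=\LZ_{\rainbow}^{(4)}(\Omega;x_1,\ldots,x_n)/\LZ_n(\Omega;x_1,\ldots,x_n)$, this proves the formula.
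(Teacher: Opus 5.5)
The paper does not prove this lemma; it is quoted from \cite[Theorem~1.4]{PeltolaWuGlobalMultipleSLEs}. Your check of the second equality in~\eqref{eqn::LZn_pf_polygon_Def} is correct: each $|\varphi'(x_j)|$ carries exponent $-\frac14\sum_{i\neq j}(-1)^{j-i}=\frac14$. Your handling of the $0\leftrightarrow\pi$ convention is also correct. Your induction is a self-contained alternative to the citation, and it is the simply connected analogue of the paper's own proof of Theorem~\ref{thm::crossingproba}. There too, a ratio of two BPZ solutions with equal covariance is a martingale for the first curve, it is bounded via explicit $\kappa=4$ formulas, its terminal value is computed according to where the curve lands, and the domain Markov property reduces to a smaller polygon. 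What this buys is a proof that uses only the rainbow case and the explicit formula~\eqref{eqn::rainbow_pf4}, inside the same framework as the rest of the paper.

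Two justifications need to be replaced.

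\emph{Boundedness.} The monotonicity~\eqref{eqn::bPoisson_monotone} cannot bound a conformally invariant cross-ratio. Instead, by the computation behind~\eqref{eqn::crossingproba_aux5},
\[
N_t=\prod_{\substack{1\le i<k\le N\\ k-i\ \mathrm{odd}}}\frac{(b-a)(d-c)}{(c-a)(d-b)} .
\]
Here $a<b<c<d$ are the images in $\HH$ of $x_i$ (read $\eta^1_t$ when $i=1$), $x_k$, $x_{n+1-k}$, $x_{n+1-i}$. Each factor lies in $(0,1)$, so $N_t\le 1$.

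\emph{Terminal values.} This is the more serious point: (ASY) does not give them. (ASY) is a two-point fusion with all other points fixed. When $\eta^1$ lands at $x_{2m}$ with $4\le 2m\le n-2$, the whole block $\eta^1_t,x_2,\ldots,x_{2m}$ collapses in the uniformized picture, and in every case the remaining points move with $t$. Work with the factorization instead.
\begin{itemize}
\item If $\eta^1\to x_{2m}$ with $2m\le n-2$, every curve in $\Omega_t$ joining the arc $(\eta^1_t x_2)$ to the arc $(x_{n-1}x_n)$ must pass through the vanishing gap between the tip and $x_{2m}$. Their extremal distance therefore tends to $\infty$, and the $(i,k)=(1,2)$ factor tends to $0$.
\item If $\eta^1\to x_n$, the same gap separates the arc $(x_n\eta^1_t)$ from each arc $(x_kx_{n+1-k})$. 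So every factor with $i=1$ tends to $1$, using $1-\frac{(b-a)(d-c)}{(c-a)(d-b)}=\frac{(c-b)(d-a)}{(c-a)(d-b)}$. The factors with $i\ge 2$ converge to those of $(\Omega\setminus\eta^1;x_2,\ldots,x_{n-1})$, whose product is $\LZ_{\rainbow}^{(4)}/\LZ_{n-2}$ of that polygon.
\end{itemize}
With these two replacements, the optional stopping step goes through as you wrote it.
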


\begin{proof}[Proof of Theorem~\ref{thm::crossingproba}]
	It suffices to show~\eqref{eqn::cross_wind_proba} for $\ell=m=0$.	
	First, we derive a martingale process for $\gamma^1$. Denote the driving function of $\gamma^1$ by $\zeta_t$, the mapping-out function of $\gamma^1$ by $\mathfrak{g}_t$, and the covering map of $\mathfrak{g}_t$ by $\mathfrak{\covmap}_t$. Denote by $T$ the lifetime of $\gamma^1$. We show that
	\begin{align} \label{eqn::mtg_partition_ratio}
		M_{t}:=\frac{\LZcro{n}^{(0)}(r-t;\zeta_t,\mathfrak{\covmap}_t(\alpha_2),\ldots,\mathfrak{\covmap}_t(\alpha_n),\Re \mathfrak{\covmap}_t(\bs{\beta}+\ii r))}{\LZann{n}(r-t;\zeta_t,\mathfrak{\covmap}_t(\alpha_2),\ldots,\mathfrak{\covmap}_t(\alpha_n),\Re \mathfrak{\covmap}_t(\bs{\beta}+\ii r))},\quad t<T,
	\end{align}
	is a local martingale with respect to $\gamma^1$.
	Recall that the law of $\gamma^1$ is the same as tilting $\Pcro{1}$ by the local martingale $M_t(\LZcro{1}; \LZann{n})$ in~\eqref{eqn::singletime_mart_kappa4}. 
	From Proposition~\ref{prop::multitime_mart_annulus}, the process
	\begin{align*}
		M_t(\LZcro{1}; \LZcro{n}^{(0)})=\ee^{\frac{n-1}{4}t}	\prod_{j=2}^{n}	\left(\mathfrak{\covmap}_t'(\alpha_j)	\mathfrak{\covmap}_t'(\beta_j+\ii r)\right)^{\frac14}	\frac{\LZcro{n}^{(0)} \left(r-t;\zeta_t,\mathfrak{\covmap}_t(\alpha_2),		\ldots,\mathfrak{\covmap}_t(\alpha_n),		\Re\mathfrak{\covmap}_t(\bs{\beta}+\ii r)\right)}	{\LZcro{1}\left(r-t;\zeta_t,\Re\mathfrak{\covmap}_t(\beta_1+\ii r)\right)},
	\end{align*}
	is a local martingale for $\Pcro{1}$, because $\LZcro{n}^{(0)}$ satisfies annulus BPZ equations~\eqref{eqn::annulus_BPZ_kappa4_alpha}. Thus, $M_t=M_t(\LZcro{1}; \LZcro{n}^{(0)})/M_t(\LZcro{1}; \LZann{n})$ is a local martingale for $\gamma^1$. 
	Combining with~\eqref{eqn::LZcro_LZann_mono}, the process $M_t$ is a bounded martingale with respect to $\gamma^1$.
	
	\medbreak
		Then, we investigate $M_t$ as $t\to T$. Following the notations in~\eqref{eqn::F(t,z)G(t,z)}, we define 
		\[F(t,z):=\prod_{m=1}^\infty\left(1-\ee^{\frac{-2m\pi^2}{r-t}+\frac{\pi z}{r-t}}\right)\quad\text{and}\quad G(t,z):=F(t,-z).\]
		Define $D_t:=\A_r\setminus \gamma^1[0,t]$ and define $\Omega_t:=\S_r\setminus q^{-1}(\gamma^1[0,t])$. By~\eqref{eqn::LZcro_LZann_mono_aux3} we have
		\begin{align}	\label{eqn::crossingproba_aux2}
			M_t=&
			\prod_{\substack{2\le i<k\le n\\ 2\nmid (k-i)}}
			\underbrace{\left(\frac{\Poisson(\Omega_t;\alpha_i,\beta_k+\ii r)\Poisson(\Omega_t;\alpha_k,\beta_i+\ii r)}
				{\Poisson(\Omega_t;\alpha_i,\alpha_k)\Poisson(\Omega_t;\beta_i+\ii r,\beta_k+\ii r)}\right)^{\frac12}}_{:=S_{i,k,t}}\notag\\
			&\times\underbrace{\frac{
					\displaystyle\prod_{\substack{2\le i<k\le n\\2\nmid(k-i)}}F(t,\mathfrak{\covmap}_t(\alpha_k)-\mathfrak{\covmap}_t(\alpha_i))
					\prod_{\substack{2\le j\le n\\2\mid j}}F(t,\mathfrak{\covmap}_t(\alpha_j)-\zeta_t)
					\prod_{\substack{1\le i<k\le n\\2\nmid(k-i)}}F(t,\mathfrak{\covmap}_t(\beta_k+\ii r)-\mathfrak{\covmap}_t(\beta_i+\ii r))}
				{\displaystyle\prod_{\substack{2\le j\le n\\2\mid j}}F(t,\mathfrak{\covmap}_t(\beta_j+\ii r)-\zeta_t)
					\prod_{\substack{2\le i\le n,\,1\le k\le n\\2\nmid(k-i)}}F(t,\mathfrak{\covmap}_t(\beta_k+\ii r)-\mathfrak{\covmap}_t(\alpha_i))}}_{:=R_{1,t}}\notag\\
			&\times\underbrace{\frac{
					\displaystyle\prod_{\substack{2\le i<k\le n\\2\nmid(k-i)}}G(t,\mathfrak{\covmap}_t(\alpha_k)-\mathfrak{\covmap}_t(\alpha_i))
					\prod_{\substack{2\le j\le n\\2\mid j}}G(t,\mathfrak{\covmap}_t(\alpha_j)-\zeta_t)
					\prod_{\substack{1\le i<k\le n\\2\nmid(k-i)}}G(t,\mathfrak{\covmap}_t(\beta_k+\ii r)-\mathfrak{\covmap}_t(\beta_i+\ii r))}
				{\displaystyle\prod_{\substack{2\le j\le n\\2\mid j}}G(t,\mathfrak{\covmap}_t(\beta_j+\ii r)-\zeta_t)
					\prod_{\substack{2\le i\le n,\,1\le k\le n\\2\nmid(k-i)}}G(t,\mathfrak{\covmap}_t(\beta_k+\ii r)-\mathfrak{\covmap}_t(\alpha_i))}}_{:=R_{2,t}}\notag\\
			&\times\prod_{\substack{2\le j\le n\\2\mid j}}
			\underbrace{\frac{\left(1-\ee^{\frac{-\pi(\mathfrak{\covmap}_t(\alpha_j)-\zeta_t)}{r-t}}\right)
					\left(1-\ee^{\frac{-\pi(\mathfrak{\covmap}_t(\beta_j+\ii r)-\mathfrak{\covmap}_t(\beta_1+\ii r))}{r-t}}\right)}
				{\left(1+\ee^{\frac{-\pi(\Re\mathfrak{\covmap}_t(\beta_j+\ii r)-\zeta_t)}{r-t}}\right)
					\left(1+\ee^{\frac{-\pi(\mathfrak{\covmap}_t(\alpha_j)-\Re\mathfrak{\covmap}_t(\beta_1+\ii r))}{r-t}}\right)}}_{:=L_{j,t}}.
		\end{align}
Note that $R_{1,t}\le 1$, $R_{2,t}\le 1$, $L_{j,t}\le 1$ due to the numerators are smaller than one and the denominators are bigger than one, and $S_{i,k,t}\le 1$ due to~\eqref{eqn::LZcro_LZann_mono_aux4}. 
We consider all possible terminal points of $\gamma^1$:	
\[ \gamma^1_T:=\lim_{t\to T} \gamma_t^1\in \{x_{2},x_{4},\ldots,x_{n}, y_1,y_{3},\ldots,y_{n-1}\}. \] 
When $\gamma^1$ crosses the annulus, i.e. when it terminates in $\{y_1, y_3, \ldots, y_{n-1}\}$, it may have different winding. To simplify our notations, we write $\beta_{j+\ell n}=\beta_j+2\pi\ell$ as before. 
Using this notation, when $\gamma^1$ crosses the annulus, we have $q^{-1}(\gamma^1_T)\in \{\beta_{2j-1}+\ii r, j\in\Z\}$.
\begin{itemize}
\item Suppose $\gamma_t^1\to x_{2j}$ as $t\to T$. Then $T<r$ and $\mathfrak{\covmap}_t(\alpha_{2j})-\zeta_t\to 0 \text{ or } 2\pi$ as $t\to T$.
Thus we have $R_{1,t}\to 0$ as $t\to T$, which implies $M_t\to 0$ as $t\to T$.
		
\item Suppose $q^{-1}(\gamma^1)$ terminates at $\beta_{2j-1}+\ii r$ for $j\neq 1$. 
Then $T=r$. By~(\ref{eqn::terminal_beta},\ref{eqn::finite}), we have
\begin{equation} \label{eqn::crossingproba_aux4}
	\frac{\Re\mathfrak{\covmap}_t(\beta_{2j-1}+\ii r)-\mathfrak{\covmap}_t(\alpha_2)}{r-t}\to-\infty, \qquad \frac{\Re\mathfrak{\covmap}_t(\beta_{2j-1}+\ii r)-\mathfrak{\covmap}_t(\alpha_{2N}-2\pi)}{r-t}\to +\infty.	
\end{equation}
Plugging~\eqref{eqn::crossingproba_aux4} into~\eqref{eqn::LZcro_LZann_mono_aux4}, we obtain
\begin{align*}
\begin{cases}
	S_{2,2j-1,t} \le \exp\left(\frac{\pi\left(\Re\mathfrak{\covmap}_t(\beta_{2j-1}+\ii r)-\mathfrak{\covmap}_t(\alpha_2)\right)}{r-t}\right)\to 0, & 2\le j\le N,\\
	S_{2,2N,t} \le \exp\left(\frac{\pi\left(\Re\mathfrak{\covmap}_t(\beta_{2N}+\ii r)-\mathfrak{\covmap}_t(\alpha_2)\right)}{r-t}\right) \le \exp\left(\frac{\pi\left(\Re\mathfrak{\covmap}_t(\beta_{2j-1}+\ii r)-\mathfrak{\covmap}_t(\alpha_2)\right)}{r-t}\right) \to 0, & j\ge N+1, \\
	S_{2N-1,2N,t} \le \exp\left(\frac{\pi\left(\mathfrak{\covmap}_t(\alpha_{2N})-\Re\mathfrak{\covmap}_t(\beta_{2N-1}+\ii r)\right)}{r-t}\right) \le \exp\left(\frac{\pi\left(\mathfrak{\covmap}_t(\alpha_{2N}-2\pi)-\Re\mathfrak{\covmap}_t(\beta_{2j-1}+\ii r)\right)}{r-t}\right)\to 0, & j\le 0.
\end{cases}
\end{align*}
Thus $M_t\to0$ as $t\to T$.
		
\item Suppose $q^{-1}(\gamma^1)$ terminates at $\beta_1+\ii r$. Then $T=r$. 
Let us check all terms on the right-hand side of~\eqref{eqn::crossingproba_aux2}. By the convergence of boundary Poisson kernels and~(\ref{eqn::rainbow_pf4},\ref{eqn::LZn_pf_polygon_Def}), we have
\begin{align} \label{eqn::crossingproba_aux5}
	\prod_{\substack{2\le i<k\le n\\ 2\nmid (k-i)}}
	\left(\frac{\Poisson(\Omega_t;\alpha_i,\beta_k+\ii r)\Poisson(\Omega_t;\alpha_k,\beta_i+\ii r)}
	{\Poisson(\Omega_t;\alpha_i,\alpha_k)\Poisson(\Omega_t;\beta_i+\ii r,\beta_k+\ii r)}\right)^{\frac12}=&\frac{\LZ_{\rainbow}^{(4)}(\Omega_t;\alpha_2,\ldots,\alpha_n,\beta_n+\ii r,\ldots,\beta_2+\ii r)}
	{\LZ_{2n-2}(\Omega_t;\alpha_2,\ldots,\alpha_n,\beta_n+\ii r,\ldots,\beta_2+\ii r)} \notag \\
	\to &\frac{\LZ_{\rainbow}^{(4)}(\A_r\setminus\gamma^1;x_2,\ldots,x_n,y_n,\ldots,y_2)}
	{\LZ_{2n-2}(\A_r\setminus\gamma^1;x_2,\ldots,x_n,y_n,\ldots,y_2)},	
\end{align}
as $t\to r$.
Moreover, it is shown in the proof of Lemma~\ref{lem::multiannulusSLE4_pf_2nd} that the estimates~(\ref{eqn::terminal_alpha},\ref{eqn::terminal_beta},\ref{eqn::finite}) give
\begin{equation} \label{eqn::crossingproba_aux6}
	\lim_{t\to r}R_{1,t}=1, \quad \lim_{t\to r}R_{2,t}=1,\quad
	\lim_{t\to r}L_{j,t}=1,\quad 2\le j\le n.
\end{equation}
Plugging~(\ref{eqn::crossingproba_aux5},\ref{eqn::crossingproba_aux6}) into~\eqref{eqn::crossingproba_aux2}, we obtain
\begin{equation*}
	\lim_{t\to r}M_t=	\frac{\LZ_{\rainbow}^{(4)}(\A_r\setminus\gamma^1;x_2,\ldots,x_n,y_n,\ldots,y_2)}
	{\LZ_{2n-2}(\A_r\setminus\gamma^1;x_2,\ldots,x_n,y_n,\ldots,y_2)}.
\end{equation*}
\end{itemize}
Combining the above three items, we have
\begin{align}\label{eqn::crossingproba_terminal}
	M_T=\one\{q^{-1}(\gamma^1)\text{ connects }\alpha_1\text{ to }\beta_1+\ii r\}\frac{\LZ_{\rainbow}^{(4)}(\A_r\setminus\gamma^1;x_2,\ldots,x_n,y_n,\ldots,y_2)}
	{\LZ_{2n-2}(\A_r\setminus\gamma^1;x_2,\ldots,x_n,y_n,\ldots,y_2)}.
\end{align}
On the event in $\{q^{-1}(\gamma^1)\text{ connects }\alpha_1\text{ to }\beta_1+\ii r\}$, the domain Markov property of the GFF shows that, conditionally on $\gamma^1$, the remaining level lines $(\gamma^2,\ldots,\gamma^n)$ are the level lines of a GFF in the polygon $(\A_r\setminus\gamma^1;x_2,\ldots,x_n,y_n,\ldots,y_2)$ with alternating boundary data. By Lemma~\ref{lem::PW19_rainbow_connection}, the ratio in~\eqref{eqn::crossingproba_terminal} is the conditional probability of $\gamma^j$ terminates at $y_j$ for $2\le j\le n$. Since $M_t\le 1$, the optional stopping theorem gives
\begin{align*}
	\frac{\LZcro{n}^{(0)}(r;\bs{\alpha},\bs{\beta})}{\LZann{n}(r;\bs{\alpha},\bs{\beta})}=M_0=\E[M_T]=\PP\left[q^{-1}(\gamma^j)\text{ connects }\alpha_j\text{ to }\beta_j+\ii r,\ 1\le j\le n\right],
\end{align*}
which gives~\eqref{eqn::cross_wind_proba} when $m=\ell=0$. For general $m\in\{1,\ldots,N\}$ and $\ell\in\mathbb Z$, relabeling the marked points and repeating the above argument gives~\eqref{eqn::cross_wind_proba}. Summing over the disjoint connectivity events in~\eqref{eqn::cross_wind_proba} gives~\eqref{eqn::cross_proba}.
\end{proof}

\begin{proof}[Proof of Proposition~\ref{prop::asymptotics}]
We obtain~\eqref{eqn::crossproba_ell_asymp} from~\eqref{eqn::cross_wind_proba} and~\eqref{eqn::LZann_asymp} and~\eqref{eqn::LZcroell_asymp}. 
We obtain~\eqref{eqn::crossproba_asymp} from~\eqref{eqn::cross_proba} and~\eqref{eqn::LZann_asymp} and~\eqref{eqn::LZcro_sum_asymp}. 
\end{proof}

\appendix
\section{Calculation for Proposition~\ref{prop::multitime_mart_annulus}}
\label{appendix::multitime_mart}
We give the details for the It\^o calculation leading to~\eqref{eqn::multitime_mart_annulus_aux1}. 
The following calculations are compiled from ChatGPT 5.5 Pro (OpenAI)'s computation drafts.
We work before the first time when two curves intersect, and localize all
processes so that all marked points stay away from each other. On this time
interval the indicator $\one_{\LE_{\emptyset}(\bs{\gamma}_{\bs{t}})}$ is
constant.

Put
\[
R_{\bs{t}}:=\Mod(\A_r\setminus \bs{\gamma}_{[\bs{0},\bs{t}]}),
\qquad
s_j:=r-t_j,
\]
and set
\[
a_j:=\mathfrak{\covmap}_{\bs{t},j}'(\zeta_{t_j}^j),
\qquad
p_j:=
\frac{\mathfrak{\covmap}_{\bs{t},j}''(\zeta_{t_j}^j)}
{\mathfrak{\covmap}_{\bs{t},j}'(\zeta_{t_j}^j)},
\qquad
q_j:=
\frac{\mathfrak{\covmap}_{\bs{t},j}'''(\zeta_{t_j}^j)}
{\mathfrak{\covmap}_{\bs{t},j}'(\zeta_{t_j}^j)}.
\]
We also write
\[
Y_\ell:=\Re\mathfrak{\covmap}_{\bs{t}}(\beta_\ell+\ii r),
\qquad
V_j:=\Re\mathfrak{\covmap}_{t_j}^j(\beta_j+\ii r),
\qquad
\bs{Y}:=(Y_1,\ldots,Y_n),
\]
and
\[
\LF_n^{\bs{t}}
:=
\LF_n(R_{\bs{t}};\bs{W}_{\bs{t}},\bs{Y}),
\qquad
\LF_1^j
:=
\LF_1(s_j;\zeta_{t_j}^j,V_j).
\]
In the following, derivatives of $\LF_n$ with respect to the first and second
spatial coordinate lists are denoted by $\partial_{\alpha_i}$ and
$\partial_{\beta_\ell}$, and then evaluated at
$(r;\bs{\alpha},\bs{\beta})=(R_{\bs{t}};\bs{W}_{\bs{t}},\bs{Y})$.
Finally, define
\[
\Phi_j:=
\frac{\partial_{\alpha_j}\LF_n^{\bs{t}}}{\LF_n^{\bs{t}}},
\qquad
\Psi_j:=
\frac{\partial_\alpha \LF_1^j}{\LF_1^j}.
\]
Under $\Pind{n}^{(\kappa)}(\LF_1)$, the individual driving functions satisfy
\begin{equation}\label{eqn::app_annulus_single_driving_for_Ito}
	\ud \zeta_{t_j}^j
	=
	\sqrt{\kappa}\,\ud B_{t_j}^j
	+
	\kappa\Psi_j\,\ud t_j,
	\qquad
	\ud V_j
	=
	H_3(s_j; V_j-\zeta_{t_j}^j)\,\ud t_j .
\end{equation}
By the translation invariance of $\LF_n$,
\begin{equation}\label{eqn::app_translation_invar_diff_for_Ito}
	\sum_{i=1}^{n}\partial_{\alpha_i}\LF_n
	+
	\sum_{\ell=1}^{n}\partial_{\beta_\ell}\LF_n
	=0,
\end{equation}
where the derivatives are evaluated at
$(R_{\bs{t}};\bs{W}_{\bs{t}},\bs{Y})$.

We use the following standard variations for annulus Loewner chains, see~(\ref{eqn::Annulus_basic1}, \ref{eqn::Annulus_basic2}, \ref{eqn::Annulus_basic3}, \ref{eqn::Annulus_basic4}). For each
fixed $j$, when only the $t_j$-coordinate varies, we have
\begin{align}
	\ud R_{\bs{t}}
	&=
	-a_j^2\,\ud t_j, \label{eqn::app_Annulus_basic1}\\
	\ud(Y_\ell-W_{\bs{t}}^j)
	&=
	-a_j\,\ud\zeta_{t_j}^j
	+
	\kappa\mathfrak{b}\,a_jp_j\,\ud t_j
	+
	H_3(R_{\bs{t}}; Y_\ell-W_{\bs{t}}^j)a_j^2\,\ud t_j,
	\label{eqn::app_Annulus_basic2}\\
	\ud(W_{\bs{t}}^i-W_{\bs{t}}^j)
	&=
	-a_j\,\ud\zeta_{t_j}^j
	+
	\kappa\mathfrak{b}\,a_jp_j\,\ud t_j
	+
	H_1(R_{\bs{t}}; W_{\bs{t}}^i-W_{\bs{t}}^j)a_j^2\,\ud t_j,
	\qquad i\neq j, \label{eqn::app_Annulus_basic3}\\
	\frac{\ud a_j}{a_j}
	&=
	p_j\,\ud\zeta_{t_j}^j
	+
	\left(
	\frac{3\kappa-8}{6}q_j
	+
	\frac{1}{2}p_j^2
	-
	\LE(s_j)
	+
	\LE(R_{\bs{t}})a_j^2
	\right)\ud t_j, \label{eqn::app_Annulus_basic4}\\
	\frac{\ud a_i}{a_i}
	&=
	\partial_z H_1(R_{\bs{t}}; W_{\bs{t}}^i-W_{\bs{t}}^j)
	a_j^2\,\ud t_j,
	\qquad i\neq j, \label{eqn::app_Annulus_basic5}\\
	\frac{\ud\mathfrak{\covmap}_{\bs{t}}'(\beta_\ell+\ii r)}
	{\mathfrak{\covmap}_{\bs{t}}'(\beta_\ell+\ii r)}
	&=
	\partial_z H_3(R_{\bs{t}}; Y_\ell-W_{\bs{t}}^j)
	a_j^2\,\ud t_j, \label{eqn::app_Annulus_basic6}\\
	\frac{\ud(\mathfrak{\covmap}_{t_j}^j)'(\beta_j+\ii r)}
	{(\mathfrak{\covmap}_{t_j}^j)'(\beta_j+\ii r)}
	&=
	\partial_z H_3(s_j; V_j-\zeta_{t_j}^j)\,\ud t_j .
	\label{eqn::app_Annulus_basic7}
\end{align}
Moreover, Lemma~\ref{lem::mt_blm_annulus_diff} gives
\begin{equation}\label{eqn::app_blm_diff_for_Ito_annulus}
	\ud \blm_{r,\bs{t}}
	=
	\sum_{j=1}^n
	\left(
	-\frac{1}{3}\left(q_j-\frac{3}{2}p_j^2\right)
	+
	a_j^2\left(\LE(R_{\bs{t}})+\frac{1}{R_{\bs{t}}}\right)
	-
	\left(\LE(s_j)+\frac{1}{s_j}\right)
	\right)\ud t_j .
\end{equation}

We now expand the four non-constant factors in~\eqref{eqn::multitime_mart_annulus}. First, let
\[
I_{\bs{t}}
:=
\exp\bigg(
\frac{\mathfrak{c}}{2}\blm_{r,\bs{t}}
+
\mathfrak{b}\sum_{j=1}^n(r-t_j)
-
n\mathfrak{b}R_{\bs{t}}
\bigg).
\]
Using~(\ref{eqn::app_Annulus_basic1},\ref{eqn::app_blm_diff_for_Ito_annulus}), we get
\begin{align*}
	\frac{\ud I_{\bs{t}}}{I_{\bs{t}}}
	=
	\sum_{j=1}^n
	\left(
	-\frac{\mathfrak{c}}{6}\left(q_j-\frac{3}{2}p_j^2\right)
	+
	\frac{\mathfrak{c}}{2}
	\left(
	a_j^2\left(\LE(R_{\bs{t}})+\frac{1}{R_{\bs{t}}}\right)
	-
	\left(\LE(s_j)+\frac{1}{s_j}\right)
	\right)-\mathfrak{b}
	+
	n\mathfrak{b}a_j^2
	\right) \ud t_j .
\end{align*}

Second, for each $j$, It\^o's formula and~\eqref{eqn::app_Annulus_basic4} give
\begin{align*}
	\begin{split}
		\frac{\ud a_j^{\mathfrak{b}}}{a_j^{\mathfrak{b}}}
		=
		&\sqrt{\kappa}\,\mathfrak{b}p_j\,\ud B_{t_j}^j
		+
		\kappa\mathfrak{b}p_j\Psi_j\,\ud t_j +
		\mathfrak{b}
		\left(
		\frac{3\kappa-8}{6}q_j
		+
		\frac{1}{2}p_j^2
		-
		\LE(s_j)
		+
		\LE(R_{\bs{t}})a_j^2
		\right)\ud t_j\\
		&+
		\frac{\kappa\mathfrak{b}(\mathfrak{b}-1)}{2}p_j^2\,\ud t_j
		+
		\mathfrak{b}\sum_{i\neq j}
		\partial_z H_1(R_{\bs{t}}; W_{\bs{t}}^j-W_{\bs{t}}^i)
		a_i^2\,\ud t_i .
	\end{split}
\end{align*}
Therefore, for
\[
J_{\bs{t}}:=\prod_{j=1}^n a_j^{\mathfrak{b}},
\]
we have
\begin{align*}
	\begin{split}
		\frac{\ud J_{\bs{t}}}{J_{\bs{t}}}
		=
		\sum_{j=1}^n
		\sqrt{\kappa}\,\mathfrak{b}p_j\,\ud B_{t_j}^j +
		\sum_{j=1}^n
		\bigg( &
		\kappa\mathfrak{b}p_j\Psi_j
		+
		\mathfrak{b}
		\left(
		\frac{3\kappa-8}{6}q_j
		+
		\frac{1}{2}p_j^2
		-
		\LE(s_j)
		+
		\LE(R_{\bs{t}})a_j^2
		\right)\\
		&	+
		\frac{\kappa\mathfrak{b}(\mathfrak{b}-1)}{2}p_j^2
		+
		\mathfrak{b}a_j^2
		\sum_{i\neq j}
		\partial_z H_1(R_{\bs{t}}; W_{\bs{t}}^i-W_{\bs{t}}^j)
		\bigg)\ud t_j .
	\end{split}
\end{align*}

Third, set
\[
U_\ell:=\mathfrak{\covmap}_{\bs{t}}'(\beta_\ell+\ii r),
\qquad
V_\ell':=(\mathfrak{\covmap}_{t_\ell}^\ell)'(\beta_\ell+\ii r).
\]
Using~(\ref{eqn::app_Annulus_basic6},\ref{eqn::app_Annulus_basic7}), we obtain
\begin{align*}
	\frac{\ud\left(U_\ell^{\mathfrak{b}}/(V_\ell')^{\mathfrak{b}}\right)}
	{U_\ell^{\mathfrak{b}}/(V_\ell')^{\mathfrak{b}}}
	=
	\mathfrak{b}
	\sum_{j=1}^n
	\partial_z H_3(R_{\bs{t}}; Y_\ell-W_{\bs{t}}^j)
	a_j^2\,\ud t_j
	-
	\mathfrak{b}
	\partial_z H_3(s_\ell; V_\ell-\zeta_{t_\ell}^\ell)\,\ud t_\ell .
\end{align*}
Consequently, for
\[
K_{\bs{t}}
:=
\prod_{\ell=1}^n
\frac{\mathfrak{\covmap}_{\bs{t}}'(\beta_\ell+\ii r)^{\mathfrak{b}}}
{(\mathfrak{\covmap}_{t_\ell}^\ell)'(\beta_\ell+\ii r)^{\mathfrak{b}}},
\]
we have
\begin{align*}
	\frac{\ud K_{\bs{t}}}{K_{\bs{t}}}
	=
	\sum_{j=1}^n
	\mathfrak{b}
	\left(
	a_j^2
	\sum_{\ell=1}^n
	\partial_z H_3(R_{\bs{t}}; Y_\ell-W_{\bs{t}}^j)
	-
	\partial_z H_3(s_j; V_j-\zeta_{t_j}^j)
	\right)\ud t_j .
\end{align*}

Fourth, define
\[
L_{\bs{t}}
:=
\frac{\LF_n(R_{\bs{t}};\bs{W}_{\bs{t}},\bs{Y})}
{\prod_{j=1}^n \LF_1(s_j;\zeta_{t_j}^j,V_j)}.
\]
For the numerator, combine~\eqref{eqn::app_translation_invar_diff_for_Ito} with~(\ref{eqn::app_Annulus_basic1},\ref{eqn::app_Annulus_basic2},\ref{eqn::app_Annulus_basic3}). The
stochastic part in the $t_j$-direction is
$\sqrt{\kappa}a_j\Phi_j\,\ud B_{t_j}^j$, and the drift terms are obtained
directly from the same variations. Thus
\begin{align}\label{eqn::app_Ito_Zn_numerator_annulus}
	\begin{split}
		\frac{\ud \LF_n^{\bs{t}}}{\LF_n^{\bs{t}}}
		=
		&\sum_{j=1}^n
		\sqrt{\kappa}\,a_j\Phi_j\,\ud B_{t_j}^j+
		\sum_{j=1}^n
		\left(
		\kappa a_j\Phi_j\Psi_j
		-
		\kappa\mathfrak{b}a_jp_j\Phi_j
		+
		a_j^2\LA_{j,0}^{(n)}
		\right)\ud t_j,
	\end{split}
\end{align}
where all derivatives of $\LF_n$ are evaluated at
$(R_{\bs{t}};\bs{W}_{\bs{t}},\bs{Y})$, and
\begin{align*}
	\begin{split}
		\LA_{j,0}^{(n)}
		:=
		&-\frac{\partial_r\LF_n}{\LF_n}
		+
		\frac{\kappa}{2}
		\frac{\partial_{\alpha_j}^2\LF_n}{\LF_n}+
		\sum_{i\neq j}
		H_1(R_{\bs{t}}; W_{\bs{t}}^i-W_{\bs{t}}^j)
		\frac{\partial_{\alpha_i}\LF_n}{\LF_n}
		+
		\sum_{\ell=1}^n
		H_3(R_{\bs{t}}; Y_\ell-W_{\bs{t}}^j)
		\frac{\partial_{\beta_\ell}\LF_n}{\LF_n}.
	\end{split}
\end{align*}
For each denominator factor, It\^o's formula and~\eqref{eqn::app_annulus_single_driving_for_Ito} give
\begin{align*}
	\frac{\ud \LF_1^j}{\LF_1^j}
	=
	\sqrt{\kappa}\,\Psi_j\,\ud B_{t_j}^j
	+
	\left(
	\kappa\Psi_j^2
	+
	\LA_{j,0}^{(1)}
	\right)\ud t_j,
\end{align*}
where all derivatives of $\LF_1$ are evaluated at
$(s_j;\zeta_{t_j}^j,V_j)$, and
\begin{align*}
	\LA_{j,0}^{(1)}
	:=
	-\frac{\partial_r \LF_1}{\LF_1}
	+
	\frac{\kappa}{2}
	\frac{\partial_\alpha^2\LF_1}{\LF_1}
	+
	H_3(s_j; V_j-\zeta_{t_j}^j)
	\frac{\partial_\beta\LF_1}{\LF_1}.
\end{align*}
Hence
\[
\frac{\ud \bigl(\LF_1^j\bigr)^{-1}}
{\bigl(\LF_1^j\bigr)^{-1}}
=
-\sqrt{\kappa}\,\Psi_j\,\ud B_{t_j}^j
-
\LA_{j,0}^{(1)}\,\ud t_j .
\]
Combining this with~\eqref{eqn::app_Ito_Zn_numerator_annulus}, and including
the quadratic covariation between the numerator and the inverse denominator,
we obtain
\begin{align*}
	\begin{split}
		\frac{\ud L_{\bs{t}}}{L_{\bs{t}}}
		=
		&\sum_{j=1}^n
		\sqrt{\kappa}\,(a_j\Phi_j-\Psi_j)\,\ud B_{t_j}^j+
		\sum_{j=1}^n
		\left(
		a_j^2\LA_{j,0}^{(n)}
		-
		\LA_{j,0}^{(1)}
		-
		\kappa\mathfrak{b}a_jp_j\Phi_j
		\right)\ud t_j .
	\end{split}
\end{align*}

Now we have
\[
M_{\bs{t}}(\LF_1;\LF_n)
=I_{\bs{t}}J_{\bs{t}}K_{\bs{t}}L_{\bs{t}}.
\]
The factors $I_{\bs{t}}$ and $K_{\bs{t}}$ have finite variation. The only
non-trivial quadratic covariation between the four displayed factors is
\begin{equation*}
	\frac{\ud\langle J,L\rangle_{\bs{t}}}{J_{\bs{t}}L_{\bs{t}}}
	=
	\sum_{j=1}^n
	\kappa\mathfrak{b}p_j(a_j\Phi_j-\Psi_j)\,\ud t_j .
\end{equation*}
Therefore
\[
\frac{\ud M_{\bs{t}}(\LF_1;\LF_n)}
{M_{\bs{t}}(\LF_1;\LF_n)}
=
\frac{\ud I_{\bs{t}}}{I_{\bs{t}}}
+
\frac{\ud J_{\bs{t}}}{J_{\bs{t}}}
+
\frac{\ud K_{\bs{t}}}{K_{\bs{t}}}
+
\frac{\ud L_{\bs{t}}}{L_{\bs{t}}}
+
\frac{\ud\langle J,L\rangle_{\bs{t}}}{J_{\bs{t}}L_{\bs{t}}}.
\]
The four drift terms
\[
\kappa\mathfrak{b}p_j\Psi_j,
\qquad
-\kappa\mathfrak{b}p_j\Psi_j,
\qquad
-\kappa\mathfrak{b}a_jp_j\Phi_j,
\qquad
\kappa\mathfrak{b}a_jp_j\Phi_j
\]
cancel. The terms involving $q_j$ and $p_j^2$ are
\[
\left(
\frac{\mathfrak{b}(3\kappa-8)}{6}
-
\frac{\mathfrak{c}}{6}
\right)q_j
+
\left(
\frac{\mathfrak{c}}{4}
+
\frac{\mathfrak{b}}{2}
+
\frac{\kappa\mathfrak{b}(\mathfrak{b}-1)}{2}
\right)p_j^2,
\]
which vanish because
\[
\mathfrak{c}=\mathfrak{b}(3\kappa-8),
\qquad
\mathfrak{b}=\frac{6-\kappa}{2\kappa}.
\]
The remaining modulus terms are
\begin{align*}
	a_j^2
	\left(
	\frac{\mathfrak{c}}{2R_{\bs{t}}}
	+
	\left(\frac{\mathfrak{c}}{2}+\mathfrak{b}\right)\LE(R_{\bs{t}})
	+
	n\mathfrak{b}
	\right)
	-
	\left(
	\frac{\mathfrak{c}}{2s_j}
	+
	\left(\frac{\mathfrak{c}}{2}+\mathfrak{b}\right)\LE(s_j)
	+
	\mathfrak{b}
	\right) =
	a_j^2F_n(R_{\bs{t}})-F_1(s_j),
\end{align*}
where
\begin{equation*}
	F_m(u)
	=
	\frac{6\tilde{\mathfrak{b}}-\mathfrak{b}}{u}
	+
	6\tilde{\mathfrak{b}}\LE(u)
	+
	m\mathfrak{b},
	\qquad m=1,n,
\end{equation*}
and we used
\[
6\tilde{\mathfrak{b}}
=
\frac{\mathfrak{c}}{2}+\mathfrak{b},
\qquad
6\tilde{\mathfrak{b}}-\mathfrak{b}
=
\frac{\mathfrak{c}}{2}.
\]
Thus, we have
\begin{align*}
	\frac{\ud M_{\bs{t}}(\LF_1;\LF_n)}
	{M_{\bs{t}}(\LF_1;\LF_n)}
	=
	\sum_{j=1}^n
	\sqrt{\kappa}\,\Xi_j\,\ud B_{t_j}^j
	+
	\sum_{j=1}^n
	\left(
	a_j^2\LR_j^{(n)}
	-
	\LR_j^{(1)}
	\right)\ud t_j,
\end{align*}
where
\begin{equation*}
	\Xi_j
	:=
	a_j\Phi_j+\mathfrak{b}p_j-\Psi_j,
\end{equation*}
and
\begin{align*}
	\begin{split}
		\LR_j^{(n)}	:=
		\frac{
			\left(F_n(R_{\bs{t}})-\partial_r+\LD_{\alpha_j}^{(n)}\right)
			\LF_n(R_{\bs{t}};\bs{W}_{\bs{t}},\bs{Y})
		}
		{
			\LF_n(R_{\bs{t}};\bs{W}_{\bs{t}},\bs{Y})
		}, \quad 
		\LR_j^{(1)}
		:=
		\frac{
			\left(F_1(s_j)-\partial_r +\LD_{\alpha}^{(2)}\right)
			\LF_1(s_j;\zeta_{t_j}^j,V_j)
		}
		{
			\LF_1(s_j;\zeta_{t_j}^j,V_j)
		},
	\end{split}
\end{align*}
which gives~\eqref{eqn::multitime_mart_annulus_aux1} as desired.
\section{Calculation for Proposition~\ref{prop::annulusBPZ_kappa4}}
\label{appendix::annulusBPZ}
We give details for the calculation in the proof of Proposition~\ref{prop::annulusBPZ_kappa4}: 
	\begin{lemma}\label{lem::Geps_doubleperiodic}
		For $\eps\in \{\pm, +\}$, the function $G_{\eps}(r; \cdot, \cdot)$ defined in~\eqref{eqn::Geps_def} satisfies double-periodicity~\eqref{eqn::Geps_doubleperiodic}. 
	\end{lemma}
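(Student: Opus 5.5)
We need to prove double periodicity of $G_\eps$ in each variable $\alpha_j,\beta_j$ under shifts $2\pi$ and $2\ii r$, using \eqref{eqn::H_shift_identities}. The $2\pi$-shifts are immediate: $H_1,H_3$ and $\partial_z H_1,\partial_z H_3$ are $2\pi$-periodic and every argument in \eqref{eqn::Geps_def} is a difference of coordinates. So the plan reduces to the imaginary shift. Note also that the differences $\beta_j-\alpha_i$ enter $H_3$ with either sign; \eqref{eqn::H_shift_identities} with $z\mapsto z-2\ii r$ gives $H(z-2\ii r)=H(z)+2\ii$. Derivatives $\partial_z H$ are invariant under all shifts, so only the quadratic and product terms in $H$ can change.

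The approach is to shift one coordinate at a time, say $\alpha_k\mapsto \alpha_k+2\ii r$ for $k\ge2$, then $\alpha_1$, then $\beta_k$, and to organize the change of $G_\eps$ as a polynomial in the constant $c=\pm 2\ii$. It is convenient to write $G_\eps$ through the auxiliary quantities
\[
A_j:=\sum_{i}\eps_i H_3(\alpha_j-\beta_i)-\sum_{i\ne j}\eps_i H_1(\alpha_j-\alpha_i),\qquad B_j:=\sum_i\eps_i H_3(\beta_j-\alpha_i)-\sum_{i\neq j}\eps_iH_1(\beta_j-\beta_i),
\]
which are (up to factor $4$ and sign) the log-derivatives $\partial_{\alpha_j}\log\LZ_\eps$, $\partial_{\beta_j}\log\LZ_\eps$ without the $\frac1r$ Gaussian term. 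Under $\alpha_k\mapsto\alpha_k+2\ii r$, each $H$ containing $\alpha_k$ shifts by $\mp2\ii$ depending on the sign of $\alpha_k$ in its argument, so $A_j$ ($j\ne k$) shifts by $\pm 2\ii\eps_k$, $B_j$ by $\pm2\ii\eps_k$, and $A_k$ by $2\ii\sum_{i\ne k}\eps_i\cdot(\text{sign})$ minus the $\beta$ contributions. Then I will expand: the terms $\frac18\sum\eps_i\eps_jH^2$ produce linear-in-$H$ corrections $\frac{c}{4}\eps_i\eps_jH$ plus constants $\frac{c^2}{8}\eps_i\eps_j$; the squared bracket in line three produces $-\frac14 c'\,(\cdots)-\frac18c'^2$; the mixed products in the last two lines produce linear corrections and constants. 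The task is to check that the linear-in-$H$ corrections cancel termwise, pairing each $H(\cdot)$ occurrence, and that the total constant vanishes; for the constant term I will use $\eps_i^2=1$ and that $\sum_i\eps_i$ takes the value $0$ for $\eps=\pm$ (with $n$ even) and $n$ for $\eps=+$, checking both cases. In particular the asymmetric role of $\alpha_1$ (it appears in the squared bracket and in the last two lines as the "pivot") means the cases $k=1$ and $k\ge 2$ must be done separately, and shifting $\alpha_1$ also shifts all the $H(\cdot-\alpha_1)$ factors multiplying the brackets.

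The main obstacle is exactly this bookkeeping for the shift of $\alpha_1$ and the constant terms: linear corrections from the last two lines of \eqref{eqn::Geps_def} involve sums like $\sum_jH_1(\alpha_j-\alpha_1)\eps_j\eps_k$ which must be matched against the cross term from expanding the square in line three and against the $\frac{c}{4}\eps_i\eps_jH$ terms from lines one and two. I would first verify cancellations in the simplest nontrivial case $n=2$ by hand to fix signs, then argue the general case by noting that every pairwise interaction contributes independently (the change is a sum over pairs $(k,i)$ of identical two-point expressions), reducing the general verification to the $n=2$ computation plus the diagonal $\beta_1$–$\alpha_1$ term.
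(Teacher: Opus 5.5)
Your setup matches the paper's. You shift one coordinate at a time by $2\ii r$ and note that $\partial_z H_a$ is unchanged while each $H_a$ moves by $\mp 2\ii$. The change of $G_\eps$ is therefore linear in the $H$'s plus a constant. You also treat the shifts of $\alpha_1$, of $\alpha_k$ with $k\ge 2$, and of $\beta_m$ separately via your brackets $A_j,B_j$ (the paper's $X_j=\eps_jA_j$, $Y_j=\eps_jB_j$).

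The step that fails is the proposed reduction to $n=2$. Its premise, that the change is a sum over pairs $(k,i)$ of identical two-point expressions, is false. Shifting $\alpha_k$ with $k\ge 2$ sends $S=\sum_i\eps_iH_3(\alpha_1-\beta_i)-\sum_{i\ge 2}\eps_iH_1(\alpha_1-\alpha_i)$ to $S-2\ii\eps_k$. Hence the third line of \eqref{eqn::Geps_def} changes by $\frac{\ii\eps_k}{2}S+\frac12$. For every $j\ge 2$ with $j\ne k$, this contains the term $\frac{\ii}{2}\eps_k\eps_jH_1(\alpha_j-\alpha_1)$: a function of a pair not containing $\alpha_k$, weighted by $\eps_k$. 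It is cancelled by the $j$-th summand of the sixth line, where only the factor $H_1(\alpha_j-\alpha_k)$ inside the bracket moves. This is a three-body cancellation among $\alpha_1,\alpha_j,\alpha_k$, and it does not occur when $n=2$.

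Similarly, for the shift of $\alpha_1$ the linear terms cancel because $\sum_{j\ge 2}X_j+\sum_j Y_j=-S$ (recall $\eps_1=1$). Proving this uses the antisymmetric cancellation of the terms $H_1(\alpha_j-\alpha_i)$ with $2\le i\ne j$, which is again absent for $n=2$. So an $n=2$ check does not establish the general case. At the least you would need $n=3$ with arbitrary signs, plus an argument that every index type is covered.

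The remedy is to do the bookkeeping for general $n$ directly, which is no harder than $n=2$. For each shift, group the linear corrections into $S$, into the moving bracket ($X_m$ or $Y_m$), and into the sums $\sum_{j\ge 2}\eps_jH_1(\alpha_j-\alpha_1)$ and $\sum_j\eps_jH_3(\beta_j-\alpha_1)$. Everything then cancels by oddness of $H_a$, together with the identity above for the $\alpha_1$-shift.

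Also, the constants cancel using only $\eps_i^2=1$ and the fact that $\alpha_i$ and $\beta_i$ carry the same sign. For example, for the shift of $\alpha_m$ one gets $\frac{\eps_m}{2}\sum_i\eps_i-\frac{\eps_m}{2}\sum_{i\ne m}\eps_i+\frac12-1=0$. So no case distinction on the value of $\sum_i\eps_i$ is needed.
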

	
	\begin{lemma}\label{lem::Geps_nopoles}
		For $\eps\in \{\pm, +\}$, the function $G_{\eps}(r; \cdot, \cdot)$ defined in~\eqref{eqn::Geps_def} does not have any poles. 
	\end{lemma}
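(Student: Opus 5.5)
We need to prove Lemma~\ref{lem::Geps_nopoles}: $G_{\eps}$ has no poles, for $\eps\in\{\pm,+\}$ (in fact the argument works for arbitrary $\eps\in\{\pm1\}^n$).

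The plan is to regard $G_{\eps}$ in~\eqref{eqn::Geps_def} as a meromorphic function of the complexified variables $(\bs{\alpha},\bs{\beta})$. Its only possible singularities sit on the divisors where some argument of $H_1$ lies in $2\pi\Z+2\ii r\Z$, or some argument of $H_3$ lies in $2\pi\Z+(2\Z+1)\ii r$. By Lemma~\ref{lem::Geps_doubleperiodic} and the shift identities~\eqref{eqn::H_shift_identities} and~\eqref{eqn::H1H3_shift}, every such divisor can be translated to one of three model collisions, after rewriting each $H_3$ as $H_1$ shifted by $\pm\ii$:
\begin{itemize}
\item[(a)] $\alpha_i\to\alpha_k$;
\item[(b)] $\beta_i\to\beta_k$;
\item[(c)] $\alpha_i\to\beta_k+\ii r$ (equivalently $\beta_k-\alpha_i\to -\ii r$).
\end{itemize}
Case (c) turns $H_3(\beta_k-\alpha_i)$ into $H_1(z)\pm\ii$, up to the constant shift. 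Poles are codimension one, so it suffices to treat one collision at a time with the remaining variables generic.

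In each case I set $z$ equal to the colliding difference and use the Laurent expansion~\eqref{eqn::Laurrent_expand_LE}: $H_1=2/z+\LE z+O(z^3)$, $\partial_zH_1=-2/z^2+\LE+O(z^2)$. I then separate three kinds of index: whether the index $1$ (the distinguished variable of $\LD_{\alpha_1}$) is one of the colliding pair, and whether the pair is $\alpha\alpha$, $\beta\beta$ or $\alpha\beta$. I collect the $z^{-2}$ and $z^{-1}$ coefficients in each line of~\eqref{eqn::Geps_def}.

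For the $z^{-2}$ terms, $\frac14\partial_zH+\frac18H^2$ contributes $-\frac12z^{-2}+\frac12z^{-2}=0$ whenever it comes with coefficient $\eps_i\eps_k$. The squared bracket and derivative bracket, in lines three and four, pair with the ``$-\frac14\partial_z$'' line and the cross-term lines, with coefficients using $\eps_i^2=1$. The $z^{-1}$ terms appear only from products $H(z)\cdot(\text{regular})$. Their coefficient is a difference of the regular factor evaluated at the two colliding points, for instance $\frac12\eps_i\eps_k(X_i-X_k)$ with $X$ a sum of $H$'s. This vanishes to first order because $X_i-X_k=O(z)$, and the $O(z)$ remainder times $1/z$ is bounded. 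In case (c) the extra constants $\mp\ii$ produced by~\eqref{eqn::H1H3_shift} generate additional $z^{-1}$ terms. These must cancel between the $H_3(\beta_k-\alpha_1)$ sum in the last line and the squared bracket in line three, and I expect the cancellation to use the sign flip $H_3(-w)=-H_3(w)$ together with the opposite shift direction for $\alpha_1-\beta_k$ versus $\beta_k-\alpha_1$.

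The main obstacle is exactly this bookkeeping in case (c) when $i=1$. There the colliding term appears simultaneously in the squared bracket, in the $\partial_z$ bracket, in the un-signed lines $-\frac14\sum\partial_zH_3(\beta_j-\alpha_1)$, and in the last double sum, each with a different shift convention. I will first do the computation for $n=1$ (only $\alpha_1$, $\beta_1$), where $G$ is short and the $\pm\ii$ constants are easy to track. I will then observe that for general $n$ the extra terms touching the collision all have the form (singular part)$\times$(difference of regular sums at the two points), which is $O(1)$.
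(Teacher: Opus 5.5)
Your route is the paper's: use Lemma~\ref{lem::Geps_doubleperiodic} to reduce to the collisions $\alpha_i\to\alpha_k$, $\beta_i\to\beta_k$, $\alpha_i\to\beta_k+\ii r$, Laurent-expand, and check that the $z^{-2}$ and $z^{-1}$ coefficients vanish. Your remark that it suffices to look at generic points of each polar hyperplane is a correct justification that the paper leaves implicit, and your sketch of the $\alpha\alpha$ and $\beta\beta$ collisions matches the paper's cases. The gap is case (c). You leave it as an expectation, and the shortcut you propose for it fails. You claim that, beyond an $n=1$ computation, every term touching the collision is (singular part)$\times$(difference of regular sums at the two points), hence $O(1)$. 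That is not true.

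Take $G_\pm$ and $z=\alpha_1-\beta_m-\ii r\to0$ with $m$ even, so $\epsilon_m=-1$. Use $H_3(\ii r+z)=\frac2z-\ii+O(z)$. The terms involving only the pair $(\alpha_1,\beta_m)$ have the following $z^{-1}$-coefficients:
\begin{itemize}
\item $\frac{\ii\epsilon_m}{2}$ from $\frac18H_3^2$ in the first line, which your sketch omits;
\item $\frac{\ii}{2}$ from the squared bracket;
\item $-\ii\epsilon_m$ from the last line.
\end{itemize}
The total is $\frac{\ii}{2}(1-\epsilon_m)=\ii\neq0$.

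The remaining terms give $-\frac{1}{2z}(\epsilon_mA'+Y_m')$. Here $A'$ is the rest of the bracket $S$ near $\alpha_1=\beta_m+\ii r$, and $Y_m'$ is the rest of the inner sum multiplying $H_3(\beta_m-\alpha_1)$. These are different functions, related only through $H_3(w+\ii r)=H_1(w)-\ii$ and $H_1(w+\ii r)=H_3(w)-\ii$. Using $\epsilon_1=1$, one gets $\epsilon_mA'+Y_m'=\ii\epsilon_m\bigl(\sum_{i\ge2}\epsilon_i-\sum_{i\ne m}\epsilon_i\bigr)+O(z)=\ii(1-\epsilon_m)+O(z)$. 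So the compensating residue $-\ii$ comes from summing the half-period constants over all other marked points, and no $n=1$ reduction can see it.

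You must keep the full regular parts, as the paper does:
\begin{itemize}
\item write $S=\frac{2\epsilon_m}{z}+A_m^{\alpha\beta}+O(z)$ and $Y_m=-\frac{2\epsilon_m}{z}+C_m^{\alpha\beta}+O(z)$;
\item read off the residue $-\frac12(\epsilon_mA_m^{\alpha\beta}+C_m^{\alpha\beta})$;
\item prove $\epsilon_mA_m^{\alpha\beta}+C_m^{\alpha\beta}=0$ via the half-period identities.
\end{itemize}
The same issue arises for $\alpha_p\to\beta_m+\ii r$ with $p\ge2$, which has no $n=1$ analogue. There the relevant difference $H_1(\alpha_p-\alpha_1)-H_3(\beta_m-\alpha_1)$ tends to $-\ii$, not $0$. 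The residue $\frac{\epsilon_p\epsilon_m}{2}\bigl(\ii+H_1(\alpha_p-\alpha_1)-H_3(\beta_m-\alpha_1)\bigr)$ vanishes only because the first line supplies $\frac{\ii\epsilon_p\epsilon_m}{2z}$. It is not a cancellation between lines three and seven, as you predict.

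Finally, your parenthetical that the argument works for all $\epsilon\in\{\pm1\}^n$ is wrong for the displayed formula. \eqref{eqn::Geps_def} was derived with $\epsilon_1=1$: the fourth line should carry a factor $\epsilon_1$. For $\epsilon_1=-1$, the fourth and sixth lines leave a double pole $\epsilon_m(1-\epsilon_1)z^{-2}$ at $\alpha_1=\alpha_m$. Reduce to $\epsilon_1=1$ via $\mathcal{Z}_{-\epsilon}=\mathcal{Z}_{\epsilon}$ instead.
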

	The following calculations are compiled from ChatGPT 5.5 Pro (OpenAI)'s computation drafts.
	Throughout this appendix, the modulus $r>0$ is
	fixed and we write 
\[
h_a(z):=H_a(r;z), \qquad h_a'(z):=\partial_z H_a(r;z),
\qquad a=1,3 .
\]
From~\eqref{eqn::Theta_period}, \eqref{eqn::Theta_shift} and
\eqref{eqn::H1H3}, we will use repeatedly
\begin{align}\label{eqn::app_H_identities}
	\begin{split}
		&h_a(-z)=-h_a(z), \qquad h_a'(-z)=h_a'(z),\qquad a=1,3,\\
		&h_a(z+2\pi)=h_a(z), \qquad h_a(z+2\ii r)=h_a(z)-2\ii,\\
		&h_3(z+\ii r)=h_1(z)-\ii, \qquad h_3(z-\ii r)=h_1(z)+\ii .
	\end{split}
\end{align}
Moreover, by~\eqref{eqn::Laurrent_expand_LE},
\begin{equation}\label{eqn::app_H_laurent}
	h_1(z)=\frac{2}{z}+\LE(r)z+O(z^3), \qquad
	h_1'(z)=-\frac{2}{z^2}+\LE(r)+O(z^2).
\end{equation}
By~\eqref{eqn::app_H_identities}, the expansion near the poles of $h_3$ is
\begin{equation}\label{eqn::app_H3_laurent}
	h_3(\ii r+z)=\frac{2}{z}-\ii+\LE(r)z+O(z^3), \qquad
	h_3'(\ii r+z)=-\frac{2}{z^2}+\LE(r)+O(z^2).
\end{equation}
Using the notations above, we rewrite~\eqref{eqn::Geps_def} as
\begin{align}\label{eqn::app_Gepsilon_def}
	\begin{split}
		G_{\eps}(r; \bs{\alpha}, \bs{\beta}):=&
		-\sum_{i,j=1}^{n}\epsilon_i\epsilon_j
		\left(\frac14 h_3'(\beta_j-\alpha_i)
		+\frac18 h_3(\beta_j-\alpha_i)^2\right)\\
		&+\sum_{1\le i<j\le n}\epsilon_i\epsilon_j
		\left(\frac14 h_1'(\alpha_j-\alpha_i)
		+\frac18 h_1(\alpha_j-\alpha_i)^2
		+\frac14 h_1'(\beta_j-\beta_i)
		+\frac18 h_1(\beta_j-\beta_i)^2\right)\\
		&-\frac18\left(
		\sum_{i=1}^{n}\epsilon_i h_3(\alpha_1-\beta_i)
		-\sum_{i=2}^{n}\epsilon_i h_1(\alpha_1-\alpha_i)
		\right)^2\\
		&+\frac12\left(
		\sum_{i=1}^{n}\epsilon_i h_3'(\alpha_1-\beta_i)
		-\sum_{i=2}^{n}\epsilon_i h_1'(\alpha_1-\alpha_i)
		\right)\\
		&-\frac14\sum_{j=2}^{n} h_1'(\alpha_j-\alpha_1)
		-\frac14\sum_{j=1}^{n} h_3'(\beta_j-\alpha_1)\\
		&+\frac14\sum_{j=2}^{n}h_1(\alpha_j-\alpha_1)
		\left(
		\sum_{i=1}^{n}\epsilon_j\epsilon_i h_3(\alpha_j-\beta_i)
		-\sum_{i\neq j}\epsilon_j\epsilon_i h_1(\alpha_j-\alpha_i)
		\right)\\
		&+\frac14\sum_{j=1}^{n}h_3(\beta_j-\alpha_1)
		\left(
		\sum_{i=1}^{n}\epsilon_j\epsilon_i h_3(\beta_j-\alpha_i)
		-\sum_{i\neq j}\epsilon_j\epsilon_i h_1(\beta_j-\beta_i)
		\right).
	\end{split}
\end{align}

\begin{proof}[Proof of Lemma~\ref{lem::Geps_doubleperiodic}]
	The $2\pi$-periodicity follows immediately from~\eqref{eqn::app_H_identities}. We check the imaginary period. Let $\Delta_x$ denote the change when a variable
	$x$ is replaced by $x+2\ii r$. In the two choices of $\epsilon$, we
	always have $\epsilon_1=1$. Put
	\begin{align*}
		S=&\sum_{i=1}^{n}\epsilon_i h_3(\alpha_1-\beta_i)
		-\sum_{i=2}^{n}\epsilon_i h_1(\alpha_1-\alpha_i),\\
		X_j=&\sum_{i=1}^{n}\epsilon_j\epsilon_i h_3(\alpha_j-\beta_i)
		-\sum_{i\neq j}\epsilon_j\epsilon_i h_1(\alpha_j-\alpha_i),
		\qquad 2\le j\le n,\\
		Y_j=&\sum_{i=1}^{n}\epsilon_j\epsilon_i h_3(\beta_j-\alpha_i)
		-\sum_{i\neq j}\epsilon_j\epsilon_i h_1(\beta_j-\beta_i),
		\qquad 1\le j\le n.
	\end{align*}
	Write $G_{\epsilon}=T_1+\cdots+T_7$, where $T_1,\ldots,T_7$ are the seven
	lines in~\eqref{eqn::app_Gepsilon_def}. We have the following three observations computing the change of each non-constant line.
	
	\medbreak
	1. Let us shift $\alpha_1$ to $\alpha_1+2\ii r$. For $T_1$ only the terms
	with $i=1$ change; for $T_2$ only the terms with the pair $(1,j)$ change.
	Using $h_a(z-2\ii r)=h_a(z)+2\ii$, we get
	\begin{align*}
		\Delta_{\alpha_1}T_1
		=&-\sum_{j=1}^{n}\epsilon_j
		\frac18\left[\left(h_3(\beta_j-\alpha_1)+2\ii\right)^2
		-h_3(\beta_j-\alpha_1)^2\right] = -\frac{\ii}{2}\sum_{j=1}^{n}\epsilon_j h_3(\beta_j-\alpha_1)
		+\frac12\sum_{j=1}^{n}\epsilon_j,\\
		\Delta_{\alpha_1}T_2
		=&\sum_{j=2}^{n}\epsilon_j
		\frac18\left[\left(h_1(\alpha_j-\alpha_1)+2\ii\right)^2
		-h_1(\alpha_j-\alpha_1)^2\right] = \frac{\ii}{2}\sum_{j=2}^{n}\epsilon_j h_1(\alpha_j-\alpha_1)
		-\frac12\sum_{j=2}^{n}\epsilon_j.
	\end{align*}
	The quantity $S$ changes to $S-2\ii$, hence
	\begin{align*}
		\Delta_{\alpha_1}T_3
		=-\frac18\left[(S-2\ii)^2-S^2\right]
		=\frac{\ii}{2}S+\frac12.
	\end{align*}
	The derivative terms $T_4$ and $T_5$ do not change. For $T_6$, the outer
	factor $h_1(\alpha_j-\alpha_1)$ changes to
	$h_1(\alpha_j-\alpha_1)+2\ii$, and $X_j$ changes to $X_j-2\ii\epsilon_j$.
	Thus
	\begin{align*}
		\Delta_{\alpha_1}T_6
		=&\frac14\sum_{j=2}^{n}
		\left[\left(h_1(\alpha_j-\alpha_1)+2\ii\right)
		\left(X_j-2\ii\epsilon_j\right)
		-h_1(\alpha_j-\alpha_1)X_j\right] = \frac{\ii}{2}\sum_{j=2}^{n}X_j
		-\frac{\ii}{2}\sum_{j=2}^{n}\epsilon_jh_1(\alpha_j-\alpha_1)
		+\sum_{j=2}^{n}\epsilon_j.
	\end{align*}
	For $T_7$, the outer factor $h_3(\beta_j-\alpha_1)$ changes to
	$h_3(\beta_j-\alpha_1)+2\ii$, and $Y_j$ changes to $Y_j+2\ii\epsilon_j$.
	Thus
	\begin{align*}
		\Delta_{\alpha_1}T_7
		=&\frac14\sum_{j=1}^{n}
		\left[\left(h_3(\beta_j-\alpha_1)+2\ii\right)
		\left(Y_j+2\ii\epsilon_j\right)
		-h_3(\beta_j-\alpha_1)Y_j\right] = \frac{\ii}{2}\sum_{j=1}^{n}Y_j
		+\frac{\ii}{2}\sum_{j=1}^{n}\epsilon_jh_3(\beta_j-\alpha_1)
		-\sum_{j=1}^{n}\epsilon_j.
	\end{align*}
	Adding the constant terms gives
	\[
	\frac12\sum_{j=1}^{n}\epsilon_j
	-\frac12\sum_{j=2}^{n}\epsilon_j
	+\frac12
	+\sum_{j=2}^{n}\epsilon_j
	-\sum_{j=1}^{n}\epsilon_j
	=0.
	\]
	The remaining terms are
	\begin{align*}
		\Delta_{\alpha_1}G_{\epsilon}
		=&-\frac{\ii}{2}\sum_{i=1}^{n}\epsilon_i h_3(\beta_i-\alpha_1)
		+\frac{\ii}{2}\sum_{i=2}^{n}\epsilon_i h_1(\alpha_i-\alpha_1)
		+\frac{\ii}{2}S\\
		&+\frac{\ii}{2}\sum_{j=2}^{n}X_j
		-\frac{\ii}{2}\sum_{j=2}^{n}\epsilon_jh_1(\alpha_j-\alpha_1)
		+\frac{\ii}{2}\sum_{j=1}^{n}Y_j
		+\frac{\ii}{2}\sum_{j=1}^{n}\epsilon_jh_3(\beta_j-\alpha_1).
	\end{align*}
	Now
	\[
	S=-\sum_{i=1}^{n}\epsilon_i h_3(\beta_i-\alpha_1)
	+\sum_{i=2}^{n}\epsilon_i h_1(\alpha_i-\alpha_1),
	\]
	and, after expanding $X_j,Y_j$ and using oddness,
	\[
	\sum_{j=2}^{n}X_j+\sum_{j=1}^{n}Y_j
	=
	\sum_{i=1}^{n}\epsilon_i h_3(\beta_i-\alpha_1)
	-\sum_{i=2}^{n}\epsilon_i h_1(\alpha_i-\alpha_1).
	\]
	Substituting these two identities into the previous display gives
	\[
	\Delta_{\alpha_1}G_{\epsilon}=0.
	\]
	
	\medbreak
	2. Fix $m\in\{2,\ldots,n\}$ and shift $\alpha_m$ to
	$\alpha_m+2\ii r$. For $T_1$ only the terms with $i=m$ change. For $T_2$ the
	pairs $(i,m)$ with $i<m$ and $(m,j)$ with $j>m$ change. Therefore
	\begin{align*}
		\Delta_{\alpha_m}T_1
		=&-\sum_{i=1}^{n}\epsilon_m\epsilon_i
		\frac18\left[\left(h_3(\beta_i-\alpha_m)+2\ii\right)^2
		-h_3(\beta_i-\alpha_m)^2\right] = -\frac{\ii\epsilon_m}{2}
		\sum_{i=1}^{n}\epsilon_i h_3(\beta_i-\alpha_m)
		+\frac{\epsilon_m}{2}\sum_{i=1}^{n}\epsilon_i,\\
		\Delta_{\alpha_m}T_2
		=&\sum_{i<m}\epsilon_i\epsilon_m
		\frac18\left[\left(h_1(\alpha_m-\alpha_i)-2\ii\right)^2
		-h_1(\alpha_m-\alpha_i)^2\right]+\sum_{j>m}\epsilon_m\epsilon_j
		\frac18\left[\left(h_1(\alpha_j-\alpha_m)+2\ii\right)^2
		-h_1(\alpha_j-\alpha_m)^2\right]\\
		=&\frac{\ii\epsilon_m}{2}
		\sum_{i\neq m}\epsilon_i h_1(\alpha_i-\alpha_m)
		-\frac{\epsilon_m}{2}\sum_{i\neq m}\epsilon_i.
	\end{align*}
	The quantity $S$ changes to $S-2\ii\epsilon_m$, and hence
	\begin{align*}
		\Delta_{\alpha_m}T_3
		=-\frac18\left[(S-2\ii\epsilon_m)^2-S^2\right]
		=\frac{\ii\epsilon_m}{2}S+\frac12.
	\end{align*}
	The derivative terms $T_4$ and $T_5$ do not change. For $T_6$, the summand
	with outer index $m$ has
	\[
	h_1(\alpha_m-\alpha_1)\mapsto h_1(\alpha_m-\alpha_1)-2\ii,
	\qquad X_m\mapsto X_m-2\ii,
	\]
	whereas, for $j\neq m$, only the term in $X_j$ containing
	$h_1(\alpha_j-\alpha_m)$ changes and gives $X_j\mapsto X_j-2\ii\epsilon_j\epsilon_m$.
	Thus
	\begin{align*}
		\Delta_{\alpha_m}T_6
		=&\frac14\left[\left(h_1(\alpha_m-\alpha_1)-2\ii\right)(X_m-2\ii)
		-h_1(\alpha_m-\alpha_1)X_m\right] -\frac{\ii\epsilon_m}{2}
		\sum_{\substack{2\le j\le n\\j\neq m}}
		\epsilon_j h_1(\alpha_j-\alpha_1)\\
		=&-\frac{\ii}{2}X_m
		-\frac{\ii\epsilon_m}{2}
		\sum_{j=2}^{n}\epsilon_j h_1(\alpha_j-\alpha_1)
		-1.
	\end{align*}
	For $T_7$, only the first sum in $Y_j$ changes, and
	$Y_j\mapsto Y_j+2\ii\epsilon_j\epsilon_m$. Hence
	\begin{align*}
		\Delta_{\alpha_m}T_7
		=&\frac{\ii\epsilon_m}{2}
		\sum_{j=1}^{n}\epsilon_j h_3(\beta_j-\alpha_1).
	\end{align*}
	The constants are
	\[
	\frac{\epsilon_m}{2}\sum_{i=1}^{n}\epsilon_i
	-\frac{\epsilon_m}{2}\sum_{i\neq m}\epsilon_i
	+\frac12-1
	=0.
	\]
	The non-constant part is
	\begin{align*}
		\Delta_{\alpha_m}G_{\epsilon}
		=&-\frac{\ii\epsilon_m}{2}
		\sum_{i=1}^{n}\epsilon_i h_3(\beta_i-\alpha_m)
		+\frac{\ii\epsilon_m}{2}
		\sum_{i\neq m}\epsilon_i h_1(\alpha_i-\alpha_m)
		+\frac{\ii\epsilon_m}{2}S\\
		&-\frac{\ii}{2}X_m
		-\frac{\ii\epsilon_m}{2}
		\sum_{i=2}^{n}\epsilon_i h_1(\alpha_i-\alpha_1)
		+\frac{\ii\epsilon_m}{2}
		\sum_{i=1}^{n}\epsilon_i h_3(\beta_i-\alpha_1).
	\end{align*}
	By the definition of $X_m$ and oddness,
	\[
	X_m
	=
	\epsilon_m\left(
	-\sum_{i=1}^{n}\epsilon_i h_3(\beta_i-\alpha_m)
	+\sum_{i\neq m}\epsilon_i h_1(\alpha_i-\alpha_m)
	\right).
	\]
	This cancels the first two sums in $\Delta_{\alpha_m}G_{\epsilon}$. The
	remaining terms vanish because
	\[
	S=
	-\sum_{i=1}^{n}\epsilon_i h_3(\beta_i-\alpha_1)
	+\sum_{i=2}^{n}\epsilon_i h_1(\alpha_i-\alpha_1).
	\]
	Thus
	\[
	\Delta_{\alpha_m}G_{\epsilon}=0,\qquad 2\le m\le n.
	\]
	
	\medbreak
	3. Fix $m\in\{1,\ldots,n\}$ and shift $\beta_m$ to $\beta_m+2\ii r$. For
	$T_1$ only the terms with $j=m$ change. For the $\beta$-part of $T_2$, the
	pairs $(i,m)$ with $i<m$ and $(m,j)$ with $j>m$ change. Thus
	\begin{align*}
		\Delta_{\beta_m}T_1
		=&-\sum_{i=1}^{n}\epsilon_i\epsilon_m
		\frac18\left[\left(h_3(\beta_m-\alpha_i)-2\ii\right)^2
		-h_3(\beta_m-\alpha_i)^2\right] = \frac{\ii\epsilon_m}{2}
		\sum_{i=1}^{n}\epsilon_i h_3(\beta_m-\alpha_i)
		+\frac{\epsilon_m}{2}\sum_{i=1}^{n}\epsilon_i,\\
		\Delta_{\beta_m}T_2
		=&\sum_{i<m}\epsilon_i\epsilon_m
		\frac18\left[\left(h_1(\beta_m-\beta_i)-2\ii\right)^2
		-h_1(\beta_m-\beta_i)^2\right] +\sum_{j>m}\epsilon_m\epsilon_j
		\frac18\left[\left(h_1(\beta_j-\beta_m)+2\ii\right)^2
		-h_1(\beta_j-\beta_m)^2\right]\\
		=&\frac{\ii\epsilon_m}{2}
		\sum_{i\neq m}\epsilon_i h_1(\beta_i-\beta_m)
		-\frac{\epsilon_m}{2}\sum_{i\neq m}\epsilon_i.
	\end{align*}
	The quantity $S$ changes to $S+2\ii\epsilon_m$, so
	\begin{align*}
		\Delta_{\beta_m}T_3
		=-\frac18\left[(S+2\ii\epsilon_m)^2-S^2\right]
		=-\frac{\ii\epsilon_m}{2}S+\frac12.
	\end{align*}
	The derivative terms $T_4$ and $T_5$ do not change. For $T_6$, only the first
	sum in $X_j$ changes and $X_j\mapsto X_j+2\ii\epsilon_j\epsilon_m$. Therefore
	\begin{align*}
		\Delta_{\beta_m}T_6
		=&\frac{\ii\epsilon_m}{2}
		\sum_{j=2}^{n}\epsilon_j h_1(\alpha_j-\alpha_1).
	\end{align*}
	For $T_7$, the summand with outer index $m$ has
	\[
	h_3(\beta_m-\alpha_1)\mapsto h_3(\beta_m-\alpha_1)-2\ii,
	\qquad Y_m\mapsto Y_m-2\ii,
	\]
	whereas, for $j\neq m$, the term in $Y_j$ containing
	$h_1(\beta_j-\beta_m)$ gives $Y_j\mapsto Y_j-2\ii\epsilon_j\epsilon_m$.
	Consequently,
	\begin{align*}
		\Delta_{\beta_m}T_7
		=&\frac14\left[\left(h_3(\beta_m-\alpha_1)-2\ii\right)(Y_m-2\ii)
		-h_3(\beta_m-\alpha_1)Y_m\right] -\frac{\ii\epsilon_m}{2}
		\sum_{\substack{1\le j\le n\\j\neq m}}
		\epsilon_j h_3(\beta_j-\alpha_1)\\
		=&-\frac{\ii}{2}Y_m
		-\frac{\ii\epsilon_m}{2}
		\sum_{j=1}^{n}\epsilon_j h_3(\beta_j-\alpha_1)
		-1.
	\end{align*}
	The constants add up to
	\[
	\frac{\epsilon_m}{2}\sum_{i=1}^{n}\epsilon_i
	-\frac{\epsilon_m}{2}\sum_{i\neq m}\epsilon_i
	+\frac12-1
	=0.
	\]
	The non-constant part is
	\begin{align*}
		\Delta_{\beta_m}G_{\epsilon}
		=&\frac{\ii\epsilon_m}{2}
		\sum_{i=1}^{n}\epsilon_i h_3(\beta_m-\alpha_i)
		+\frac{\ii\epsilon_m}{2}
		\sum_{i\neq m}\epsilon_i h_1(\beta_i-\beta_m)
		-\frac{\ii\epsilon_m}{2}S\\
		&+\frac{\ii\epsilon_m}{2}
		\sum_{j=2}^{n}\epsilon_j h_1(\alpha_j-\alpha_1)
		-\frac{\ii}{2}Y_m
		-\frac{\ii\epsilon_m}{2}
		\sum_{j=1}^{n}\epsilon_j h_3(\beta_j-\alpha_1).
	\end{align*}
	By the definition of $Y_m$ and oddness,
	\[
	Y_m
	=
	\epsilon_m\left(
	\sum_{i=1}^{n}\epsilon_i h_3(\beta_m-\alpha_i)
	+\sum_{i\neq m}\epsilon_i h_1(\beta_i-\beta_m)
	\right),
	\]
	which cancels the first two sums. The remaining terms cancel after substituting
	\[
	S=
	-\sum_{j=1}^{n}\epsilon_j h_3(\beta_j-\alpha_1)
	+\sum_{j=2}^{n}\epsilon_j h_1(\alpha_j-\alpha_1).
	\]
	Therefore
	\[
	\Delta_{\beta_m}G_{\epsilon}=0,\qquad 1\le m\le n.
	\]
	\medbreak
	Combining the three observations above, we conclude the $2\ii r$-periodicity for all $\alpha$- and $\beta$-coordinates. Therefore $G_{\epsilon}$ is doubly periodic in each variable.
\end{proof}

\begin{proof}[Proof of Lemma~\ref{lem::Geps_nopoles}]
	By the periodicity shown in Lemma~\ref{lem::Geps_doubleperiodic}, it suffices to inspect the
	possible singularities
	\[
	\alpha_i\to\alpha_j,\qquad \beta_i\to\beta_j,\qquad
	\alpha_i\to\beta_j+\ii r.
	\]
	We inspect the singular parts in each case. All unlisted terms are analytic in the corresponding local coordinate.
	
	\medbreak
	1. Let $m\in\{2,\ldots,n\}$ and put $z=\alpha_1-\alpha_m\to0$. Define the
	regular part
	\[
	A_m:=
	\sum_{i=1}^{n}\epsilon_i h_3(\alpha_m-\beta_i)
	-\sum_{\substack{i=2\\i\neq m}}^{n}
	\epsilon_i h_1(\alpha_m-\alpha_i).
	\]
	Using~\eqref{eqn::app_H_laurent},
	\[
	S=-\epsilon_m h_1(z)+A_m+O(z)
	=-\frac{2\epsilon_m}{z}+A_m+O(z),
	\]
	and
	\[
	X_m=-\epsilon_m h_1(\alpha_m-\alpha_1)+\epsilon_m A_m+O(z)
	=\frac{2\epsilon_m}{z}+\epsilon_m A_m+O(z).
	\]
	The terms in $G_\epsilon$ with possible poles are
	\begin{align*}
		T_2^{\mathrm{sing}}
		=&\epsilon_m\left(
		\frac14 h_1'(\alpha_m-\alpha_1)
		+\frac18h_1(\alpha_m-\alpha_1)^2\right)=O(1),\\
		T_3^{\mathrm{sing}}
		=&-\frac18\left(-\frac{2\epsilon_m}{z}+A_m\right)^2
		=-\frac{1}{2z^2}+\frac{\epsilon_m A_m}{2z}+O(1),\\
		T_4^{\mathrm{sing}}
		=&-\frac{\epsilon_m}{2}h_1'(z)
		=\frac{\epsilon_m}{z^2}+O(1),\\
		T_5^{\mathrm{sing}}
		=&-\frac14 h_1'(-z)
		=\frac{1}{2z^2}+O(1),\\
		T_6^{\mathrm{sing}}
		=&\frac14 h_1(-z)X_m
		=\frac14\left(-\frac2z+O(z)\right)
		\left(\frac{2\epsilon_m}{z}+\epsilon_m A_m+O(z)\right) = -\frac{\epsilon_m}{z^2}
		-\frac{\epsilon_m A_m}{2z}+O(1).
	\end{align*}
	The coefficient of $z^{-2}$ is
	\[
	-\frac12+\epsilon_m+\frac12-\epsilon_m=0,
	\]
	and the coefficient of $z^{-1}$ is
	\[
	\frac{\epsilon_m A_m}{2}-\frac{\epsilon_m A_m}{2}=0.
	\]
	Thus $G_\epsilon$ has no pole when $\alpha_1\to\alpha_m$.
	
	\medbreak
	2. Let $2\le p<q\le n$ and put $z=\alpha_p-\alpha_q\to0$. The direct
	interaction in $T_2$ is
	\[
	T_2^{\mathrm{sing}}
	=
	\epsilon_p\epsilon_q
	\left(
	\frac14 h_1'(\alpha_q-\alpha_p)
	+\frac18h_1(\alpha_q-\alpha_p)^2\right)
	=O(1).
	\]
	The only other possible poles come from the $p$-th and $q$-th summands in
	$T_6$. In $X_p$ and $X_q$ we have
	\[
	X_p=-\epsilon_p\epsilon_q h_1(z)+O(1)
	=-\frac{2\epsilon_p\epsilon_q}{z}+O(1),
	\]
	and
	\[
	X_q=-\epsilon_q\epsilon_p h_1(-z)+O(1)
	=\frac{2\epsilon_p\epsilon_q}{z}+O(1).
	\]
	Therefore
	\begin{align*}
		T_6^{\mathrm{sing}}
		=&\frac14 h_1(\alpha_p-\alpha_1)X_p
		+\frac14 h_1(\alpha_q-\alpha_1)X_q = \frac{\epsilon_p\epsilon_q}{2z}
		\left(h_1(\alpha_q-\alpha_1)-h_1(\alpha_p-\alpha_1)\right)+O(1).
	\end{align*}
	Since
	\[
	h_1(\alpha_p-\alpha_1)
	=h_1(\alpha_q-\alpha_1)+z\,h_1'(\alpha_q-\alpha_1)+O(z^2),
	\]
	the last display is $O(1)$. Thus $G_\epsilon$ has no pole when
	$\alpha_p\to\alpha_q$ for $2\le p<q\le n$.
	
	\medbreak
	3. Let $m\in\{1,\ldots,n\}$ and put
	$z=\alpha_1-\beta_m-\ii r\to0$. Define
	\[
	A_m^{\alpha\beta}
	:=
	-\ii\epsilon_m
	+\sum_{i\neq m}\epsilon_i h_3(\beta_m+\ii r-\beta_i)
	-\sum_{i=2}^{n}\epsilon_i h_1(\beta_m+\ii r-\alpha_i),
	\]
	so that
	\[
	S=\frac{2\epsilon_m}{z}+A_m^{\alpha\beta}+O(z).
	\]
	For the $m$-th summand of $Y_m$, define
	\[
	C_m^{\alpha\beta}
	:=
	\ii\epsilon_m
	+\sum_{i=2}^{n}\epsilon_m\epsilon_i h_3(\beta_m-\alpha_i)
	-\sum_{i\neq m}\epsilon_m\epsilon_i h_1(\beta_m-\beta_i),
	\]
	so that
	\[
	Y_m=-\frac{2\epsilon_m}{z}+C_m^{\alpha\beta}+O(z).
	\]
	Using~\eqref{eqn::app_H3_laurent} and oddness,
	\[
	h_3(\alpha_1-\beta_m)=\frac2z-\ii+O(z),
	\qquad
	h_3(\beta_m-\alpha_1)=-\frac2z+\ii+O(z).
	\]
	The possible singular terms are
	\begin{align*}
		T_1^{\mathrm{sing}}
		=&-\epsilon_m\left(
		\frac14 h_3'(\beta_m-\alpha_1)
		+\frac18h_3(\beta_m-\alpha_1)^2\right)
		=\frac{\ii\epsilon_m}{2z}+O(1),\\
		T_3^{\mathrm{sing}}
		=&-\frac18\left(\frac{2\epsilon_m}{z}+A_m^{\alpha\beta}\right)^2
		=-\frac{1}{2z^2}
		-\frac{\epsilon_m A_m^{\alpha\beta}}{2z}+O(1),\\
		T_4^{\mathrm{sing}}
		=&\frac{\epsilon_m}{2}h_3'(\alpha_1-\beta_m)
		=-\frac{\epsilon_m}{z^2}+O(1),\\
		T_5^{\mathrm{sing}}
		=&-\frac14 h_3'(\beta_m-\alpha_1)
		=\frac{1}{2z^2}+O(1),\\
		T_7^{\mathrm{sing}}
		=&\frac14 h_3(\beta_m-\alpha_1)Y_m = \frac14\left(-\frac2z+\ii+O(z)\right)
		\left(-\frac{2\epsilon_m}{z}+C_m^{\alpha\beta}+O(z)\right) = \frac{\epsilon_m}{z^2}
		-\frac{C_m^{\alpha\beta}}{2z}
		-\frac{\ii\epsilon_m}{2z}+O(1).
	\end{align*}
	The coefficient of $z^{-2}$ is
	\[
	-\frac12-\epsilon_m+\frac12+\epsilon_m=0.
	\]
	The coefficient of $z^{-1}$ is
	\[
	\frac{\ii\epsilon_m}{2}
	-\frac{\epsilon_m A_m^{\alpha\beta}}{2}
	-\frac{C_m^{\alpha\beta}}{2}
	-\frac{\ii\epsilon_m}{2}
	=
	-\frac12\left(\epsilon_m A_m^{\alpha\beta}+C_m^{\alpha\beta}\right).
	\]
	By~\eqref{eqn::app_H_identities},
	\[
	h_3(w+\ii r)=h_1(w)-\ii,\qquad
	h_1(w+\ii r)=h_3(w)-\ii.
	\]
	Applying these identities to $A_m^{\alpha\beta}$ gives
	\[
	A_m^{\alpha\beta}
	=
	\sum_{i\neq m}\epsilon_i h_1(\beta_m-\beta_i)
	-\sum_{i=2}^{n}\epsilon_i h_3(\beta_m-\alpha_i)
	-\ii.
	\]
	Consequently,
	\[
	\epsilon_m A_m^{\alpha\beta}+C_m^{\alpha\beta}=0.
	\]
	Thus $G_\epsilon$ has no pole when $\alpha_1\to\beta_m+\ii r$.
	
	\medbreak
	4. Let $p\in\{2,\ldots,n\}$, $m\in\{1,\ldots,n\}$ with $p\neq m$, and put
	$z=\alpha_p-\beta_m-\ii r\to0$. The possible pole from $T_1$ is
	\begin{align*}
		T_1^{\mathrm{sing}}
		=&-\epsilon_p\epsilon_m
		\left(
		\frac14 h_3'(\beta_m-\alpha_p)
		+\frac18h_3(\beta_m-\alpha_p)^2\right)
		=\frac{\ii\epsilon_p\epsilon_m}{2z}+O(1).
	\end{align*}
	The $p$-th summand in $T_6$ contains
	\[
	X_p=\epsilon_p\epsilon_m h_3(\alpha_p-\beta_m)+O(1)
	=\epsilon_p\epsilon_m\left(\frac2z-\ii\right)+O(z),
	\]
	and hence
	\[
	T_6^{\mathrm{sing}}
	=
	\frac14 h_1(\alpha_p-\alpha_1)X_p
	=
	\frac{\epsilon_p\epsilon_m}{2z}
	h_1(\alpha_p-\alpha_1)+O(1).
	\]
	The $m$-th summand in $T_7$ contains
	\[
	Y_m=\epsilon_m\epsilon_p h_3(\beta_m-\alpha_p)+O(1)
	=\epsilon_p\epsilon_m\left(-\frac2z+\ii\right)+O(z),
	\]
	and hence
	\[
	T_7^{\mathrm{sing}}
	=
	\frac14 h_3(\beta_m-\alpha_1)Y_m
	=
	-\frac{\epsilon_p\epsilon_m}{2z}
	h_3(\beta_m-\alpha_1)+O(1).
	\]
	Therefore the residue is
	\[
	\frac{\epsilon_p\epsilon_m}{2}
	\left(\ii+h_1(\alpha_p-\alpha_1)-h_3(\beta_m-\alpha_1)\right).
	\]
	At the limit $\alpha_p=\beta_m+\ii r$, we have
	\[
	h_1(\alpha_p-\alpha_1)
	=h_1(\beta_m-\alpha_1+\ii r)
	=h_3(\beta_m-\alpha_1)-\ii,
	\]
	so the residue is zero. Thus $G_\epsilon$ has no pole when
	$\alpha_p\to\beta_m+\ii r$ with $p\neq m$.
	
	\medbreak
	For the diagonal collision $\alpha_p\to\beta_p+\ii r$ with $p\ge2$, put
	$z=\alpha_p-\beta_p-\ii r$. The singular terms are
	\[
	T_1^{\mathrm{sing}}=\frac{\ii}{2z}+O(1),\qquad
	T_6^{\mathrm{sing}}=\frac{h_1(\alpha_p-\alpha_1)}{2z}+O(1),\qquad
	T_7^{\mathrm{sing}}=-\frac{h_3(\beta_p-\alpha_1)}{2z}+O(1).
	\]
	Their residue is
	\[
	\frac12\left(\ii+h_1(\alpha_p-\alpha_1)-h_3(\beta_p-\alpha_1)\right),
	\]
	which is zero by the identity displayed in the previous paragraph. Hence this
	diagonal collision has no pole.
	
	\medbreak
	5. Let $1\le p<q\le n$ and put $z=\beta_p-\beta_q\to0$. The direct
	$\beta$-interaction in $T_2$ is
	\[
	T_2^{\mathrm{sing}}
	=
	\epsilon_p\epsilon_q
	\left(
	\frac14 h_1'(\beta_q-\beta_p)
	+\frac18 h_1(\beta_q-\beta_p)^2
	\right)
	=O(1).
	\]
	The possible pole in $T_7$ comes from the $p$-th and $q$-th summands. In
	$Y_p$ and $Y_q$,
	\[
	Y_p=-\epsilon_p\epsilon_qh_1(z)+O(1)
	=-\frac{2\epsilon_p\epsilon_q}{z}+O(1),
	\]
	and
	\[
	Y_q=-\epsilon_q\epsilon_ph_1(-z)+O(1)
	=\frac{2\epsilon_p\epsilon_q}{z}+O(1).
	\]
	Therefore
	\begin{align*}
		T_7^{\mathrm{sing}}
		=&\frac14 h_3(\beta_p-\alpha_1)Y_p
		+\frac14 h_3(\beta_q-\alpha_1)Y_q = \frac{\epsilon_p\epsilon_q}{2z}
		\left(h_3(\beta_q-\alpha_1)-h_3(\beta_p-\alpha_1)\right)+O(1).
	\end{align*}
	Since
	\[
	h_3(\beta_p-\alpha_1)
	=h_3(\beta_q-\alpha_1)+z\,h_3'(\beta_q-\alpha_1)+O(z^2),
	\]
	the last display is $O(1)$. Thus $G_\epsilon$ has no pole when
	$\beta_p\to\beta_q$.
	
	\medbreak
	Combining the observations above, we conclude that all possible singularities are removable. Thus $G_{\epsilon}$ has no poles in $\C^{2n}$.
\end{proof}


\end{document}